\newtheorem*{rep@theorem}{\rep@title}
\newcommand{\newreptheorem}[2]{%
\newenvironment{rep#1}[1]{%
 \def\rep@title{#2 \ref{##1}}%
 \begin{rep@theorem}}%
 {\end{rep@theorem}}}
\newtheorem*{rep@cor}{\rep@title}
\newcommand{\newrepcor}[2]{%
\newenvironment{rep#1}[1]{%
 \def\rep@title{#2 \ref{##1}}%
 \begin{rep@cor}}%
 {\end{rep@cor}}}
\newtheorem*{rep@prop}{\rep@title}
\newcommand{\newrepprop}[2]{%
\newenvironment{rep#1}[1]{%
 \def\rep@title{#2 \ref{##1}}%
 \begin{rep@prop}}%
 {\end{rep@prop}}}
\newtheorem{cor}{Corollary}[section]
\newtheorem{corx}{Corollary}
\newtheorem{theorem}[cor]{Theorem}
\newtheorem{thmx}[corx]{Theorem}
\newtheorem{prop}[cor]{Proposition}
\newtheorem{lemma}[cor]{Lemma}
\theoremstyle{definition}
\newtheorem{defi}[cor]{Definition}
\theoremstyle{remark}
\newtheorem{remark}[cor]{Remark}
\newtheorem*{remark*}{Remark}
\newtheorem{example}[cor]{Example}
\DeclareRobustCommand{\bfs}[1]{
  \ifcat#1\relax
    \boldsymbol{#1}
  \else
    \mathbf{#1}
  \fi}
\newcommand{\cU}{{\mathcal U}}
\newcommand{\cJ}{{\mathcal J}}
\newcommand{\cH}{{\mathcal H}}
\newcommand{\cB}{{\mathcal B}}
\newcommand{\cR}{{\mathcal R}}
\newcommand{\cC}{{\mathcal C}}
\newcommand{\cF}{{\mathcal F}}
\newcommand{\cL}{{\mathcal L}}
\newcommand{\cM}{{\mathcal M}}
\newcommand{\cS}{{\mathcal S}}
\newcommand{\cT}{{\mathcal T}}
\newcommand{\D}{{\mathbb D}}
\newcommand{\Hh}{{\mathbb H}}
\newcommand{\R}{{\mathbb R}}
\newcommand{\E}{{\mathbb E}}
\newcommand{\dev}{\mathrm{dev}}
\newcommand{\hess}{\mbox{Hess}}
\newcommand{\Hyp}{\mathbb{H}}
\newcommand{\psl}{\mathfrak{sl}}
\newcommand{\Ad}{\mbox{Ad}}
\newcommand{\hol}{\mbox{hol}}
\newcommand{\Hol}{\mbox{\bf{hol}}}
\newcommand{\cun}{{\mathrm{C}}^\infty}
\newcommand{\dn}{d^\nabla}
\newcommand{\Diffeo}{\mbox{Diffeo}}
\newcommand{\Homeo}{\mbox{Homeo}}
\newcommand{\tr}{\mbox{\rm tr}}
\newcommand{\sh}{\mathrm{sinh}\,}
\newcommand{\ch}{\mathrm{cosh}\,}
\newcommand{\grad}{\operatorname{grad}}
\newcommand{\isom}{\mathrm{Isom}}
\newcommand{\rar}{\rightarrow}
\newcommand{\ev}{\mbox{ev}}
\newcommand{\da}{\,\mathrm{dA}}
\newcommand{\Id}{\mathrm{I}}
\newcommand{\id}{\mathrm{I}}
\newcommand{\SO}{\mathrm{SO}}
\newcommand{\so}{\mathfrak{so}}
\newcommand{\cA}{\mathcal{A}}
\newcommand{\WP}{W\!P}
\def\Hess{\mathrm{Hess}\,}
\def\Teich{\mathcal{T}}
\def\ol#1{\overline{#1}}
\def\hyp{\mathbb H}
\def\Div{\mbox{div}}
\def\puct{\mathfrak{p}}
\begin{document}

\setcounter{secnumdepth}{3}
\setcounter{tocdepth}{2}

\title{On Codazzi tensors on a hyperbolic surface and flat Lorentzian geometry}

\author[Francesco Bonsante]{Francesco Bonsante}
\address{Francesco Bonsante: Dipartimento di Matematica ``Felice Casorati", Universit\`{a} degli Studi di Pavia, Via Ferrata 1, 27100, Pavia, Italy.} \email{bonfra07@unipv.it} 
\author[Andrea Seppi]{Andrea Seppi}
\address{Andrea Seppi: Dipartimento di Matematica ``Felice Casorati", Universit\`{a} degli Studi di Pavia, Via Ferrata 1, 27100, Pavia, Italy.} \email{andrea.seppi01@ateneopv.it}

\thanks{The authors were partially supported by FIRB 2010 project ``Low dimensional geometry and topology'' (RBFR10GHHH003). The first author was partially supported by
PRIN 2012 project ``Moduli strutture algebriche e loro applicazioni''.
The authors are members of the national research group GNSAGA}



\begin{abstract}
Using global considerations, Mess proved that the moduli space of globally hyperbolic flat Lorentzian structures on $S\times\R$ is the tangent bundle
of the Teichm\"uller space of $S$, if $S$ is a closed surface.
One of the goals of this paper is to deepen this surprising occurrence and to make explicit the relation between the Mess parameters
and the embedding data of any Cauchy surface.
This relation is pointed out by using some specific properties of Codazzi tensors on hyperbolic surfaces.
As a by-product we get a new \emph{Lorentzian} proof of Goldman's celebrated result about the
coincidence of the Weil-Petersson symplectic form and the Goldman pairing.

In the second part of the paper we use this machinery to get a classification of globally hyperbolic flat
space-times with particles of angles in $(0,2\pi)$ containing a uniformly convex Cauchy surface. The analogue of Mess' result is achieved showing 
that the corresponding moduli space is the tangent bundle of the Teichm\"uller space of a punctured surface.
To generalize the theory in the case of particles, we deepen the study of Codazzi tensors on hyperbolic surfaces with cone singularities,
proving that the well-known decomposition of a Codazzi tensor in a harmonic part and a trivial part can be generalized in the context
of hyperbolic metrics with cone singularities.
\end{abstract}

\maketitle


\section{Introduction}
In 1990, in his pioneering work \cite{Mess} G. Mess showed that gravity in dimension $3$ is strictly related to Teichm\"uller theory.
Assuming the cosmological constant equal to $0$ (that in dimension $3$ corresponds to the flatness of the space), Mess showed that 
the relevant moduli space is the tangent bundle of the Teichm\"uller space of the spatial part of the universe.
In the Anti-de Sitter case  (negative cosmological constant) the moduli space is instead the Cartesian product of two copies of Teichm\"uller 
space. Since Mess' work, connections between $3$-dimensional gravity and Teichm\"uller theory and more generally hyperbolic geometry
 have been intensively exploited by a number of authors, see for instance \cite{notes, barbot, bebo,Schlenker-Krasnov, scannell}.

The main problem of gravity is to classify solutions of Einstein equation, that in dimension $3$ are Lorentzian metrics
of constant curvature.
Usually some   global causal conditions are required; the most important one  is the so called global hyperbolicity, that 
prescribes the existence of a Cauchy surface -- namely a surface which meets every inextensible causal path exactly once. 
Globally hyperbolic space-times are topologically a product $M=S\times\R$, where $S$ is the Cauchy surface. 
In \cite{choquet},  Choquet-Bruhat proved that the embedding data of  any Cauchy surface $S\subset M$ -- 
that is, the first and the second fundamental form -- determine
a maximal extension of the space. This is an Einstein space-time $M_{\tiny\mathrm{max}}$
containing  $M$,  so that $S$ is a Cauchy surface of $M_{\tiny\mathrm{max}}$, and it contains all the globally hyperbolic space-times
where $S$ is realized as a Cauchy surface.
This fact has a simple physical meaning: if the shape of the space and its first order variation is known at some time, one can
predict the whole geometry of the space-time.

So from this point of view, the problem of gravity can be reduced to a study of the embedding data of Cauchy surfaces.
Those are pairs of a metric $I$ and a shape operator $s$ which satisfy the so called constraint equations.
In dimension $3$, assuming the cosmological constant $0$,
 those equations correspond to the classical Gauss-Codazzi equations:
\[
   K_I=-\det s\,,\qquad \nabla_i s_{kj}-\nabla_j s_{ki}=0\,.
\]
However, in general it is not simple to understand whether two different embedding data $(I, s)$, $(I', s')$ correspond
to Cauchy surfaces of the same space-time. 

Assuming $S$ closed, Mess overcame this difficulty by a careful analysis of the holonomy of a globally hyperbolic 
space-time of constant curvature. In the flat case, he showed that the linear part of the holonomy 
is a discrete and faithful representation $\rho:\pi_1(S)\to \SO_0(2,1)$, providing an element $X_\rho=[\mathbb H^2/\rho]$ of
Teichm\"uller space $\Teich(S)$. On the other hand, the translation part is a cocycle $t\in H^1_\rho(\pi_1(S),\mathbb R^{2,1})$.
Using the $\SO_0(2,1)$-equivariant identification between $\mathbb R^{2,1}$ and $\mathfrak{so}(2,1)$ given by the Lorentzian 
cross product, $t$ can be directly regarded as an element of the cohomology group $H^1_{Ad\circ \rho}(\pi_1(S), \mathfrak{so}(2,1))$
which is canonically identified to the tangent space $T_{\rho}(\cR(\pi_1(S), \SO_0(2,1))/\!\!/\SO_0(2,1))$ of the character variety,   \cite{Goldman}.

By a celebrated result of Goldman (see for instance \cite{goldmanthesis})
  the map which gives the holonomy of the uniformized surface
\[
\Hol:\Teich(S)\to\cR(\pi_1(S), \SO_0(2,1))/\!\!/\SO_0(2,1)\,,
\]
is  a diffeomorphism of $\Teich(S)$ over a connected component of $\cR(\pi_1(S), \SO_0(2,1))/\!\!/\SO_0(2,1)$. Through this map we identify
$H^1_{Ad\circ \rho}(\pi_1(S), \mathfrak{so}(2,1))$ and $T_{X_\rho}\Teich(S)$, and consider  $t$ as a tangent vector of Teichm\"uller 
space.

Although Mess  pointed out a complete description of the moduli space of gravity, it remained somehow mysterious how to recover the
space-time from the embedding data of any Cauchy surface. In the Anti-de Sitter case Krasnov and Schlenker provided  some simple
formulae to get the two hyperbolic metrics associated to an Anti-de Sitter space-time $M$ 
in terms of the embedding data of any Cauchy surface $S$ of
$M$, \cite{Schlenker-Krasnov}. Those formulae work provided  the intrinsic curvature of the Cauchy surface is negative.
In the flat case Benedetti and Guadagnini \cite{begua}
studied the particular case of level set of the cosmological time, showing how they are directly related to Mess parameterization. 

 One of the aims of this paper is to  point out an explicit relation between the embedding data $(I, s)$ of any Cauchy surface $S$ in a
 flat $3$-dimensional space-time $M$ and the Mess parameters of $M$. 
 As in \cite{Schlenker-Krasnov}, 
 we will also   work under the assumption that $S$ inherits from $M$ a space-like metric $I$ of negative curvature.
 By the Gauss equation for space-like surfaces in Minkowski space, this is equivalent to the fact that the shape operator $s$ has positive determinant and corresponds to a local convexity of $S$. 

The reason why we assume the local convexity of $S$ is that it permits a  convenient \emph{change of variables}.
Instead of the pair $(I, s)$, one can in fact consider the pair $(h, b)$, where $h$ is the third fundamental form $h=I(s\bullet, s\bullet)$
and $b=s^{-1}$. The fact that $(I,s)$ solves Gauss-Codazzi equations
corresponds to the conditions that $h$ is a hyperbolic metric and $b$ is a self-adjoint solution
of Codazzi equation for $h$.

It is simple to check that the holonomy of the hyperbolic surface $(S,h)$ is the linear part of the holonomy of $M$, so the isotopy class of $h$
does not depend on the choice of a Cauchy surface in $M$ and corresponds to the element $X_\rho$ of Mess parameterization.
Recovering the translation part of the holonomy of $M$ in terms of $(h, b)$ is subtler.
This is based on the fact that  $b$ solves the Codazzi equation for the hyperbolic metric $h$.
Oliker and Simon in \cite{oliker} proved that any $h$-self-adjoint operator on the hyperbolic surface $(S, h)$ which solves the Codazzi equation
can be locally expressed as $\hess u- u\id$ for some smooth function $u$.
Using this result we  construct a short sequence of sheaves
\begin{equation}\label{eq:short}
  0\to\mathcal F\to\cun\to\mathcal C\to 0\,,
\end{equation}
where $\mathcal C$ is the sheaf of self-adjoint Codazzi operators on $S$ and
 $\mathcal F$ is the sheaf of flat sections of the $\mathbb R^{2,1}$-valued flat bundle associated to the holonomy of $h$.
Passing to cohomology, this gives a connecting homomorphism
\[
\delta: \mathcal C(S, h) \to H^1(S, \mathcal F)\,.
\]
It is  a standard fact that  $H^1(S, \mathcal F)$ is canonically identified with $H^1_{\tiny{\hol}}(\pi_1(S),\mathbb R^{2,1})$.
Under this identification we prove the following result.

\begin{reptheorem}{theorem holonomy}
Let $M$ be a globally hyperbolic space-time and $S$ be a uniformly convex Cauchy surface with embedding data $(I, s)$.
Let $h$ be the third fundamental form of $S$ and $b=s^{-1}$.
Then
\begin{itemize}
\item the linear holonomy of $M$ coincides with the holonomy of $h$;
\item the translation part of the holonomy of $M$ coincides with $\delta b$.
\end{itemize}
\end{reptheorem}

It should be remarked that the construction of the short exact sequence \eqref{eq:short} and the proof of Theorem \ref{theorem holonomy}
are carried out just by local computations, so they hold for any uniformly convex space-like surface in any flat globally hyperbolic
 space-time without any assumption on the compactness or completeness of the surface.

In the case where $S$ is closed, we also provide a $2$-dimensional  geometric interpretation of $\delta b$.
This is based on the simple remark that $b$ can also be regarded as a first variation of the metric $h$.
As any Riemannian metric determines a complex structure over $S$, $b$ determines an infinitesimal variation
of the complex structure $X$ underlying the metric $h$, giving in this way an element $\Psi(b)\in T_{[X]}\Teich(S)$.

\begin{reptheorem}{theorem parametrization closed surfaces}
\label{thm:main2}
Let $h$ be a hyperbolic metric on a closed surface $S$, $X$ denote the complex structure underlying $h$ and $\cC(S, h)$ be the space of
self-adjoint $h$-Codazzi tensors. Then the following diagram is commutative 
\begin{equation}\label{eq:main3}
\begin{CD}
\cC(S, h) @>\Lambda\circ \delta>>H^1_{\tiny\Ad\circ\tiny\hol}(\pi_1(S),\so(2,1))\\
@V\Psi VV                                       @A d\Hol AA\\
T_{[X]}\cT(S) @>>\mathcal J > T_{[X]}\cT(S)
\end{CD}
\end{equation}
where $\Lambda:H^1_{\tiny\hol}(\pi_1(S), \R^{2,1})\to H^1_{\tiny\Ad\circ\tiny\hol}(\pi_1(S),\so(2,1))$ is the natural isomorphism, and
$\cJ$ is the complex structure on $\cT(S)$.
\end{reptheorem} 

As a consequence we get the following corollary

\begin{repcor}{corollary same spacetimes}
 Two embedding data $(I,s)$ and $(I', s')$ correspond to Cauchy surfaces contained in the same space-time
if and only if 
\begin{itemize}
\item the third fundamental forms $h$ and $h'$ are isotopic;
\item the infinitesimal variation of $h$ induced by $b$ is Teichm\"uller equivalent to the infinitesimal variation of $h'$ induced by $b'$.
\end{itemize}
\end{repcor}

The compactness of $S$ is important for the proof of Theorem \ref{theorem parametrization closed surfaces} for several reasons.
First, in the non closed case the uniformization is more problematic and one should add some extra hypothesis on $h$, like
completeness. 
A more important reason to restrict this theorem to the closed case 
is that the proof highly depends on a decomposition of 
$\mathcal C(S,h)$ as the direct sum of the space of Codazzi tensors that can be
globally expressed as $\Hess (u)-u\id$  (trivial Codazzi tensors)
and the space of Codazzi tensors which are the real part of quadratic differential (harmonic Codazzi tensors).
This decomposition, proved by Oliker and Simon in \cite{oliker}, is peculiar of the closed case.

The key point to prove Theorem \ref{theorem parametrization closed surfaces} is to relate $\delta b$ to  the first-order  variation of the holonomy of the family of hyperbolic metrics
 $h_t$ obtained by uniformizing the metrics $h((\Id+tb)\bullet, (\Id+tb)\bullet)$.
 This computation can be made explicit in the case $b=b_q$ is a harmonic Codazzi tensor. 
 In fact in this case $b_q$ is the first variation of a family of hyperbolic metrics, so the uniformization is not strictly necessary.
 It turns out that the variation of the holonomy for $\Psi(b_q)$ is $\Lambda\delta(b_{iq})$.
 
However in general $b$ is not tangent to a deformation of $h$ through hyperbolic metrics, so the uniformization necessarily enters in the proof.
To get rid of the conformal factor one uses the decomposition $b=b_q+\hess u-u\Id$. Heuristically $u\Id$ is a conformal variation, whereas $\hess u$ is trivial
in the sense that correspond to the Lie derivative of the metric through the gradient field $\grad u$. So one has $\Psi(b)=\Psi(b_q)$ and the commutativity of the diagram
\eqref{eq:main3} follows by the computation on harmonic differentials.

We give an application of the commutativity of the diagram \eqref{eq:main3} to hyperbolic geometry.
Goldman proved in \cite{Goldman} that the Weil-Petersson symplectic form on $\cT(S)$ coincides up to
a factor with the Goldman pairing on the character variety through the map $\Hol$.
We give a  new \emph{Lorentzian proof}  of this fact. It 
directly follows  by the commutativity of \eqref{eq:main3}: 
 we show   by
an explicit   computation that the pull-back of those forms through the maps $\Lambda\circ\delta$ and $\Psi$ coincide (up to a factor) on $\cC(S, h)$. 

The computation of the Weil-Petersson metric is quite similar to that obtained by Fischer and Tromba \cite{trombawp} and the result is completely analogous.
The computation of the Goldman pairing follows in a  simple way using  
 a different characterization of self-adjoint Codazzi tensors.
The inclusion of $\Hyp^2\to\R^{2,1}$ projects to a section $\iota$ of the flat $\R^{2,1}$-bundle $F$ associated with $\hol:\pi_1(S)\rar \SO_0(2,1)$.
The differential of this map provides an inclusion $\iota_*$ of $TS$ into $F$
corresponding to the standard inclusion of $T\Hyp^2$ into $\R^{2,1}$.
Thus any operator $b$ on $TS$  corresponds to an $F$-valued one-form $\iota_*b$. We prove that
$b$ is Codazzi and self-adjoint for $h$ if and only if the form $\iota_*b$ is closed.
From this point of view the connecting homomorphism $\delta:\cC(S, h)\to H^1(S, \cF)$ associated to the short exact sequence in \eqref{eq:short} can be expressed as
$\delta(b)=[\iota_*b]$, where we are implicitly using the canonical identification between $H^1(S, \cF)$ and the de Rham
cohomology group $H^1_{dR}(S, F)$.
The fact that the Goldman pairing 
coincides with the cup product in the de Rham cohomology proves immediately the coincidence of the two forms.

While Goldman's proof highly relies on the complex analytical theory of Teichm\"uller space, our proof is basically only differential geometric.

\subsection*{Space-times with particles}

In the second part of the paper we apply this machinery  to study globally hyperbolic space-times containing particles.
Mathematically speaking particles are cone singularities along time-like lines with angle in $(0,2\pi)$.
In order to develop a reasonable study of Cauchy surfaces in a space-time with particles, some assumption are needed
about the behavior of the surface around a particle. Here the  assumption we consider is very weak: 
 we only assume that the shape operator of the surface is bounded and uniformly  positive 
(meaning that the principal curvatures are uniformly far from $0$ and $+\infty$). We will briefly say that 
the Cauchy surface is bounded and uniformly convex.

Under this assumption we prove that the surface is necessarily orthogonal to the singular locus and intrinsically carries a Riemannian metric
with cone angles equal to the cone singularities of the particle (here we use the definition given by Troyanov \cite{Troyanov}
of metrics with cone angles on a surface with variable curvature).
It turns out that the third fundamental form of such a surface is a hyperbolic surface with the same cone angles
and $b=s^{-1}$ is a bounded and uniformly positive Codazzi operator for $(S,h)$.
More precisely we prove the following statement.

\begin{reptheorem}{corrispondenza coppie caso singolare}
Let us fix a \emph{divisor} $\bfs{\beta}=\sum\beta_ip_i$ on a surface with $\beta_i\in(-1,0)$ and consider the following sets:
\begin{itemize}
\item $\E_{\bfs{\beta}}$ is the set of embedding data $(I,s)$ of bounded and uniformly convex 
Cauchy surfaces on flat space-times with particles so
that for each $i=1,\ldots , k$ a particle of angle $2\pi(1+\beta_i)$ passes through $p_i$.
\item $\D_{\bfs{\beta}}$ is the set of pairs $(h,b)$, where $h$ is a hyperbolic metric on $S$ with a cone singularity
of angle $2\pi(1+\beta_i)$ at each $p_i$ and $b$ is a self-adjoint solution of Codazzi equation for $h$, bounded and  uniformly positive.
\end{itemize}
Then the correspondence $(I,s)\to (h,b)$ induces a bijection between $\E_{\bfs{\beta}}$ and $\D_{\bfs{\beta}}$.
\end{reptheorem}

\begin{remark*}
By Gauss-Bonnet formula, in order to have $\D_{\bfs{\beta}}$ (and consequently $\E_{\bfs{\beta}}$)  non empty one has to require that
$\chi(S, \bfs\beta):=\chi(S)+\sum\beta_i$ is negative. We will always make this assumption in the paper. 
\end{remark*}

The difficult part of the proof  is to show that for $(h,b)\in\E_{\bfs{\beta}}$, the corresponding pair
$(I,s)$ is the embedding data of some Cauchy surface in a  flat space-time with particles.
Clearly  the regular part of $S$ can be realized as a Cauchy surface in some flat space-time $M$; it remains to prove that
$M$ can be embedded in a space-time with particles.
The problem is local around the punctures.
Indeed it is sufficient to prove that a neighborhood of a puncture $p_i$ can be realized as a surface in a small flat cylinder
with particle and then use some standard \emph{cut and paste} procedure to construct the thickening of $M$.

The first point is to prove that the translation part of the holonomy of $M$ is trivial for peripheral loops, preventing
singularities with spin as those studied in \cite{barbot-meusburger}. This is achieved by showing that
 $b$ can be expressed as $\Hess u-u\id$ in a neighborhood of a puncture. In fact we prove that this fact holds 
  for the larger class of  Codazzi operators defined on the 
regular part of $S$ whose squared norm is integrable  with respect to the area form of $h$. 

The construction of the small cylinders where neighborhoods of particles in $S$ can be embedded is somehow technical.
Here we try to sketch  the main ingredients we use.
A key point is that if $U$ is a neighborhood of a puncture $p$, then it carries a natural Euclidean metric with the 
same cone angle: such a metric is obtained by composing the developing map of $U$ with the radial projection of
the hyperboloid $\mathbb H^2$ onto  the affine plane tangent in $\R^{2,1}$ at the fixed point of the holonomy of $U$.

Now express $b$ around the cone point as $\hess u-u\id$ and consider the function $\bar u=(\ch r)^{-1} u$, where $r$ is the distance from
the singular point.
In \cite{bonfill} it was proved that   the developing map corresponding to the embedding data $(I, s)$ is  a  map of the form
\begin{equation}\label{eq:sigma}
        \sigma(x)= D_*(\grad \bar u(x))+ \bar u(x) e_0
\end{equation}
where $D:\widetilde{U\setminus\{p\}}\to\R^2$ is the developing map of the Euclidean metric over $U$ and $e_0$ is a unit vector fixed
by the holonomy of the peripheral loop around $p$. We are implicitly using the orthogonal splitting of  Minkowski space 
$\R^{2,1}=\R^2\oplus\mathrm{Span}(e_0)$.

We then prove that the map  $\varphi:\widetilde{U\setminus\{p\}}\to\R^2$, defined by $\varphi(x)=D_*(\grad \bar u(x))$ is the developing
map of a Euclidean structure with a cone point on $U$. This fact allows to construct a flat cylinder with a particle as
the orthogonal product of the flat Euclidean structure over $U$ and the standard negative metric over $\R$. Formula
\eqref{eq:sigma} shows that the graph map $x\to(x, \bar u(x))$ is the isometric embedding of $(U, I, s)$ in this cylinder.

The fact that $\varphi$ is a developing map for  a Euclidean structure with a cone point   
relies on 
the fact that the Euclidean Hessian of $\bar u$ is bounded and uniformly  positive
over $U$, as it is directly related to $b$ by a result of \cite{bonfill}.

Moreover, the above construction allows to prove the following result concerning Riemannian metrics with cone points, which might have an interest on its own:

\begin{reptheorem}{hbb cone metric}
Let $h$ be a hyperbolic metric with cone singularities and let $b$ be a Codazzi, self-adjoint operator for $h$, bounded and uniformly positive. Then $I=h(b\bullet,b\bullet)$ defines a singular metric with the same cone angles as $h$.
\end{reptheorem}

Another goal of this part is to 
show  the analogue of Theorem \ref{theorem parametrization closed surfaces}  in the context of cone singularities, proving 
that the relevant moduli space is the tangent bundle of the Teichm\"uller space of the punctured surface, and  in particular
it is  independent of the cone angles.

To give a precise statement we use the Troyanov uniformization result \cite{Troyanov} which ensures that, given a
conformal structure on $S$, there is a unique conformal hyperbolic metric with prescribed cone angles at the points $p_i$ (notice we are assuming $\chi(S, \bfs\beta)<0$).
So once the divisor $\bfs{\beta}$ is chosen we have a holonomy map
\[
   \Hol:\cT(S,\puct)\to\cR(\pi_1(S\setminus\puct), \SO_0(2,1))/\!\!/\SO_0(2,1)\,,
 \]
where $\puct=\{p_1,\ldots, p_k\}$ is the support of $\bfs{\beta}$,
and $\cT(S,\puct)$ is the Teichm\"uller space of the punctured surface.

As in the closed case fix a hyperbolic metric $h$  on $S$ with cone angles  $2\pi(1+\beta_i)$ at $p_i$.
Let $X$ denote the complex structure underlying $h$.
 Any  Codazzi operator $b$ on $(S, h)$ can be regarded as
an infinitesimal deformation of the metric on the regular part of $S$. If $b$ is bounded this deformation
is quasi-conformal so it extends to an infinitesimal deformation of the underlying conformal structure at the punctures,
providing an element $\Psi(b)$ in $T_{[X]}\cT(S,\puct)$.

\begin{reptheorem}{thm:coneteich}
Let $\cC_\infty(S, h)$ be the space of bounded Codazzi tensors on $(S, h)$.
The following diagram is commutative 
\begin{equation}\label{eq:main4}
\begin{CD}
\cC_\infty(S, h) @>\Lambda\circ \delta>>H^1_{\tiny\Ad\circ\tiny\hol}(\pi_1(S\setminus\puct),\so(2,1))\\
@V\Psi VV                                       @A d\Hol AA\\
T_{[X]}\cT(S,\puct) @>>\mathcal J > T_{[X]}\cT(S,\puct)
\end{CD}\,,
\end{equation}
where $\Lambda:H^1_{\tiny\hol}(\pi_1(S\setminus\puct), \R^{2,1})\to H^1_{\tiny\Ad\circ\tiny\hol}(\pi_1(S\setminus\puct),\so(2,1))$ 
is the natural isomorphism, and $\cJ$ is the complex structure on $\cT(S,\puct)$.
\end{reptheorem} 

In order to repeat the argument used in the closed case, we show that also in this context bounded Codazzi tensors can be
split as the sum of a trivial part and a harmonic part.
More precisely we prove that  any square-integrable Codazzi tensor on a surfaces with cone angles in $(0,2\pi)$
can be expressed as the sum of a trivial Codazzi tensor and a Codazzi 
tensor corresponding to a holomorphic quadratic differential with at worst simple poles at the punctures.
As a consequence we have the following corollary.

\begin{repcor}{cor:main2}
Two embedding data $(I,s)$ and $(I', s')$ in $\E_{\bfs{\beta}}$
 correspond to Cauchy surfaces contained in the same space-time with particles
if and only if 
\begin{itemize}
\item the third fundamental forms $h$ and $h'$ are isotopic;
\item the infinitesimal variation of $h$ induced by $b$ is Teichm\"uller equivalent to the infinitesimal variation of $h'$ induced by $b'$.
\end{itemize}
\end{repcor}

It should be remarked that in this context, at least if the cone angles are in $[\pi,2\pi)$, the holonomy does not distinguish the structures,
so this corollary is not a direct consequence of Theorem \ref{thm:coneteich}, but some argument is required.
Indeed we believe that for the same reason, the direct application of Mess' arguments to this context is not immediate.
It can be remarked that in Anti-de Sitter case in \cite{bbsads} a generalization of Mess' techniques has been achieved, at least if cone angles are in $(0, \pi)$.
In that context the existence of a convex core can be proved by some general arguments which work also in the case with particles
(with small angles). On the other hand  in the flat case it seems more difficult to adapt Mess' argument.
However, if cone angles are less than $\pi$, the arguments used in this paper indicate that one can recover the analogous picture of the closed
case, showing that the initial singularity of a maximal globally hyperbolic flat space-time  with particles 
containing a uniformly convex surface  is a real tree. We will not develop this point in this work as it seems not strictly related to the focus
of the paper.

It remains open to understand the extent  to which  the condition of containing a uniformly convex surface is restrictive.
It is not difficult to point out some counterexamples: it is sufficient to double a cylinder in Minkowski space based on some polygon on $\R^2$.
However the space-times obtained in this way have the property that Euler characteristic $\chi(S, \bfs\beta)$ of its Cauchy surfaces is $0$.

So a natural question is whether there are globally hyperbolic space-times with particles with negative characteristic
which do not contain uniformly convex surfaces.
In the last section we give some counterexamples in this direction, based on some simple surgery idea.
Similar problems regarding the existence of space-times with certain properties on the Cauchy surfaces have been tackled in \cite{beguacone}.

We remark that in all those exotic examples at least one particle must have cone angle in $[\pi,2\pi)$.
Indeed we believe that if cone angles are less than $\pi$ the classification given in this paper is complete, that is
maximal globally hyperbolic flat space-times with small cone angles must contain a uniformly convex surface.
However we leave this question for a further investigation.

 We finally address the question of the coincidence of the Weil-Petersson metric and the Goldman pairing in this context
 of structures with cone singularities.
 Once a divisor $\bfs\beta$ is fixed, the hyperbolic metrics with prescribed cone angles   allow to determine
 a Weil-Petersson product on $\cT(S, \puct)$, as it has been studied in \cite{schumacher}.
 In \cite{mondelloPoisson},
 Mondello showed 
 that also in this singular case the Weil-Petersson product corresponds
 to an intersection form on the subspace of $H^1_{\tiny\Ad\circ\tiny\hol}(\pi_1(S\setminus\puct), \so(2,1))$ corresponding to cocycles trivial around the punctures.
Actually  Mondello's proof is based on a careful generalization of Goldman argument in the case with singularity.
Like  in the closed case, we  give a  substantially different proof of this coincidence by
using  the commutativity of \eqref{eq:main4}.

\subsection*{Outline of the paper}
In Section \ref{sec:cauco} we study Codazzi tensors on a hyperbolic surface $(S, h)$.
In this section  $S$ is not assumed to be closed. The only assumption we will make is that
the holonomy of $S$ is not elementary.
We construct the short exact sequence \eqref{eq:short}
and then we prove Theorem \ref{theorem holonomy}.
In the last part of this section we prove the characterization of self-adjoint Codazzi tensors in terms of closed forms and the relation to de Rham cohomology.

Section \ref{closed surfaces} is mostly devoted to the proof of Theorem \ref{theorem parametrization closed surfaces}. The first part of the section shows the relationship between the variation of a family of hyperbolic metrics and the variation of the corresponing holonomies. Then we conclude the proof, by showing the decisive equality $d\Hol(\Psi(b_q))=\delta(b_{iq})$, where $b_q$ is the harmonic Codazzi tensor associated with the quadratic differential $q$.
In the final part of the section we perform the computations of the Weil-Petersson and Goldman symplectic forms and obtain Goldman theorem as a corollary.

Section \ref{punctured surfaces} develops the theory for hyperbolic surfaces with cone angles and flat space-times with particles.
In the first part of this section we study Codazzi tensors on hyperbolic surfaces with cone angles.
Here there are two remarkable results: first we prove the decomposition of $L^2$-Codazzi tensors as the sum of a trivial tensor
and a harmonic tensor corresponding to a quadratic differential with at worst simple poles at punctures. Second, we prove that
$L^2$-Codazzi tensors are trivial around punctures. 
Basically results of this part are achieved by a local analysis of the Codazzi tensors around the punctures.

In the second part of Section \ref{punctured surfaces}  we apply  the theory we have developed about Codazzi tensors on hyperbolic singular surfaces  
to the Lorentzian setting. Here  we prove Theorem \ref{corrispondenza coppie caso singolare}. 
Again the arguments of this section are of local nature around punctures. As a byproduct we prove Theorem \ref{hbb cone metric}.

In Section \ref{sectionteichsing} we finally prove Theorem \ref{thm:coneteich} and Corollary \ref{cor:main2}.
We use the same ingredients as in the closed case. There is some more technical details to get rid of
conformal factors. Once the commutativity of the diagram \eqref{eq:main4} is achieved, the computation of the Weil-Petersson metric and 
the Goldman pairing is done exactly as in the closed case.

Finally in Section \ref{exotic} we construct an example of a flat globally hyperbolic space-time  with particles whose  Euler characteristic is negative,
 but such that it does not admit any uniformly convex Cauchy surface.

\subsection*{Acknowledgements}
The first author is grateful to Roberto Frigerio and Gabriele Mondello for useful conversations about
the geometry of hyperbolic surfaces with cone angles bigger than $\pi$. The second author would like to thank Thierry Barbot and Francois Fillastre for several valuable discussions.



\section{Cauchy surfaces and Codazzi operators}\label{sec:cauco}

\subsection{Codazzi operators on a hyperbolic surface} \label{sezione cociclo R21}

We denote with $\R^{2,1}$ Minkowski space, namely the vector space $\R^3$ endowed with the bilinear form 
 $$\langle v,w\rangle=v_1 w_1+v_2 w_2-v_3 w_3\,.$$
In this paper we will implicitly consider $\Hyp^2$ as the hyperboloid model in $\R^{2,1}$, that is,
$$\Hyp^2=\left\{v\in\R^{2,1}\mid \langle v,v\rangle=-1,\ v_3>0 \right\}\,.$$
Recall that $\isom^+(\R^{2,1})=\SO(2,1)\ltimes \R^{2,1}$ is the group of orientation-preserving isometries of Minkowski space, 
and $\isom^+(\Hyp^2)=\SO_0(2,1)$ is the identity component of $\SO(2,1)$, 
containing orientation-preserving orthochronus  linear isometries of $\R^{2,1}$.

Let  $(S,h)$ be any hyperbolic (possibly open and non-complete)  surface.
Denote by $$\hol:\pi_1(S)\rightarrow\SO_0(2,1)$$  the corresponding holonomy.
The only assumption  we will make on $h$ is that  $\hol$ is not elementary.


We will consider the Codazzi operator on the space of linear maps on $TS$
\[
d^\nabla:\Gamma(T^*S\otimes TS)\rightarrow\Gamma(\Lambda^2T^*S\otimes TS)
\]
defined in this way. Given a linear map $b:TS\rightarrow TS$, 
and given $v_1,v_2\in T_xM$  we have
\[
   d^\nabla b(v_1,v_2)=(\nabla_{v_1} b)(v_2)-(\nabla_{v_2} b)(v_1)=\nabla_{v_1}(b(\hat v_2))-\nabla_{v_2}(b(\hat v_1))-b([\hat v_1, \hat v_2])\,,
\]
where $\nabla$ is the Levi Civita connection of $h$ and
 $\hat v_1$ and $\hat v_2$ are local extensions of $v_1$ and $v_2$ in a neighborhood of $x$.

Given a representation $\rho:\pi_1(S)\rightarrow\SO_0(2,1)$, the first cohomology group $H^1_{\rho}(\pi_1(S), \R^{2,1})$ is the vector space obtained as a quotient of cocycles over coboundaries. A cocycle is a map $t:\pi_1(S)\rar\R^{2,1}$ satisfying $t_{\alpha\beta}=\rho(\alpha)t_\beta+t_\alpha$. Such a cocycle $t$ is a coboundary if $t_\alpha=\rho(\alpha)t-t$ for some $t\in\R^{2,1}$.
In this section we will show that self-adjoint operators $b$ satisfying the Codazzi equation $d^\nabla b=0$
naturally describe the elements of the cohomology group $H^1_{\tiny{\hol}}(\pi_1(S), \R^{2,1})$. 

Let us recall that associated with $h$
there is a natural flat $\R^{2,1}$-bundle $F\to S$, whose holonomy is $\hol$.
Basically, $F$ is the quotient of $\tilde S\times\R^{2,1}$  by the product action of $\pi_1(S)$ as deck transformation
on the first component and through the representation $\hol$ on the second one.
Since $\tilde S$ is contractible, the group $H^1_{\tiny{\hol}}(\pi_1(S), \R^{2,1})$ can be canonically identified with the
first cohomology group $H^1(S,\cF)$ of the sheaf $\cF$ of  flat sections of  $F$.

The relation between Codazzi tensors and the cohomology group  $H^1_{\tiny{\hol}}(\pi_1(S), \R^{2,1})=H^1(S,\cF)$ 
relies on the construction of a short exact sequence of sheaves
\[
   0\to\cF\to\cun\to\cC\to 0 \, ,
\]
where $\cC$ is the sheaf of self-adjoint Codazzi tensors on $S$.

First we construct a map $H:\cun\to\cC$. This is is based on the following simple remark:

\begin{lemma}
Let $U$ be any hyperbolic surface.
For every $u\in\cun(S)$, $b=\Hess u-u\id$ is a self-adjoint Codazzi operator with respect to $h$. Here $\id$ denotes the identity operator and $\Hess u=\nabla \grad u$ is considered as an operator on $TS$.
\end{lemma}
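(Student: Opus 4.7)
The plan has two pieces: self-adjointness, which is essentially immediate, and the Codazzi equation, which reduces to an application of the curvature formula.

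For self-adjointness, I would note that $\mathrm{Hess}\, u = \nabla \mathrm{grad}\, u$ always defines an $h$-self-adjoint endomorphism of $TS$; equivalently, the bilinear form $(X,Y) \mapsto h(\nabla_X \mathrm{grad}\, u, Y)$ coincides with $X(Y u) - (\nabla_X Y)u$, which is manifestly symmetric when $\nabla$ is torsion-free. Since $u\cdot \id$ is obviously self-adjoint, so is $b$.

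For the Codazzi equation, I would compute $d^\nabla b$ directly on vector fields $X,Y$. Unwinding the definitions,
\[
(\nabla_X b)(Y) = \nabla_X \nabla_Y \mathrm{grad}\, u - \nabla_{\nabla_X Y} \mathrm{grad}\, u - (Xu)\, Y,
\]
and similarly for $(\nabla_Y b)(X)$. Subtracting and regrouping the second-derivative terms gives
\[
d^\nabla b(X,Y) \;=\; R(X,Y)\,\mathrm{grad}\, u \;+\; (Yu)\, X - (Xu)\, Y,
\]
where $R$ is the Riemann curvature tensor of $h$.

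The final step uses that $h$ has constant curvature $-1$, so $R(X,Y)Z = -h(Y,Z)X + h(X,Z)Y$. Applied to $Z = \mathrm{grad}\, u$ this yields $R(X,Y)\mathrm{grad}\, u = -(Yu)X + (Xu)Y$, which exactly cancels the remaining terms. Hence $d^\nabla b = 0$.

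There is no real obstacle here: the only subtle point is correctly tracking signs in the curvature convention for the hyperbolic surface, and making sure the torsion-free identity is invoked for self-adjointness. The computation is local, so the assumption that the holonomy of $S$ is non-elementary plays no role in this lemma.
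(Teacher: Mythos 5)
Your proof is correct and follows essentially the same route as the paper: both reduce the Codazzi equation to the identity $d^\nabla\Hess u = R(\cdot,\cdot)\grad u$ and cancel it against $d^\nabla(u\,\id)=du\wedge\id$ using the constant curvature $-1$ formula, with self-adjointness coming from the symmetry of the Hessian. Your signs and conventions are consistent with the paper's, so there is nothing to fix.
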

\begin{proof}
The fact that $b$ is self-adjoint is clear.
Let us prove that $b$ satisfies the Codazzi equation.
By a simple computation $d^\nabla(u\id)=du\wedge\id$. On the other hand, since by definition $\Hess u=\nabla (\grad u)$ we get
$d^\nabla\Hess u=R(\cdot, \cdot)\grad u$. 
As for a hyperbolic surface $R(v_1,v_2)v_3=h(v_3,v_1)v_2-h(v_3,v_2)v_1$, 
we  get $d^\nabla\Hess u=du\wedge\id$, so the result
follows.
\end{proof}

Hence we define $$H(u)=\Hess u-u\id\,.$$
The second step is to construct a map $V:\cF\to\cun$ whose image is the kernel of $H$.
Notice that the developing map $\dev:\tilde S\rightarrow\Hh^2\subset\R^{2,1}$ induces to the quotient a section
$\bfs{\iota}:S\rightarrow F$ called the developing section.
Using the natural Minkowski product on $F$, for any section $\sigma$ of $F$  the smooth function $V(\sigma)$ is defined by 
taking the product of $\sigma$ with $\bfs{\iota}$:
\[
  V(\sigma)=\langle \sigma, \bfs{\iota}\rangle~.
\]

\begin{theorem}
The short sequence of sheaves
\begin{equation}\label{eq:exsq}
\begin{CD}
 0@>>>\cF@>V>>\cun @>H>>\cC @>>>0
\end{CD}
\end{equation}
 is exact.
\end{theorem}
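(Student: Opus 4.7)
The sequence has three points of exactness to verify: injectivity of $V$, the identity $\ker H = \operatorname{im} V$, and surjectivity of $H$ at the sheaf level. Surjectivity is exactly the Oliker--Simon local representation result mentioned earlier in the introduction: any $h$-self-adjoint Codazzi tensor on $(S,h)$ is locally of the form $\Hess u - u\,\id$. The crux is the middle, which I would handle by a direct computation using the geometry of $\Hyp^2 \subset \R^{2,1}$.

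\textbf{Injectivity of $V$.} A flat section $\sigma$ of $F$ lifts to a map $\tilde\sigma:\widetilde U \to \R^{2,1}$ which is locally constant in the flat trivialization, hence equals some $v_0\in\R^{2,1}$. The equation $V(\sigma)=0$ becomes $\langle v_0,\dev(x)\rangle=0$ for all $x\in\widetilde U$. Since $\dev(\widetilde U)$ is an open subset of the hyperboloid $\Hyp^2$, and $\Hyp^2$ contains a basis of $\R^{2,1}$ in any neighborhood, this forces $v_0=0$.

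\textbf{The identity $\ker H=\operatorname{im} V$.} Working on $\widetilde U$ and identifying $\bfs\iota$ with the position vector $\dev(x)\in\Hyp^2\subset\R^{2,1}$, the Gauss formula for the spacelike hypersurface $\Hyp^2$ (whose shape operator is $-\id$) reads $D_w Y=\nabla_w Y + h(Y,w)\,\bfs\iota$ for a vector field $Y$ tangent to $\Hyp^2$, where $D$ is the ambient flat connection. The plan is:
\begin{itemize}
\item For $\operatorname{im} V\subseteq\ker H$: given $\sigma=v_0$ constant and $u=\langle v_0,\bfs\iota\rangle$, decompose $v_0$ along $\bfs\iota$ and its orthogonal complement $T\Hyp^2$ to obtain $v_0=\grad u - u\,\bfs\iota$. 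Differentiating and using $D_w\bfs\iota=w$ yields $D_w\grad u=h(\grad u,w)\,\bfs\iota + u\,w$, so by the Gauss formula $\Hess u(w)=\nabla_w\grad u=u\,w$, i.e.\ $H(u)=0$.
\item For $\ker H\subseteq\operatorname{im} V$: given $u$ with $\Hess u=u\,\id$, define the candidate lift $\tilde\sigma:=\grad u - u\,\bfs\iota$ on $\widetilde U$. A direct computation using the same Gauss formula gives
\[
D_w\tilde\sigma = \bigl(\Hess u(w)+h(\grad u,w)\,\bfs\iota\bigr) - du(w)\,\bfs\iota - u\,w = \Hess u(w) - u\,w = 0,
\]
so $\tilde\sigma$ is $D$-parallel, hence descends to a local flat section $\sigma$ of $F$, and $\langle\sigma,\bfs\iota\rangle = \langle\grad u,\bfs\iota\rangle - u\langle\bfs\iota,\bfs\iota\rangle = u$.
\end{itemize}

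\textbf{Surjectivity of $H$.} This is a purely local statement, and it is precisely the Oliker--Simon theorem (\cite{oliker}) stated in the introduction: every self-adjoint Codazzi operator on a hyperbolic surface admits locally a potential $u$ with $b=\Hess u-u\,\id$.

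\textbf{Main obstacle.} Nothing is truly hard once the right global object $\bfs\iota$ is in place; the only point requiring care is to keep straight that the sheaf-theoretic statement of exactness is local, so $\tilde\sigma$ only needs to be defined on contractible opens of $\tilde S$ (no global equivariance obstruction appears), while the identification $H^1(S,\cF)\cong H^1_{\hol}(\pi_1(S),\R^{2,1})$ later extracts the holonomy-equivariant information via the connecting map $\delta$.
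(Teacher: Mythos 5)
Your proposal is correct, and its treatment of the middle exactness $\ker H=\mathrm{im}\, V$ differs from the paper's. The paper reduces to a convex open subset $U\subset\Hyp^2$, checks (as you do) that $V(t_0)=\langle t_0,\cdot\rangle$ lies in $\ker H$, and then closes the gap by a dimension count: it shows $\dim\ker H\le 3$ by observing that along a unit-speed geodesic any $u$ with $\Hess u-u\id=0$ satisfies $y''=y$, so a solution vanishing at three non-collinear points vanishes identically (here the convexity of $U$ is used); since $V$ is injective with three-dimensional image inside $\ker H$, equality follows. You instead prove the inclusion $\ker H\subseteq\mathrm{im}\,V$ constructively: given $H(u)=0$ you form $\tilde\sigma=\bfs{\iota}_*\grad u-u\,\bfs{\iota}$ and verify via the Gauss formula that $D_w\tilde\sigma=\bfs{\iota}_*\bigl(\Hess u(w)-u\,w\bigr)=0$, so $\tilde\sigma$ is a flat local section with $\langle\tilde\sigma,\bfs{\iota}\rangle=u$. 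Your route is arguably cleaner: it needs no convexity hypothesis or ODE/maximum argument, works on any connected open set, and it is exactly the computation the paper itself proves later as Lemma \ref{lm:potential} (and uses in formula \eqref{eq:normal}), so your argument anticipates the de Rham description of $\delta$. What the paper's route buys is elementarity (one-variable ODEs along geodesics, no appeal to the ambient flat connection at this stage) and an explicit statement of uniqueness of the vector $t$, though uniqueness also follows in your setup from the injectivity of $V$. Both treatments delegate surjectivity of $H$ to the local Oliker--Simon result, and your closing remark that exactness is a purely local (stalkwise) statement, so no equivariance issue arises, matches the paper's reduction.
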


Since  the statement is of local nature, it suffices to check exactness  on an open convex subset $U$ of $\Hh^2$.
The surjectivity of the map $H:\cun(U)\rightarrow\cC(U,h)$ follows by the general results  in \cite{oliker}.

Notice that  $\cF(U)$ is naturally identified with $\R^{2,1}$. On the other hand, by identifying $\Hh^2$ with a subset of 
$\R^{2,1}$, the developing section is the standard inclusion. So the exactness of the first part of the sequence
(\ref{eq:exsq}) is proved by the following Proposition.

\begin{prop} \label{existence of function}
Let $U$ be a convex neighborhood of $\Hh^2$. 
For any vector $t_0\in\R^{2,1}$ the corresponding function $v=V(t_0)$
\[
    v(x)=\langle t_0, x\rangle
\]
satisfies the equation $H(v)=0$.
Conversely, 
if $u$ is a smooth function on $U$ such that $H(u)=0$, there exists a unique vector
$t\in\R^{2,1}$ such that $u(x)=\langle t, x\rangle$ for any $x\in U$.
\end{prop}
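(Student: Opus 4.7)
The plan is to treat the two directions separately and then reduce the converse to a uniqueness-of-ODE argument along geodesics, using convexity of $U$.

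For the forward direction, I would just compute directly. Thinking of $v(x) = \langle t_0, x\rangle$ as the restriction to $\Hh^2$ of a linear function on $\R^{2,1}$, the Minkowski-orthogonal projection formula gives
\[
\grad v(x) = t_0 + v(x)\, x,
\]
because $\langle x, x\rangle = -1$ forces the normal component. Taking the ambient derivative of this expression in a tangent direction $X \in T_x\Hh^2$ yields $\langle t_0, X\rangle x + v(x) X$, and since $\Hh^2$ is totally geodesically embedded only in the sense that its shape operator is the identity, the first summand is normal and the tangential part of $D_X\grad v$ is $v(x)\,X$. Thus $\Hess v\, (X) = v(x)\, X$, i.e.\ $\Hess v = v\,\id$, so $H(v) = 0$.

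For the converse, the key observation is that $H(u) = 0$ is a second-order linear PDE which, restricted to any geodesic, becomes a second-order linear ODE with unique solutions for given initial data. Concretely, if $\gamma : [0,L] \to U$ is a unit-speed geodesic and $g(t) = u(\gamma(t))$, then $g'(t) = h(\grad u, \gamma')$ and
\[
g''(t) = h(\nabla_{\gamma'}\grad u, \gamma') = h(\Hess u\,(\gamma'), \gamma') = u(\gamma(t))\, h(\gamma',\gamma') = g(t).
\]
So along any geodesic the function solves $g'' = g$, and it is determined by $g(0)$ and $g'(0)$.

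Now for existence: fix $x_0 \in U$ and define
\[
t := \grad u(x_0) - u(x_0)\, x_0 \in \R^{2,1},
\]
viewing $\grad u(x_0) \in T_{x_0}\Hh^2 \subset \R^{2,1}$. Set $w(x) = \langle t, x\rangle$. By the forward direction $H(w) = 0$, and using $\langle x_0, x_0\rangle = -1$ one checks $w(x_0) = u(x_0)$ and $\grad w(x_0) = t + w(x_0)\, x_0 = \grad u(x_0)$. Hence $f = u - w$ satisfies $H(f) = 0$ with vanishing $0$- and $1$-jet at $x_0$. Since $U$ is convex, every $x \in U$ is joined to $x_0$ by a geodesic in $U$; the ODE analysis above forces $f \equiv 0$ on that geodesic, hence on all of $U$. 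Uniqueness of $t$ follows from the fact that $t \mapsto (w(x_0), \grad w(x_0)) = (\langle t, x_0\rangle,\, t + \langle t, x_0\rangle x_0)$ is an injective linear map on $\R^{2,1}$.

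The only mild technical point is making sure that the ambient-versus-intrinsic computation of $\Hess v$ in the forward direction is done cleanly; once the normal/tangential split $D_X \grad v = \langle t_0, X\rangle x + v(x)X$ is written out, everything else is formal, and the ODE reduction along geodesics provides both uniqueness and the identification $u = \langle t, \cdot\rangle$ in one stroke.
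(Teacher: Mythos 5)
Your proof is correct. The forward direction is essentially the computation in the paper: both you and the authors identify $\grad v(x)$ with the tangential projection $t_0+v(x)x$ of $t_0$ and read off $\Hess v=v\,\id$ from the tangential part of the ambient derivative. For the converse, however, you take a genuinely different route. The paper argues by a dimension count: the map $V:\R^{2,1}\to\cun(U)$, $V(t)=\langle t,\cdot\rangle$, is injective because the points of $U$ span $\R^{2,1}$, and $\ker H$ is shown to be at most three-dimensional by proving that any solution of $\Hess u-u\,\id=0$ vanishing at three non-collinear points vanishes identically (via the boundary-value problem $y''=y$, $y(s_1)=y(s_2)=0$ along geodesics joining zeros, plus convexity to see that such geodesics exhaust $U$). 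You instead construct $t=\grad u(x_0)-u(x_0)x_0$ explicitly from the $1$-jet of $u$ at a basepoint, subtract $\langle t,\cdot\rangle$, and kill the difference by initial-value uniqueness for $y''=y$ with vanishing $0$- and $1$-jet along geodesics emanating from $x_0$; convexity enters only to guarantee those geodesics reach every point of $U$. Your version is more constructive and makes uniqueness of $t$ immediate (it is read off from the $1$-jet at $x_0$), whereas the paper's version isolates the linear-algebraic statement $\ker H=V(\R^{2,1})$, which is the form needed for exactness of the sheaf sequence. One small terminological slip: $\Hyp^2$ is not totally geodesic in $\R^{2,1}$; it is totally umbilic with shape operator the identity, which is precisely what makes the normal part of $D_X\grad v$ a multiple of $x$ and the tangential part equal to $\Hess v(X)$ — your computation uses this correctly even though the phrase is off.
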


\begin{proof}
We start by showing that for any fixed $t_0\in\R^{2,1}$ the function $v(x)=\langle t_0,x\rangle$ satisfies $H(v)=0$. 
Note that, for $w\in T_x\Hyp^2$, 
\begin{equation} \label{relazione gradiente 1}
d v_x(w)=\langle t_0,w\rangle=\langle t_0^{T_x},w\rangle
\end{equation} 
where $t_0^{T_x}$ is the projection of $t_0$ to $T_x\Hyp^2$. 
Thus $
\grad v(x)=t_0^{T_x}$.
Note that $T_x\Hyp^2$ coincides with the orthogonal plane to $x$. So
$t_0=t_0^{T_x}-\langle t_0,x\rangle x=\grad v(x)-v(x)x$ 
and, upon covariant differentiation in $\R^{2,1}$ (we will denote by $\ol\nabla$ the covariant derivative in the ambient $\R^{2,1}$ and by $\nabla$ that of $\Hyp^2$ and use the fact that the second fundamental form of $\Hyp^2$ coincides with the metric),
$$0=\nabla_w\grad v(x)+\langle w,\grad v(x)\rangle x-d v_x(w)x- v(x)\,w=(\Hess v- v\id)(w)\,.$$

Since elements of $U$, regarded as vectors of $\R^{2,1}$, generate the whole Minkowski space, the map
\[
    V:\R^{2,1}\rightarrow \cun(U)
\]
is injective and the image is a subspace of dimension $3$ contained in the kernel of $H$.
In order to conclude it is sufficient to prove that the dimension of $\ker H$ is $3$.

To this aim  it will suffice to show that any $u$ satisfying $\Hess u-u\id =0$ such that $u(x_i)=0$ on three non-collinear points $x_1,x_2,x_3$ vanishes everywhere. 

Let $\gamma:\R\rar\Hyp^2$ be a unit-speed geodesic in $U$ connecting two points $\gamma(s_1),\gamma(s_2)$ where $u(\gamma(s_1))=u(\gamma(s_2))=0$. We claim that $u\circ\gamma\equiv 0$. Using that $\Hess u-u\id=0$, one gets that $y=u\circ \gamma$ satisfies the linear differential equation $y''=y$. Since $y(s_1)=y(s_2)=0$, for a standard maximum argument, $y\equiv 0$ on the interval $[s_1,s_2]$ and, by uniqueness, $y\equiv 0$ on $\R$.

Then $u\equiv 0$ on any geodesic connecting two points where $u$ takes the value $0$. By hypothesis, $u$ takes the value $0$ on three non-collinear points of $U$. By convexity of $U$, it is easy to see that the geodesics on which $u\equiv 0$ exaust the whole $U$, and this concludes the proof.
\end{proof}
 

Let us stress that in general  the sequence (\ref{eq:exsq}) is not globally exact.
The following example shows a family of Codazzi tensors which cannot be expressed as $\Hess(u)-u\id$.

\begin{example} \label{Codazzi traceless real part hqd}
Suppose $S$ is a closed surface. 
As observed by Hopf, see also \cite{Schlenker-Krasnov}, a self-adjoint Codazzi  operator $b:TS\rar TS$ is traceless 
 if and only if the symmetric form $g(v,w)=h(b(v),w)$ on $S$ is the real part of a holomorphic quadratic differential $q$ on $S$. 
 This gives an isomorphism of real vector spaces between the space of holomorphic quadratic differentials on $S$ and the space of traceless Codazzi tensors. We denote the image of $q$ under this isomorphism by $b_q$. So traceless Codazzi tensors form a vector space of finite dimension $6g-6$ where $g$ is the genus of $S$.

On the other hand, if $\Hess u-u\id$ is traceless, then $u$ satisfies the equation $\Delta u-2u=0$. A simple application of the maximum principle shows that the only solution of that equation is $u\equiv 0$. It follows that non-trivial traceless Codazzi tensors on $S$ cannot 
be expressed as $\Hess u-u\id$.
\end{example}

The next Proposition shows that however the examples above are in a sense the most general possible.
Although the proof is contained in \cite{oliker}, we give a short argument.

\begin{prop}\label{pr:cod-dec}
Let $S$ be a closed surface. Given 
  $b\in\cC(S,h)$ self-adjoint tensor satisfying Codazzi equation with respect to the hyperbolic metric $h$, a holomorphic quadratic differential $q$ and a smooth function $u\in\cun(S)$ are uniquely determined so that $b=b_q+\Hess u-u\id$.
\end{prop}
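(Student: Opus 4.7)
The plan is to decompose $b$ according to its trace, reducing the existence of $u$ to a linear elliptic equation on $S$ and then identifying the traceless remainder via Hopf's observation from Example~\ref{Codazzi traceless real part hqd}.

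First, taking the trace of the desired identity $b=b_q+\Hess u - u\id$ and using that $\tr(b_q)=0$, one obtains the scalar equation $\Delta u - 2u = \tr(b)$, since $\tr(\Hess u)=\Delta u$ and $\tr(\id)=2$. So the problem splits into solving this equation for $u$ and then checking that $b - (\Hess u - u\id)$ has the form $b_q$ for a holomorphic quadratic differential~$q$.

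The key analytic step, and the main obstacle, is solvability and uniqueness of $u$. The operator $\Delta - 2\,\id$ acting on $\cun(S)$ is linear elliptic and formally self-adjoint, so the Fredholm alternative applies. Its kernel is trivial: if $\Delta v - 2v = 0$, then at an interior maximum of $v$ we have $\Delta v \leq 0$, hence $v\leq 0$ there; similarly $v\geq 0$ at a minimum, so $v\equiv 0$. This is precisely the maximum principle argument already used in Example~\ref{Codazzi traceless real part hqd}. Consequently $\Delta - 2\,\id$ is an isomorphism of $\cun(S)$, and there is a unique $u \in \cun(S)$ with $\Delta u - 2u = \tr(b)$.

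Set $b' := b - (\Hess u - u\id)$. Then $b'$ is a self-adjoint Codazzi tensor, since both $b$ and $\Hess u - u\id$ are (by hypothesis and by the preceding lemma respectively), and it is traceless by the choice of $u$. By Hopf's characterization recalled in Example~\ref{Codazzi traceless real part hqd}, there is a unique holomorphic quadratic differential $q$ with $b' = b_q$, which establishes existence. Uniqueness of the pair $(q,u)$ follows by the same two ingredients: if $b_q + \Hess u - u\id = b_{q'} + \Hess u' - u'\id$, taking traces gives $\Delta(u-u')-2(u-u')=0$, so $u=u'$ by the kernel argument above, and then $b_q=b_{q'}$, so $q=q'$ by the Hopf isomorphism.
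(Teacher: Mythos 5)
Your proof is correct and follows essentially the same route as the paper: solve $\Delta u-2u=\tr(b)$ using invertibility of $\Delta-2\,\mathrm{id}$ (kernel killed by the maximum principle, as in Example \ref{Codazzi traceless real part hqd}), and identify the traceless Codazzi remainder with $b_q$ via Hopf's characterization. The only difference is that you spell out the Fredholm/maximum-principle argument that the paper only cites, which is a welcome addition but not a different proof.
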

\begin{proof}
 The subspaces $\left\{b\in\cC(S,h): b \textnormal{ is traceless}\right\}$ and $\left\{b\in\cC(S,h): b=\Hess u-u\id \right\}$ have trivial intersection by  Example \ref{Codazzi traceless real part hqd}.
Now, let $b\in\cC(S, h)$ and $f=\tr (b)\in\cun(S)$. Again, since $\Delta-2\mathrm{id}$ is invertible, there exists some function $u$ such that $f=\Delta u-2u=\tr(\Hess u-u\id)$. Therefore $b-(\Hess u-u\id)$ is traceless. This concludes the proof of the direct sum decomposition.
\end{proof}

From the exact sequence (\ref{eq:exsq}) we have a long exact sequence in cohomology
\begin{equation} \label{long exact sequence}
   0\rightarrow H^0(S,\cF)\rightarrow H^0(S, \cun)\rightarrow H^0(S, \cC)\rightarrow H^1(S, \cF)\rightarrow H^1(S, \cun)~.
\end{equation}
Since the holonomy representation $\hol$ is irreducible, then $H^0(S,\cF)$ is trivial. Moreover, $H^1(S, \cun)$
vanishes since $\cun $ is a fine sheaf.
So we have a short exact sequence
\begin{equation} \label{coboundary operator}
\begin{CD}      0@>>> \cun(S)@>H>> \cC(S,h)@>\delta>> H^1(S, \cF)@>>>0~.
\end{CD}
\end{equation}

Since $\tilde S$ is contractible,
the cohomology group $H^1(S, \cF)$ is naturally identified with the group $H^1_{\tiny\hol}(\pi_1(S), \R^{2,1})$.
The identification goes as follows. Take a good cover $\cU$ of $S$ and let $\tilde \cU$ be its lifting on $\tilde S$.
By Leray Theorem  $H^1(S,\cF)=H^1(\check{C}^\bullet(\cU, \cF))$.

The pull-back $\pi^*\cF$ of the sheaf $\cF$ on the universal cover  is isomorphic to the sheaf $\underline{\R}^{2,1}$ of $\R^{2,1}$-valued locally constant function.
Moreover there is a natural left action of $\pi_1(S)$ on $\check C^k(\tilde \cU,  \underline{\R}^{2,1})$ given by
\[
      (\alpha\star s)(i_0,\ldots, i_k)=\hol(\alpha) s (\alpha^{-1}i_0,\ldots, \alpha^{-1}i_k)\,,
\]
where we are using the fact that $\pi_1(S)$ permutes the open subsets in $\tilde \cU$.

Now, the complex $\check{C}^\bullet(\cU,\cF)$ is identified by pull-back with the sub-complex of $\check{C}^\bullet(\tilde\cU, \R^{2,1})$,
say $\check{C}^\bullet(\tilde\cU, \R^{2,1})^{\pi_1(S)}$, made of elements invariant by the action of $\pi_1(S)$.

Since $H^1(\tilde S, \R^{2,1})=0$, 
given a $\pi_1(S)$-invariant cocycle $s\in \check{Z}^\bullet(\tilde\cU, \R^{2,1})^{\pi_1(S)}$, there is
a $0$-cochain $r\in \check{C}^0(\tilde \cU, \R^{2,1})$ such that $\check{d}(r)=s$, that is
$s(i_0, i_1)=r(i_1)-r(i_0)$ for any pair of open sets in $\tilde \cU$ which have nonempty intersection. 

Although in general $r$ is not $\pi_1(S)$-invariant, 
for any $\alpha\in\pi_1(S)$ we have that $\check d (\alpha\star r-r)=0$, so there is an element $t_\alpha\in\R^{2,1}$ such that
$\hol(\alpha)r(\alpha^{-1}i_0)-r(i_0)=t_\alpha$ for every $i_0$.

It turns out that the collection $(t_\alpha)$ verifies the cocycle condition, so it determines an element of
$H^1_{\tiny\hol}(\pi_1(S), \R^{2,1})$. This construction provides the required isomorphism
\[
    H^1(\check{C}^\bullet(\tilde\cU, \R^{2,1})^{\pi_1(S)})\to H^1_{\tiny \hol}(\pi_1(S), \R^{2,1})~.
\]


%
Using this natural identification we will explicitly describe the connecting homomorphism 
$\delta:\cC(S,h)\rightarrow H^1_{\tiny\hol}(\pi_1(S),\R^{2,1})$. 
Let $b\in\cC(S,h)$ and let $\tilde b:T\Hyp^2\rar T\Hyp^2$ be the lifting of $b$ to the universal cover. 
By Proposition \ref{existence of function}, there exists $\hat u\in\cun(\tilde S)$ such that $\tilde b=\Hess\hat u-\hat u\id$. 
By the equivariance of $b$, 
for every $\alpha\in\pi_1(S)$, $\hat u\circ\alpha^{-1}$ is again such that $\tilde b=\Hess(\hat u\circ\alpha^{-1})-(\hat u\circ\alpha^{-1})\id$. 
By the exactness of (\ref{eq:exsq}) there is a vector $t_\alpha\in \R^{2,1}$ such that 
\[
(\hat u-\hat u\circ\alpha^{-1})(x)=\langle t_\alpha, \dev(x)\rangle
\]
where $\dev:\tilde S\to\Hyp^2$ is a developing map for the hyperbolic structure on $\tilde S$. The map $\alpha\to t_\alpha$ 
gives a cocyle. Since the definition depends on the choice of $\hat u$, it is easy to check that $t_\bullet:\pi_1(S)\rar\R^{2,1}$ is well-defined up to a coboundary. The cohomology class of $t_\bullet$ in $H^1_{\tiny\hol}(\pi_1(S), \R^{2,1})$ coincides
with  $\delta(b)$.

\subsection{Geometric interpretation} \label{geo_interpretation}
\begin{defi}
Let $S$ be a $\mathrm{C}^2$ Cauchy surface in a flat space-time 
$M$ and denote by $s$ its shape operator computed with respect to the future-pointing
normal vector.
We say that $S$ is strictly future convex if $s$ is positive.
\end{defi}


In this section we fix a topological surface $S$ of genus $g\geq 2$. We consider pairs $(M, \sigma)$ where
$M$ is a maximal globally hyperbolic flat space-time (we will use the acronym MGHF hereafter) and $\sigma:S\rightarrow M$ is an embedding onto
a strictly future-convex  Cauchy surface. Recall $M$ is homeomorphic to $S\times\R$; we will always implicitly consider embeddings $\sigma:S\rar M$ which are isotopic to the standard embedding $S\hookrightarrow S\times\{0\}$.

By a classical result of \cite{choquet}, those pairs are parameterized by the embedding data of $\sigma$ which are the Riemannian metric $I$\ 
induced by $\sigma$ on $S$ and the shape operator $s$ of the immersion (computed with respect to the future-pointing normal vector).
Pairs $(I,s)$ are precisely the solutions of the so called Gauss-Codazzi equations:

\begin{equation} \label{GC} \tag{GC}
\begin{cases}
\det s=-K_I \\
\dn_I s=0
\end{cases}.
\end{equation}

We consider the set of embedding data of strictly convex Cauchy surfaces in a MGHF space-time, namely:
\begin{equation*}
\D= \left\{(I,s):\begin{aligned}
        &I\text{  Riemannian metric on }S\\ & s:TS\rar TS\text{ positive, self-adjoint for }I \\
        & (I,s) \text{ solves equations \eqref{GC}}
         \end{aligned}
 \right\}\,.
\end{equation*}
Observe that the Riemannian metric $I$ in a pair $(I,s)\in\D$ is necessarily of negative curvature. First of all we want to show that the space $\D$ can be naturally identified with the space 

\begin{equation*}
\E= \left\{(h,b):\begin{aligned}
        &h\text{ hyperbolic metric on }S\\
&b:TS\rar TS\text{ self-adjoint for }h,\dn_h b=0,b> 0
         \end{aligned}
 \right\}\,.
\end{equation*}

\begin{prop} \label{identificationDE}
Let $(I,s)$ be an element of $\D$. Then the metric $h(v, w)=I(sv, sw)$ is hyperbolic and the operator $b=s^{-1}$ satisfies
the Codazzi equation for $h$.
Conversely if $(h,b)\in \E$ then $I=h(bv, bw)$ and $s=b^{-1}$ are solutions of \eqref{GC}.
\end{prop}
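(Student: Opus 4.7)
The plan is to treat the assignment $(I,s)\mapsto (h,b)$ as the main task and then dualize the argument, exploiting the fact that the correspondence is evidently self-inverse: if $h=I(s\bullet,s\bullet)$ and $b=s^{-1}$, then $h(b\bullet,b\bullet)=I(sb\bullet,sb\bullet)=I$ and $b^{-1}=s$. So bijectivity will follow once both directions produce valid pairs in the other set. The elementary properties are immediate: $h$ is a Riemannian metric because $s$ is invertible, $b$ is positive because $s$ is, and $h(bv,w)=I(sbv,sw)=I(v,sw)=I(sv,w)=h(v,bw)$ shows $b$ is $h$-self-adjoint.

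The geometric heart of the argument is the identification of the Levi-Civita connection of $h$. First I would introduce the candidate $\tilde\nabla_XY:=s^{-1}\nabla^I_X(sY)$, where $\nabla^I$ is the Levi-Civita connection of $I$, and verify directly that it is metric for $h$ (applying the Leibniz rule to $X\, I(sY,sZ)$ and using the defining formula) and torsion-free. The torsion-free check reduces to
\[
\tilde\nabla_XY-\tilde\nabla_YX-[X,Y]=s^{-1}\bigl((\nabla^I_Xs)Y-(\nabla^I_Ys)X\bigr)=s^{-1}(d^{\nabla^I}s)(X,Y),
\]
which vanishes precisely because $d^{\nabla^I}s=0$. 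Thus $\nabla^h_XY=s^{-1}\nabla^I_X(sY)$.

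From this formula both remaining statements drop out by direct computation. For the curvature, iterating the identity gives $R^h(X,Y)Z=s^{-1}R^I(X,Y)(sZ)$; evaluating both sides in an $h$-orthonormal frame $e_i$ with $f_i=se_i$ (hence $I$-orthonormal) and using $R^I(f_1,f_2)f_2=K_If_1$ together with $\det(s^{-1})=1/\det s$ yields $K_h=K_I/\det s$. The Gauss equation $K_I=-\det s$ then produces $K_h=-1$, so $h$ is hyperbolic. For Codazzi, a one-line substitution shows
\[
d^{\nabla^h}b(X,Y)=s^{-1}\nabla^I_X(s\cdot s^{-1}Y)-s^{-1}\nabla^I_Y(s\cdot s^{-1}X)-s^{-1}[X,Y]=s^{-1}\bigl(\nabla^I_XY-\nabla^I_YX-[X,Y]\bigr)=0,
\]
using torsion-freeness of $\nabla^I$.

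The reverse direction is entirely symmetric: starting from $(h,b)\in\E$, the same connection computation with $(I,s)$ and $(h,b)$ swapped gives $\nabla^I_XY=b^{-1}\nabla^h_X(bY)$, hence $K_I=K_h/\det b=-\det s$, which is the Gauss equation, and $d^{\nabla^I}s=0$ by the identical formal cancellation. The only point that is not purely formal is the curvature identity $K_h=K_I/\det s$; this is the main step and requires the frame computation with $\det(s^{-1})$, but it is a short calculation once the connection formula is in hand. Together with the self-inverseness observation from the first paragraph, this establishes the claimed bijection between $\D$ and $\E$.
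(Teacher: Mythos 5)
Your proof is correct and follows essentially the same route as the paper: the paper simply cites the formula $\nabla^h=s^{-1}\nabla^I s$ (from Labourie / Krasnov--Schlenker) and the relation $K_h=K_I/\det s$, whereas you verify these two facts directly; the Codazzi cancellations and the symmetry of the two directions are then identical to the paper's argument.
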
 
\begin{proof}
Using the formula $\nabla^{h}=s^{-1}\nabla^I s$ which relates the Levi-Civita connection of $h$ and $I$ (see \cite{labourieCP} or \cite{Schlenker-Krasnov}), it is easy to check that $b$ is an $h$-Codazzi tensor, and the same implication with the roles of $h$ and $I$ switched. We also have $K_h={K_I}/{\det s}$ when $h=I(s,s)$, where $K_h$ and $K_I$ are the curvatures of $h$ and $I$. Therefore $K_h=-1$. Viceversa, starting from $(h,b)$, one obtains the Gauss equation $K_I=-1/\det b=-\det s$.  
\end{proof}

\begin{remark}
The group $\Diffeo(S)$ naturally acts both on $\E$ and $\D$.
It is important to remark here that the identification given by Proposition \ref{identificationDE} commutes with those actions. 
\end{remark}

The following theorem shows the relation between the embedding data $(I, s)$  of a convex  Cauchy embedding $S$ into $M$
and the holonomy of $M$.

\begin{thmx}\label{theorem holonomy}
Let $M$ be a globally hyperbolic space-time and $S$ be a uniformly convex Cauchy surface with embedding data $(I, s)$.
Let $h$ be the third fundamental form of $S$ and $b=s^{-1}$.
Then
\begin{itemize}
\item the linear holonomy of $M$ coincides with the holonomy of $h$;
\item the translation part of the holonomy of $M$ coincides with $\delta b$.
\end{itemize}
\end{thmx}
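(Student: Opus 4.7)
My plan is to prove both statements by working out the extrinsic geometry of $\tilde S$ inside $\R^{2,1}$. I would first fix an equivariant lift $\tilde\sigma:\tilde S\to\R^{2,1}$ of the embedding and let $\tilde N:\tilde S\to\Hyp^2$ be the future-pointing unit normal. Writing $\hol_M(\alpha)=(\rho(\alpha),\tau(\alpha))\in\SO_0(2,1)\ltimes\R^{2,1}$, equivariance of $\tilde\sigma$ gives $\tilde\sigma(\alpha\cdot x)=\rho(\alpha)\tilde\sigma(x)+\tau(\alpha)$, while the fact that $\rho(\alpha)$ is an isometry preserving time orientation forces $\tilde N(\alpha\cdot x)=\rho(\alpha)\tilde N(x)$. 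The first bullet is then immediate: the identity $d\tilde N=s$ relating the Gauss map to the shape operator, read as a map $T_x\tilde S\to T_{\tilde N(x)}\Hyp^2$ after identifying both tangent spaces with the orthogonal complement of $\tilde N(x)$ in $\R^{2,1}$, gives
$$\tilde N^{\,*}g_{\Hyp^2}(v,w)=\langle sv,sw\rangle=I(sv,sw)=h(v,w).$$
Thus $\tilde N:(\tilde S,h)\to\Hyp^2$ is a local isometry, hence a developing map for $h$, and the equivariance $\tilde N\circ\alpha=\rho(\alpha)\tilde N$ yields $\rho=\hol_h$.

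For the second bullet, I would introduce the support function $\hat u:\tilde S\to\R$ defined by $\hat u(x)=\langle\tilde\sigma(x),\tilde N(x)\rangle$, and first verify the key identity
$$\Hess\hat u-\hat u\,\id=\tilde b,$$
so that $\hat u$ is a global primitive of $\tilde b$ in the sense entering the construction of the connecting homomorphism $\delta$. To establish this I would pass to the Gauss parametrization $f=\tilde\sigma\circ\tilde N^{-1}:\Hyp^2\to\R^{2,1}$, which satisfies $df(w)=b(w)$ because $d(\tilde N^{-1})=s^{-1}=b$. Splitting $f=f^T-\hat u\,y$ along the orthogonal decomposition $\R^{2,1}=T_y\Hyp^2\oplus\R y$, so that $\hat u(y)=\langle f(y),y\rangle$, and differentiating using $\overline\nabla_v y=v$ together with the formula $\overline\nabla_v w=\nabla_v w+\langle v,w\rangle y$ for the ambient derivative of tangent fields on $\Hyp^2$, one matches the tangential and normal components of $df$ with $b(w)$; the normal equation forces $\grad\hat u=f^T$ and the tangential equation then reads $b=\Hess\hat u-\hat u\,\id$.

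Finally, I would extract the cocycle by combining the equivariance of $\tilde\sigma$ and $\tilde N$ with the isometry property of $\rho(\alpha)$:
$$\hat u(\alpha^{-1}x)=\bigl\langle\rho(\alpha)^{-1}(\tilde\sigma(x)-\tau(\alpha)),\,\rho(\alpha)^{-1}\tilde N(x)\bigr\rangle=\hat u(x)-\langle\tau(\alpha),\tilde N(x)\rangle,$$
so that $\hat u(x)-\hat u(\alpha^{-1}x)=\langle\tau(\alpha),\dev(x)\rangle$ with $\dev=\tilde N$. By the prescription given at the end of Section \ref{sezione cociclo R21} for the connecting homomorphism, the cocycle representing $\delta b$ is $\alpha\mapsto\tau(\alpha)$, i.e.\ exactly the translation part of $\hol_M$. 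I expect the main difficulty to lie in the Hessian identity of the previous paragraph: establishing $\Hess\hat u-\hat u\,\id=\tilde b$ requires carefully handling the orthogonal splitting of $df$ and the sign conventions for the Weingarten operator and the second fundamental form of $\Hyp^2\subset\R^{2,1}$. Once this identity is in place, the remaining steps follow immediately from equivariance and the fact that $\rho$ is a Minkowski isometry.
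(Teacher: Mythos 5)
Your proposal is correct and follows essentially the same route as the paper: the Gauss map $\tilde N=G$ as developing map for $h$ (Proposition \ref{linear part holonomy}), the support function $\hat u=\langle\tilde\sigma,G\rangle$ satisfying $\Hess\hat u-\hat u\,\id=\tilde b$ (Proposition \ref{caratterizzazione embedding inversi gauss}, which the paper also proves by identifying $\tilde S$ locally with a subset of $\Hyp^2$ via $G$ and splitting $\tilde\sigma=\grad\hat u-\hat u\,x$), and the equivariance computation yielding $\hat u-\hat u\circ\alpha^{-1}=\langle\tau(\alpha),G(\cdot)\rangle$, which identifies $\delta b$ with the translation part via the description of the connecting homomorphism from Section \ref{sezione cociclo R21}. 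No gaps; the only cosmetic difference is that you organize the Hessian identity through the Gauss parametrization $f=\tilde\sigma\circ\tilde N^{-1}$ rather than computing $d\hat u$ first, which is the same local computation.
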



The rest of this section is devoted to the proof of this theorem.
We first give a more geometric meaning to the correspondence between $\D$ and $\E$.
The key ingredient is the Gauss map.
Given a Cauchy immersion $\sigma:S\to M$ we can consider the equivariant immersion of the universal cover of $S$,
$\tilde\sigma:\tilde S\to\R^{2,1}$ obtained by composing the inclusion of $\tilde S$ into $\tilde M$ with the developing map
of $M$. Notice that the holonomy of $\tilde\sigma$ coincides with the holonomy of $M$.

The Gauss map of the immersion is then the map $G:\tilde S\to\Hyp^{2}$ sending a point $x$ to the future normal of the immersion
$\tilde\sigma$. 
Notice that $d\sigma(T_x\tilde S)=\langle G(x)\rangle^\perp= T_{G(x)}\Hyp^{2}$. 
Denote by $\tilde s$ and $\tilde I$  the lifting of $s$ and $I$ to the universal cover.
Using that $\tilde s$ is the covariant derivative
of the future  normal field by the flat $\R^{2,1}$-connection, it is immediate to see that 
\begin{equation}\label{eq:dg}
    dG_x(v)=d\sigma(\tilde s(v)).
\end{equation}
This identity shows that the pull-back of the hyperbolic metric through $G$ is the metric $h(v,w)=I(sv, sw)$ so $G$ is a local
isometry between $(\tilde S, \tilde h)$ and $\mathbb H^2$. This implies that  $G$ is the developing map of $h$.

\begin{prop}\label{linear part holonomy}
Let $(I,s)$ be the embedding data of a strictly convex spacelike Cauchy surface in some MGHF space-time $M$, and denote by
$(h,b)$ the pair in $\E$ corresponding to $(I,s)$.
If $\tilde\sigma:\tilde S\rightarrow\R^{2,1}$ is the space-like immersion corresponding to the data $(I,s)$, then the corresponding
Gauss map  $G:\tilde S\rightarrow\Hyp^2$ is a developing map for $h$.
\end{prop}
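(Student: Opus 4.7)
The plan is essentially to unpack equation \eqref{eq:dg} and read off the pullback metric, since the assertion is precisely that $G^*(g_{\Hyp^2})=\tilde h$ where $\tilde h$ is the lift of the third fundamental form to $\tilde S$.

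First I would recall why \eqref{eq:dg} holds: the future unit normal $G:\tilde S\to\Hyp^2\subset\R^{2,1}$ can be differentiated using the ambient flat connection on $\R^{2,1}$, and since $\tilde\sigma$ is spacelike with timelike normal $G$, the shape operator $\tilde s$ is defined precisely by $\overline\nabla_v G=d\tilde\sigma(\tilde s(v))$, where $\overline\nabla$ is the ambient connection. The left-hand side is by definition $dG_x(v)$, regarded as a vector in $T_{G(x)}\Hyp^2=\langle G(x)\rangle^\perp$, so $dG_x(v)=d\tilde\sigma(\tilde s(v))$ as tangent vectors to $\Hyp^2$ at $G(x)$.

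Next I would compute the pullback of the hyperbolic metric via $G$. For $v,w\in T_x\tilde S$, using \eqref{eq:dg} and the fact that $\tilde s$ is $\tilde I$-self-adjoint,
\[
G^*(g_{\Hyp^2})(v,w)=\langle dG_x(v),dG_x(w)\rangle=\tilde I(\tilde s v,\tilde s w)=\tilde h(v,w),
\]
which is exactly the definition of $h$ given in Proposition \ref{identificationDE}. Hence $G$ is a local isometry from $(\tilde S,\tilde h)$ to $\Hyp^2$.

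Finally I would conclude by the standard uniqueness-of-developing-map principle: a local isometry from a simply connected Riemannian manifold to $\Hyp^2$ is, up to post-composition with an element of $\isom(\Hyp^2)$, the developing map of the hyperbolic structure it induces. Since $\tilde S$ is simply connected and $\tilde h$ is hyperbolic (being isometric to $\Hyp^2$ locally via $G$), $G$ is a developing map for $(S,h)$. The only mild subtlety, which I would address briefly, is verifying that the hyperbolic structure on $S$ obtained in this way coincides with the one coming from the original definition $h=I(s\bullet,s\bullet)$ on the quotient; but this is automatic because the $\pi_1(S)$-equivariance of $\tilde\sigma$ and $\tilde s$ forces $G$ to be equivariant with respect to some representation $\pi_1(S)\to\isom(\Hyp^2)$, so $G$ descends to identify the two hyperbolic structures on $S$.
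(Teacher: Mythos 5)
Your proposal is correct and follows essentially the same route as the paper: the paper's argument (given in the text preceding the proposition) is exactly the identity $dG_x(v)=d\tilde\sigma(\tilde s(v))$, the resulting pullback computation $G^*(g_{\Hyp^2})=\tilde I(\tilde s\,\bullet,\tilde s\,\bullet)=\tilde h$ showing $G$ is a local isometry, and the conclusion that a local isometry from the simply connected $(\tilde S,\tilde h)$ to $\Hyp^2$ is a developing map. Your additional remark on $\pi_1(S)$-equivariance is a harmless elaboration of what the paper leaves implicit.
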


We now want to compute the translation part of the holonomy of $M$, once the embedding data $(I,s)$ are known. 
In particular we want to show that the translation part of the holonomy equals $\delta b$ where $(h,b)$ is the pair corresponding to
$(I, s)$.

To this aim we need to construct a function $\hat u:\tilde S\rightarrow\R$ such that $\tilde b=\hess_{\tilde h} \hat u -\hat u \id$.

\begin{prop} \label{caratterizzazione embedding inversi gauss}
Let $(I, s)\in \D$ and  $(h,b)\in \E$ be the corresponding pair. 
Let $\tilde\sigma:\tilde S\rar\R^{2,1}$ be an embedding whose first fundamental form 
is $\tilde I$ and whose shape operator is $\tilde s$, and denote by $G:\tilde S\rightarrow\Hyp^2$ its Gauss map.
Let us define $\hat u:\tilde S\to\R$ as  $\hat u(x)=\langle \tilde\sigma(x),G(x)\rangle$.
Then $\tilde s^{-1}=\tilde b=\Hess_{\tilde h}\hat u-\hat u\id$.
\end{prop}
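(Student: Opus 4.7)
The plan is to reduce the statement to an explicit computation in the ambient Minkowski space $\R^{2,1}$, exploiting the fact that by Proposition \ref{linear part holonomy} the Gauss map $G:\tilde S\to\Hyp^2$ is a developing map for $\tilde h$, and hence is a local isometry between $(\tilde S,\tilde h)$ and an open subset of $\Hyp^2$. Concretely, set $F:=\tilde\sigma\circ G^{-1}$, a locally defined map $\Hyp^2\to\R^{2,1}$, and $\bar u:=\hat u\circ G^{-1}$, so that $\bar u(y)=\langle F(y),y\rangle$. Both $\Hess_{\tilde h}\hat u$ and $\tilde b$ pull back through $G$ to their obvious analogues on $\Hyp^2$, and it will suffice to prove the identity $dF_y(w)=(\Hess_{\bar h} \bar u-\bar u\,\id)(w)$ for every $w\in T_y\Hyp^2$, where $\bar h$ is the standard hyperbolic metric on $\Hyp^2$.

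The first step is to compute $d\bar u$. Since $dF_y(w)$ is tangent to the image surface, which is orthogonal to the normal $y$, one immediately finds
$$d\bar u_y(w)=\langle dF_y(w),y\rangle+\langle F(y),w\rangle=\langle F(y),w\rangle,$$
so that the hyperbolic gradient is the tangential projection $\grad \bar u(y)=F(y)^{T_y\Hyp^2}$. Decomposing $F(y)$ along $\R^{2,1}=T_y\Hyp^2\oplus \langle y\rangle$ and using $\langle y,y\rangle=-1$, one obtains the ambient identity $F(y)=\grad \bar u(y)-\bar u(y)\,y$.

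The second step is to differentiate this relation in $\R^{2,1}$, denoting the flat ambient connection by $\ol\nabla$. Using $\ol\nabla_w y=w$ and the fact (already exploited in the proof of Proposition \ref{existence of function}) that the second fundamental form of $\Hyp^2\subset\R^{2,1}$ coincides with the metric, so that
$$\ol\nabla_w \grad \bar u=\Hess_{\bar h}\bar u(w)+d\bar u_y(w)\,y,$$
substitution into the differentiated relation gives, after cancellation of the two $y$-components,
$$dF_y(w)=(\Hess_{\bar h}\bar u-\bar u\,\id)(w).$$

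It remains to identify $dF_y$ with the push-forward of $\tilde b$ through $G$: this is where formula \eqref{eq:dg} is used. Since $dG_x=d\tilde\sigma_x\circ\tilde s$ and $\tilde b=\tilde s^{-1}$, one directly computes
$$dG\circ\tilde b\circ dG^{-1}=d\tilde\sigma\circ\tilde s\circ\tilde s^{-1}\circ dG^{-1}=d\tilde\sigma\circ dG^{-1}=dF_y,$$
so under the three natural identifications of $T_x\tilde S$, $T_y\Hyp^2$, and $T_{F(y)}\tilde\sigma(\tilde S)$ with the common subspace $y^\perp\subset\R^{2,1}$, the operator $\tilde b$ corresponds exactly to $dF_y$. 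The main obstacle, really the only delicate point of the argument, is to keep these three identifications coherent; once they are handled carefully, pulling the Hyp$^2$ identity back by $G$ yields $\tilde b=\Hess_{\tilde h}\hat u-\hat u\,\id$ on $\tilde S$.
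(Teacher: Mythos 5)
Your proof is correct and follows essentially the same route as the paper: both use that the Gauss map is a local isometry for $\tilde h$ to transplant the computation to $\Hyp^2$, derive $\tilde\sigma=\grad\hat u-\hat u\,x$ from the formula for $d\hat u$, and then differentiate in the ambient $\R^{2,1}$ using that the second fundamental form of $\Hyp^2$ is the metric. The only difference is notational: you carry the conjugation by $dG$ explicitly, whereas the paper simply identifies $(\tilde S,\tilde h)$ with an open subset of $\Hyp^2$ via $G$, under which \eqref{eq:dg} becomes $d\tilde\sigma=\tilde b$.
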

\begin{proof}
Notice that the statement is local so we may suppose that $G$ is an isometry between $\tilde S$ and an open subset $U$ of
$\Hyp^2$. So we can  identify  $(\tilde S, \tilde h)$ to $U$ through the map $G$.
Under this identification we have $\hat u(x)=\langle \tilde\sigma(x), x\rangle$ and (\ref{eq:dg}) becomes
$d\tilde\sigma_x(v)=b_x(v)$.
By a computation we have
$$d\hat u_x(v)=\langle d\tilde\sigma_x(v),x\rangle+\langle\tilde\sigma(x),v\rangle=\langle\tilde\sigma(x)^{T_x},v\rangle\,,$$
where we are using that $d\tilde\sigma_x(v)=b_x(v)\in T_x\Hyp^2=x^\perp$. This shows that $\grad\hat u(x)=\tilde\sigma(x)^{T_x}$ and concludes that $\tilde\sigma(x)=\grad\hat u(x)-\hat u(x)x$. Furthermore,
\begin{align*}
\tilde b_x(v)=&d\tilde\sigma_x(v)=\nabla_v\grad\hat u+\langle\grad\hat u,v\rangle x-d\hat u_x(v)x-\hat u(x)v\\ =&( \Hess\hat u(x))(v)-\hat u(x)v\,.
\end{align*}
\end{proof}

The argument of the proof shows that in general the map $\tilde\sigma$ can be reconstructed using $G$ and $\hat u$ by the formula
\begin{equation}\label{eq:normal}
   \tilde\sigma(x)= dG(\grad\hat u(x))-\hat u(x) G(x)~.
\end{equation}

\begin{remark}
By a result  of Mess, if the metric $\tilde I$  on $\tilde S$ is complete (that is the case if it comes from a metric $I$ on the closed surface
$S$), then the immersion $\tilde\sigma$ is in fact an embedding on a space-like surface of $\R^{2,1}$.

It turns out that $S$ is future strictly convex iff the future $F$ of $\tilde\sigma(\tilde S)$ in $\R^{2,1}$ is convex.
The function $\hat u\circ G^{-1}:\Hyp^2\to \R$ coincides with the support function of $F$ (see \cite{Fillastre} for details on the support function
of convex subsets in Minkowski space).
 
\end{remark}

We can now conclude the proof of Theorem \ref{theorem holonomy}.

\begin{lemma}
Let $(I,s)\in \D$ and $(b,h)$ the corresponding pair in $\E$.
Denote by $\tilde\sigma:\tilde S\to \R^{2,1}$ the space-like immersion corresponding to
$(I,s)$ and by $G:\tilde S\to\Hyp^2$ the corresponding Gauss map.
Consider the function $\hat u:\tilde S\rightarrow\R$ defined by
$\hat u(x)=\langle  \tilde\sigma(x),G(x)\rangle.$
Then $$(\hat u-\hat u\circ \alpha^{-1})(x)=\langle t_\alpha, G(x)\rangle\,,$$ where $t_\alpha$ is the translation
part of the holonomy of the immersion $\tilde \sigma$.
\end{lemma}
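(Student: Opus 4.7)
The plan is to exploit the equivariance of both $\tilde\sigma$ and the Gauss map $G$ under the $\pi_1(S)$-action on $\tilde S$. By construction $\tilde\sigma$ is equivariant with respect to the holonomy representation $\rho:\pi_1(S)\to\isom^+(\R^{2,1})$ of the flat structure on $M$, so that $\tilde\sigma(\alpha\cdot x)=L_\alpha\tilde\sigma(x)+t_\alpha$, where $L_\alpha\in\SO_0(2,1)$ and $t_\alpha\in\R^{2,1}$ denote respectively the linear and translation parts of $\rho(\alpha)$. By Proposition \ref{linear part holonomy} the Gauss map $G$ is a developing map for the hyperbolic metric $h$. Moreover, since $G$ is obtained from $\tilde\sigma$ by assigning to each point its future unit normal in $\Hyp^2\subset\R^{2,1}$, and since $L_\alpha$ sends future-pointing unit timelike vectors to future-pointing unit timelike vectors, the relation $\tilde\sigma(\alpha\cdot x)=L_\alpha\tilde\sigma(x)+t_\alpha$ forces $G(\alpha\cdot x)=L_\alpha G(x)$. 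In particular the holonomy of $h$ coincides with $\alpha\mapsto L_\alpha$, which is nothing but the first bullet of Theorem \ref{theorem holonomy}.

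Combining both equivariance identities with the definition of $\hat u$, I compute
\begin{align*}
\hat u(\alpha\cdot x) &= \langle \tilde\sigma(\alpha\cdot x),\, G(\alpha\cdot x)\rangle \\
&= \langle L_\alpha\tilde\sigma(x)+t_\alpha,\, L_\alpha G(x)\rangle \\
&= \langle \tilde\sigma(x), G(x)\rangle + \langle t_\alpha, L_\alpha G(x)\rangle \\
&= \hat u(x) + \langle t_\alpha, G(\alpha\cdot x)\rangle,
\end{align*}
where the third equality uses that $L_\alpha$ preserves the Minkowski bilinear form. Substituting $\alpha^{-1}\cdot x$ in place of $x$ and rearranging yields
\[
(\hat u-\hat u\circ\alpha^{-1})(x)=\langle t_\alpha, G(x)\rangle,
\]
which is precisely the desired identity.

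There is no real obstacle in this argument: it reduces to a one-line computation, once the two equivariance properties of $\tilde\sigma$ and $G$ have been recorded. The only conceptual ingredient is the coincidence of the linear part of the Minkowski holonomy with the hyperbolic holonomy of $h$, which is already available from the normal-vector description of $G$ and from Proposition \ref{linear part holonomy}.
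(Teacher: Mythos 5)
Your proof is correct and follows essentially the same route as the paper: both use the equivariance of $\tilde\sigma$ under the Minkowski holonomy, the equivariance of $G$ under its linear part, and the invariance of the Minkowski product, differing only in that you phrase the computation with $\alpha$ and substitute $\alpha^{-1}x$ at the end, while the paper computes directly at $\alpha^{-1}x$. Your explicit justification that $G(\alpha\cdot x)=L_\alpha G(x)$ via the future unit normal is a harmless elaboration of what the paper takes from Proposition \ref{linear part holonomy}.
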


\begin{proof}
We have
\[
   \tilde\sigma(\alpha^{-1} x)=\hol(\alpha^{-1})\tilde\sigma(x)-\hol(\alpha)^{-1}t_\alpha\,,
\]
and
\[
  G(\alpha^{-1} x)=\hol(\alpha^{-1})(G(x))~.
\]
So
\[
\hat u(\alpha^{-1} x)=\langle G(\alpha^{-1} x), \tilde\sigma(\alpha^{-1}x)\rangle=
\hat u(x)-\langle t_\alpha,G(x)\rangle~.
\]
and this concludes the proof.
\end{proof}

\begin{remark}\label{rm:closedhol}
If $S$ is a closed surface of genus $g\geq 2$,
the holonomy distinguishes MGHF structures on $S\times\R$ containing a convex Cauchy surface.
So Theorem \ref{theorem holonomy} implies that 
that two elements of $\E$, say $(h,b)$, and $(h',b')$, correspond to isotopic Cauchy immersions into the same space-time if and only if
\begin{itemize}
\item there is an isometry $F:(S, h)\to(S, h')$ isotopic to the identity.
\item $\delta(b)=\delta(b')$ (this makes sense as the holonomies of $h$ and $h'$ coincide for the previous point).
\end{itemize}

Let us denote by $\cS_+(S)$ the set of  isotopy classes of MGHF structures on $S\times\mathbb R$ containing a future convex surface.
Then, $\cS_+(S)$ can be realized as the quotient of $\E$ up to the 
identify $(h,b)$ and $(h', b')$ if the previous conditions are satisfied.
\end{remark}

\subsection{A de Rham approach to Codazzi tensors}

We want to give a different description of the connection homomorphism
\[
  \delta:\cC(S,h)\to H^1(S, \cF)~.
\]

Let us denote by $D$ the flat connection over $F\to S$, and let $\Omega^k(S, F)$ be the space of $F$-valued $k$-forms
over $S$, namely the space of smooth sections of the bundle $\Lambda^kT^*S\otimes F$.
The exterior differential is the operator
\[
    d^D:\Omega^k(S, F)\to\Omega^{k+1}(S, F)
\]
defined on simple elements by $d^D(\omega\otimes t)=d(\omega)\otimes t+(-1)^{k}\omega\wedge Dt$.

As $F$ is flat, $d^D\circ d^D=0$ and the de Rham cohomology of the bundle $F$ is defined as the cohomology of
the complex $(\Omega^\bullet(S, F), d^D)$.
As
\[
0\to\cF\to\Omega^0(-, F)\to\Omega^1(-, F)\to\ldots
\]
 is a fine resolution of $\cF$, by de Rham theorem $H^1(\cF)$ is naturally identified with $H^1_{\mathrm{dR}}(S, F)$.
 
The first aim of this section is to give a characterization of Codazzi operators over $S$ in terms of de Rham complex.
Recall that we have a developing section $\bfs{\iota}:S\to F$ obtained as the projection of the developing map.
The covariant derivative of $\bfs{\iota}$ provides a natural monomorphism $\bfs{\iota}_*:TS\to F$, where $\bfs{\iota}_*(v)=D_v\bfs{\iota}$.
If $S=\Hyp^2$, then $\bfs{\iota}$ corresponds to the natural inclusion $\Hyp^2\to \R^{2,1}$, and $\bfs{\iota}_*$ corresponds 
to the inclusion of tangent spaces of $\Hyp^2$ in $\R^{2,1}$.

Now, given any operator $b:TS\to TS$ we can consider the composition  $\bfs{\iota}_* b$ as an $F$-valued  $1$-form on $S$.
The following simple computation gives a characterization of self-adjoint Codazzi tensors:

\begin{prop}
Let $b: TS\to TS$ be any operator. 
Then $b$ is self-adjoint and Codazzi if and only if $\bfs{\iota}_* b$ is closed. 
\end{prop}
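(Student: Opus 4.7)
The plan is to carry out a local computation exploiting the extrinsic geometry of $\Hyp^2 \subset \R^{2,1}$. Since the statement is local, I would fix a simply connected open set $U\subset S$, trivialize $F|_U$ by identifying $U$ with its image in $\Hyp^2$ under the developing map, so that $\bfs{\iota}$ becomes the inclusion $U\hookrightarrow\R^{2,1}$, the bundle $F|_U$ becomes the trivial $\R^{2,1}$-bundle, and the flat connection $D$ becomes the standard flat connection on $\R^{2,1}$. Under this identification $\bfs{\iota}_*v$ is just $v$ regarded as an element of $\R^{2,1}$, and in particular $\bfs{\iota}_* b$ is the $\R^{2,1}$-valued $1$-form $v\mapsto b(v)$.

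The key ingredient is the Gauss formula for the inclusion $\Hyp^2\hookrightarrow\R^{2,1}$. Since the position vector $\bfs{\iota}(x)=x$ is timelike unit normal to $\Hyp^2$ at $x$ and the second fundamental form of $\Hyp^2$ coincides with its first fundamental form $h$, differentiating the relation $\langle Y,\bfs{\iota}\rangle=0$ for a tangent field $Y$ gives
\[
D_X Y \;=\; \nabla_X Y + h(X,Y)\,\bfs{\iota},
\]
where $\nabla$ is the Levi-Civita connection of $h$. Applied to $Y=b(W)$ this is the formula I will plug into $d^D$.

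Now I would compute, for vector fields $X,Y$ on $U$,
\[
d^D(\bfs{\iota}_* b)(X,Y) \;=\; D_X\bigl(b(Y)\bigr) - D_Y\bigl(b(X)\bigr) - b([X,Y]).
\]
Using the Gauss formula twice and recognizing $\nabla_X(b(Y))-\nabla_Y(b(X))-b([X,Y])=d^\nabla b(X,Y)$, this yields the decomposition
\[
d^D(\bfs{\iota}_* b)(X,Y) \;=\; \bfs{\iota}_*\bigl(d^\nabla b(X,Y)\bigr) \;+\; \bigl(h(X,b(Y))-h(Y,b(X))\bigr)\,\bfs{\iota}.
\]
The first summand lies in the image of $\bfs{\iota}_*$, i.e.\ in the $h$-orthogonal complement of $\bfs{\iota}(x)$, while the second is a multiple of $\bfs{\iota}(x)$ itself; these subspaces meet only at $0$. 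Therefore $d^D(\bfs{\iota}_*b)=0$ on $U$ if and only if $d^\nabla b=0$ and the bilinear form $(X,Y)\mapsto h(X,b(Y))$ is symmetric, i.e.\ $b$ is Codazzi and self-adjoint. Since this is a local equivalence, it holds globally.

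The only delicate point is keeping the two components straight: one has to verify that the "tangential'' and "normal'' parts of $d^D(\bfs{\iota}_*b)$ really are linearly independent, which boils down to $\langle\bfs{\iota},\bfs{\iota}\rangle=-1$ and $\langle\bfs{\iota},\bfs{\iota}_*v\rangle=0$. Beyond that, the proof is a direct manipulation of the Gauss formula, and no serious obstacle arises.
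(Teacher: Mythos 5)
Your proof is correct and is essentially the paper's own argument: the Gauss-formula splitting $D_X(\bfs{\iota}_*Y)=\bfs{\iota}_*(\nabla_X Y)+h(X,Y)\bfs{\iota}$, the computation of $d^D(\bfs{\iota}_*b)$ into a tangential part $\bfs{\iota}_*(d^\nabla b)$ and a normal part $(h(X,bY)-h(Y,bX))\bfs{\iota}$, and the conclusion from the fact that the image of $\bfs{\iota}_*$ is the orthogonal complement of $\bfs{\iota}$. The preliminary identification of a simply connected chart with its image in $\Hyp^2$ via the developing map is harmless but not needed, since the computation is pointwise and works directly on $S$ with the flat bundle $F$.
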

\begin{proof}
The usual splitting of the flat connection of $\R^{2,1}$ into the Levi-Civita connection of $\Hyp^2$
and second fundamental form gives in this setting the following formula
\[
    D_v(\bfs{\iota}_*X)=\bfs{\iota}_*(\nabla_v X)+ h(X, v)\bfs{\iota}(x)
\]
for any vector field $X$ over $S$ and any tangent vector $v$ at $x$.
Given two vector fields $X,Y$ on $S$ we get
\begin{align*}
   d^D(\bfs{\iota}_*b)(X, Y)=& D_X(\bfs{\iota_*}(bY))-D_Y(\bfs{\iota_*}(bX))-\bfs{\iota}_*b[X,Y] \\
   =&\bfs{\iota_*}(\nabla_X(bY)-\nabla_Y(bX)-b[X,Y])+ (h(X, bY)-h(Y,bX))\bfs{\iota}(x) \\
   =&\bfs{\iota_*}(d^\nabla b(X,Y))+(h(X, bY)-h(Y,bX))\bfs{\iota}(x)
\end{align*}
As the image of $\bfs{\iota_*}$ at $T_xS$ is the orthogonal complement of
$\bfs{\iota}(x)$ in $F_x$, the previous computation proves the statement.
\end{proof}

We can now give a description of the connection homomorphism $\delta$ under the usual identification between $H^1(S,\cF)$ and $H^1_{\mathrm{dR}}(S,F)$.

\begin{prop}\label{pr:216}
The connecting homomorphism $\delta:\cC(S,h)\rar H^1_{\mathrm{dR}}(S,F)$ of the short exact sequence \eqref{coboundary operator} is expressed by the formula
$$\delta(b)=[\bfs{\iota}_*b]\,.$$
\end{prop}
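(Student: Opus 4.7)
The plan is to unwind both sides of the claimed equality and match them through the standard Čech--de Rham comparison. Fix a good cover $\{U_i\}$ of $S$ by simply connected open sets. First I would recall the explicit Čech description of $\delta(b)$: by Proposition~\ref{existence of function} the map $H$ is surjective on each $U_i$, so there exist $u_i \in \cun(U_i)$ with $H(u_i) = b|_{U_i}$; on each overlap $u_i - u_j$ is killed by $H$ and hence of the form $V(\sigma_{ij})$ for a unique flat local section $\sigma_{ij}$ of $\cF$, and $\{\sigma_{ij}\}$ is the Čech $1$-cocycle representing $\delta(b)$.

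Next, on the de Rham side, the natural candidate for a local primitive of $\bfs{\iota}_* b$ with respect to $d^D$ is the $F$-valued function
$$\tau_i := \bfs{\iota}_*(\grad u_i) - u_i\,\bfs{\iota}$$
on $U_i$. Using the splitting $D_v(\bfs{\iota}_* X) = \bfs{\iota}_*(\nabla_v X) + h(X,v)\bfs{\iota}$ together with $D_v\bfs{\iota} = \bfs{\iota}_*(v)$ (already used in the previous proposition), a direct calculation gives $d^D \tau_i = \bfs{\iota}_*(\Hess u_i - u_i\id) = \bfs{\iota}_* b$, confirming that $\tau_i$ is indeed a local primitive of $\bfs{\iota}_* b$.

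The key step is then to match the two cocycles. On $U_i \cap U_j$ one has $\tau_i - \tau_j = \bfs{\iota}_*(\grad(u_i - u_j)) - (u_i - u_j)\bfs{\iota}$, and since $u_i - u_j = V(\sigma_{ij}) = \langle \sigma_{ij}, \bfs{\iota}\rangle$ for a flat section, the very identity used in Proposition~\ref{existence of function} (namely that any flat section $\sigma$ is recovered from $V(\sigma)$ by the formula $\sigma = \bfs{\iota}_*(\grad V(\sigma)) - V(\sigma)\bfs{\iota}$, which is just the orthogonal decomposition $t = t^{T} - \langle t, x\rangle x$ in $\R^{2,1}$) yields $\tau_i - \tau_j = \sigma_{ij}$. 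Invoking the Čech--de Rham isomorphism, which sends a closed $F$-valued $1$-form with local $d^D$-primitives $\tau_i$ to the Čech class of $\{\tau_i - \tau_j\}$, we conclude that $[\bfs{\iota}_* b]$ goes to $[\{\sigma_{ij}\}] = \delta(b)$.

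I expect the main obstacle to be purely bookkeeping: verifying that the explicit fibrewise identification between a flat section and its pair $(\grad V(\sigma), V(\sigma))$ is exactly the one produced by the formula for $\tau_i$, and keeping the sign conventions of the connecting homomorphism consistent with those of the Čech--de Rham isomorphism (ordering of indices in $\tau_i - \tau_j$). Once these are aligned, the statement reduces to the computation $d^D \tau_i = \bfs{\iota}_* b$, which is immediate.
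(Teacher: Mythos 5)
Your proposal is correct and follows essentially the same route as the paper: local potentials $u_i$ for $b$, the primitive $\bfs{\iota}_*(\grad u_i)-u_i\bfs{\iota}$ (the paper's Lemma \ref{lm:potential}), and the \v{C}ech--de Rham comparison identifying $\{\tau_i-\tau_j\}$ with the cocycle $\{\sigma_{ij}\}$ via the orthogonal decomposition of flat sections. The only difference is cosmetic: the paper checks the matching of cocycles by pairing with $\bfs{\iota}$ and using orthogonality of the image of $\bfs{\iota}_*$, while you invoke the reconstruction formula $\sigma=\bfs{\iota}_*(\grad V(\sigma))-V(\sigma)\bfs{\iota}$ directly.
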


Before proving the Proposition, we give a preliminary Lemma.

\begin{lemma}\label{lm:potential}
Given a function $u\in\cun(S)$ and $b\in\cC(S,h)$, we have $b=\Hess u-u\id$ if and only if $\bfs{\iota}_*b=d^{D}(\bfs{\iota}_*\grad u-u\bfs{\iota})$.
\end{lemma}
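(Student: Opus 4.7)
The plan is a direct computation using two facts already available in the proof of the preceding proposition: the splitting formula
$$D_v(\bfs{\iota}_* X) = \bfs{\iota}_*(\nabla_v X) + h(X, v)\,\bfs{\iota}(x)$$
for a vector field $X$ on $S$, and the injectivity of $\bfs{\iota}_*:TS \to F$ (its image at $x$ is the $h$-orthogonal complement of $\bfs{\iota}(x)$).

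First I would unpack the right-hand side. Since $\bfs{\iota}_*\grad u - u\bfs{\iota}$ is a section of $F$ (a zero-form with values in $F$), its exterior differential $d^D$ evaluated on a tangent vector $v$ is just $D_v$ of the section. Applying the Leibniz rule to the scalar multiplication $u\bfs{\iota}$, together with the splitting formula applied to the vector field $X = \grad u$, I obtain
\begin{align*}
d^D(\bfs{\iota}_*\grad u - u\bfs{\iota})(v)
&= \bfs{\iota}_*(\nabla_v \grad u) + h(\grad u, v)\,\bfs{\iota} - du(v)\,\bfs{\iota} - u\,\bfs{\iota}_*(v).
\end{align*}

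Next I would observe that $\nabla_v \grad u = (\Hess u)(v)$ by definition of the Hessian used throughout the paper, and that $h(\grad u, v) = du(v)$ by definition of the gradient, so the two $\bfs{\iota}$-terms cancel. The right-hand side collapses to
$$\bfs{\iota}_*\bigl((\Hess u - u\,\id)(v)\bigr),$$
giving the identity of $F$-valued one-forms
$$d^D(\bfs{\iota}_*\grad u - u\bfs{\iota}) = \bfs{\iota}_*(\Hess u - u\,\id).$$

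Finally I would conclude both implications: if $b = \Hess u - u\,\id$ the claim follows by substitution, and conversely if $\bfs{\iota}_* b = d^D(\bfs{\iota}_*\grad u - u\bfs{\iota})$ then the computed identity forces $\bfs{\iota}_* b = \bfs{\iota}_*(\Hess u - u\,\id)$, whence $b = \Hess u - u\,\id$ by the fiberwise injectivity of $\bfs{\iota}_*$. No step appears to be a real obstacle here; the only care needed is to use the correct sign conventions for $d^D$ on a zero-form and to recognize that the two $\bfs{\iota}$-contributions automatically cancel, which is what makes the ``potential'' $\bfs{\iota}_*\grad u - u\bfs{\iota}$ the natural candidate.
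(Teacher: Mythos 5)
Your proposal is correct and follows essentially the same route as the paper: the authors' proof is precisely the computation $d^D(\bfs{\iota}_*\grad u-u\bfs{\iota})=\bfs{\iota}_*(\Hess u-u\id)$ via the Leibniz rule and the splitting formula $D_v(\bfs{\iota}_*X)=\bfs{\iota}_*(\nabla_v X)+h(X,v)\bfs{\iota}$. Your only addition is to spell out explicitly that the converse implication uses the fiberwise injectivity of $\bfs{\iota}_*$, which the paper leaves implicit.
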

\begin{proof}
By an explicit computation, 
\begin{align*}
d^D(\bfs{\iota}_*\grad u-u\bfs{\iota})=&
D(\bfs{\iota}_*\grad u)-(du)\bfs{\iota}-uD\bfs{\iota}\\=&
\bfs{\iota}_*(\hess u)+ h(\grad u, \bullet)\bfs{\iota}
-(du)\bfs{\iota}-u\bfs{\iota}_*\\ =&
\bfs{\iota}_*(\hess u -u\id)\,.
\end{align*}
\end{proof}

\begin{remark}
If $b$ is positive and  
$(I, s)\in\D$ are the embedding data associated with  $(h,b)$, the corresponding  map 
$\tilde{\sigma}:\tilde S\to \R^{2,1}$, considered as a section of the trivial flat $\R^{2,1}$-bundle 
on $\tilde S$, solves the equation
\[
    d^D\tilde{\sigma}=\bfs{\iota}_*b\,,
\]
 so Lemma \ref{lm:potential} is a generalization of Formula \eqref{eq:normal}.
\end{remark}

\begin{proof}[Proof of Proposition \ref{pr:216}]
The construction of the operator $\delta$ works as follows.
Take a good cover $\{U_i\}$ of $S$. On each $U_i$ there is a  function
$u_i$ such that $b|_{U_i}=\hess u_i-u_i\Id$.
Now $u_{i_1}-u_{i_0}=V(t_{i_0i_1})$ for some  flat sections $t_{i_0i_1}$ of $F$ on $U_{i_0}\cap U_{i_1}$.
The family $\{t_{i_0i_1}\}$ forms an $F$-valued $1$-cocyle. Since $\Omega^0(-,F)$ is fine, there are smooth (but in general non flat)
sections $\eta_i$ over $U_i$ such that 
\begin{equation}\label{eq:coc}
t_{i_0i_1}=\eta_{i_1}-\eta_{i_0}\,.
\end{equation}
The differentials of $\eta_i$ glue to a global $F$-valued closed form which represents $\delta b$ in the de~Rham cohomology. 
We claim that  $\eta_i=\bfs{\iota}_*(\grad u_i)-u_i\bfs{\iota}$ satisfy the condition \eqref{eq:coc}.
From the claim and Lemma \ref{lm:potential} we easily get that $\delta b=[\bfs{\iota}_*b]$.

To prove the claim it is sufficient to check the following formula
\[
   u_{i_1}(x)-u_{i_0}(x)=\langle (\eta_{i_1}-\eta_{i_0})(x), \bfs{\iota}(x)\rangle\,,
\]
which is immediate once one recalls that the image of $\bfs{\iota}_*$ is orthogonal to $\bfs{\iota}$.
\end{proof}



\section{Relations with Teichm\"uller theory} \label{closed surfaces}
In this section we will consider symmetric Codazzi tensors as infinitesimal deformations of a metric, inducing
in this way an infinitesimal deformation of the conformal structure.

We will see that in the closed case the first order variation of the holonomy map $$\Hol:\cT(S)\to\cR(\pi_1(S), \SO(2,1))/\!\!/ \SO(2,1)$$ which associates to a conformal structure the holonomy of the hyperbolic structure in its conformal class, can be 
explicitly computed in terms of the coboundary operators $\delta$ we already considered.

As a by-product we will see that the corresponding map $\mathbb E\to T\cT(S)$, induces to the quotient a bijective map
between the space $\cS_+(S)$ of MGHF structures on $S\times \mathbb R$ and $T\cT(S)$. 

\subsection{Killing vector fields and Minkowski space} \label{subsec Killing}
The Lie algebra  $\so(2,1)$ is naturally identified to the set of
Killing vector fields on $\mathbb H^2$, so it is realized as a subalgebra of the space of
smooth vector fields $\mathfrak{X}(\Hyp^2)$ on $\mathbb H^2$.
There is a natural action of $\SO(2,1)$ on $\mathfrak{X}(\Hyp^2)$ that is simply defined by
\[
  (A_* X)(x)=dA (X(A^{-1}x))\,. 
\] 
This action restricts to the adjoint action on $\so(2,1)$.

The Minkowski cross product is defined by $v\boxtimes w=*(v\wedge w)$, where $*:\Lambda^2(\mathbb R^{2,1})\to\mathbb R^{2,1}$
is the Hodge operator associated to the Minkowski product. As in the Euclidean $3D$ case, it leads to 
a natural identification between $\mathbb R^{2,1}$ and $\so(2,1)$ which commutes with the action
of $\SO(2,1)$. Basically  any vector $t\in\mathbb R^{2,1}$ is associated  
with the Killing vector field on $\mathbb H^2$ defined by $X_t(x)=t\boxtimes x$. 
This identification will be denoted  by $\Lambda:\mathbb R^{2,1}\to \so(2,1)$.

Notice that  any hyperbolic surface $S$ is equipped with a $\SO(2,1)$-flat bundle $F_{\so(2,1)}$ 
whose flat sections  correspond to Killing vector fields on $S$.
Elements of $F_{\so(2,1)}$ are germs of Killing vector fields on $S$, so an evaluation map
$\ev:F_{\so(2,1)}\to TS$ is defined.

On the other hand, the isomorphism $\Lambda$ provides a flat isomorphism, that with a standard abuse we still denote by $\Lambda$,
between the $\mathbb R^{2,1}$-flat bundle $F$ and $F_{\so(2,1)}$.
Under this identification the developing section $\bfs{\iota}:S\to F$ corresponds to the section sending each point to the infinitesimal
generator   of the rotation about the point.

We recall that the Zarinski tangent space at a point $\rho:\pi_1(S)\to\SO(2,1)$ to the character variety $\cR(\pi_1(S), \SO(2,1))/\!\!/ \SO(2,1)$ 
is canonically identified with the cohomology group $H^1_{\tiny\Ad \rho}(\pi_1(S), \so(2,1))$ (\cite{Goldman}). The identification goes in the following way.
Given a differentiable  path of representations $\rho_t$ such that $\rho_0=\rho$, we put
\[
   \dot \rho(\alpha)(x)= \left. \frac{d}{dt}\right|_{t=0}\rho_t(\alpha)\circ \rho_0(\alpha)^{-1}(x). 
 \]

\subsection{Deformations of hyperbolic metrics}
We fix a hyperbolic surface $(S, h)$ -- that in this subsection we will not necessarily assume complete -- with holonomy
$\hol:\pi_1(S)\to\SO(2,1)$. We still make the assumption that $\hol$ is not elementary.

It is well known that if $h_t$ is a family of hyperbolic metrics on $S$ which smoothly depend on $t$ and such that $h_0=h$, then
the $h$-self-adjoint operator $b=h^{-1}\dot h$ satisfies 
 the so called  Lichnerowicz equation (see \cite{tromba})
\[
 L(b)=-(\Delta-1/2)\tr b+\delta_h\delta_h b=0\,,
\]
where $\delta_h b$ is the 1-form obtained by contracting $\nabla b$ namely,  $\delta_h(b)(v)=\tr(\nabla_{\bullet} b) (v)$,
whereas the second $\delta_h$ is the divergence on $1$-forms, $\delta(\omega)=\tr h^{-1}\nabla\omega$.

If $X$ is  a vector field on $S$ with compact support, 
and $f_t:S\to S$ is the flow generated by $X$, then putting $h_t=f_t^*(h)$, we
have that $h^{-1}\dot h=2\mathbf S\nabla X$, where $\mathbf S\nabla X$ denotes the symmetric part of the operator $\nabla X$.
It follows that  $\mathbf S\nabla X$ is a solution of Lichnerowicz equation.
Being $L$ a local operator,  we deduce that $\mathbf S\nabla X$ is a solution of Lichnerowicz equation  for any vector field $X$. 

Conversely, as any deformation of a hyperbolic metric is locally trivial, it can be readily shown that
any solution of  Lichnerowicz equation can be locally written  as $\mathbf S\nabla X$ for some vector field $X$.

Denoting by $\mathfrak{X}$ the sheaf of  smooth vector fields  and by $\cL$ the sheaf of solutions of Lichnerowicz equation,
the sheaf morphism $\mathbf S\nabla:\mathfrak{X}\to\cL$,  defined by $X\mapsto \mathbf S\nabla X$, is surjective.
On the other hand the kernel of this morphism is the subsheaf of $\mathfrak{X}$ of Killing vector fields.
This is simply the image of the sheaf $\cF_{\so(2,1)}$ of flat sections of $F_{\so(2,1)}$ through the evaluation map
$\ev:\cF_{\so(2,1)}\to\mathfrak{X}$.

So we have a short exact sequence of sheaves
\[
  0\to\cF_{\so(2,1)}\to\mathfrak{X}\to\cL\to 0
\]
that in cohomology gives a sequence
\begin{equation}\label{eq:deltaconn}
\begin{CD}
 0@>>>\mathfrak{X}(S)@>>>\cL(S)@>\mathfrak{d}>>H^1_{\tiny\hol}(\pi_1(S), \so(2,1))@>>> 0\,,
\end{CD}
\end{equation}
where  $\hol$ is the holonomy of $h$ and 
again we are using the canonical identification $$H^1(S,\cF_{\so(2,1)})=H^1_{\tiny\hol}(\pi_1(S), \so(2,1)).$$
We claim that if $b$ is the first order deformation of a family of hyperbolic metrics,  namely $b=h^{-1}\dot h$, then  
 $\mathfrak{d}$ coincides with the derivative  of the holonomy map (which we still denote by $\Hol$)
$$\Hol:\cM_{-1}\to\cR(\pi(S), \SO(2,1))/\!\!/ \SO(2,1)$$ along the family of metrics.

More precisely we will prove the following result.

\begin{prop}\label{pr:deltastar}
Let $h_t$ be a family of hyperbolic metrics on a surface $S$, not necessarily complete, and suppose that $h$ depends smoothly on $t$.
Then we can find a family of representations $\hol_t$ so that 
\begin{itemize}
\item $\hol_t$ is a representative of the holonomy of $h_t$ in its conjugacy class.
\item $\hol_t$ smoothly depends on $t$.
\end{itemize}
Moreover we have
\begin{equation}
   2\dot\hol=\mathfrak{d}(h^{-1}\dot h)~.
\end{equation}
\end{prop}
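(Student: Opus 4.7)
The plan is to construct $\hol_t$ via a smoothly varying family of developing maps and then to compute both $\mathfrak{d}(h^{-1}\dot h)$ and $\dot\hol$ as \v{C}ech cocycles with values in Killing vector fields, finally identifying them under the Leray isomorphism $H^1(S,\cF_{\so(2,1)})\cong H^1_{\tiny\hol}(\pi_1(S),\so(2,1))$ described in Subsection~\ref{sezione cociclo R21}. First I would choose a good cover $\{U_i\}$ of $S$ by simply connected relatively compact sets on which the family $h_t$ can be trivialized, producing smooth families of diffeomorphisms $f^{(i)}_t:U_i\to U_i$ with $f^{(i)}_0=\id$ and $(f^{(i)}_t)^*h=h_t$; these exist by a Moser-type argument based on the local surjectivity of $\mathbf{S}\nabla:\mathfrak X\to\cL$. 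After fixing a developing map $\dev_0$ for $h$ together with lifts $\tilde f^{(i)}_t$ on connected components $\tilde U_i$ of $\pi^{-1}(U_i)$, the maps $\dev_0\circ(\tilde f^{(i)}_t)^{-1}$ are local developing charts for $h_t$, and they can be glued by post-composing on each patch with a smooth family $A^{(i)}_t\in \SO(2,1)$, $A^{(i)}_0=\id$, into a global smooth family $\dev_t:\tilde S\to\Hyp^2$. The induced holonomies $\hol_t$ then depend smoothly on~$t$.

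Next I would compute $\mathfrak{d}(b)$ for $b=h^{-1}\dot h$. The Moser identity $h^{-1}\cL_Xh=2\mathbf{S}\nabla X$ gives $\mathbf{S}\nabla X_i=b/2$ for $X_i:=\ddt f^{(i)}_t$, so $Z_i:=2X_i$ is a local lift of $b$ along the sheaf surjection $\mathbf{S}\nabla:\mathfrak X\to\cL$. By the construction of the connecting homomorphism in (\ref{eq:deltaconn}), $\mathfrak{d}(b)$ is represented by the \v{C}ech $1$-cocycle $K_{ij}:=Z_j-Z_i=2(X_j-X_i)$, whose entries are Killing vector fields on $U_i\cap U_j$ since they lie in the kernel of $\mathbf{S}\nabla$. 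On the other hand, differentiating at $t=0$ the local formula $\dev_t=A^{(i)}_t\circ\dev_0\circ(\tilde f^{(i)}_t)^{-1}$ on each $\tilde U_i$ yields
\[
\dot\dev(x)=\xi_i\bigl(\dev_0(x)\bigr)-d\dev_0\bigl(\tilde X_i(x)\bigr),\qquad \xi_i:=\ddt A^{(i)}_t\in\so(2,1),
\]
and differentiating the equivariance $\dev_t\circ\alpha=\hol_t(\alpha)\circ\dev_t$ yields
\[
\dot\hol(\alpha)\bigl(\dev_0(\alpha x)\bigr)=\dot\dev(\alpha x)-\hol(\alpha)_*\dot\dev(x).
\]

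To conclude, I would match these two computations. The first display forces $\xi_j-\xi_i$ (viewed via $\dev_0$ as a Killing vector field on $\tilde S$) to coincide with $\tilde X_j-\tilde X_i$ on overlaps, so the $\{\xi_i\}$ form a bounding $0$-cochain for $\{X_j-X_i\}$ in the Leray picture. Substituting the local expression for $\dot\dev$ into the second display and using the equivariance of $\dev_0$ then identifies $\dot\hol(\alpha)\in\so(2,1)$ on the nose with the image of the class $[X_j-X_i]$ under the Leray-to-group-cohomology isomorphism $H^1(S,\cF_{\so(2,1)})\to H^1_{\tiny\hol}(\pi_1(S),\so(2,1))$. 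Since $\mathfrak{d}(b)$ is represented by $\{2(X_j-X_i)\}$, the equality $2\dot\hol=\mathfrak{d}(b)$ follows at once. The main obstacle is precisely this last identification: one has to keep careful track of the deck action on the components $\tilde U_i$ of $\pi^{-1}(U_i)$, of the local ambient isometries $A^{(i)}_t$ chosen patch by patch, and of the normalization of the Leray-to-group-cohomology transfer recalled in Subsection~\ref{sezione cociclo R21}, in order to recognize Goldman's cocycle $\dot\hol(\alpha)=\ddt\hol_t(\alpha)\hol_0(\alpha)^{-1}$. The factor $2$ is the expected discrepancy between the uniformizing vector fields $X_i$ and the correct lifts $Z_i=2X_i$ through $\mathbf{S}\nabla$.
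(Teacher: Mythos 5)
Your overall strategy---build a smooth family of developing maps, then compute $\mathfrak{d}(h^{-1}\dot h)$ and $\dot\hol$ and match them---is viable, but it is genuinely different from the paper's and, as written, it defers or fumbles exactly the steps that carry the content. The paper obtains smoothness of $\dev_t$ globally from the compatibility with the exponential maps of $h_t$, and then never needs any patchwise data: differentiating $h_t=\dev_t^*\,g_{\Hyp^2}$ gives $h^{-1}\dot h=2\mathbf{S}\nabla\dot\dev$ for a single global field $\dot\dev$ on $\tilde S$, and the universal-cover description of $\mathfrak{d}$ (Remark \ref{superpippo}) together with differentiation of $\dev_t\circ\alpha=\hol_t(\alpha)\circ\dev_t$ finishes the proof in a few lines. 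Note that once you have glued your local charts into a global smooth $\dev_t$ you could do exactly the same and skip the Leray transfer altogether; instead your plan routes everything through a \v{C}ech computation whose decisive final identification you do not carry out, and which you yourself label ``the main obstacle''.

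Two concrete gaps. First, the smooth-in-$t$ local trivializations $f^{(i)}_t$ are not supplied by ``local surjectivity of $\mathbf{S}\nabla$'' alone: you must choose, smoothly in $t$, a local primitive $Y_t$ of $\tfrac12 h_t^{-1}\partial_t h_t$ (the kernel of $\mathbf{S}\nabla$ is the space of Killing fields, so some normalization or an explicit right inverse is needed), and the smoothness of the gluing rotations $A^{(i)}_t$ is likewise asserted rather than proved; this is precisely the technical point the paper's exponential-map argument is designed to settle. Second, your conventions are inconsistent: if $(f^{(i)}_t)^*h=h_t$, the developing chart for $h_t$ is $\dev_0\circ\tilde f^{(i)}_t$, not $\dev_0\circ(\tilde f^{(i)}_t)^{-1}$ (the latter requires $(f^{(i)}_t)^*h_t=h$), so one of your two displayed formulas has the wrong sign. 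This is not cosmetic, because you never perform the transfer: carrying it out mechanically with your displays (bounding the cocycle $\{2(X_j-X_i)\}$ by $\{2\xi_i\}$, and using the deck relation $A^{(\alpha i)}_t=\hol_t(\alpha)A^{(i)}_t\hol_0(\alpha)^{-1}$, which gives $\Ad(\hol(\alpha))\,\xi_{\alpha^{-1}i}-\xi_i=-\dot\hol(\alpha)$) yields $\mathfrak{d}(h^{-1}\dot h)=-2\dot\hol$, i.e.\ the wrong sign; with the corrected convention the same bookkeeping does give $2\dot\hol=\mathfrak{d}(h^{-1}\dot h)$. So the Leray-to-group-cohomology identification, with the deck action on the lifted patches made explicit and the $f_t$ versus $f_t^{-1}$ slip repaired, is not an afterthought: it is the proof, and it must be written out.
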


\begin{remark} \label{superpippo}
A simple way to understand the operator $\mathfrak{d}$ is the following. Given $b\in\cL(S)$, on the universal covering there is a field $T$ such that $\tilde b=\bfs{S}\nabla T$. As $\tilde b$ is $\pi_1(S)$-invariant, it turns out that $T-\alpha_* T$ is a Killing vector field on $\tilde S$ for any $\alpha\in\pi_1(S)$. Thus there is an element $\tau_\alpha\in\so(2,1)$ such that $d\dev(T-\alpha_*T)(x)=\tau_\alpha(\dev(x))$ and $\mathfrak{d}(b)$ coincides with the cocycle $\tau_\bullet$.
\end{remark}

\begin{proof}
Notice that  the exponential maps $\exp_t$ of $h_t$  define  a differentiable map  on an open subset $\Omega$ of 
$(-\epsilon,\epsilon)\times T\tilde S$ 
\[
\mathbf{\exp}:\Omega\to \tilde{S}
\]
such that $\mathbf{\exp}(t,x,v)=\exp_t(x,v)$.

Now, let $\dev_t$ be the developing map of $h_t$ normalized 
so that $\dev_t(p_0)=\dev_0(p_0)$ and $d(\dev_t)(p_0)=d(\dev_0)(p_0)$, 
and consider the map
\[
\mathbf{\dev}:(-\epsilon,\epsilon)\times\tilde S\to\hyp^2
\]
defined by $\mathbf{\dev}(t,p)=\dev_t(p)$.
As $\dev_t$   commutes with the exponential map
\[
      \dev_t(\exp_t(x,v))=\exp_{\mathbb H^2}((\dev_t)_*(x,v))\,,
\]
one readily sees that $\mathbf{\dev}$ is differentiable.

As a representative for the holonomy representation $\hol_t$ of $h_t$
can be chosen so that
\[
    \dev_t\circ\alpha=\hol_t(\alpha)\circ\dev_t\,,
\]
it turns out that the map $\hol:(-\epsilon, \epsilon)\to\cR(\pi_1(S),
\SO(2,1)$ is differentiable as well.

Now differentiating the identity
\[
   h_t(v, w)= \langle d\dev_t(v), d\dev_t(w)\rangle
\]
one sees that on the universal covering
\[
     h^{-1}\dot h=2\mathbf S\nabla \dot\dev\,,
\]
where we have put $\dot\dev=d(\dev_0)^{-1}\frac{d\dev_t}{dt}|_{t=0}$.
On the other hand, for any $\alpha\in\pi_1(S)$,  differentiating the identity
\[
    \dev_t(\alpha x)=\hol_t(\alpha)\dev_t(x)
\]
one gets that
\[
    (d\dev_0)\dot\dev(\alpha x)=(d\dev_0)(d\alpha)\dot\dev(x)+\dot\hol(\alpha)\dev_0(\alpha x)~.
\] 
It can be checked (compare Remark \ref{superpippo}) that $\mathfrak{d}(h^{-1}\dot h)_\alpha=2d\dev_0(\dot\dev-\alpha_*\dot \dev)$, hence the conclusion follows. 
 \end{proof}

\subsection{Codazzi tensors as deformations of conformal structures on a closed surface}
In this section we restrict to the case $S$ closed. We fix a hyperbolic metric $h$ with holonomy $\hol$ and denote by $J:TS\to TS$ the almost-complex structure induced by $h$.

We remark that in general a Codazzi tensor $b$ for $h$ is not an infinitesimal deformation of a family of hyperbolic metrics, unless $b$ is traceless.
Indeed the following computation holds.
\begin{lemma}
If $b$ is a self-adjoint Codazzi tensor of a hyperbolic surface $(S,h)$ then 
$L (b)=\tr (b)/2$~.
\end{lemma}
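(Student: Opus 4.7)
The proof is a direct local computation in an $h$-orthonormal frame, exploiting the Codazzi and self-adjointness conditions to simplify $\delta_h b$ before applying the second divergence.

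The plan is to first show that for any self-adjoint Codazzi tensor $b$ on a hyperbolic surface, one has $\delta_h b = d\tr b$. Fix a point $x\in S$ and a local $h$-orthonormal frame $\{e_1,e_2\}$. From the definition,
\[
(\delta_h b)(v)=\sum_i h\bigl((\nabla_{e_i}b)(v),\,e_i\bigr).
\]
Using Codazzi, $(\nabla_{e_i}b)(v)=(\nabla_v b)(e_i)$, so
\[
(\delta_h b)(v)=\sum_i h\bigl((\nabla_v b)(e_i),e_i\bigr)=\nabla_v\!\left(\sum_i h(b(e_i),e_i)\right)=d(\tr b)(v),
\]
where in the last step I use that $\nabla_v$ commutes with the pointwise trace and that the frame can be taken parallel at $x$. (Self-adjointness of $b$ is not needed for this identity, but it is what allows one to work with $\tr b$ unambiguously.)

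Next, the plan is to compute $\delta_h\delta_h b=\delta_h(d\tr b)$. For any smooth function $f$, the identity $(\nabla_X df)(Y)=\Hess f(X,Y)$ gives
\[
\delta_h(df)=\sum_i (\nabla_{e_i}df)(e_i)=\tr\Hess f=\Delta f,
\]
where $\Delta$ is the Laplacian appearing in the Lichnerowicz equation (the trace of the Hessian, consistent with the sign convention already fixed by the operator $L$). Applying this to $f=\tr b$ yields $\delta_h\delta_h b=\Delta\tr b$.

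Substituting into the definition of $L$,
\[
L(b)=-(\Delta-1/2)\tr b+\delta_h\delta_h b=-\Delta\tr b+\tfrac12\tr b+\Delta\tr b=\tfrac12\tr b,
\]
which is the desired equality. There is no real obstacle in the argument; the only point requiring a small amount of care is tracking the sign convention for $\Delta$, which is however forced by the convention used in defining $L$. Note also that the computation is purely local, so it requires no completeness or compactness hypothesis on $S$.
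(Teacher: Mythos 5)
Your proof is correct and follows essentially the same route as the paper: use the Codazzi equation to identify $\delta_h b$ with $d\,\tr b$, note that $\delta_h d = \Delta$ (the trace of the Hessian, which is indeed the convention forced by the paper's use of $\Delta$), and substitute into the definition of $L$. The explicit orthonormal-frame computation is just a spelled-out version of the paper's observation that the trace commutes with $\nabla$, so there is nothing to add.
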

\begin{proof}
Notice that $(\delta_h b)(v) =\tr(\nabla_\bullet b)(v)=\tr\nabla_v b$, where the last equality holds
as $b$ is Codazzi. As the trace commutes with $\nabla$,  $(\delta_h b)(v)=(d\tr b)(v)$ so $\delta_h b=d\tr b$,
and  $\delta_h\delta_h b=\Delta(\tr b)$.
The conclusion follows immediately.
\end{proof}

 We may consider $b$ as an infinitesimal deformation of the conformal structure.
Indeed  for small $t$ the bilinear form
\[
    \hat h_t(v,w)=h((\Id+tb)v, (\Id+tb)w)
\]
defines a path of Riemannian metrics which  smoothly depends on $t$.

So for each $t$, there is a uniformization function $\psi_t:S\to\mathbb R$ such that
$h_t= e^{2\psi_t}\hat h_t$ is the unique hyperbolic metric conformal to $h_t$.
It is well known that the conformal factor $\psi_t$ smoothly depends on $t$ (compare  \cite{tromba}), so
the path of holonomies $[\hol_t]$ defines a smooth path in the character variety.

The following proposition computes the first order  variation of $\hol_t$ (that is an element of
$H^1_{\tiny\Ad\circ\tiny\hol}(\pi_1(S), \so(2,1))$ in terms of $b$.  

\begin{prop}\label{pr:holclos}
Let $h$ be a hyperbolic metric on $S$, $X_h\in\Teich(S)$ be its complex structure and let $b\in\cC(S, h)$. Let $b=b_q+\hess u-u\Id$ the decomposition of $b$ given in Proposition \ref{pr:cod-dec}.
Then
\[
   d\Hol_{X_h}([b_0])=-\Lambda(\delta(Jb_q))
\] 
where $\Lambda: H^1_{\tiny\hol}(\pi_1(S), \mathbb R^{2,1})\to H^1_{{\tiny\Ad\circ\tiny\hol}}(\pi_1(S), \so(2,1))$ is the
isomorphism induced by the $\SO(2,1)$-equivariant isomorphism $\Lambda:\R^{2,1}\rar \so(2,1)$.
\end{prop}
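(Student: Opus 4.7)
The strategy is to reduce the statement to a local computation on $\Hyp^2$ by isolating the harmonic summand of $b=b_q+\Hess u-u\Id$. First, $\Psi(b)=\Psi(b_q)$ in $T_{[X_h]}\cT(S)$: the summand $u\Id$ produces the conformal variation $\dot{\hat h}_0=2uh$, which preserves the conformal class, while the summand $\Hess u=\bfs{S}\nabla(\grad u)$ produces $\dot{\hat h}_0=\mathcal L_{\grad u}h$ (via $h^{-1}\mathcal L_{\grad u}h=2\Hess u$), the infinitesimal pull-back by the flow of $\grad u\in\Diffeo_0(S)$, which is trivial in $T_{[X_h]}\cT(S)$. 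By linearity it therefore suffices to prove
\[
d\Hol_{X_h}(\Psi(b_q))=-\Lambda\delta(Jb_q).
\]

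\textbf{Realizing $b_q$ by a hyperbolic deformation.} Since $b_q$ is traceless, the Lichnerowicz formula of the preceding lemma gives $L(b_q)=\tr(b_q)/2=0$, i.e.\ $b_q\in\cL(S,h)$. On a closed surface, deformation theory then produces a smooth path $h_t$ of hyperbolic metrics through $h$ with $h^{-1}\dot h_0=2b_q$; moreover, writing the uniformization $h_t=e^{2\psi_t}\hat h_t$ of $\hat h_t=h((I+tb_q)\cdot,(I+tb_q)\cdot)$, a direct computation with $\delta(\phi\Id)=d\phi$ and $\delta d\phi=\Delta\phi$ yields $L(\phi\Id)=(1-\Delta)\phi$; the requirement $L(2\dot\psi_0\Id)=-L(2b_q)=0$ forces $(1-\Delta)\dot\psi_0=0$, and hence $\dot\psi_0=0$ by the spectral properties of $\Delta$ on a closed hyperbolic surface. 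Proposition \ref{pr:deltastar} now gives
\[
d\Hol_{X_h}(\Psi(b_q))=\dot\hol_0=\tfrac12\mathfrak d(2b_q)=\mathfrak d(b_q),
\]
reducing the proposition to the cohomological identity
\[
\mathfrak d(b_q)=-\Lambda\delta(Jb_q)\qquad\text{in }H^1_{\Ad\circ\hol}(\pi_1(S),\so(2,1)).
\]

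\textbf{Matching cocycle representatives.} By Remark \ref{superpippo}, lift $b_q$ to $\tilde S$ and choose a vector field $T$ with $\tilde b_q=\bfs{S}\nabla T$; then $\mathfrak d(b_q)_\alpha$ is represented by the Killing field $d\dev(T-\alpha_*T)$ on $\Hyp^2$. By Proposition \ref{existence of function}, write $\widetilde{Jb_q}=\Hess\hat u-\hat u\Id$; then $\delta(Jb_q)_\alpha$ is the vector $t_\alpha\in\R^{2,1}$ characterized by $\hat u-\hat u\circ\alpha^{-1}=\langle t_\alpha,\dev\rangle$, and $\Lambda(t_\alpha)$ is the Killing field $p\mapsto t_\alpha\boxtimes p$. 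The decisive step is the following local identity on $\Hyp^2$: \emph{if $\Hess v-v\Id=Jb_q$, then $T=-J\grad v$ satisfies $\bfs{S}\nabla T=b_q$, up to addition of a Killing field.} This follows intrinsically: from $\nabla J=0$ one has $\nabla(J\grad v)=J\Hess v$; substituting $\Hess v=v\Id+Jb_q$ and using the anticommutation $JA+AJ=0$ for every traceless self-adjoint $A$, one obtains $J\Hess v=vJ-b_q$, whose symmetric part is $-b_q$. Transporting this relation to $\tilde S$ via $\dev$, differentiating $\hat u-\hat u\circ\alpha^{-1}=\langle t_\alpha,\dev\rangle$ to obtain the cocycle relation for $\grad\hat u$, and using $\hol(\alpha)$-equivariance of the Minkowski cross product together with $p\boxtimes p=0$ and the antisymmetry $p\boxtimes t=-t\boxtimes p$, the cocycle $d\dev(T-\alpha_*T)$ collapses to a Killing field proportional to $t_\alpha\boxtimes p$; a careful check of signs through the conventions ($J=p\boxtimes\bullet$ on tangent vectors, and the definition of $\Lambda$) then yields exactly $\mathfrak d(b_q)=-\Lambda\delta(Jb_q)$.

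\textbf{Main obstacle.} The heart of the proof is this pointwise identification between two different second-order differential descriptions of the same traceless Codazzi tensor ($b_q=\bfs{S}\nabla T$ and $Jb_q=\Hess\hat u-\hat u\Id$) and the ensuing sign-careful cocycle computation in $\R^{2,1}$. Everything else — the reduction to the harmonic summand, the Lichnerowicz observation (including the vanishing of the conformal correction $\dot\psi_0$), and the application of Proposition \ref{pr:deltastar} — is essentially formal.
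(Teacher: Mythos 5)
Your proof is correct and follows essentially the same route as the paper: uniformize $\hat h_t$, kill the conformal correction via $\Delta\phi-\phi=0$ on the closed surface, apply Proposition \ref{pr:deltastar}, and compute $\mathfrak{d}(b_q)$ through the field $T=-J\grad v$ with $Jb_q=\Hess v-v\Id$, which is exactly the paper's key step. The only differences are cosmetic: you reduce to $b_q$ at the level of Teichm\"uller tangent vectors before uniformizing (the paper carries the full $b$ through the uniformization and then uses $\mathfrak{d}(\Hess u)=0$), and you leave the final $\boxtimes$-cocycle sign bookkeeping as a sketch, which the paper carries out explicitly in its concluding lemma.
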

\begin{remark}
As $b_q$ is traceless and self-adjoint, $J b_q$ is traceless and self-adjoint as well, and it turns out that $J b_q=b_{iq}$.
\end{remark}
\begin{proof}
Let $\dev_t$ be  a family of  developing maps of $h_t$ depending smoothly on $t$ and denote by
$\hol_t$ the holonomy representative  for which $\dev_t$ is equivariant.

By Proposition \ref{pr:deltastar},
 $2\dot\hol=\mathfrak{d}(h^{-1}\dot h),$ where $\mathfrak{d}:\cL(S)\to H^1_{\tiny\Ad\circ\tiny\hol}(\pi_1(S), \so(2,1))$ is the connecting
homomorphism defined in \eqref{eq:deltaconn}.

By differentiating the identity $h_t(v,w)=e^{2\psi_t}h((\Id+tb)v, (\Id+tb)w)$ one gets
\[
   \dot h(v,w)=2 h((\dot \psi\Id+b)v, w)
\]
that is
\[
   \frac{1}{2} h^{-1}\dot h =(\dot \psi-u)\Id+\Hess u+b_q~.
\]
Now $h^{-1}\dot h$, $b_q$ and $\Hess u=\mathbf S\nabla(\grad u)$ are solutions of Lichnerowicz equation, so by linearity 
$L((\dot\psi- u)\Id)=0$. An explicit computation shows that this precisely means that the function $\phi=\dot \psi-u$
must satisfy the equation $\Delta\phi-\phi=0$. As $\phi$ is a regular function on $S$ and we are assuming $S$ is closed, we deduce that
$\phi\equiv 0$.

Thus $h^{-1}\dot h=2\Hess u+2b_q$. Notice  that $\mathfrak{d}(\Hess u)=0$ as $\Hess u=\mathbf S\nabla(\grad u)$ on $S$.
So we get
\[
   2\dot\hol=\mathfrak{d}(h^{-1}\dot h)=2\mathfrak{d}(b_q)~.
\]
To compute $\mathfrak{d}(b_q)$ we have to find a vector field $T$ on the universal covering, such that
$b_q=\mathbf S\nabla T$, and $\mathfrak{d}(b_q)$ is determined by  the equivariance of $T$.

Now as $Jb_q$ is still a symmetric Codazzi tensor,  on the universal cover we can find
a function $v$ such that $Jb_q=\hess v-v\Id$. This implies that $b_q=-J\hess v+J v=-\nabla(J\grad v)+J v$.
That is, the field $T=-J\grad v$ satisfies the property we need.
It follows that
\begin{equation} \label{paperoga}
  (d\dev_0)^{-1}(\dot\hol(\alpha))=(-J\grad v)-\alpha_*(-J\grad v) ~,
\end{equation}
where $\dev:\tilde S\to\mathbb H^2$ is the developing map for $h$.
 
The conclusion then follows by comparing Equation \eqref{paperoga} with the formula proved in the following Lemma, that we prove separately as it does not depend on the fact that $S$ is closed.\end{proof}

\begin{lemma}
Let $b$ a Codazzi tensor on any hyperbolic surface $S$ and let $v$ be a function on the universal cover such that
$b=\Hess f-f\Id$.
Then for any $\alpha\in\pi_1(S)$  and $x\in\tilde S$ we have that
\[
   J\grad f-\alpha_*(J\grad f)=-(d\dev)^{-1}\Lambda(\delta b)_{\alpha}\,,
\]
where $\dev:\tilde S\to\mathbb H^2$ is the developing map.
\end{lemma}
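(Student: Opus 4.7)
The plan is to compute explicitly how the ambiguity in the ``potential'' $f$ under the deck action translates, via the identification $\Lambda$, into an $\so(2,1)$-valued cocycle, and to recognise this cocycle as $-\mathfrak d$ applied to $J\grad f$.

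First, for any $\alpha\in\pi_1(S)$, the equivariance of $\tilde b$ (viewed as the lift of the Codazzi tensor $b$ on $S$) gives that $f\circ\alpha^{-1}$ also satisfies $\Hess(f\circ\alpha^{-1})-(f\circ\alpha^{-1})\id=\tilde b$. Hence $f-f\circ\alpha^{-1}$ lies in the kernel of $H$, so by Proposition \ref{existence of function} there is a vector $t_\alpha\in\R^{2,1}$ with
\[
f(x)-f(\alpha^{-1}x)=\langle t_\alpha,\dev(x)\rangle,
\]
and by the description of $\delta$ given after \eqref{coboundary operator} the class $[t_\bullet]\in H^1_{\tiny\hol}(\pi_1(S),\R^{2,1})$ coincides with $\delta b$.

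Next I take gradients on both sides. Using that $\dev$ is a local isometry, that $\alpha$ is an isometry of $\tilde h$ and the standard formula $\grad(f\circ\alpha^{-1})(x)=d\alpha(\grad f(\alpha^{-1}x))$, together with the identity $\grad\langle t,\dev(\cdot)\rangle(x)=(d\dev)^{-1}(t^{T_{\dev(x)}})$ proved in the computation just after \eqref{relazione gradiente 1}, one obtains
\[
\grad f(x)-d\alpha\bigl(\grad f(\alpha^{-1}x)\bigr)=(d\dev)^{-1}\bigl(t_\alpha^{T_{\dev(x)}}\bigr).
\]
Applying $d\dev$ and using $d\dev\circ d\alpha=d\hol(\alpha)\circ d\dev$ this becomes an identity in $T_{\dev(x)}\Hyp^2$.

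Now the key step: rotate by $J$. The map $\dev$ intertwines the complex structure on $\tilde S$ with the one on $\Hyp^2$, and the latter is described in $\R^{2,1}$ by $Jv=\dev(x)\boxtimes v$ for $v\in T_{\dev(x)}\Hyp^2$, because $\Lambda(\dev(x))$ is the infinitesimal rotation fixing $\dev(x)$. Since $\hol(\alpha)$ commutes with $J$, applying $J$ to both sides and unwinding $\alpha_*(J\grad f)(x)=d\alpha(J\grad f(\alpha^{-1}x))$ yields
\[
d\dev\bigl(J\grad f(x)-\alpha_*(J\grad f)(x)\bigr)=J\,t_\alpha^{T_{\dev(x)}}.
\]
Finally, since $\dev(x)\boxtimes\dev(x)=0$, the projection can be dropped and
\[
J\,t_\alpha^{T_{\dev(x)}}=\dev(x)\boxtimes t_\alpha=-t_\alpha\boxtimes\dev(x)=-\Lambda(t_\alpha)(\dev(x)),
\]
which is precisely $-\Lambda(\delta b)_\alpha$ evaluated at $\dev(x)$. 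Solving for $J\grad f-\alpha_*(J\grad f)$ gives the claimed formula.

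The only delicate point is ensuring the identifications are consistent: that $\Lambda$, the orientation of $J$, the sign of $\boxtimes$ and the convention $\alpha_*X(x)=d\alpha(X(\alpha^{-1}x))$ all fit together. Once the identity $Jv=\dev(x)\boxtimes v$ on $T_{\dev(x)}\Hyp^2$ is fixed, the rest is a direct verification with no analytic difficulty, since the statement is purely infinitesimal and no global (e.g.\ compactness or completeness) hypothesis on $S$ is used.
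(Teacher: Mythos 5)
Your proof is correct and follows essentially the same route as the paper's: derive the relation $f-f\circ\alpha^{-1}=\langle t_\alpha,\dev(\cdot)\rangle$ from the potential, take gradients to get the tangential projection of $t_\alpha$, and then use $Jv=\dev(x)\boxtimes v$ together with the antisymmetry of $\boxtimes$ (and $\dev(x)\boxtimes\dev(x)=0$) to identify the result with $-\Lambda(\delta b)_\alpha$. The only difference is presentational: you spell out the equivariance and sign conventions that the paper leaves implicit.
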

\begin{proof}
Let us  put $t_\alpha=(\delta b)_\alpha$.
As   $\Lambda(t)(\bullet)=t\boxtimes \bullet$,
we have to prove that for any $x$ in $\tilde S$
\[
d\dev_x(J\grad f(x) -\alpha_*(J\grad f))(x)=-t_\alpha\boxtimes \dev(x)\,.
\]
The point is that $(f-f\circ\alpha^{-1})(x)=\langle t_{\alpha}, \dev(x)\rangle$ for any $x\in\tilde S$. 
Thus
$$d\dev_x(\grad f-\alpha_*\grad f)=t_{\alpha}+\langle t_\alpha, \dev(x)\rangle \dev(x)~.$$
As for any $v\in T_x \tilde S$ we have that $d\dev_x(Jv)=J_{\mathbb H^2}d\dev_x(v)=\dev(x)\boxtimes d\dev_x(v)$ 
we get
$$
d\dev_x(J(\grad f -\alpha_*\grad f))= 
\dev (x)\boxtimes d\dev_x(\grad f -\alpha_*\grad f)=
\dev(x)\boxtimes t_{\alpha}\,,
$$
and the conclusion follows.
\end{proof}

\begin{remark}
We want to emphasize the reasons we restricted Proposition \ref{pr:holclos} to the case
of closed surfaces.

There are some technical issues.
For instance the metric $\hat h_t$ is not well defined if the eigenvalues of $b$ are not bounded and one
should use $\Id+\chi_tb$, where $\chi_t$ is a function going sufficiently fast to $0$ at infinity for each $t$
and such that $\partial_t \chi_t(0)=1$.
Moreover  the  uniformization factor $\psi_t$ on $t$ is well defined only if we restrict on some classes of hyperbolic metrics
(e.g. complete metrics, metrics with cone singularities) and its smooth dependence on the factor is more complicated.

More substantial problems are related to the splitting of $b$ as traceless part and trivial part.
The splitting is related to the solvability of the equation $\Delta u-2u=\tr (b)$. Now in the non closed case to get
existence and uniqueness of the solution, some asymptotic behavior of $\tr (b)$ must be required.

A related  problem is that on an open surface there are smooth non trivial
 solutions of the equation $\Delta \phi-\phi=0$.
So in order to prove that $\phi\equiv 0$, $\phi$ is needed  to have some good behavior at infinity (which can be obtained only  requiring  some extra hypothesis on the behavior of $b$ in the ends).

We will discuss the case of hyperbolic metrics with cone singularity in Section \ref{punctured surfaces}, where these problems will become evident.
 \end{remark}
 
\subsection{A global parameterization of MGHF space-times with closed Cauchy surfaces} \label{glopar}

We consider the space $\E$ introduced in Section \ref{sec:cauco}. We know that this space parameterizes
embedding data of uniformly convex surfaces in some MGHF space-time, and we have already remarked
that the space $\cS_+(S)$ of MGHF structures on $S\times\mathbb R$ containing a closed convex Cauchy surface is
the quotient of $\E$ by identifying $(h,b)$ and $(h',b')$ if $h$ and $h'$ are isotopic and $\delta b=\delta b'$ (see Remark
\ref{rm:closedhol}) 

We want to use results of the previous section to construct a natural bijection between $\cS_+(S)$
and the tangent bundle of the Teichm\"uller space of $S$.
Let us briefly recall some basic facts of Teichm\"uller theory that we will use. See for instance \cite{gardiner} for more details.

Elements of $\cT(S)$ are   complex structures on $S$, say $X=(S, \mathcal A)$,
 up to isotopy. In the classical Ahlfors-Bers theory, 
 the tangent space of $\cT(S)$ at a point $[X]\in\cT(S)$ is identified with a quotient of 
the space of Beltrami differentials $\cB(X)$. A Beltrami differential is a $L^\infty$ section of the bundle $K^{-1}\otimes \bar K$,
where $K$ is the canonical bundle of $X$, that simply means that a Beltrami differential is a $(0,1)$-form with value in the holomorphic
tangent bundle of $X$.
There is a natural pairing between quadratic differentials and Beltrami differentials, 
given by the integration of the $(1,1)$ form obtained  by contraction
\[
    \langle q, \mu\rangle=\int_S q\bullet\mu\,,
\]
where in complex chart $q\bullet\mu:=q(z)\mu(z)dz\wedge d\bar z$, if $\mu=\mu(z)d\bar z/dz$ and $q=q(z)dz^2$.

We say that a Beltrami differential $\mu$ is trivial if $\langle q,  \mu\rangle =0$ for any holomorphic quadratic differential.
We will denote by $\cB(X)^\perp$ the subspace of trivial Beltrami differentials.

The tangent space  $T_{[X]}\cT(S)$  is naturally identified with $\cB(X)/\cB(X)^\perp$ as a complex vector space.
The identification goes as follows: suppose to have a $C^1$-path $\gamma:[0,1]\to\cT(S)$ such that $\gamma(0)=[X]$.
Then it is possible to choose a family of representatives $\gamma(t)=[X_t]$ such that the Beltrami differential $\mu_t$
of the identity map $\Id: X\to X_t$ is a $C^1$ map of $[0,1]$ in $\cB(X)$. It is a classical fact that $\dot\mu(0)$ 
does not depend on the choice of the representatives $X_t$ up to a trivial differential, and thus one can identify 
$\dot\gamma(0)$ with the class of $[\dot\mu]$ in $\cB(X)/\cB(X)^\perp$.
Smooth trivial Beltrami operators can be expressed as $\bar\partial\sigma$, where $\sigma$ is a section of $K^{-1}$.

In order to link this theory with our construction it seems convenient
to identify the holomorphic tangent bundle $K^{-1}$ of a Riemann
surface $X=(S,\mathcal A)$ with its real tangent bundle $TS$. Basically if $z=x+iy$
is a complex coordinate one identifies the tangent vector
$a\frac{\partial\,}{\partial x}+b\frac{\partial\,}{\partial y}$ with
the holomorphic tangent vector $(a+ib)\frac{\partial\,}{\partial
  z}$. Notice that under this identification the multiplication by $i$
on $K^{-1}$ corresponds to the multiplication by the almost-complex
structure $J$ associated with  $X$.  Moreover Beltrami differentials correspond to
operators $m$ on $TS$ which are anti-linear for $J$: $mJ=-Jm$.  By
some simple linear algebra this is equivalent to $\tr (m)=0$ and $\tr
(Jm)=0$, or analogously Beltrami differentials correspond to traceless
operators which are symmetric for some conformal metric on $(TS, J)$.

More explicitly, if in local complex coordinate $\mu=\mu(z)d\bar z/dz$, the corresponding operator in the real coordinates is
\begin{equation} \label{formulabeltramicodazzi}
    m= \left(\begin{array}{ll} \Re(\mu) & \Im(\mu)\\ \Im(\mu) & -\Re(\mu)\end{array}\right)\,.
\end{equation}
We recall that $\Teich(S)$ is a complex manifold. Its almost-complex structure $\cJ$ corresponds in the complex notation to the multiplication by $i$ of the Beltrami differential. In the real notation, this is the same as $\cJ([m])=[Jm]$.

Now we denote by $X=X_h$ the complex structure determined by a hyperbolic metric $h$.
Given a Codazzi operator for the metric $h$, we have considered a smooth path of metrics
$\hat h_t=h(\Id+tb, \Id+tb)$, which determines a smooth path in the Teichmuller space $[X_t]$, where
$X_t$ is the complex structure on $S$ determined by $\hat h_t$.

It turns out that the tangent vector of this path is simply the class of the Beltrami differential $b_0$, where
$b_0=b-(\tr b/2)\Id$ is the traceless part of $b$. 

The main theorem we prove in this section is the following:

\begin{thmx}\label{theorem parametrization closed surfaces}
Let $h$ be a hyperbolic metric on a closed surface $S$, $X$ denote the complex structure underlying $h$, and $\cC(S, h)$ be the space of
self-adjoint $h$-Codazzi tensors. Then the following diagram is commutative 
\begin{equation}\label{eq:diag-comm}
\begin{CD}
\cC(S, h) @>\Lambda\circ \delta>>H^1_{\tiny\Ad\circ\tiny\hol}(\pi_1(S),\so(2,1))\\
@V\Psi VV                                       @A d\Hol AA\\
T_{[X]}\cT(S) @>>\mathcal J > T_{[X]}\cT(S)
\end{CD}
\end{equation}
where $\Lambda:H^1_{\tiny\hol}(\pi_1(S), \R^{2,1})\to H^1_{\tiny\Ad\circ\tiny\hol}(\pi_1(S),\so(2,1))$ is the natural isomorphism, and
$\cJ$ is the complex structure on $\cT(S)$.
\end{thmx}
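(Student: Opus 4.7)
The plan is to reduce commutativity to the harmonic (traceless) part $b_q$ of $b$ on both legs of the diagram, and then apply Proposition~\ref{pr:holclos} together with the exactness of the short exact sequence \eqref{coboundary operator}.

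Given $b \in \cC(S,h)$, I would start by invoking Proposition~\ref{pr:cod-dec} to decompose
$b = b_q + \Hess u - u\Id$
for some holomorphic quadratic differential $q$ and some $u\in\cun(S)$. I would then analyze each leg of the diagram separately. On the right-hand (cohomological) leg, exactness of \eqref{coboundary operator} gives $\delta(\Hess u - u\Id) = \delta\circ H(u) = 0$, so $\delta(b) = \delta(b_q)$, and therefore $\Lambda\circ\delta(b) = \Lambda(\delta(b_q))$. On the left-hand (Teichm\"uller) leg, I want to establish the analogous reduction $\Psi(b) = \Psi(b_q)$, i.e.\ that the trivial Codazzi tensor $\Hess u - u\Id$ induces the zero vector in $T_{[X]}\cT(S)$. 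The point is that the infinitesimal variation of $h$ associated to $\Hess u - u\Id$ is
\[
\tfrac{d}{dt}\bigr|_{t=0}\, h((\Id + t(\Hess u - u\Id))\bullet,(\Id + t(\Hess u - u\Id))\bullet) = \cL_{\grad u} h - 2u\,h,
\]
which is the sum of a Lie derivative along a vector field and a pure conformal change, both of which act trivially on the underlying conformal structure. Hence $\Psi(\Hess u - u\Id)=0$, and $\Psi(b) = \Psi(b_q) = [b_q]$ under the real-operator realization \eqref{formulabeltramicodazzi} of Beltrami differentials.

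Next I would apply $\cJ$: by the identification recalled in the paper, $\cJ([b_q]) = [Jb_q] = [b_{iq}]$. I then apply Proposition~\ref{pr:holclos} to the traceless (hence already harmonic) Codazzi tensor $b_{iq}$, whose canonical decomposition is $b_{iq}=b_{iq}+0$. This gives
\[
d\Hol_{X_h}([b_{iq}]) = -\Lambda\!\left(\delta(J b_{iq})\right) = -\Lambda\!\left(\delta(-b_q)\right) = \Lambda(\delta(b_q)),
\]
since $J b_{iq} = J^2 b_q = -b_q$. Combining this with the reduction $\Lambda\circ\delta(b)=\Lambda(\delta(b_q))$ obtained on the cohomological side yields
\[
d\Hol\circ\cJ\circ\Psi(b) = \Lambda(\delta(b_q)) = \Lambda\circ\delta(b),
\]
which is the commutativity of the diagram.

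The main obstacle is not the final algebraic manipulation but rather the identification $\Psi(\Hess u - u\Id)=0$. The subtleties there are: first, that in the real-operator model of Beltrami differentials, traceless symmetric Codazzi operators coming from a gradient Lie derivative together with a conformal factor give a trivial element of $\cB(X)/\cB(X)^\perp$; and second, that the splitting of $b$ into trivial plus harmonic parts furnished by Proposition~\ref{pr:cod-dec} is genuinely global on a closed surface (the non-closed case, where the trivial/harmonic decomposition fails in general, is precisely why the theorem requires $S$ closed). Once this is cleanly established, the two invocations of Proposition~\ref{pr:holclos} (applied to the harmonic tensor $b_{iq}$) and of exactness of \eqref{coboundary operator} are essentially mechanical.
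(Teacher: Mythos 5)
Your proposal is correct and follows essentially the same route as the paper: decompose $b=b_q+\Hess u-u\Id$ via Proposition~\ref{pr:cod-dec}, note that $\delta$ kills the trivial part by exactness of \eqref{coboundary operator}, observe that the trivial part contributes nothing in $T_{[X]}\cT(S)$, and apply Proposition~\ref{pr:holclos} to the harmonic tensor to get $d\Hol([b_{iq}])=\Lambda\delta(b_q)$. The only cosmetic difference is that you check $\Psi(\Hess u-u\Id)=0$ directly via the Lie-derivative-plus-conformal description, while the paper verifies the equivalent statement $[Jb_0]=[Jb_q]$ after applying $\cJ$, using that $Jb_0-Jb_q=\bar\partial(J\grad u)$ is a trivial Beltrami differential.
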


%
%

\begin{remark} \label{remarkremark}
In order to prove this Theorem, we need to link the Levi-Civita connection on $S$ with the complex structure $X$. 

Let $X=(S,\mathcal A)$ be a Riemann surface with underlying space $S$, and let $h$ be a Riemannian metric on $S$ which is
conformal in the complex charts of $\mathcal A$. Then through the canonical identification between $TS$ and $K^{-1}$, $h$ corresponds
to a Hermitian product over $K^{-1}$.

Being $K^{-1}$ a holomorphic bundle over $S$, there is a Chern connection $D$ on $K^{-1}$ associated with $h$.
As in complex dimension $1$ any Hermitian form is K\"ahler, the connection $D$ corresponds to
the Levi Civita connection of $h$ (regarded as a real Riemannian structure on $S$), through the identification $K^{-1}\cong TS$
(see Proposition 4.A.7 of \cite{huybrechts2005complex}).

Now if in a conformal coordinate $z$ the metric is of the form $h=e^{2\eta}|dz|^2$, then the connection form
of $D$ is simply 
\begin{equation}\label{eq:chern}
\omega=2\partial\eta~, 
\end{equation}
where $\partial\eta=\frac{\partial\eta}{\partial z}dz$.
Thus a simple computation shows that the connection form of $\nabla$ with respect to the real conformal frame $\partial_x, \partial_y$ is
\begin{equation}\label{eq:levicivita}
      A= d\eta\otimes\Id-(d\eta\circ J)\otimes J\,.
\end{equation}
Finally, as the $\bar\partial$-operator on $K^{-1}$ corresponds to the $(0,1)$-part of $D$,
holomorphic sections of $K^{-1}$ correspond to vector fields $Y$ such that $(\nabla Y)$ commutes with $J$.
This means that $\mathbf S\nabla Y$ must be a multiple of the identity, i.e.
$\nabla Y=\lambda\Id+\mu J$ for some functions $\lambda$ and $\mu$.
\end{remark}

\begin{proof}
The proof is based on the computation in Proposition \ref{pr:cod-dec}.
%
Using the decomposition  $b=b_q+\Hess u-u\Id$, we see that $Jb_0=Jb_q+\mathbf{S}\nabla(J\Hess u)=
Jb_q+\bar\partial (J\grad u)$,
where we are using that the $\bar\partial$-operator on $K^{-1}$ coincides with the anti linear part of $\nabla X$ 
(under the identification $K^{-1}=TS$). In particular $[Jb_0]=[Jb_q]$ as elements of $T_X\cT(S)$.
As  $d\Hol([Jb_0])=\Lambda \delta(b_q)=\Lambda \delta(b)$ by Proposition \ref{pr:holclos} we conclude that the diagram is commutative.
\end{proof}

\begin{corx} \label{corollary same spacetimes}
 Two embedding data $(I,s)$ and $(I', s')$ correspond to Cauchy surfaces contained in the same flat globally hyperbolic space-time
if and only if 
\begin{itemize}
\item the third fundamental forms $h$ and $h'$ are isotopic;
\item the infinitesimal variation of $h$ induced by $b$ is Teichm\"uller equivalent to the infinitesimal variation of $h'$ induced by $b'$.
\end{itemize}
In particular the map induces to the quotient a bijective map
\[
\bar\Psi: \cS_+(S)\to T\cT(S)\,.
\] 
\end{corx}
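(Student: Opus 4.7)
The plan is to combine Remark \ref{rm:closedhol}, which already describes $\cS_+(S)$ as a quotient of $\E$, with the commutative diagram of Theorem \ref{theorem parametrization closed surfaces}. By Remark \ref{rm:closedhol}, two pairs $(h,b),(h',b')\in\E$ yield Cauchy embeddings into the same MGHF space-time if and only if there is an isometry $F:(S,h)\to(S,h')$ isotopic to the identity and, after pulling back $b'$ by $F$, one has $\delta(b)=\delta(b')$. The first condition is precisely the statement that the hyperbolic metrics $h$ and $h'$ are isotopic, which is the first bullet of the Corollary. After performing this pullback I may therefore assume $h=h'$, and the remaining task is to translate the equality $\delta(b)=\delta(b')$ into the Teichm\"uller equivalence $\Psi(b)=\Psi(b')$.

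This translation is essentially a diagram chase. Diagram \eqref{eq:diag-comm} of Theorem \ref{theorem parametrization closed surfaces} states that $\Lambda\circ\delta=d\Hol\circ\cJ\circ\Psi$. The maps $\Lambda$ and $\cJ$ are isomorphisms, and by Goldman's theorem $\Hol$ is a local diffeomorphism of $\cT(S)$ onto a component of the character variety, so $d\Hol$ is injective. Consequently $\delta(b)=\delta(b')$ is equivalent to $\Psi(b)=\Psi(b')$, and the equivalence of the two bulleted conditions follows.

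The induced map $\bar\Psi:\cS_+(S)\to T\cT(S)$ is then well-defined and injective by this equivalence. For surjectivity, given $([X],v)\in T\cT(S)$, I would let $h$ be the hyperbolic representative of $[X]$ furnished by uniformization and represent $v$ by the traceless $h$-self-adjoint Codazzi tensor $b_q$ coming from the unique harmonic representative of $v$, namely a holomorphic quadratic differential $q$ as in Example \ref{Codazzi traceless real part hqd}. Since $S$ is closed, $b_q$ is bounded, so $b:=b_q+c\,\id$ is positive definite for a sufficiently large constant $c$; it is Codazzi because $c\,\id=H(-c)$, i.e., a constant multiple of the identity lies in the image of $H$ and in particular in $\ker\delta$. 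Moreover, the constant term does not affect $\Psi$, which only sees the traceless part of $b$ in the variation $\hat h_t=h((\Id+tb)\bullet,(\Id+tb)\bullet)$. Hence $(h,b)\in\E$ and $\bar\Psi([h,b])=([X],v)$.

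The main obstacle is not in the Corollary itself but in the machinery it rests upon: Theorem \ref{theorem parametrization closed surfaces} and Goldman's theorem on the injectivity of $d\Hol$. Once these are accepted, the Corollary follows in an essentially formal manner, the only subtle point being the benign modification of $b$ by a multiple of $\id$ used to enforce positivity without altering either $\delta(b)$ or $\Psi(b)$.
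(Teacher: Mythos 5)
Your proof is correct and takes essentially the same route as the paper: Mess's holonomy rigidity (packaged in Remark \ref{rm:closedhol}), Goldman's result that $d\Hol$ is an isomorphism, the commutativity of \eqref{eq:diag-comm}, and the trick of adding a large multiple of $\id$ to enforce positivity without changing the relevant invariant. The only cosmetic difference is in the surjectivity step, where the paper argues on the $\delta$ side (surjectivity of $\delta$ from the exact sequence \eqref{coboundary operator}, restricted to positive Codazzi tensors), while you argue on the $\Psi$ side via harmonic representatives $b_q$; the two are interchangeable through the diagram.
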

\begin{proof}
As it is known (\cite{Goldman}) that $$d\Hol: T_X\cT(S)\to T_{[\tiny\hol]}\left(\cR(\pi_1(S), \SO(2,1))/\!\!/ \SO(2,1)\right)$$ is an isomorphism, 
and that the holonomy distinguishes maximal globally hyperbolic flat space-times with compact surface (\cite{Mess}), the result follows by the commutative
of \eqref{eq:diag-comm}.
The only  point to check
is that the restriction of the map $\Lambda\circ\delta:\cC(S, h)\to H^1_{\tiny\hol}(\pi_1(S), \so(2,1))$  on the subset of positive Codazzi tensors $\cC^+(S, h)=\left\{b\in\cC(S, h):b>0\right\}$ is surjective.
This follows from the fact that $\delta:\cC(S, h)\to H^1_{\tiny\hol}(\pi_1(S), \mathbb R^{2,1})$ is surjective and that
for any smooth Codazzi tensor we can find a constant $M$ such that $b+M\Id$ is positive.
\end{proof}

\subsection{Symplectic forms} \label{Symplectic forms}
We fix a hyperbolic metric $h$ on a closed surface $S$ and  use the same notation as in the previous section.
We consider the Goldman symplectic form $\omega^B$ on $H^1_{\tiny\Ad\circ\tiny\hol}(\pi_1(S), \so(2,1))$, which depends on the choice
of a non-degenerate $\Ad$-invariant symmetric form $B$ on $\so(2,1)$, and the Weil-Petersson symplectic form $\omega_{\WP}$ on $T\cT(S)$. In \cite{Goldman}, Goldman proved that $d\Hol:(T_{[X_h]}\cT(S), \omega_{W\!P})\to (H^1_{\tiny\hol}(\pi_1(S), \so(2,1)), \omega^B)$ is symplectic
up to a multiplicative factor (which depends on the choice of $B$).

In this section we give a different proof of this fact. We will compute in a simple way the pull-back of the forms $\omega^B$ and 
$\omega_{\WP}$ respectively through the maps $\Lambda\circ\delta:\cC(S,h)\to H^1_{\tiny\hol}(\pi_1(S), \so(2,1))$ and 
$\Psi:\cC(S,h)\to T_{X_h}\cT(S)$ introduced in the previous section and show that  they  coincide up to a factor.
The thesis will directly follow by the commutativity of  \eqref{eq:diag-comm}.

 In the definition of the Goldman form $\omega^B$ given in \cite{Goldman}, the model $\psl(2,\R)$ of the algebra $\so(2,1)$  is considered, and in that model the following $\Ad$-invariant form is taken:
\[
    B(X, Y)=\tr (XY)\,.
\]
With this choice we can compute $B$ in terms of the Minkowski product on the bundle $F$.
\begin{lemma}\label{lm:BF}
Let $B$ be the form on $\so(2,1)$ obtained by identifying $\so(2,1)$ with $\psl(2,\R)$. Then
$B(\Lambda(t), \Lambda(s))=(1/2)\langle t,s\rangle$.
\end{lemma}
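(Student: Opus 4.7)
The plan is to reduce the identity to a one-parameter check by a symmetry argument, and then evaluate both sides on a single well-chosen vector.

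First I would observe that both sides are $\SO(2,1)$-invariant symmetric bilinear forms on $\R^{2,1}$. The right-hand side $\langle\cdot,\cdot\rangle$ is invariant by definition of $\SO(2,1)$. For the left-hand side, recall that $B(X,Y)=\tr(XY)$ is $\Ad$-invariant on $\psl(2,\R)\cong\so(2,1)$, while $\Lambda$ is $\SO(2,1)$-equivariant as noted in Subsection~\ref{subsec Killing}. Hence $(t,s)\mapsto B(\Lambda(t),\Lambda(s))$ is $\SO(2,1)$-invariant and symmetric. Since the standard representation of $\SO(2,1)$ on $\R^{2,1}$ is irreducible, the space of invariant symmetric bilinear forms on $\R^{2,1}$ is one-dimensional, and so there exists a constant $c\in\R$ with
\[
B(\Lambda(t),\Lambda(s))=c\,\langle t,s\rangle \qquad\text{for all } t,s\in\R^{2,1}.
\]

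Next, I would pin down the constant $c$ by evaluating on a single vector. A convenient choice is the time-like unit vector $t_0=(0,0,1)$, so that $\langle t_0,t_0\rangle=-1$. The corresponding Killing field $X_{t_0}(x)=t_0\boxtimes x$ generates the rotation about the $e_3$-axis fixing the base point of $\Hyp^2$. Under the standard identification of $\so(2,1)$ with $\psl(2,\R)$ (realized through the action on the upper half-plane), this infinitesimal rotation corresponds to the elliptic generator, which in traceless matrix form may be written as $\tfrac{1}{2}\begin{pmatrix} 0 & -1\\ 1 & 0\end{pmatrix}$. A direct computation gives $\tr(\Lambda(t_0)^2)=-1/2$, so $c=1/2$, which is the asserted constant.

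The only genuine difficulty in executing this plan is bookkeeping: one must make the identification of $\psl(2,\R)$ with $\so(2,1)\subset \operatorname{End}(\R^{2,1})$ explicit enough to verify that the normalizations match, especially because $\Lambda$ is defined through the cross product $t\boxtimes x$, which carries its own sign convention from the Hodge operator on $\Lambda^2\R^{2,1}$. Once this is done carefully on the single vector $t_0$, the invariance argument of the first step propagates the result to all of $\R^{2,1}\times\R^{2,1}$.
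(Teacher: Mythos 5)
Your proposal is correct and follows essentially the same route as the paper: both reduce the statement to a single constant via the one-dimensionality of the space of invariant symmetric bilinear forms, and then evaluate on the timelike unit vector $t_0$ whose associated Killing field generates the elliptic rotations fixing the corresponding point of $\Hyp^2$. The one step you defer as ``bookkeeping''---that under the identification with $\psl(2,\R)$ the generator $\Lambda(t_0)$ corresponds to $\pm\tfrac12\begin{pmatrix}0&-1\\1&0\end{pmatrix}$ rather than the unnormalized matrix---is precisely what the paper pins down, by noting that $\exp(t\Lambda(t_0))$ is $2\pi$-periodic in $\SO_0(2,1)$ whereas $\exp(tX_0)$ is $\pi$-periodic in $\PSL(2,\R)$, which forces the factor $1/2$ and hence $c=1/2$.
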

\begin{proof}
As the space of $\Ad$-invariant symmetric forms on $\so(2,1)$ is $1$-dimensional, there exists
$\lambda_0$ such that $B(\Lambda(t), \Lambda(s))=\lambda_0\langle t,s\rangle$.

In order to compute $\lambda_0$, let us consider an isometry $\Gamma$ between $\Hyp^2$ and the upper half-space $H^+$, sending
$t_0=(1,0,0)$ to $i$. As $\Lambda(t_0)$ is a generator of the elliptic group around $t_0$, we have that $\Gamma\Lambda(t_0)\Gamma^{-1}$
is a multiple of the matrix
\[
X_0=\begin{pmatrix}
0 & -1\\
1 & 0
\end{pmatrix}\,.
\]
Using that $\exp(t\Lambda(t_0))$ is $2\pi$ periodic, whereas $\exp(tX)$ is $\pi$-periodic in $PSL(2,\R)$ we deduce that
$\Gamma\Lambda(t_0)\Gamma^{-1}=\pm (1/2)X_0$, so 
$B(\Lambda(t_0), \Lambda(t_0))=-1/2=1/2\langle t_0, t_0\rangle$ and $\lambda_0=1/2$.
\end{proof}

\noindent To define the symplectic form $\omega^B$, $H^1_{\tiny\Ad\circ\tiny\hol}(\pi_1(S), \so(2,1))$ is identified with $H^1_{\mathrm{dR}}(S, F_{\so(2,1)})$. Then we set
\[
    \omega^B(\sigma, \sigma')=\int_S B(\sigma\wedge\sigma')\,,
\]
where $B(\sigma\wedge\sigma')$ is obtained by alternating the real $2$-form $B(\sigma(\bullet), \sigma'(\bullet))$.

\noindent Analogously  a symplectic form $\omega^F$ is defined on $H^1_{\mathrm{dR}}(S, F)$ by setting
\[
\omega^F( s, s')=\int_S\langle s\wedge s'\rangle\,,
\]
where $s,s'$ are $F$-valued closed $1$-forms.

By Lemma \ref{lm:BF} one gets that $\omega^B(\Lambda(s), \Lambda(s'))=(1/2)\omega^F(s, s')$.

\begin{prop}\label{pr:omegaB}
Let $\delta:\cC(S,h)\to H^1_{dR}(S, F)$ be the connecting homomorphism.
Then
 \begin{equation}\label{eq:omegaF}
 \omega^F(\delta(b), \delta(b'))=\frac{1}{2}\int_S \tr (Jbb')\da_h\,,
 \end{equation}
or analogously
 \begin{equation}\label{eq:omegaB}
 \omega^B(\Lambda(\delta(b)), \Lambda(\delta(b')))=\frac{1}{4}\int_S \tr (Jbb')\da_h\,.
 \end{equation}
\end{prop}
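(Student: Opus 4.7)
The plan is to combine Proposition \ref{pr:216}, which identifies $\delta(b)$ with the de~Rham class of the $F$-valued closed $1$-form $\bfs{\iota}_* b$, with a pointwise unpacking of the wedge pairing in $\omega^F$. Once $\delta(b)$ and $\delta(b')$ are replaced by $\bfs{\iota}_* b$ and $\bfs{\iota}_* b'$, the integrand becomes a $2$-form on $S$ built out of the Minkowski product on $F$, and I want to show it equals $\frac{1}{2}\tr(Jbb')\,\da_h$.

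The first observation is that the monomorphism $\bfs{\iota}_*:TS\hookrightarrow F$ is an \emph{isometric embedding}, namely
\[
\langle \bfs{\iota}_*(X),\bfs{\iota}_*(Y)\rangle = h(X,Y)\qquad\text{for all } X,Y\in T_xS.
\]
Indeed, this is evident in the model $S=\Hyp^2$, where $\bfs{\iota}_*$ is the inclusion $T\Hyp^2\hookrightarrow\R^{2,1}$ and the Minkowski product restricts to the hyperbolic metric on each tangent space; the general case follows by locality.

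The second step is a purely linear algebraic computation. Pick $x\in S$ and a positively oriented orthonormal frame $(e_1,e_2)$ at $x$ with $e_2=Je_1$. Expanding the alternating pairing, using the isometric property, and then the self-adjointness of $b$, I get
\[
\langle \bfs{\iota}_*b\wedge\bfs{\iota}_*b'\rangle(e_1,e_2) = \tfrac{1}{2}\bigl[h(be_1,b'e_2)-h(be_2,b'e_1)\bigr] = \tfrac{1}{2}\bigl[h(e_1,bb'e_2)-h(e_2,bb'e_1)\bigr].
\]
The key identity is then the pointwise matrix formula, valid for \emph{any} endomorphism $A$ of the $2$-dimensional space $T_xS$:
\[
h(e_1,Ae_2)-h(e_2,Ae_1)=\tr(JA),
\]
which is immediate from $Je_1=e_2$, $Je_2=-e_1$ by writing $A$ as a $2\times 2$ matrix in the frame $(e_1,e_2)$. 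Applying this to $A=bb'$ and integrating against the hyperbolic area form yields \eqref{eq:omegaF}. Equation \eqref{eq:omegaB} then follows immediately from Lemma \ref{lm:BF}, which supplies the extra factor of $1/2$ relating $\omega^B\circ\Lambda$ and $\omega^F$.

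The only delicate point is bookkeeping with the normalization of the alternation in the definition of $\langle\sigma\wedge\sigma'\rangle$: the factor $1/2$ on the right-hand side of \eqref{eq:omegaF} is exactly the one coming from the alternation convention, and all the rest of the argument is a direct substitution of Proposition \ref{pr:216} and the isometric character of $\bfs{\iota}_*$.
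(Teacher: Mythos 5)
Your proposal is correct and follows essentially the same route as the paper: identify $\delta(b)=[\bfs{\iota}_*b]$ via Proposition \ref{pr:216}, use that $\bfs{\iota}_*$ is isometric to reduce the wedge pairing to $\tfrac{1}{2}\bigl(h(be_1,b'e_2)-h(be_2,b'e_1)\bigr)$ in an orthonormal frame, and conclude $\tfrac12\tr(Jbb')$, with Lemma \ref{lm:BF} giving \eqref{eq:omegaB}. The only cosmetic difference is that you finish the pointwise computation via self-adjointness and the identity $h(e_1,Ae_2)-h(e_2,Ae_1)=\tr(JA)$, while the paper substitutes $e_2=Je_1$ directly; both are the same linear algebra.
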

\begin{proof}
By Proposition \ref{pr:216}, $\delta(b)=[\bfs{\iota}_*b]$ and $\delta(b')=[\bfs{\iota}_*b']$, where
$\bfs{\iota}_*:TS\to F$ is the inclusion induced by the developing section.
In particular if $\{e_1, e_2\}$ is an orthonormal frame on $S$ we have
\begin{align*}
\langle( \delta b) \wedge(\delta b')\rangle= &\frac{1}{2}\left(\langle \bfs{\iota}_*b(e_1), \bfs{\iota}_*b'(e_2)\rangle-
\langle \bfs{\iota}_*b(e_2), \bfs{\iota}_*b'(e_1)\rangle\right)\\ = &
\frac{1}{2}\left(h(b e_1, b'e_2)-h(b e_2, b' e_1)\right)\\ = &
\frac{1}{2}\left(h(be_1, b'J e_1)+h(be_2, b'Je_2)\right)\\ = &
\frac{1}{2}\tr(Jbb')\,.
\end{align*}
Formula \eqref{eq:omegaF} immediately follows.
\end{proof}

\begin{remark}\label{rk:wd}
A consequence of the previous proposition is that if $b$ and $b'$ are Codazzi operators, then 
\[
    \int_S \tr(Jbb')\da_h=0
\]
whenever one of the two factors is of the form $\Hess u-u\Id$.
This could also be deduced by a direct computation.
\end{remark}

We now consider the computation of the Weil-Petersson symplectic form $\omega_{\WP}$.
In conformal coordinates, if $q(z)=f(z)dz^2$, $q'(z)=g(z)dz^2$ and 
$h(z)=e^{2\eta}|dz|^2$, then the $2$-form
\[
       \frac{f\bar g}{e^{2\eta}}dx\wedge dy
\]
is independent of the coordinates. Recall that the Codazzi tensor $b_q$ is defined as the $h^{-1}\Re(q)$.  
A simple computation shows that in the conformal basis $\{\partial_x, \partial_y\}$ the operator $b_q$ is represented by the matrix
\begin{equation} \label{formulahqdcodazzi}
 e^{-2\eta}\begin{pmatrix} \Re (f) & -\Im (f)\\ -\Im (f) & -\Re (f)\end{pmatrix}\,.
 \end{equation}
 The Weil-Petersson product is defined as 
\[
g_{\WP}(q,q')=\int_S  \frac{f\bar g}{e^{2\eta}}dx\wedge dy~.
\]
A local computation, using expression \eqref{formulahqdcodazzi} of the matrices $b_q$ and $b_{q'}$ shows that
\begin{equation}\label{eq:WP}
   g_{\WP}(q,q')=\frac{1}{2}\int_S\tr(b_{q}b_{q'})\da_h\, +\, \frac{i}{2}\int_{S}\tr(Jb_{q}b_{q'})\da_h\,.
\end{equation}
This expression is at the heart of the following computation.

\begin{prop}\label{pr:omegaWP}
Given $b,b'\in\cC(S, h)$, the following formula holds:
\begin{equation}\label{eq:omegaWP}
   \omega_{\WP}(\Psi(b), \Psi(b'))=2 \int_S\tr (Jbb')\da_h\,.
\end{equation}
\end{prop}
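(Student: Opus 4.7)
The plan is to reduce the computation to the harmonic (traceless) parts of $b$ and $b'$ given by the decomposition in Proposition \ref{pr:cod-dec}, and then appeal to the local formula \eqref{eq:WP} for the Weil-Petersson Hermitian pairing on quadratic differentials.

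The first step is to identify $\Psi(b)$ with the class of $b_q$ in $\cB(X)/\cB(X)^\perp$. Writing $b = b_q + \Hess u - u\Id$, the element $\Psi(b)$ is represented by the traceless part $b_0$, so the task is to show that the traceless symmetric operator $\Hess u - \tfrac{\Delta u}{2}\Id$ (the traceless part of $\Hess u - u\Id$) is a trivial Beltrami differential. This follows from Remark \ref{remarkremark}: under the identification $TS \cong K^{-1}$ the Levi-Civita connection coincides with the Chern connection, hence the symmetric traceless part of $\nabla \grad u$ is exactly $\bar\partial(\grad u)$, a $\bar\partial$-exact Beltrami differential. Therefore $\Psi(b) = [b_q]$ and similarly $\Psi(b') = [b_{q'}]$ in $T_{[X]}\cT(S)$.

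The second step is to rewrite the right-hand side. Expanding
\[
\int_S \tr(Jbb')\da_h = \int_S \tr(Jb_q b_{q'})\da_h + \int_S \tr(Jb_q(\Hess u'-u'\Id))\da_h
\]
\[
+\int_S \tr(J(\Hess u-u\Id)b_{q'})\da_h + \int_S \tr(J(\Hess u-u\Id)(\Hess u'-u'\Id))\da_h
\]
and applying Remark \ref{rk:wd} to each of the last three terms (each one has at least one factor of the form $\Hess v - v\Id$), we conclude that $\int_S \tr(Jbb')\da_h = \int_S \tr(Jb_q b_{q'})\da_h$.

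The final step is to apply formula \eqref{eq:WP}, which expresses the Weil-Petersson Hermitian pairing on quadratic differentials as the sum of $\tfrac{1}{2}\int_S \tr(b_q b_{q'})\da_h$ and $\tfrac{i}{2}\int_S \tr(Jb_q b_{q'})\da_h$. The Weil-Petersson symplectic form on $\cT(S)$ is, up to the conventional factor fixed by the normalization used throughout the paper, the imaginary part of this Hermitian pairing, which combined with the previous step yields $\omega_{\WP}(\Psi(b), \Psi(b')) = 2\int_S \tr(Jbb')\da_h$. I expect the main obstacle to be the first step, as it requires a careful translation between the real Riemannian formalism on $(S,h)$ and the Dolbeault calculus on $K^{-1}$ via the Chern connection encoded in Remark \ref{remarkremark}; once this is settled, the remaining computations are straightforward.
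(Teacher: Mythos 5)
Your first two steps are sound and essentially coincide with the paper's own argument: the identification $\Psi(b)=[b_q]$ (the traceless part of $\Hess u-u\Id$ is the $J$-anti-linear part of $\nabla\grad u$, i.e. $\bar\partial(\grad u)$ under the identification of Remark \ref{remarkremark}, hence a trivial Beltrami differential), and the reduction $\int_S\tr(Jbb')\da_h=\int_S\tr(Jb_qb_{q'})\da_h$ via Remark \ref{rk:wd}. The gap is in your final step. Formula \eqref{eq:WP} computes the Weil--Petersson Hermitian pairing on holomorphic quadratic differentials, i.e. on the \emph{cotangent} space at $[X]$, whereas $\Psi(b_q),\Psi(b_{q'})$ are tangent vectors (Beltrami classes). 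You cannot simply take ``the imaginary part of this Hermitian pairing'' and declare it equal to $\omega_{\WP}(\Psi(b_q),\Psi(b_{q'}))$ up to a ``conventional factor'': the constant in \eqref{eq:omegaWP} is precisely what the section needs (it produces $\Hol^*\omega^B=\frac18\omega_{\WP}$ in Corollary \ref{corollarytheoremgoldman}), and it is not a normalization convention but the output of a computation your proposal omits.

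Concretely, what is missing is the transfer of \eqref{eq:WP} from the cotangent to the tangent side. The paper does this by computing the natural contraction pairing between a quadratic differential and the Beltrami class $\Psi(b)$ in terms of operators on $TS$, using \eqref{formulabeltramicodazzi} and \eqref{formulahqdcodazzi}, obtaining $\langle q,\Psi(b)\rangle=-\int_S(\tr(Jb_0b_q)+i\,\tr(b_0b_q))\da_h$; comparing this with \eqref{eq:WP} identifies the antilinear duality map $\mathcal K^2(S)\to T_{X_h}\cT(S)$ defined by the Weil--Petersson product as $q\mapsto\Psi(Jb_q/2)$, whence $g_{\WP}(\Psi(b_q),\Psi(b_{q'}))=4\,g_{\WP}(q,q')$ and then $\omega_{\WP}(\Psi(b_q),\Psi(b_{q'}))=2\int_S\tr(Jb_qb_{q'})\da_h$. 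Had you naively read off the imaginary part of \eqref{eq:WP} evaluated at $(q,q')$, you would get $\frac12\int_S\tr(Jb_qb_{q'})\da_h$, off by a factor of $4$ from the statement; that factor comes exactly from the duality map $q\mapsto\Psi(Jb_q/2)$, which your argument never computes. To close the proof you must add this pairing computation (or some equivalent explicit description of $\omega_{\WP}$ directly on Beltrami classes) so that the constant $2$ is actually derived rather than asserted.
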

\begin{proof}
Let  $q$ be a holomoprhic quadratic differential.
By a local computation, using Equations \eqref{formulabeltramicodazzi} and \eqref{formulahqdcodazzi} which relate the expression in complex charts of the Beltrami differential $\Psi(b)=[b_0]$ and the holomorphic quadratic $q$ differential to their expression as operators on the real tangent space $TS$, the contraction form of $\Psi(b)$ and $q$ equals
$$q\bullet \Psi(b)=-(\tr(Jb_0b_q)+i\tr(b_0b_q))\da_h\,.$$
It follows that
\begin{equation}\label{eq:cont}
    \langle q,\Psi(b)\rangle=-\int_S(\tr(Jb_0b_q)+i\tr(b_0b_q))\da_h\,.
\end{equation}

Comparing this equation with \eqref{eq:WP} we see that the antilinear
map $\mathcal K^2(S)\to T_{X_h}\cT(S)$ defined  by the Weil-Petersson product is
\[
    q\to \Psi\left(\frac{Jb_q}{2}\right)\,.
\]
So we have dually  that
\[
  g_{\WP}(\Psi(b_q), \Psi(b_q'))=4 g_{\WP}(\Psi(Jb_q/2), \Psi(Jb_q'/2))=4g_{\WP}(q,q')
\]
and using \eqref{eq:WP} we see that
\[
   \omega_{\WP}(\Psi(b_q), \Psi(b_q'))=2 \int_S\tr (Jb_qb'_q)\da_h\,.
\]
To get the formula in general, notice that, if $b=b_q+\hess u-u\Id$ and $b'=b_{q'}+\hess u'-u'\Id$, 

\begin{align*}
\omega_{\WP}(\Psi(b), \Psi(b'))=&\omega_{\WP}(\Psi(b_q), \Psi(b_{q'}))\\= &
2\int_S\tr(Jb_qb_{q'})\da_h\\=&
2\int_S\tr(Jbb')\da_h\,.
\end{align*}
where the last equality holds by Remark \ref{rk:wd}.
\end{proof}

\begin{cor} \label{corollarytheoremgoldman}
The Weil-Petersson symplectic form $\omega_{\WP}$ and the Goldman symplectic form $\omega^{B}$ are related by:
$$\Hol^*(\omega^B)=\frac{1}{8}\omega_{\WP}\,.$$
\end{cor}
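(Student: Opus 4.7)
The plan is to exploit the two explicit formulas already obtained for $\omega^B$ and $\omega_{\WP}$ pulled back to $\cC(S,h)$, and then transport the resulting identity to $T_{[X]}\cT(S)$ via the commutative diagram \eqref{eq:diag-comm}. The decisive observation is that Propositions \ref{pr:omegaB} and \ref{pr:omegaWP} both express their respective symplectic forms as a constant multiple of $\int_S\tr(Jbb')\,\da_h$, so that the comparison is essentially immediate; the only subtlety is the $\cJ$-twist built into \eqref{eq:diag-comm}.

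First I would combine the two propositions to record the basic identity
\[
\omega^B\bigl(\Lambda\delta(b),\Lambda\delta(b')\bigr)=\tfrac{1}{8}\,\omega_{\WP}\bigl(\Psi(b),\Psi(b')\bigr)
\qquad\text{for all }b,b'\in\cC(S,h).
\]
Next I would invoke Theorem \ref{theorem parametrization closed surfaces}, which gives $\Lambda\circ\delta=d\Hol\circ\cJ\circ\Psi$, to rewrite the left-hand side as $(\Hol^*\omega^B)(\cJ\Psi(b),\cJ\Psi(b'))$. Since every class in $T_{[X]}\cT(S)$ is represented by a harmonic Codazzi tensor $b_q$ (Example \ref{Codazzi traceless real part hqd} identifies traceless Codazzi tensors with holomorphic quadratic differentials, and these exhaust the Beltrami classes), the map $\Psi:\cC(S,h)\to T_{[X]}\cT(S)$ is surjective. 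Consequently, setting $v=\Psi(b)$, $w=\Psi(b')$, I get the identity
\[
(\Hol^*\omega^B)(\cJ v,\cJ w)=\tfrac{1}{8}\,\omega_{\WP}(v,w)
\qquad\text{for all }v,w\in T_{[X]}\cT(S).
\]

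To remove the $\cJ$-twist, I would substitute $v\mapsto -\cJ v$, $w\mapsto -\cJ w$ and use $\cJ^2=-\id$, obtaining $(\Hol^*\omega^B)(v,w)=\tfrac{1}{8}\omega_{\WP}(\cJ v,\cJ w)$. The desired equality $(\Hol^*\omega^B)(v,w)=\tfrac{1}{8}\omega_{\WP}(v,w)$ will then follow from the $\cJ$-invariance $\omega_{\WP}(\cJ v,\cJ w)=\omega_{\WP}(v,w)$, which is the Kähler property of the Weil-Petersson form. If I do not want to quote this externally, it can be verified directly from Proposition \ref{pr:omegaWP}: writing $v=\Psi(b_q)$, $w=\Psi(b_{q'})$, one has $\cJ v=\Psi(b_{iq})$, $\cJ w=\Psi(b_{iq'})$ with $b_{iq}=Jb_q$, and the anticommutation $Jb_q=-b_qJ$ (valid for any $h$-self-adjoint traceless operator) gives $\tr(J\,b_{iq}b_{iq'})=\tr(Jb_qb_{q'})$, whence the invariance.

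The main obstacle, if one can call it that, is bookkeeping: one has to keep track of the $\cJ$ appearing in the diagram \eqref{eq:diag-comm} and reconcile it with the $\cJ$ appearing implicitly in the formulas of Propositions \ref{pr:omegaB} and \ref{pr:omegaWP}. Once this is done carefully, the corollary drops out without any further computation—in particular it is worth emphasizing that, in contrast to Goldman's original complex-analytic argument, no further input from Teichm\"uller theory is required: the two symplectic forms are compared through the common bilinear form $(b,b')\mapsto\int_S\tr(Jbb')\,\da_h$ on Codazzi tensors.
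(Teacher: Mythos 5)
Your proposal is correct and follows essentially the same route as the paper, which deduces the corollary directly from the commutativity of diagram \eqref{eq:diag-comm} together with formulae \eqref{eq:omegaB} and \eqref{eq:omegaWP}. The only difference is that you spell out the removal of the $\cJ$-twist via the $\cJ$-invariance of $\omega_{\WP}$ (checked through $\tr(Jb_{iq}b_{iq'})=\tr(Jb_qb_{q'})$), a point the paper leaves implicit in its ``follows directly''.
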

The proof, which is a new proof of Goldman's Theorem presented in \cite{Goldman}, follows directly by the commutativity of diagram \eqref{eq:diag-comm} and formulae \eqref{eq:omegaB} and \eqref{eq:omegaWP}.




\section{The case of punctured and singular surfaces} \label{punctured surfaces}

\subsection{Hyperbolic metrics with cone singularities}\label{ss:conprel}

We now consider more deeply the case of surfaces with cone singularities.
Let us fix a closed surface $S$ of genus $g$ and a finite set of points $\puct=\{p_1,\ldots, p_k\}$ on $S$.
Finally fix $\theta_1,\ldots\theta_k\in (0,2\pi)$.
Recall by \cite{Troyanov} that a singular metric on $S$ with cone angles $\theta_i$ at $p_i$
is a smooth metric $h$ on $S\setminus \puct$ such that for any $i=1\ldots,k$.
there is a conformal coordinate $z$ in a neighborhood $U_i$ of $p_i$ such that
$h|_{U_i\setminus\{p_i\}}=|z|^{2\beta_i}e^{2\xi_i(z)}|dz|^2$, where
$\beta_i=\frac{\theta_i}{2\pi}-1\in(-1,0)$ and $\xi_i$ is a continuous function on $U_i$.
We will denote by $\bfs{\beta}=\sum \beta_ip_i$ the divisor associated with the metric $h$.
We will always assume $\chi(S, \bfs\beta):=\chi(S)+\sum\beta_i<0$. 



\begin{example} \label{examples cone singular metrics}
The local model of a hyperbolic metric with cone singularity of angle $\theta_0$
is obtained by taking a wedge in $\Hyp^2$ of angle $\theta_0$ and glueing its edges by a
rotation. See Figure \ref{modelconepoint} below, in the Poincar\'e disc model.

\begin{figure}[htbp]
\centering
\includegraphics[height=4cm]{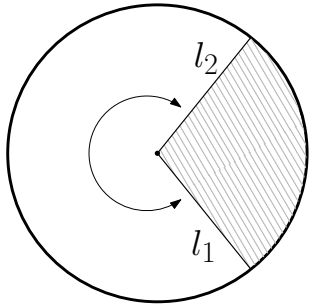}
\caption{The model of a hyperbolic surface with a cone point. The wedge in $\Hyp^2$ is the intersection of the half-planes bounded by two geodesic $l_1,l_2$. The edges are glued by a rotation fixing $l_1\cap l_2$.} \label{modelconepoint}
\end{figure}

More formally one can consider the universal cover $H$ of $\Hyp^2\setminus\{p\}$.
Its isometry group is the universal cover of the stabilizer of $p$ in $\isom(\Hyp^2)$.
Indeed we can consider on $H\cong (0,+\infty)\times\R$ the coordinates $(r,\theta)$ obtained by pulling back
the polar coordinates of $\Hyp^2$ centered at $p$.
If we take $p=(0,0,1)$ the projection map is simply
\[
    d(r,\theta)=(\sh r\cos\theta, \sh r\sin\theta,\ch r)~.
\]
We have $d^*(h_{\Hyp^2})=dr^2+\sh^2(r)d\theta^2$. The isometry group of $H$
coincides with the group of horizontal translations $\tau_{\theta_0}(r,\theta)=(r, \theta_0+\theta)$.
For a fixed $\theta_0$, the completion $\Hyp_{\theta_0}$ of the quotient of $H$ by the group generated by $\tau_{\theta_0}$
is the model of a hyperbolic surface with cone singularity $\theta_0$.

According to the definition given in \cite{mazzrubinKE}, 
\emph{polar coordinates} on $\Hyp_{\theta_0}$ are obtained by taking $r$ and $\phi=(2\pi/\theta_0)\theta\in[0,2\pi]$ 
and the metric takes the form
 $$dr^2+\left(\frac{\theta_0}{2\pi}\right)^2(\sinh r)^2 d\phi^2.$$

To construct a conformal coordinate on $\Hyp_{\theta_0}$ it is convenient 
to consider the holomorphic covering of $\pi:H\to \Hyp^2\setminus\{p\}$.
Taking the Poincar\'e model of the hyperbolic plane centered at $p$, we can realize $H$ as the upper half plane with projection 
$\pi(w)=\exp(iw)$. In this model the pull-back metric is simply
\[
     \pi^*\left(\frac{4|dz|^2}{(1-|z|^2)}\right)=\frac{4 e^{-2y}}{(1-e^{-2y})^2}|dw|^2\,,
\]
and isometries are horizontal translations.
It can be readily shown that the dependence of coordinates $(r, \theta)$ on the conformal coordinate $w=x+iy$ is of the form  
$r=\log\frac{1+e^{-y}}{1-e^{-y}},\ \theta=x$.
In particular the map $\tau_{\theta_0}$ in this model is still of the form $\tau_{\theta_0}(w)=w+\theta_0$.

Now we have a natural holomorphic projection 
$\pi_{\theta_0}:H\to \D\setminus\{0\}$ given by $\pi_{\theta_0}(w)=\exp(ik w)$ where $k=2\pi/\theta_0$. 
The automorphism group  of $\pi_{\theta_0}$ is generated by the translation  $\tau_{\theta_0}$.
So a conformal metric $h_{\theta_0}$ is induced on $\D\setminus\{0\}$ that makes $\D$ a model of $\Hyp_{\theta_0}$.
Putting $z=\exp(ikw)$ and $w=x+iy$ it turns out that $|z|=e^{-ky}$ so $e^{-y}=|z|^{1/k}$ and as $dz=ik\exp{(ikw)}dw$ one gets that
\begin{equation}\label{eq:hypconf}
   h_{\theta_0}=\frac{4}{k^2}\frac{|z|^{2(1/k-1)}}{(1-|z|^{2/k})^2}|dz|^2=4(1+\beta)^2\frac{|z|^{2\beta}}{(1-|z|^{2(1+\beta)})^2}|dz|^2\,,
\end{equation}
where we have put $\beta=\theta_0/2\pi-1$.


Analogously, one can construct the models of Euclidean and spherical
singular points.
In polar coordinates the
flat cone metric takes the form
$$d\rho^2+  \left( \frac{\theta_0}{2\pi} \right) ^2 \rho^2  d\phi^2~.$$
In the conformal coordinate the flat metric is simply
$|z|^{2\beta}|dz|^2$. 
\end{example}

We mainly consider the case where $h$ is a hyperbolic metric with cone singularities.
In particular we denote by $\cH(S,\bfs{\beta})$ the set of singular hyperbolic metrics on $S$ with
divisor $\bfs{\beta}$.
We will endow a neighborhood of a cone point of a hyperbolic surface with a Euclidean metric with the same cone singularity,
that we call the \emph{Klein Euclidean metric} associated with $h$.
The construction goes as follows.

Take a point $p\in\Hyp^2$ and consider the radial projection of $\Hyp^2$ to the affine plane $P$ tangent to $\Hyp^2$ at $p$, 
\[
   \pi:\Hyp^2\to P\,.
\]
The map $\pi$ is frequently used to construct  the  Klein model of $\Hyp^2$. 
The pull-back of the Euclidean metric $g_P$ of $P$ to $\Hyp^2$ is invariant by the whole stabilizer of $p$.
Hence the pull-back of this metric on the universal cover $H$ of $\Hyp^2\setminus\{p\}$ is a Euclidean metric
invariant by the isometry group of $H$. 
The latter metric thus projects to a Euclidean metric $g_K$ on $\Hyp_{\theta_0}$, still having a cone singularity of angle $\theta_0$ at the cone point.
We observe that $h$ and $g_K$ are bi-Lipschitz metrics in a neighborhood of the singular point.

\begin{remark}
On a surface with cone singularity, 
the metric $g_K$ is defined only in a regular  neighborhood of a cone point.
\end{remark}

One of the reason we are interested in this metric is that there is a useful relation between the Hessian computed with respect to the metric
$h$ and the Hessian computed with respect to the metric $g_K$.

\begin{lemma}[Lemma 2.8 of \cite{bonfill}]\label{lm:kleincorr}
Let $S$ be a hyperbolic surface with a cone point at $p$. 
Let $u$ be a function defined in a neighborhood of $p$ and $r$ the the hyperbolic distance from $p$.
Consider the function $\bar u=(\ch r)^{-1}u$, then
\begin{equation}\label{eq:hasseucl}
  D^2\bar u(\bullet, \bullet)=(\ch r)^{-1}h((\Hess u-u\id)\bullet, \bullet)\,,
\end{equation} 
 where $D^2\bar u$ is the Euclidean Hessian of $\bar u$ for the metric $g_K$, considered as a bilinear form.
\end{lemma}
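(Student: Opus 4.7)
The plan is to use the ambient Minkowski embedding and homogeneity. Normalize so that the cone point lifts to $e_0=(0,0,1)\in\Hyp^2\subset\R^{2,1}$, so the tangent affine plane is $P=\{v_3=1\}$ and the Klein projection takes the explicit form $\pi(x)=x/x_3=x/\ch r$. By construction $g_K=\pi^*g_P$, hence $\pi$ is an isometry from $(\Hyp^2,g_K)$ onto an open set of the Euclidean plane $(P,g_P)$; consequently the $g_K$-Hessian of any function $\bar u$ equals the $g_P$-Hessian of $\hat u:=\bar u\circ\pi^{-1}$.

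The next step is to recognize $\hat u$ as the restriction to $P$ of a natural extension of $u$. First I extend $u$ to the positive time-cone by the degree-one homogeneous rule $\tilde u(tv)=t\tilde u(v)$ for $v\in\Hyp^2$, $t>0$; then by homogeneity $\tilde u(x/\ch r)=u(x)/\ch r=\bar u(x)$, which shows $\hat u=\tilde u|_P$. The Euclidean Hessian of $\hat u$ at $\pi(x)$ is thus simply the restriction of the ambient Hessian $D^2\tilde u_{\pi(x)}$ to directions tangent to $P$.

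Now I compute $D^2\tilde u$ at a point $x\in\Hyp^2$ and pull back by $d\pi$. Since the second fundamental form of $\Hyp^2\subset\R^{2,1}$ is $h$ with inward normal $x$, extending $W\in T_x\Hyp^2$ to a vector field tangent to $\Hyp^2$ and applying the Gauss formula $D_V\tilde W=\nabla_V W+h(V,W)\,x$ gives
\[
D^2\tilde u(V,W)=V(Wu)-(D_V\tilde W)\tilde u=\Hess u(V,W)-h(V,W)\,d\tilde u_x(x).
\]
Euler's identity for a degree-one homogeneous function yields $d\tilde u_x(x)=\tilde u(x)=u(x)$, so $D^2\tilde u_x(V,W)=h((\Hess u-u\id)V,W)=h(bV,W)$ for $V,W\in T_x\Hyp^2$. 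Differentiating the homogeneity identity $d\tilde u_{tv}=d\tilde u_v$ once more gives the scaling $D^2\tilde u_{tv}=t^{-1}D^2\tilde u_v$, and since $x\in\ker D^2\tilde u_{\pi(x)}$ (again by Euler), the third-coordinate correction in $d\pi(V)=V/x_3-(V_3/x_3^2)\,x$ drops out. Combining these observations,
\[
D^2\bar u(V,W)=D^2\tilde u_{x/\ch r}(V/\ch r,\,W/\ch r)=\frac{1}{\ch^2 r}\cdot\ch r\cdot h(bV,W)=(\ch r)^{-1}h(bV,W),
\]
which is the claimed formula. The only step requiring real care is making sure that the normal/tangential decomposition and the radial kernel of $D^2\tilde u$ are exploited consistently so that the $\ch r$ factors combine to the exponent $-1$; once that bookkeeping is in place the proof is essentially algebraic.
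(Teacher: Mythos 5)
Your proof is correct. Note that the paper itself does not prove this statement: it is quoted as Lemma 2.8 of \cite{bonfill}, with only the remark that the computation there is local in $\Hyp^2$ and therefore transfers to a punctured neighborhood of a cone point, so there is no in-paper argument to compare against line by line. Your derivation via the degree-one homogeneous extension $\tilde u$ of $u$ to the cone over the domain is the standard support-function computation and all the key steps check out: the radial projection $\pi(x)=x/\ch r$ is an isometry from $(\Hyp^2,g_K)$ onto an open subset of the affine plane $(P,g_P)$, so $\Hess_{g_K}\bar u$ pulls back the flat Hessian of $\hat u=\tilde u|_P$; the Gauss formula $D_VW=\nabla_VW+h(V,W)x$ together with Euler's identity $d\tilde u_x(x)=u(x)$ gives $D^2\tilde u_x(V,W)=h\bigl((\Hess u-u\,\id)V,W\bigr)$ on $T_x\Hyp^2$; and the $(-1)$-homogeneity of $D^2\tilde u$ plus the radial kernel $D^2\tilde u_y(y,\cdot)=0$ makes the correction term in $d\pi_x(V)=V/\ch r-(V_3/\ch^2 r)\,x$ disappear and produces exactly the factor $(\ch r)^{-1}$. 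The only point worth stating explicitly is that near a cone point one works on the universal cover of the punctured neighborhood (or at a regular point, via the developing map), where $h$ and $g_K$ are simultaneously identified with the hyperbolic metric and $\pi^*g_P$ on an open subset of $\Hyp^2$, the fixed point of the peripheral holonomy playing the role of $e_0$; your normalization implicitly does this, and since the asserted identity is pointwise and tensorial at regular points, this is enough.
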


The computation in \cite{bonfill} was done only locally in $\Hyp^2$, but as the result is of local nature it works
also in the neighborhood of a cone point.

\subsection{Codazzi tensors on a hyperbolic surface with cone singularities}

A Codazzi operator on the singular hyperbolic surface $(S, h)$ is a smooth  self-adjoint operator $b$ on $S\setminus\puct$ which
solves the Codazzi equation $d^{\nabla}_h b=0$. 
It is often convenient to require some regularity of $b$ around the singularity.

We basically consider two classes of regularity.
We say $b$ is bounded if the eigenvalues of $b$ are uniformly bounded on $S$, and denote by
$\cC_\infty(S,h)$ the space of bounded Codazzi operators.
We say that $b$ is  of class $L^2$ (with respect to $h$)
 if $$\int_S \tr (b^2)\da_h<+\infty\,,$$ where $\da_h$ is the area form of the singular metric.
 We denote by $\cC_2(S,h)$ the space of $L^2$-Codazzi tensors. Since $\tr (b^2)=||b||^2$, $b$ is in $\cC_2(S,h)$ if and only if $||b||\in L^2(S,h)$.
 Notice that if $u\in \cun(S\setminus\puct)$, then the operator $b=\hess u-u\id$ is a Codazzi operator
 of the singular surface. We have that $b\in \cC_2(S,h)$ if $u$ and $||\hess u||$ are in $L^2(S,h)$.
 
 As in the general case, given a holomorphic  quadratic differential $q$ on $S\setminus \puct$, 
 the self-adjoint operator $b_q$ defined
 by  $\Re q(u,v)=h(b_q(v), w)$ is a traceless Codazzi operator of the singular surface $(S, h)$.
 The regularity of $b_q$ close to the singular points can be easily understood in terms of the singularity of $q$.
 In particular we have
 \begin{prop} \label{integrability harmonic codazzi}
 The Codazzi operator   $b_q$ is in $\cC_2(S,h)$ if and only if $q$ has at worst simple poles at the singularities.
 Moreover if $\theta_i\leq \pi$ then  $b_q$ is bounded around $p_i$, and if $\theta_i<\pi$ then $b_q$ continuously extends at $p_i$.
 \end{prop}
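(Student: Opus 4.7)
The statement is purely local around each cone point, so the plan is to work in a conformal coordinate $z$ centered at a fixed singular point $p_i$ in which
\[
h = |z|^{2\beta_i}e^{2\xi_i(z)}|dz|^2 = e^{2\eta}|dz|^2, \qquad \eta=\beta_i\log|z|+\xi_i(z),
\]
with $\xi_i$ continuous at $0$, and to write $q=f(z)\,dz^2$ with $f$ meromorphic at $0$. The starting point is formula \eqref{formulahqdcodazzi}, giving
\[
b_q = e^{-2\eta}\begin{pmatrix} \Re f & -\Im f \\ -\Im f & -\Re f\end{pmatrix}
\]
in the frame $\{\partial_x,\partial_y\}$. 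A direct squaring gives $b_q^2 = e^{-4\eta}|f|^2\,\id$, so $\tr(b_q^2)=2e^{-4\eta}|f|^2$; equivalently the two eigenvalues of $b_q$ are $\pm e^{-2\eta}|f|$, so $\|b_q\|_{op}=e^{-2\eta}|f|$. These two basic identities are what drives the rest.

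For the $L^2$ part, since $\da_h = e^{2\eta}\,dx\,dy$, one has
\[
\int_U \tr(b_q^2)\,\da_h \;=\; 2\int_U e^{-2\eta}|f|^2\,dx\,dy \;\asymp\; \int_U \frac{|f(z)|^2}{|z|^{2\beta_i}}\,dx\,dy,
\]
where $U$ is a small punctured disk and $\asymp$ uses that $e^{2\xi_i}$ is bounded above and below. Laurent expanding $f(z)=\sum_{k\ge -n}c_kz^k$, passing to polar coordinates, and observing that the cross terms in $|f|^2$ integrate to zero in $\theta$, the condition for finiteness near $0$ reduces to $\int_0 r^{1-2k-2\beta_i}\,dr<\infty$ for every power appearing in $|f|^2$, i.e.\ $k<1-\beta_i$. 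Since $\beta_i\in(-1,0)$, this is equivalent to $k\le 1$, that is, $q$ has at worst a simple pole at $p_i$. Conversely, for a simple pole the leading power $|z|^{-2-2\beta_i}$ gives $r^{-1-2\beta_i}$, which is integrable because $\beta_i<0$.

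For the boundedness/continuity part, one uses the operator-norm identity: for $q$ with at worst a simple pole, $|f(z)|=O(|z|^{-1})$, hence
\[
\|b_q(z)\|_{op} \;=\; e^{-2\eta}|f(z)| \;\le\; C\,|z|^{-1-2\beta_i}.
\]
This is bounded as $z\to 0$ precisely when $-1-2\beta_i\ge 0$, i.e.\ $\beta_i\le -\tfrac12$, i.e.\ $\theta_i=2\pi(1+\beta_i)\le\pi$; and tends to $0$ (so that $b_q$ extends continuously by $0$ at $p_i$) when the strict inequality $\theta_i<\pi$ holds. The only delicate point is checking that ``bounded'' and ``continuously extends'' are coordinate-free, which follows from the fact that the eigenvalues of $b_q$ are intrinsic and are precisely $\pm\|b_q\|_{op}$. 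No step is really an obstacle; the main care is in matching the local exponent of $|z|$ in $h$, in the pole of $q$, and in the area form, which is entirely bookkeeping.
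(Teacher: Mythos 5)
Your computation of $\tr(b_q^2)$, of the area form, and of the resulting integrability and boundedness exponents is correct and matches the paper's local analysis, including the thresholds $\beta_i\le-\tfrac12$ (i.e.\ $\theta_i\le\pi$) for boundedness and $\beta_i<-\tfrac12$ for continuous extension by $0$. The gap is at the very start of the ``only if'' direction: you write $q=f(z)\,dz^2$ ``with $f$ meromorphic at $0$'' and Laurent expand $f(z)=\sum_{k\ge -n}c_kz^k$. But $q$ is only assumed to be a holomorphic quadratic differential on $S\setminus\puct$, so a priori $f$ is merely holomorphic on the punctured disc and could have an essential singularity at $0$; the content of the proposition is precisely to exclude anything worse than a simple pole from the $L^2$ condition. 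As written, your argument only shows: \emph{if} the singularity is a pole, then its order is at most one. The paper devotes a separate lemma (Lemma \ref{lemma integrability}) to this point, ruling out the essential singularity via subharmonicity of $|z^2f|$ and the mean value inequality, which first forces polynomial growth and hence a pole of bounded order, before the exponent bookkeeping takes over.

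The good news is that your route repairs itself cheaply and in fact yields a cleaner argument than the paper's: every $f$ holomorphic on the punctured disc has a two-sided Laurent expansion $f(z)=\sum_{k\in\Z}c_kz^k$, and integrating $|f|^2|z|^{-2\beta_i}$ over circles and using orthogonality (Parseval) gives
\[
\int_{|z|<\epsilon}\frac{|f|^2}{|z|^{2\beta_i}}\,dx\,dy \;=\; 2\pi\sum_{k\in\Z}|c_k|^2\int_0^{\epsilon}r^{2k-2\beta_i+1}\,dr\,,
\]
so finiteness forces $c_k=0$ for all $k\le -2$ (since $2k-2\beta_i+1\le-1$ exactly when $k\le\beta_i-1$, and $\beta_i-1\in(-2,-1)$), with no meromorphy assumption needed. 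If you state the expansion this way, your proof is complete and avoids the paper's subharmonicity lemma altogether; the paper's lemma buys a slightly more general statement (arbitrary exponents $p\in[1,2]$, $a\in(0,2)$), which is not needed for this proposition.
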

 
For the sake of completeness we prove an elementary Lemma, that will be used in the proof of Proposition \ref{integrability harmonic codazzi}.
\begin{lemma} \label{lemma integrability}
Let $D$ be a disc in $\mathbb C$ centered at $0$ and $f$ be a holomorphic function
on $D\setminus\{0\}$. If $|z|^a|f|^p$ is integrable for some $a\in(0,2)$  and $p\in[1,2]$, 
then $f$ has at worst a pole of order $3$ at $0$.
If moreover $2p-a\geq 2$, then the pole at $0$ is at most simple.
\end{lemma}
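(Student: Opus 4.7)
The plan is to handle the statement in two clean steps: first rule out the possibility of an essential singularity of $f$ at $0$ by extracting pointwise growth from the weighted integrability, and then read off the admissible orders of the pole directly from the local asymptotic. The main obstacle is the first step; once $f$ is known to be meromorphic, the rest is a one-line integrability check in polar coordinates.

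For the first step I would exploit subharmonicity: since $f$ is holomorphic on $D\setminus\{0\}$, the function $|f|^p$ is subharmonic there for every $p>0$. Applying the sub-mean-value inequality on the disc $D(z,|z|/2)\subset D\setminus\{0\}$, on which the weight satisfies $|w|\ge |z|/2$, yields
\[
|f(z)|^p\;\le\;\frac{4}{\pi|z|^2}\int_{D(z,|z|/2)}|f(w)|^p\,dA\;\le\;\frac{C_a}{|z|^{a+2}}\int_D |w|^a |f(w)|^p\,dA\,,
\]
for a constant $C_a$ depending only on $a$. This gives the polynomial bound $|f(z)|=O(|z|^{-(a+2)/p})$ as $z\to 0$, which is incompatible with an essential singularity at $0$. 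Hence $f$ extends to a meromorphic function on $D$ and admits a Laurent expansion $f(z)=c_{-k}z^{-k}+O(|z|^{-k+1})$ for some integer $k\ge 0$, with $c_{-k}\ne 0$ if $k\ge 1$.

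For the second step I would insert this asymptotic into the hypothesis. If $k\ge 1$, then $|z|^a|f(z)|^p\sim |c_{-k}|^p|z|^{a-kp}$ near $0$, and this is integrable against the area measure if and only if $a-kp+2>0$, i.e.\ $kp<a+2$. For $a\in(0,2)$ and $p\in[1,2]$ this forces $k<(a+2)/p<4$, so $k\le 3$, which is the first claim. Under the additional hypothesis $2p-a\ge 2$, equivalent to $(a+2)/p\le 2$, the same inequality tightens to $k<2$, hence $k\le 1$, which is the second claim.
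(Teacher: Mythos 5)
Your argument is correct, up to the routine remark that the sub-mean-value estimate on $D(z,|z|/2)$ requires $|z|$ small enough that this disc is contained in $D$ — harmless, since the statement is local at $0$. It rests on the same basic tool as the paper's proof, namely subharmonicity together with the sub-mean-value inequality on discs of radius comparable to $|z|$, followed by integrability of powers of $|z|$, but it is organized differently. The paper first discards the exponents: it observes that $z^2f(z)\in L^1(D)$ (using $a<2$, $p\geq 1$), applies the mean-value bound to the subharmonic function $|z^2f(z)|$ to get $|z^2f(z)|\leq C|z|^{-2}$, and then needs the additional observation that $1/z^2\notin L^1(D)$ while $z^2 f\in L^1(D)$ to exclude a double pole of $z^2f$, which yields pole order at most $3$ for $f$; the refined claim under $2p-a\geq 2$ is treated by a separate lower-bound argument ($|f|\geq C|z|^{-2}$ near $0$ for poles of order $\geq 2$, contradicting integrability). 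You instead keep the weight $|z|^a$ and the exponent $p$ inside the mean-value step, obtaining the pointwise bound $|f(z)|=O(|z|^{-(a+2)/p})$ directly, which rules out an essential singularity, and then the single inequality $kp<a+2$ coming from the Laurent leading term delivers both conclusions at once: $k\leq 3$ since $(a+2)/p<4$, and $k\leq 1$ when $2p-a\geq 2$. Your version is somewhat more unified and makes the dependence on $a$ and $p$ transparent (it would give the sharp pole bound for any $a>-2$, $p>0$), whereas the paper's reduction to an unweighted $L^1$ statement keeps the subharmonic estimate parameter-free at the cost of a two-stage argument and a separate treatment of the refined claim.
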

 \begin{proof}
 First we notice that with our assumption $\hat f(z)=z^2f(z)\in L^1(D, |dz|^2)$. 
 As the function $|\hat f(z)|$ is subharmonic, the value at a point $z$ of $|\hat f|$ is estimated from above
 by the mean value of $|\hat f|$ on the ball centered at $z$ with radius $|z|$
 \[
   |\hat f(z)|\leq\frac{1}{2\pi i|z|^2}\int_{B(z, |z|)}|\hat f(\zeta)|d\zeta\wedge d\bar\zeta~.
 \]
 As the integral is estimated by the norm $L^1$ of $\hat f$ we get that
 $|z|^2 |\hat f(z)|\leq C$. Thus $\hat f$ has a pole of order at worst $2$. As $1/z\in L^1(D,|dz|^2)$, whereas
 $1/z^2\notin L^1(D,|dz|^2)$, the pole in $0$ cannot be of order 2.
 
 This implies that $f$ has at most a pole of order $3$.
 Suppose $f$ has a pole of order $2$ or $3$, then
close to $0$ we have $|f(z)|>C|z|^{-2}$ where $C$ is some positive constant.
Thus $|z|^a|f(z)|^{p}\geq C^p |z|^{a-2p}$, that implies $2p-a<2$. 
  \end{proof}

 \begin{proof}[Proof of Proposition \ref{integrability harmonic codazzi}]
 The problem is local around the punctures. Let us fix a conformal coordinate $z$  in a neighborhood
 of a puncture $p_i$, so that   the metric takes the form
 $h=e^{2\xi}|z|^{2\beta}|dz|^2$ where $\xi$ is a bounded function, 
 whereas $q=f(z)dz^2$ where $f$ is a holomorphic function on the punctured disc $\{z\,|\,0<|z|<\epsilon\}$. 
Using the expression \eqref{formulahqdcodazzi} for the operator $b_q$ in real coordinates, we get
 \[
 ||b_q||^2=\tr (b_q^2)=2e^{-4\xi}|z|^{-4\beta}|f|^2\,.
 \]
 On the other hand the area form is $\da_h=e^{2\xi}|z|^{2\beta}dx\wedge dy$,
 so the problem is reduced to the integrability of the function
 $|z|^{-2\beta}|f(z)|^2$. As $-2\beta\in(0,2)$, Lemma \ref{lemma integrability} implies that this happens if and only if
  $f$ has at worst a simple pole at $0$.
  
 The same computation shows that 
 $||b_q||^2(z)<C|z|^{-4\beta-2}$.
 In particular if $\beta\in(-1,-1/2]$ (that is if the cone angle is $\theta\leq\pi$)
 the operator $b_q$ is bounded around $p_i$, whereas if $\beta<-1/2$ the operators 
 continuously extends at $p_i$ with $b_q(p_i)=0$.
   \end{proof}
 
 We want to prove now that $\delta b$ is a continuous function of $b$ with respect to the $L^2$-distance.
 \begin{prop}\label{pr:contdelta}
The map $\delta:\cC_2(S, h)\to H^1_{\tiny\hol}(\pi_1(S), \R^{2,1})$ is continuous for the $L^2$-distance on $\cC_2(S,h)$.
\end{prop}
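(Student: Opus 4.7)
The plan is to factor $\delta$ as the composition of two continuous linear maps, using the de Rham description from Proposition \ref{pr:216}: $\delta(b) = [\bfs{\iota}_* b]$ in $H^1_{dR}(S\setminus\puct, F)$ under the canonical isomorphism $H^1(S,\cF)\cong H^1_{dR}(S\setminus\puct, F)$. It then suffices to show (i) that $b\mapsto \bfs{\iota}_* b$ is $L^2$-continuous as a map into closed $F$-valued $1$-forms and (ii) that the passage to cohomology class is continuous into the finite-dimensional space $H^1_{dR}(S\setminus\puct, F)$.

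For (i), equip $F$ with the auxiliary positive-definite fibre metric $|\cdot|_+$ declaring the splitting $F = \bfs{\iota}^\perp \oplus \R\bfs{\iota}$ orthogonal, with $|\bfs{\iota}|_+ = 1$ and with the Minkowski product (which is positive definite on $\bfs{\iota}^\perp$) on the first factor. Since $\bfs{\iota}_*: TS \to F$ is, pointwise, the differential of the developing map into $\Hyp^2 \subset \R^{2,1}$, it is a fibrewise isometry of $(TS, h)$ onto $(\bfs{\iota}^\perp, |\cdot|_+)$. A direct computation in a local $h$-orthonormal frame $\{e_1, e_2\}$ yields
\begin{equation*}
\|\bfs{\iota}_* b\|^2_{T^*S\otimes F, x} = \sum_{i=1}^{2} |\bfs{\iota}_* b(e_i)|_+^2 = \sum_{i=1}^{2} h(be_i, be_i) = \tr(b^2),
\end{equation*}
so integration against $\da_h$ gives $\|\bfs{\iota}_* b\|_{L^2(T^*S\otimes F)} = \|b\|_{\cC_2}$, showing that $b\mapsto \bfs{\iota}_* b$ is in fact an $L^2$-isometric embedding.

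For (ii), Poincar\'e–Lefschetz duality on the oriented open surface $S\setminus \puct$, combined with the self-duality of $F$ under the Minkowski product, provides a non-degenerate pairing
\begin{equation*}
H^1_{dR}(S\setminus\puct, F) \otimes H^1_{dR, c}(S\setminus\puct, F) \to \R, \qquad ([\omega], [\eta]) \mapsto \int_S \langle \omega \wedge \eta\rangle.
\end{equation*}
I would then fix a basis $[\eta_1], \ldots, [\eta_N]$ of $H^1_{dR, c}(S\setminus\puct, F)$ represented by smooth closed $F$-valued $1$-forms $\eta_i$ supported in compact subsets $K_i \subset S\setminus\puct$. On each $K_i$ the metric $h$ is smooth and uniformly bounded, so Cauchy–Schwarz implies that $\omega \mapsto \int_S \langle \omega \wedge \eta_i\rangle$ is a bounded linear functional on $L^2(T^*S\otimes F)$. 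Since these $N$ functionals give the coordinates of $[\omega]$ in the dual basis of $H^1_{dR}$, the projection $\omega \mapsto [\omega]$ is continuous, and composing with Step (i) proves the Proposition. The only delicate point is the validity of Poincar\'e–Lefschetz duality in this twisted setting, which follows from the standard statement for oriented open manifolds of finite type and finite-rank flat bundles endowed with a non-degenerate invariant bilinear form, so it is not a serious obstruction.
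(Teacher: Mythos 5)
Your proof is correct, but it follows a genuinely different route from the paper's. You work entirely in the de Rham picture: using Proposition \ref{pr:216} you write $\delta(b)=[\bfs{\iota}_*b]$, observe that $b\mapsto\bfs{\iota}_*b$ is a pointwise (hence $L^2$) isometry onto $F$-valued $1$-forms for the Wick-rotated fibre metric, and then control the cohomology class by pairing with finitely many compactly supported closed forms, invoking Poincar\'e duality with coefficients in the flat bundle $F$ on the finite-type open surface $S\setminus\puct$ to know that these pairings determine the class; since each such pairing is a bounded functional for the $L^2$-norm, the (linear) map $\delta$ is bounded, hence continuous. The paper instead argues through the potential description of $\delta$: it normalizes potentials $u_n$ with $\tilde b_n=\Hess u_n-u_n\id$ at a base point, converts to Euclidean Hessians of $\bar u_n=(\ch r)^{-1}u_n$ via the Klein metric (Lemma \ref{lm:kleincorr}), uses the elementary Taylor-expansion estimate of Lemma \ref{sublemma} and a connectedness argument to get almost-everywhere convergence of the potentials, and hence convergence of the group cocycles $\delta(b_n)_\alpha$. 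Your argument is shorter and conceptually cleaner (it reduces everything to boundedness of finitely many linear functionals), at the price of importing twisted Poincar\'e duality on an open surface — a heavier external ingredient than anything the paper uses, and the one point you should state carefully: what is needed is the injectivity of $H^1_{\mathrm{dR}}(S\setminus\puct,F)\to H^1_{\mathrm{dR},c}(S\setminus\puct,F)^*$, which does hold here since $S\setminus\puct$ is oriented of finite type and the Minkowski pairing identifies $F$ with its dual. The paper's proof is more elementary and self-contained, and its intermediate tools (normalization of potentials, the Klein-metric Hessian trick) are reused in the neighbouring singular analysis, e.g. in Proposition \ref{pr:conedecomp} and Lemma \ref{lm:triv}, which is presumably why the authors argue that way.
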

We antepone to the proof an elementary lemma.

\begin{lemma}\label{sublemma}
Let $f_n$ be a sequence of smooth functions defined on a planar disc $U$ of radius $R$ 
such that 
\begin{itemize}
\item $f_n(0)\to 0$ and $df_n(0)\to 0$ as $n\to+\infty$;
\item $||D^2f_n||_{L^2(U)}\to 0$.
\end{itemize}
then $df_n\to 0$ and $f_n\to 0$ in $L^2(U)$.
\end{lemma}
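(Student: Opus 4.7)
The plan is to reduce by Taylor expansion to the case $f_n(0)=0$ and $df_n(0)=0$, and then combine the Poincar\'e--Wirtinger inequality with a compactness argument specific to dimension two. Indeed, writing $\ell_n(x)=f_n(0)+df_n(0)\cdot x$, the affine Taylor polynomial $\ell_n$ and its differential converge uniformly to zero on $\bar U$ by the pointwise hypotheses, so both $\|\ell_n\|_{L^2(U)}$ and $\|d\ell_n\|_{L^2(U)}$ tend to zero; moreover $D^2\ell_n=0$, so the hypothesis on $D^2f_n$ is unaffected when replacing $f_n$ by $f_n-\ell_n$. Hence we may assume $f_n(0)=0$ and $df_n(0)=0$ and prove that $f_n,df_n\to 0$ in $L^2(U)$.

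I would next apply the Poincar\'e--Wirtinger inequality componentwise to the smooth $\mathbb R^2$-valued function $df_n$:
\[
\|df_n-\overline{df_n}\|_{L^2(U)} \;\le\; C\,\|D^2 f_n\|_{L^2(U)} \;\longrightarrow\; 0,
\]
where $\overline{df_n}\in\mathbb R^2$ is the average; and similarly $\|f_n-\overline{f_n}\|_{L^2(U)}\le C'\|df_n\|_{L^2(U)}$. The two conclusions will thus follow once one shows that the averages $\overline{df_n}$ and $\overline{f_n}$ tend to zero, and this is precisely the point where the pointwise vanishing at the origin enters.

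To prove $\overline{df_n}\to 0$ I would argue by contradiction: assume that along some subsequence $|\overline{df_n}|\ge\delta>0$. Combining the Poincar\'e estimates above with $\|D^2f_n\|_{L^2}\to 0$ and a suitable normalization shows that $f_n$, modulo an affine term, is bounded in $W^{2,2}(U)$. In dimension two one has Rellich--Kondrachov and the Sobolev embedding $W^{2,2}(U)\hookrightarrow C^0(\bar U)$, so a further subsequence converges in $C^0(\bar U)$ to a limit $f^*$. Since the weak $L^2$-limit of $D^2 f_n$ is zero, $f^*$ is affine, say $f^*(x)=a+b\cdot x$; the $C^0$-convergence together with $f_n(0)\to 0$ forces $a=0$, and a refinement of the same compactness argument on the gradients (using $df_n(0)\to 0$) forces $b=0$. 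The resulting $f^*\equiv 0$ contradicts the normalization, proving $\overline{df_n}\to 0$ and therefore $\|df_n\|_{L^2(U)}\to 0$. An analogous (and simpler, since $df_n\to 0$ is now known) compactness argument yields $\overline{f_n}\to 0$, and hence $\|f_n\|_{L^2(U)}\to 0$.

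The principal obstacle is the passage from $L^2$-bounds to the pointwise value at $0$ in dimension two: single points have vanishing $W^{1,2}$-capacity, so point evaluation is not continuous for $L^2$-convergence of $df_n$ and its derivatives alone, and a direct integral estimate (e.g.\ writing $df_n(x)-df_n(0)$ as an integral of $D^2 f_n$ along a ray and integrating over $U$) leads to logarithmically divergent weights. The way around this is to work with $f_n$ itself, which lies in $W^{2,2}(U)$, and exploit the borderline Sobolev embedding $W^{2,2}\hookrightarrow C^0$ available only in dimension two; that embedding is what makes the compactness extraction produce genuine pointwise values of the limit, linking the hypotheses at $0$ to the integral norms on $U$.
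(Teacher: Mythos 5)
Your reduction to $f_n(0)=0$, $df_n(0)=0$ and the Poincar\'e--Wirtinger step are fine, and the part of the argument dealing with $\overline{f_n}$ would indeed work, because the embedding $W^{2,2}(U)\hookrightarrow C^0(\overline U)$ lets you pass the value $f_n(0)\to 0$ to the limit. The genuine gap is the step that is supposed to give $b=0$, i.e.\ $\overline{df_n}\to 0$: the only bound you have on the gradients is a $W^{1,2}$ bound, and in dimension two $W^{1,2}$ does not embed into $C^0$, so no compactness extraction can transport the pointwise hypothesis $df_n(0)\to 0$ to the limit of the gradients. This is exactly the zero-capacity obstruction you describe yourself in your last paragraph; the embedding $W^{2,2}\hookrightarrow C^0$ controls the value of $f_n$ at the origin (hence $a=0$) but says nothing about the value of $df_n$ there, and the sentence ``a refinement of the same compactness argument on the gradients forces $b=0$'' is precisely the missing proof.

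Moreover the gap cannot be closed, because the statement as written is false. Let $\phi_n(z)=\eta_n(|z|)$ be the standard logarithmic capacity cutoff: $\eta_n\equiv 1$ for $|z|\le r_n$, $\eta_n(|z|)=\log\bigl(\tfrac{R}{2|z|}\bigr)/\log\bigl(\tfrac{R}{2r_n}\bigr)$ for $r_n\le |z|\le R/2$, $\eta_n\equiv 0$ for $|z|\ge R/2$ (smoothed at the two corners), and set $f_n=x\,(1-\phi_n)$ with $r_n\to 0$. Then $f_n\equiv 0$ near the origin, so $f_n(0)=0$ and $df_n(0)=0$, and a direct computation gives $\|D^2f_n\|^2_{L^2(U)}=O\bigl(1/\log(R/2r_n)\bigr)\to 0$, yet $f_n\to x$ and $df_n\to (1,0)$ in $L^2(U)$, so neither conclusion holds. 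The same example shows that the paper's own proof is also flawed: after the Taylor expansion of $df_n$ along rays, the Fubini step requires the change of variables $z=ty$, which produces the non-integrable weight $t^{-2}$, so the claimed inequality $\int_U\|df_n(y)-df_n(0)\|^2\le C\|D^2f_n\|^2_{L^2(U)}$ does not follow and is in fact false --- this is exactly the divergence you anticipated, and it is fatal rather than technical. A correct variant must strengthen the hypotheses at the origin, for instance by assuming $f_n\to 0$ and $df_n\to 0$ in $L^2$ of a fixed smaller disc (which Poincar\'e--Wirtinger then upgrades to all of $U$); using such a version in Proposition \ref{pr:contdelta} also requires replacing the pointwise normalization of $u_n$ at $x_0$ by a normalization of suitable averages.
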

\begin{proof}
By Taylor expansion  along the path $\gamma(t)=ty$, we write
\[
  df_n(y)=df_n(0)+\int_0^1 D^2 (f_n)_{ty}(y,\bullet)dt\,,
\]
so by applying Cauchy-Schwartz inequality we deduce that
\[
   ||df_n(y)-df_n(0)||^2<C\int_0^1||D^2 (f_n)_{ty}||^2dt\,,
\]
where $C$ is a constant depending on $R$.
By integrating, using Fubini-Tonelli Theorem we get
\[
   \int_U||df_n(y)- df_n(0)||^2<C||D^2 f_n||_{L^2(U)}^2~.
\]
Using that $df_n(0)\to 0$ we conclude that $df_n\to 0$ in $L^2(U)$.
A similar computation shows that $f_n\to 0$ in $L^2(U)$.
\end{proof}

\begin{proof}[Proof of Proposition \ref{pr:contdelta}]
Let $b_n$ be a sequence of Codazzi operators on $(S, h)$ such that
$||b_n||_{L^2(h)}\to 0$. We have to prove that $\delta b_n\to 0$.

 Lifting $b_n$ on the universal covering we have a family of functions $u_n:\tilde S\to \R$ such that
\[
  \tilde b_n=\hess u_n-u_n\id\,,
\]
and $u_n(x)-u_n(\alpha^{-1}x)=\langle \dev(x), \delta(b_n)_\alpha\rangle$ for every $\alpha\in\pi_1(S)$.
We claim that we can choose $u_n$ such that $u_n(x)\to 0$ almost everywhere on $\tilde S$.
The claim immediately implies that $\delta(b_n)_\alpha\to 0$ for every $\alpha\in\pi_1(S)$.

In order to prove the claim we fix a point $x_0\in\tilde S$ and normalize $u_n$ so that $u_n(x_0)=0$ and
$du_n(x_0)=0$ (this is always possible by adding some linear function to $u_n$).

Consider now the Klein Euclidean metric $g_K$ on $\tilde S$ obtained by composing the developing map with the projection of 
 $\Hyp^2$ to the tangent space $T_{\dev(x_0)}\Hyp^2$. (Notice that covering transformations are not isometries for $g_K$.)
Take the function $\bar u_n=(\ch r)^{-1} u_n$ where $r(x)$ is the hyperbolic  distance between $\dev(x)$ and $\dev (x_0)$.
Let $U$ be any bounded subset of $\tilde S$.
Combining the hypothesis on $b_n$  and Lemma \ref{lm:kleincorr}, we have that $||D^2\bar u_n||_{h}\to 0$ in $L^2(U, h)$.
As $h$ and $g_K$ are bi-Lipschitz over $U$ we have that $||D^2 \bar u_n||_{L^2(U,g_K)}\to 0$. 

By Lemma \ref{sublemma} there is a neighborhood $U_0$ of $x_0$ where $\bar u_n\to 0$ and $d\bar u_n\to 0$.
So the set
\[
    \Omega=\{x\in\tilde S : \exists\, U \textrm{ neighborhood of }x \textrm{ such that } ||\bar u_n||_{L^2(U, g_K)}\to 0 \textrm{, and }  ||d\bar u_n||_{L^2(U, g_K)}\to 0\}
\]
 is open and non-empty.
 Again Lemma \ref{sublemma} implies that this set is closed so $\Omega=\tilde S$.
 The claim easily follows.
\end{proof}

 We are now ready to prove  that the decomposition of Codazzi tensors for closed surfaces described in Proposition \ref{pr:cod-dec}
 holds for singular surfaces, when the correct behavior around the singularity is considered.
 For a singular metric $h$, we need to introduce the Sobolev spaces $W^{k,p}(h)$ of functions
 in $L^p(h)$ whose distributional derivatives  up to order $k$ (computed with the Levi-Civita connection of $h$)
 lie in $L^p(h)$.

\begin{prop}\label{pr:conedecomp}
Given $b\in\cC_2(S,h)$, a holomorphic quadratic differential $q$  and a  function $u$ are uniquely determined so that
\begin{itemize}
\item   $b=b_q+\Hess u-u\id$;
\item  $q$ has at  worst simple poles at $\puct$;
\item   $u\in C^\infty(S\setminus\puct)\cap W^{2,2}(h)$.
\end{itemize}
Such decomposition is orthogonal, in the sense that
\[
 ||b||^2_{L^2}=||b_q||^2_{L^2}+||\Hess u-u\id||^2_{L^2}~.
\]
Moreover, if $\theta_i<\pi$ and  $b$ is bounded then $u\in W^{2,\infty}(h)$.
\end{prop}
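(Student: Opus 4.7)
The strategy mirrors the proof of Proposition \ref{pr:cod-dec}: given $b \in \cC_2(S,h)$, set $f := \tr b \in L^2(h)$, solve the Poisson-type equation $\Delta u - 2u = f$ in a suitable Sobolev space, and define the traceless part by $b_0 := b - (\Hess u - u\id)$. By construction $b_0$ is Codazzi, self-adjoint, traceless, and lies in $L^2(h)$. The Hopf correspondence between traceless self-adjoint Codazzi tensors and real parts of holomorphic quadratic differentials (Example \ref{Codazzi traceless real part hqd}), applied on the regular part $S\setminus\puct$, then gives $b_0 = b_q$ for some holomorphic $q$ on $S\setminus\puct$, and the $L^2$ assumption translates via Proposition \ref{integrability harmonic codazzi} into the prescribed bound on the order of the poles of $q$ at $\puct$.

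The heart of the proof is therefore the solvability and regularity of $(\Delta - 2)u = f$. Work in the completion $H^1(h)$ of $\cunc(S\setminus\puct)$ under $\|\cdot\|_{L^2} + \|\grad\,\cdot\|_{L^2}$. The bilinear form $a(u,v) := \int_S(h(\grad u,\grad v) + 2uv)\,\da_h$ is coercive on $H^1(h)$, so Lax--Milgram produces a unique weak solution $u\in H^1(h)$. To upgrade to $u\in W^{2,2}(h)$, I would invoke the Bochner identity for functions on a surface of curvature $-1$, namely
\[
\|\Hess u\|_{L^2}^{2} \;=\; \|\Delta u\|_{L^2}^{2} + \|\grad u\|_{L^2}^{2},
\]
together with $\Delta u = 2u + f \in L^2(h)$. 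Justifying this identity on the singular surface requires an approximation argument with cutoffs near the cone points, and this is the main technical obstacle: one must show that the boundary terms produced by truncation go to zero in the limit, which uses the bi-Lipschitz equivalence with the Klein Euclidean metric from Section \ref{ss:conprel} and the integrability properties of $u$, $\grad u$, and $f$ near each $p_i$.

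For the orthogonality statement, compute
\[
\int_S \tr\bigl(b_q(\Hess u - u\id)\bigr)\,\da_h \;=\; \int_S \tr(b_q\,\Hess u)\,\da_h,
\]
since $\tr b_q = 0$. Integration by parts on $S_\varepsilon := S\setminus\bigcup_i B(p_i,\varepsilon)$, combined with the identity $\mathrm{div}\, b_q = d(\tr b_q) = 0$ valid for any symmetric Codazzi tensor, eliminates the bulk term. The boundary contributions on $\partial S_\varepsilon$ are estimated by $\|b_q\|_{L^2}$ and $\|\grad u\|_{L^2}$ on shrinking annuli via Cauchy--Schwarz and vanish as $\varepsilon\to 0$. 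Uniqueness follows immediately: if two decompositions agree, the difference yields a weak solution $u\in W^{2,2}(h)$ of $\Delta u - 2u = 0$, and coercivity of $a$ forces $u\equiv 0$, hence $q\equiv 0$.

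For the final assertion on $W^{2,\infty}(h)$ regularity when all $\theta_i<\pi$, Proposition \ref{integrability harmonic codazzi} shows that $q$ with at worst simple poles produces a $b_q$ that extends continuously through each puncture, in particular $b_q$ is bounded. Hence $\Hess u - u\id = b - b_q \in L^\infty$. Local Schauder-type estimates for the operator $\Delta - 2$ on the conical model (where, for angles less than $\pi$, the model sector admits the expected elliptic theory with respect to polar coordinates) then upgrade $u$ to $W^{2,\infty}(h)$. The restriction $\theta_i<\pi$ is used both to ensure $b_q$ is bounded near $p_i$ and to apply elliptic regularity on the cone model.
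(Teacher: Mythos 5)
Your existence strategy is genuinely different from the paper's: you solve $\Delta u-2u=\tr b$ variationally and then identify the traceless remainder, whereas the paper never solves a PDE --- it passes to the $L^2$-completion of $\cC_2(S,h)$, notes that the traceless summand is finite-dimensional by Proposition \ref{integrability harmonic codazzi} (hence complete), and uses the $L^2$-continuity of $\delta$ (Proposition \ref{pr:contdelta}) to see that the trivial summand $\delta^{-1}(0)$ is closed in $\cC_2(S,h)$, so the Hilbert-space splitting descends from the completion to $\cC_2(S,h)$ itself. Your route could in principle work, but as written it has a genuine gap exactly where the conical geometry matters: the $W^{2,2}(h)$ regularity of the weak solution at the cone points. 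The Bochner identity with cutoffs that you invoke produces boundary terms on $\partial B(p_i,\varepsilon)$ involving $\Hess u$ and $\grad\|\grad u\|^2$, i.e.\ precisely the quantities whose integrability near $p_i$ is at stake; without a Kondratiev-type analysis on the cone (where $\theta_i<2\pi$ enters) or an absorption/good-radius device, the argument is circular, and nothing in your sketch supplies it. A second gap of the same nature occurs in your orthogonality step: Cauchy--Schwarz with only $\|b_q\|_{L^2}$ and $\|\grad u\|_{L^2}$ shows that $r\mapsto\int_{\partial B_r}h(b_q\grad u,\nu)\,d\ell_r$ is integrable in $r$, which does not force it to tend to zero, not even along a sequence; the paper needs the sharper facts that $\|b_q\|$ lies in $L^{2+\epsilon}$ near each $p_i$ and $\|\grad u\|_h\in L^p$ for all finite $p$ (Sobolev embedding from $W^{2,2}$), and then the divergence of $\int dr/r$ to select good radii $r_n\to 0$. (Your coercivity-based uniqueness also silently uses that the punctures have zero capacity, so that $u\in W^{2,2}(h)$ can be paired against itself with no boundary contribution; true, but it should be said.)

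The final assertion fails as you argue it. From $\Delta u-2u=\tr(b-b_q)\in L^\infty$ alone one cannot deduce $u\in W^{2,\infty}(h)$: Schauder estimates require H\"older data ($b$ is merely bounded), and the $L^p$ theory degenerates at $p=\infty$ already on a smooth closed surface, so ``Schauder-type estimates on the cone model'' is not a valid step here. The correct and much more elementary route --- the one taken in Step 1 of the paper's proof --- is to use the information you already have, namely that the full tensor $\Hess u-u\id=b-b_q$ is bounded near $p_i$ when $\theta_i<\pi$: by Lemma \ref{lm:kleincorr} this is equivalent to a bound on the Euclidean Hessian of $\bar u=(\ch r)^{-1}u$ for the Klein metric, and Lemma \ref{lm:tech2} (Taylor expansion along radial geodesics) then bounds $\bar u$ and its gradient, hence $u$, $\grad u$ and finally $\Hess u=(\Hess u-u\id)+u\id$ are bounded, i.e.\ $u\in W^{2,\infty}(h)$, with no elliptic estimate at the cone. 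The same mechanism is what yields $u\in W^{2,2}(h)$ in the paper, because there $u$ arises with $\Hess u-u\id=b-b_q\in L^2$ from the outset, rather than as a variational solution requiring a separate regularity theory at the singular points.
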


Again we antepone an elementary Lemma of Euclidean geometry.
 
  \begin{lemma}\label{lm:tech2}
Let $(U, g)$ be a closed disc equipped with a Euclidean metric with a cone angle at $p_0$.
Let $f$ be a smooth function on $U\setminus\{p_0\}$ such that $||D^2 f||$ is in $L^2(U, g)$ (resp. $L^\infty(U, g)$).
Then $f$ and $||\grad f||$ lie in $L^2(U, g)$ (resp. $L^\infty(U,g)$). 
\end{lemma}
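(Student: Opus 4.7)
The plan is to work in polar coordinates $(\rho,\phi)$ centered at the cone point $p_0$, with $\rho\in(0,R]$, $\phi\in\R/\theta_0\Z$, so that $g=d\rho^2+\rho^2\,d\phi^2$ and the area form is $\rho\,d\rho\,d\phi$. Since $f$ is smooth on $U\setminus\{p_0\}$ and $\partial U=\{\rho=R\}$ is a compact set in the smooth locus, the boundary values of $f$, $\partial_\rho f$ and $\rho^{-1}\partial_\phi f$ are automatically uniformly bounded. The key geometric remark is that along every radial geodesic $\rho\mapsto(\rho,\phi)$ the vectors $\partial_\rho$ and $\rho^{-1}\partial_\phi$ form a parallel orthonormal frame. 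Indeed, a direct computation of the Christoffel symbols gives $\nabla_{\partial_\rho}\partial_\rho=0$ and $\nabla_{\partial_\rho}(\rho^{-1}\partial_\phi)=-\rho^{-2}\partial_\phi+\rho^{-1}\Gamma^\phi_{\rho\phi}\partial_\phi=0$. Therefore, setting $r_\phi(\rho)=f(\rho,\phi)$ and $y_\phi(\rho)=\rho^{-1}\partial_\phi f(\rho,\phi)$, one obtains the pointwise identities
\begin{equation*}
r_\phi''(\rho)=D^2f(\partial_\rho,\partial_\rho),\qquad y_\phi'(\rho)=D^2f(\partial_\rho,\rho^{-1}\partial_\phi),
\end{equation*}
both of which are controlled in absolute value by $\|D^2f\|(\rho,\phi)$.

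In the $L^\infty$ case, writing $M=\|D^2f\|_{L^\infty(U,g)}$ and applying Taylor's theorem from $\rho=R$ gives, for every $\phi$ and every $\rho\in(0,R]$,
\begin{equation*}
|r_\phi(\rho)|\leq|r_\phi(R)|+R|r_\phi'(R)|+\tfrac{MR^2}{2},\quad|r_\phi'(\rho)|\leq|r_\phi'(R)|+MR,\quad|y_\phi(\rho)|\leq|y_\phi(R)|+MR.
\end{equation*}
Since the boundary data are uniformly bounded, both $f$ and $\|\grad f\|^2=r_\phi'(\rho)^2+y_\phi(\rho)^2$ are uniformly bounded on $U$.

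In the $L^2$ case, the same Taylor identities are used, but one must integrate against the weight $\rho\,d\rho\,d\phi$. Cauchy--Schwarz with the auxiliary weight $s^{-1/2}$ gives
\begin{equation*}
\Bigl|\int_\rho^R r_\phi''(s)\,ds\Bigr|^2\leq\log(R/\rho)\int_\rho^R s\,|r_\phi''(s)|^2\,ds,
\end{equation*}
and analogous estimates for $y_\phi$ and for $r_\phi$ itself. Multiplying by $\rho$, integrating over $(0,R]$, applying Fubini and using the elementary fact that $\int_0^R\rho\log(R/\rho)\,d\rho=R^2/4<\infty$ yields
\begin{equation*}
\int_0^R|r_\phi'(\rho)|^2\rho\,d\rho\leq C_1|r_\phi'(R)|^2+C_2\int_0^R s\,|r_\phi''(s)|^2\,ds,
\end{equation*}
and similarly for $f$ and $y_\phi$. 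A final integration in $\phi$ produces the desired $L^2$ bounds on $f$ and $\|\grad f\|$ in terms of the boundary data and $\|D^2f\|_{L^2(U,g)}$. The main technical point, and the only real obstacle, is the reconciliation of the weight $\rho$ in the area form with the one-dimensional Taylor estimates: the logarithmic blow-up $\log(R/\rho)$ at the cone point is exactly compensated by the vanishing of $\rho$ in the measure, which is precisely what makes the weighted integral finite.
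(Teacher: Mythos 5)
Your proof is correct and follows essentially the same route as the paper's: one–dimensional Taylor/fundamental-theorem estimates along the radial geodesics starting at the boundary, together with a frame that is parallel along those geodesics, so that $f$, $r_\phi'$ and $y_\phi$ have derivatives controlled pointwise by $\|D^2f\|$, and then integration against the singular area form. The only differences are cosmetic: you use the rotationally invariant frame $\{\partial_\rho,\rho^{-1}\partial_\phi\}$ instead of cutting $U$ along a radial geodesic to obtain parallel fields, and you make explicit (via the weighted Cauchy--Schwarz giving the $\log(R/\rho)$ factor compensated by the weight $\rho$) the integration step that the paper only sketches.
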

\begin{proof}
Let us consider coordinates $r,\theta$ on $U$. We may suppose that $U$ coincides with the disc of radius $1$.
The Taylor expansion of $f$ along the  geodesics $c(t)=(1-(1-r)t, \theta)$  is 
\[
   f(r,\theta)=f(1,\theta)-(1-r)\langle \grad f(1,\theta), \partial_r\rangle +(1-r)^2\int_{0}^1 (1-t)D^2f_{(1-(1-r)t, \theta)}(\partial_r, \partial_r)dt~.
\]
Now the function $\hat f(r, \theta)=f(1,\theta)-(1-r)\langle \grad f(1,\theta), \partial_r\rangle$ is bounded and we can estimate
\[
(f(r, \theta)-\hat f(r, \theta))^2<\int_0^1 ||D^2f(tr, \theta)||^2dt\,.
\]
If $D^2f$ is bounded this formula shows that $f\in L^\infty(U, g)$.
By integrating the inequality above on $U$ we also see that if $||D^2f||\in L^2(U, g)$ then $f$ is in $L^2(U, g)$ as well.

Cutting $U$ along a radial geodesic we get a planar domain.
We can find on this domain two parallel orthogonal unitary fields $e_1, e_2$ and basically one has to prove
that $f_i=\langle \grad f, e_i\rangle\in L^2(U,g)$ (resp. $L^\infty(U, g)$).
This can be shown by a simple Taylor expansion as above noticing that
$\grad f_i=(D^2 f) e_i$, and  so $||\grad f_i||\in L^2(U, g)$ (resp. $L^\infty(U, g)$).
\end{proof}

\begin{proof}[Proof of Proposition \ref{pr:conedecomp}]
The proof is split in three steps:
\begin{itemize}
\item[Step $1$] We prove that if $\hess u-u\Id$ is in $\cC_2(S, h)$ (resp. $\cC_\infty(S, h)$)  then $u\in W^{2,2}(S, h)$ (resp. $W^{2,\infty}(S,h)$).
\item[Step $2$] Denote by $\cC_{\tiny{\mbox{tr}}}(S, h)$ the space of trivial Codazzi tensors in $\cC_2(S, h)$.
We prove that $\cC_{\tiny{\mbox{tr}}}(S, h)^\perp$ coincides with the space of traceless Codazzi tensors.
\item[Step $3$] We prove that the orthogonal splitting holds 
\[
    \cC_2(S, h)=\cC_{\tiny{\mbox{tr}}}(S, h)\oplus\cC_{\tiny{\mbox{tr}}}(S, h)^\perp~.
\]
(This is not completely obvious as $\cC_2(S, h)$ is not complete.)
\end{itemize}

To prove Step $1$ first suppose $\hess u-u\Id\in\cC_2(S, h)$. We check that $u$ and $||du||_h$ lie in $L^2(S, h)$. 
Notice that we only need to prove integrability of $u^2$ and $||du||^2_h$ in a neighborhood $U$ of a
puncture $p_i$. Consider the function $\bar u=(\ch r)^{-1} u$, where $r$ is the distance from the puncture. 
By Lemma \ref{lm:kleincorr}, the Hessian of $\bar u$ computed with respect to the Klein Euclidean metric $g_K$ is simply
\[
    D^2\bar u(\bullet, \bullet)=(\ch r)^{-1} h((\hess u-u\id)\bullet, \bullet)\,,
\]
so we see that $||D^2\bar u||_{g_K}$ is in $L^2(U, g_K)$. By Lemma \ref{lm:tech2} we conclude that $u$ and $||du||_{g_K}$ lie in $L^2(U, g_K)$. 
As $g_K$ and $h$ are bi-Lipschitz we conclude that $u$ and $||du||_h$ are in $L^2(U, h)$.
The proof of Step $1$ easily follows by noticing that $\hess u=(\hess u-u\id)+u\Id$.

The case where $\hess u-u\Id$ is in $\cC_\infty(S,h)$ can be proved adapting the argument above in a completely obvious way.

Let us  prove Step 2. Imposing the orthogonality with the trivial Codazzi tensor corresponding to  $u=1$ we see that
 $\cC_{\tiny{\mbox{tr}}}(S, h)^\perp$ is contained in the space of traceless Codazzi tensors.
 
 To prove the reverse inclusion  it suffices to show that 
 \[
   \int_S\tr(b_q(\hess u-u\id))\da_h=0\,.
\]
for any holomorphic quadratic differential $q$ with at worst simple poles at the punctures and for any $u\in W^{2,2}(h)$.

As $b_q$ is Codazzi, $b_q\hess u=\nabla(b_q\grad u)-\nabla_{\grad u}b_q$.
Using that $\tr (b_q)=0$, we get
\[
 \int_S\tr(b_q(\hess u-u\id))\da_h=\int_S\tr(b_q\hess u)\da_h=\int_S \Div(b_q\grad u)\da_h\,.  
\]
So if $B_r$ is a neighborhood of the singular locus formed by discs of radius $r$ we have
\begin{equation} \label{trace divergence limit}
\int_S\tr(b_q(\hess u-u\id))\da_h=\lim_{r\to 0} \int_{\partial B_r} h(b_q\grad u, \nu) d\ell_r\,,
\end{equation}
where $d\ell_r$ is the length measure element of the boundary whereas $\nu$ is the normal to the boundary pointing 
inside $B_r$.
Now we claim that there exists a sequence $r_n\to 0$ such that 
\[
 \int_{\partial B_{r_n}} ||b_q\grad u||_h d\ell_{r_n}\to 0
\]
as $n\to+\infty$.  Using this sequence in Equation \eqref{trace divergence limit} we get the result.

In order to prove the claim, notice that as $u\in W^{2,2}(h)$, $\grad u\in W^{1,2}(h)$. By Sobolev embedding,  $||\grad u||_h\in L^p(h)$ for any 
$p<+\infty$. On the other hand  as in the proof of Proposition \ref{integrability harmonic codazzi}, $||b_q||\sim |z|^{-2\beta_i}|f|$, 
where $z$ is a conformal coordinate on $U$ with $z(p_i)=0$ and we are putting $q(z)=f(z)dz^2$. 
As the area element of $h$ is $\sim |z|^{2\beta_i}dxdy$, using that $|f|<C|z|^{-1}$ we see that
$||b_q||_h$ lies in $L^{2+\epsilon}(U, h)$ for  $\epsilon<2|\beta_i|$ for $i=1,\ldots k$.

\noindent So by a standard use of H\"older estimates we get  $f=||b_q\grad u||_h$ lies in $L^2(S, h)$.
\noindent Now suppose that there exists a singular point $p$ and some number $a$ such that 
\[
    \int_{\partial B_r} f d\ell_r>a
\]
for any $r\in(0, r_0)$. By Schwarz inequality we get
\[
   a<\ell(\partial B_r)^{1/2}(\int_{\partial B_r} f^2)^{1/2}\,.
\]
As $\ell(\partial B_r)<Cr$ for some constant $C$, we have
\[
          \int_{\partial B_r} f^2>\frac{a^2}{Cr}\,.
 \]
Integrating this inequality and using Tonelli Theorem we obtain
\[
    ||f||^2_{L^2}>\int_0^{r_0} dr \int_{\partial B_r} f^2>\frac{a^2}{C}\int_0^{r_0}\frac{dr}{r}\,.
\]
which gives a contradiction.

Finally we prove Step $3$.
Considering the completion  of $\cC_2(S, h)$  we have an orthogonal decomposition
\[
    \overline{\cC_2(S, h)}=\overline{ \cC_{\tiny{\mbox{tr}}}(S, h)}\oplus \overline{\cC_{\tiny{\mbox{tr}}}(S, h)^\perp}\,,
\]
where we used  a bar to denote the completed space.
By Step $2$ and Proposition \ref{integrability harmonic codazzi} the subspace $\cC_{\tiny{\mbox{tr}}}(S, h)^\perp$ is finite dimensional, so it coincides with its completion.
This implies that the splitting above induces a splitting
\[
\cC_2(S, h)=(\overline{ \cC_{\tiny{\mbox{tr}}}(S, h)}\cap \cC_2(S, h))\oplus \cC_{\tiny{\mbox{tr}}}(S, h)^\perp\,.
\]
By Proposition  \ref{pr:contdelta} we notice that   $\cC_{\tiny{\mbox{tr}}}(S, h)=\delta^{-1}(0)$ is closed in  $\cC_2(S, h)$, so the first addend is actually
$\cC_{\tiny{\mbox{tr}}}(S, h)$, and the proof is complete.
\end{proof}

Now we want to show  that if $b\in\cC_2(S,h)$ then the cocycle $\delta b$ is trivial around all
the punctures, that means that for every peripheral loop $\alpha$ there exists 
$t_0\in\mathbb R^{2,1}$ such that  $(\delta b)_{\alpha}=\hol(\alpha) t_0-t_0$. 

\begin{remark}
As $\hol(\alpha)$ is an elliptic transformation, proving that $\delta b$ is trivial around $\alpha$ is
equivalent to proving that the vector $(\delta b)_\alpha$ is orthogonal to the axis of $\hol(\alpha)$.
The importance of this condition will be made clear in next subsection. Basically, it corresponds to being the translation part of the holonomy of a MGHF manifold with particles. Moreover, it can be checked that a cocycle trivial around the punctures, if regarded as an element in $H^1_{\tiny\Ad\circ\tiny\hol}(\pi_1(S),\so(2,1))$ by means of the isomorphism $\Lambda$, corresponds to a first-order deformation of hyperbolic metrics which preserves the cone singularity. This clarifies how Theorem \ref{theorem parametrization closed surfaces} can be extended to the case of manifolds with particles, which is the aim of Section \ref{sectionteichsing}.
\end{remark}

By Proposition \ref{pr:conedecomp}, in order to prove that $\delta b$ is trivial around the punctures, it is sufficient to consider the case $b=b_q$ where $q$ is a holomorphic quadratic differential 
with at most a simple pole at singular points.

\begin{lemma}\label{lm:triv}
Let $U$ be a neighborhood of a cone point $p$ in a hyperbolic surface of angle $\theta_0\in(0,+\infty)$ 
and let $b$ a Codazzi operator on $U$
such that $||b(x)||<C_0 r(x)^{\alpha}$ where $r(x)$ is the distance from the cone point and $\alpha$ is some fixed number
bigger than $-2$. Then $\delta b$ is trivial.

Moreover, there exists a function $u\in W^{1,2}(h)\cap C^{0,\alpha}(U)$, smooth over $U\setminus\{p\}$,
such that $b=\Hess u-u\id$.
If $\alpha>-1$, then $u$ is Lipschitz continuous around $p$.
\end{lemma}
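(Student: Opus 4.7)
First, since $U\setminus\{p\}$ has infinite cyclic fundamental group generated by a peripheral loop $\alpha$ and $\widetilde{U\setminus\{p\}}$ is simply connected, applying Proposition \ref{existence of function} to the lifted tensor $\tilde b$ will produce a smooth function $\hat u$ on the universal cover with $\tilde b=\Hess\hat u-\hat u\,\id$. Writing $\tau$ for the deck transformation corresponding to $\alpha$ and $e_0\in\Hyp^2$ for the fixed point of the elliptic rotation $\hol(\alpha)$ (normalized so $e_0=(0,0,1)\in\R^{2,1}$), we will have $\hat u-\hat u\circ\tau^{-1}=V(t_\alpha)$, with $t_\alpha$ representing $(\delta b)_\alpha$. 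Modifying $\hat u$ by $V(s)$ replaces $t_\alpha$ by $t_\alpha-(\hol(\alpha)s-s)$, and since $\hol(\alpha)-\id$ is invertible on $e_0^{\perp}$, the triviality of $\delta b$ on the peripheral loop will reduce to the single scalar condition $\langle t_\alpha,e_0\rangle=0$.

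The plan is to extract this condition from the growth bound on $b$ via the Klein Euclidean picture. Setting $\bar u:=(\ch r)^{-1}\hat u$, Lemma \ref{lm:kleincorr} gives
\[D^2\bar u(v,w)=(\ch r)^{-1}h(bv,w),\]
so $\|D^2\bar u\|_{g_K}\leq C\rho^\alpha$ near $p$, where $\rho$ is the Euclidean radial coordinate of $g_K$ (bi-Lipschitz to the hyperbolic $r$). I would then work in polar coordinates $(\rho,\phi)$ on $\widetilde{U\setminus\{p\}}$ with $\phi\in\R$: a radial integration of this bound gives $\|\nabla\bar u\|_{g_K}\lesssim\max(1,\rho^{\alpha+1})$ (with a logarithmic factor if $\alpha=-1$). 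Since the $g_K$-length of $\partial_\phi$ is proportional to $\rho$, this yields $|\partial_\phi\bar u|\lesssim\rho\cdot\max(1,\rho^{\alpha+1})$, which vanishes as $\rho\to 0$ precisely because $\alpha+2>0$; consequently
\[\bar u(\rho,\phi)-\bar u(\rho,\phi-\theta_0)\longrightarrow 0\]
uniformly in $\phi$.

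To conclude, decompose $t_\alpha=ce_0+v$ with $v\perp e_0$ in the Minkowski sense. Directly from $V(t_\alpha)(x)=\langle t_\alpha,\dev(x)\rangle$,
\[\bar u(x)-\bar u(\tau^{-1}x)=\frac{V(t_\alpha)(x)}{\ch r}=-c+\frac{\langle v,\dev(x)\rangle}{\ch r},\]
whose right-hand side tends to $-c$ as $r\to 0$ since $\dev(x)\to e_0$ and $\langle v,e_0\rangle=0$. Comparing with the previous step forces $c=0$; choosing $s\in e_0^{\perp}$ with $\hol(\alpha)s-s=t_\alpha$ will then make $u:=\hat u+V(s)$ a $\tau$-invariant function descending to a smooth function on $U\setminus\{p\}$ satisfying $b=\Hess u-u\,\id$. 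The regularity of $u$ across $p$ follows by transferring the bounds on $\bar u$ through $u=(\ch r)\bar u+V(s)$, since both $\ch r$ and $V(s)$ are smooth and bounded near $p$: integrating the gradient bound yields Hölder regularity with exponent $\min(1,\alpha+2)$ for $\bar u$, and hence Lipschitz regularity when $\alpha>-1$, while the $W^{1,2}(h)$-membership of $u$ will follow from these pointwise estimates together with the local conformal description of $h$ at the cone point.

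The hard part will be the uniform angular estimate forcing $|\partial_\phi\bar u|\to 0$ from a scale-invariant pointwise Hessian bound on a multi-sheeted cover; this is precisely where the threshold $\alpha>-2$ is sharp, as any slower decay of $\|b\|$ at $p$ could leave a non-trivial $e_0$-component in the peripheral cocycle.
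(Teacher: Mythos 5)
Your argument is correct and follows essentially the same route as the paper's proof: lift $b$ to the universal cover, use Lemma \ref{lm:kleincorr} to bound $\|D^2\bar u\|_{g_K}\lesssim \rho^{\alpha}$, integrate radially to control $\|d\bar u\|_{g_K}$, and use the resulting angular estimate on the arc of length $\sim\rho$ to show the $e_0$-component of the peripheral cocycle vanishes, then kill the remaining component by adding a linear function $V(s)$ and transfer the gradient bounds to get the continuity, $W^{1,2}$ and Lipschitz (for $\alpha>-1$) statements. The only cosmetic differences are that you phrase the arc estimate via $\partial_\phi\bar u$ and work with $\bar u$ rather than $\int_{c_r}du$, which is the same computation.
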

\begin{proof}
As in Example \ref{examples cone singular metrics}, let $H$ be the universal cover of $\mathbb H^2\setminus\{x_0\}$, and $(r, \theta)$ be global coordinates
on $H$ obtained by pulling back the polar coordinates on $\Hyp^2$ centered at $x_0$. We can assume $x_0=(0,0,1)$.
The cover $d:H\to\Hyp^2$ is then of the form
\[
     d(r,\theta)=(\sh r\cos\theta, \sh r\sin\theta,\ch r)\,. 
\]
Up to shrinking $U$, we may suppose that $U$ is the quotient of the region
$\tilde U=\{(r,\theta)\,|\, r<r_0\}$ by the isometry $\tau_{\theta_0}(r,\theta)=(r, \theta+\theta_0)$.

If $\tilde b$ is the lifting of $U$, there is some function $u$ on $\tilde U$ such that
$\tilde b=\hess u-u\id$. Moreover, $t=(\delta b)_\alpha$ is such that
\[
  (u-u\circ\tau_{\theta_0}^{-1})(r,\theta)=u(r,\theta)-u(r,\theta-\theta_0)=\langle d(r,\theta), t\rangle\,.
\]
Integrating $du$ on the path $c_r(\theta)=(r, \theta)$ with $\theta\in[0,\theta_0]$ we get
\[
   \int_{c_r}du= \ch(r)\langle x_0, t\rangle +O(r)\,.
\]
So, in order to conclude it is sufficient to prove that 
\begin{equation}\label{eq:lmdu}
\int_{c_r}du\rightarrow 0\quad \textrm{as }r\rightarrow 0~.
\end{equation}

To this aim we consider the Klein Euclidean metric $g_K$ on $U$ introduced in Subsection ~\ref{ss:conprel}.
Notice that $g_K$ is equivalent to the hyperbolic metric $h$  in $U$.
In particular if $\rho$ is the Euclidean distance from $p$, we have that $\rho\sim r$.
Let $\bar u=(\ch r)^{-1} u$. By Lemma \ref{lm:kleincorr} we have $D^2\bar u(\bullet,\bullet)=(\ch r)^{-1}h((\hess u-u \id)\bullet,\bullet)$ as bilinear forms, so
we get   $||D^2 \bar u||_{g_K}\sim\rho^{\alpha}$ on $\tilde U$.
A simple integration on vertical lines
shows that $||d\bar u||_{g_K}(r,\theta)\leq  C_0+C_1\rho^{\alpha+1}<C_0+C_2 r^{\alpha+1}$
for any $\theta\in[0,\theta_0]$ and $r\in[0,r_0]$, where $C_0$, $C_1$ and $C_2$ are constant depending
on $\alpha$, $r_0$, $\sup_{0\leq \theta\leq \theta_0 }||du||(r_0, \theta)$.

In particular $|\int_{c_r}d\bar u|\leq C_3(1+r^{\alpha+1})\ell_{g_K}(c_r)\leq C_4(1+r^{\alpha+1})r$.
So if $\alpha>-2$ this integral goes to $0$ as $r\rightarrow 0$.
As $\int_{c_r}du=\ch r\int_{c_r}d\bar u$, \eqref{eq:lmdu} follows.

We conclude that $\langle x_0,t\rangle=0$, or equivalently $t$ can be decomposed as 
$t=\hol(\alpha)t_0-t_0$ for some vector $t_0\in \R^{2,1}$,
and up to adding the linear function $f(r,\theta)=\langle d(r,\theta), t_0\rangle$,
we may suppose that $u$ is $\tau_{\theta_0}$-periodic. 
Hence $u$ projects to a function on $U$, that with some abuse
we still denote $u$, such that $b=\hess u-u\id$.

By the estimate on $d\bar u$ we also deduce that $\bar u$ is uniformly continuous around
the singular point. More precisely the following estimate holds:
\begin{align*}
    |\bar u(r_1, \theta_1)-\bar u(r_2, \theta_2)|&\leq |\bar u (r_1, \theta_1)-\bar u(r_1, \theta_2)|+|\bar u(r_1, \theta_2)-\bar u(r_2, \theta_2)|
    \\ &\leq C_3(1+r^{\alpha+1})r|\theta_1-\theta_2|+C_0|r_1-r_2|+\frac{C_2}{\alpha+2}|r_1^{\alpha+2}-r_2^{\alpha+2}|~.
\end{align*}
So $\bar u$ extends to a continuous function on $U$ and the same holds for $u$.

Writing $u=\ch(r)\bar u$ we get $du=\sh(r)\bar udr+\ch(r)d\bar u$.
As $\bar u$ is bounded it results that  $||du||<C_5+C_6r^{\alpha+1}$ and this estimate shows that $||du||\in L^2(U,h)$
and $u$ is Lipschitz if $\alpha>-1$.
\end{proof}

\begin{prop}\label{pr:trivial}
Let $(U,h)$ be a disc with a hyperbolic metric with a cone singularity of angle $\theta_0\in(0,2\pi)$ at $p$.
Let $q$ be a holomorphic quadratic differential with at most a simple pole in $p$.
Then $\delta b_q$ is trivial. 
Moreover there exists a Lipschitz function $u$ over $U$ that is  smooth over
$U\setminus\{p\}$ such that
$b=\Hess u-u\id$.
\end{prop}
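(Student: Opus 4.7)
The plan is to reduce the statement to Lemma \ref{lm:triv} by establishing a pointwise bound of the form $\|b_q(x)\|_h \leq C\, r(x)^{\alpha}$ with some $\alpha > -1$, where $r(x)$ denotes the hyperbolic distance from $p$. Once such an exponent is in hand, Lemma \ref{lm:triv} directly produces a smooth function $u$ on $U\setminus\{p\}$ with $b_q = \Hess u - u\id$, shows that $\delta b_q$ is trivial, and (thanks to $\alpha > -1$) gives Lipschitz continuity of $u$ up to the cone point.

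The pointwise estimate is a computation in a conformal chart $z$ centered at $p$, in which $h = e^{2\xi}|z|^{2\beta}|dz|^{2}$ with $\xi$ continuous and $\beta = \theta_{0}/(2\pi) - 1 \in (-1, 0)$, and $q = f(z)\,dz^{2}$ with $f$ holomorphic on the punctured disc. Since $q$ has at worst a simple pole at $p$, one has $|f(z)| \leq C|z|^{-1}$ near the origin. The same calculation already performed in the proof of Proposition \ref{integrability harmonic codazzi} gives
\[
\|b_q\|_h^{2} \;=\; 2\,e^{-4\xi}\,|z|^{-4\beta}\,|f(z)|^{2} \;\leq\; C'\,|z|^{-4\beta - 2},
\]
so $\|b_q\|_h \leq C''\,|z|^{-2\beta - 1}$.

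It then remains to rewrite this in terms of the hyperbolic distance from $p$. Integrating the radial length element of $h$ yields $r \sim |z|^{\beta + 1}$, hence $|z| \sim r^{1/(\beta + 1)}$, and therefore $\|b_q\|_h \leq C'''\,r^{\alpha}$ with $\alpha = (-2\beta - 1)/(\beta + 1)$. A direct check shows that $\alpha > -1$ is equivalent to $-\beta > 0$, which is exactly the hypothesis $\beta \in (-1, 0)$, i.e.\ $\theta_{0} \in (0, 2\pi)$. Thus the strongest conclusion of Lemma \ref{lm:triv} applies, giving a Lipschitz function $u$ defined throughout $U$, smooth on $U\setminus\{p\}$, with $b_q = \Hess u - u\id$ and with $\delta b_q$ trivial. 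The only delicate point is the translation from the conformal estimate to the $r$-estimate: one uses that $\xi$ is continuous so that the singular metric $h$ and the flat cone metric $|z|^{2\beta}|dz|^{2}$ are bi-Lipschitz on $U$, which makes the comparison $r \sim |z|^{\beta + 1}$ legitimate; apart from this, the argument is an immediate invocation of Lemma \ref{lm:triv}.
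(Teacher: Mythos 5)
Your proposal is correct and follows essentially the same route as the paper: the pointwise bound $\|b_q\|_h\leq C|z|^{-2\beta-1}\sim C' r^{(-1-2\beta)/(1+\beta)}$ in a conformal chart, the observation that this exponent exceeds $-1$ precisely because $\beta\in(-1,0)$, and the direct application of Lemma \ref{lm:triv}. The additional remark about the bi-Lipschitz comparison with the flat cone metric justifying $r\sim|z|^{\beta+1}$ is a sound (and welcome) explicit justification of a step the paper states without comment.
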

\begin{proof}
If $z$ is a conformal coordinate on $U$ with $z(p)=0$ 
and $r$ is the distance from the singular point, 
we know that $r\sim|z|^{\beta+1}$.
On the other hand
if $q=f(z)dz^2$ and $h=e^{2\xi}|z|^{2\beta}|dz|^2$ we have
 $||b_q||^2=e^{-4\xi}z^{-4\beta}|f(z)|^2$, so by the assumption
$||b_q||^2<C |z|^{2(-1-2\beta)}$. In particular
$||b_q||<Cr^\alpha$ with $\alpha=\frac{-1-2\beta}{1+\beta}$.
As $\alpha>-1$ for any $\beta\in (-1,0)$ we can apply Lemma \ref{lm:triv}
and conclude.
\end{proof}
 
 \begin{remark}
 If cone angles are in bigger than $2\pi$ (but different from integer multiples of $2\pi$) the same argument shows that $b_q$ is trivial around the puncture as well.
 The main difference is that the exponent $\alpha$ lies in $(-2,-1]$ so Lemma \ref{lm:triv} ensures that the function $u$
 is H\"older continuous at the puncture and $du$ is only $L^2$-integrable over $U\setminus\{p\}$.
 \end{remark}

A simple corollary of Proposition \ref{pr:trivial} is that if $q$ is a quadratic differential on $(S, h)$ with at most simple poles
at the punctures then the cohomology class of $\delta b_q$ can be expressed as $\delta(b)$
for some operator $b\in\cC_\infty(S, h)$

\begin{cor}
Let $q$ be a quadratic differential on $S$ with at most simple poles at punctures.
There exists a Lipschitz function $u$ on $S$, smooth on $S\setminus\puct$ such that
$b=b_q-(\Hess u-u\id)$ is bounded. Moreover $u$ can be chosen so that $b$ is uniformly positive definite,
i.e. $b\geq a\id$ for some $a>0$.
\end{cor}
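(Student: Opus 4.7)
The plan is to combine Proposition \ref{pr:trivial} with a partition-of-unity argument, then use a simple shift trick to achieve uniform positivity.

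First, for each puncture $p_i$, choose a disc neighborhood $U_i$ containing $p_i$ and no other puncture, small enough that $q$ has at most a simple pole at $p_i$ in $U_i$ and no other singularities. Proposition \ref{pr:trivial} then provides a Lipschitz function $u_i$ on $U_i$, smooth on $U_i\setminus\{p_i\}$, satisfying $b_q|_{U_i}=\Hess u_i-u_i\id$. By shrinking if necessary, I may assume the $U_i$ are pairwise disjoint. Choose strictly smaller neighborhoods $V_i\Subset U_i$ and smooth cutoffs $\chi_i:S\to[0,1]$ with $\chi_i\equiv 1$ on $V_i$ and $\mathrm{supp}\,\chi_i\subset U_i$, and set
\[
   u=\sum_{i=1}^k \chi_i u_i.
\]
Then $u$ is Lipschitz on $S$ (as a finite sum of compactly supported Lipschitz functions) and smooth on $S\setminus\puct$, since each $u_i$ is smooth away from its corresponding puncture.

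Next, set $b=b_q-(\Hess u-u\id)$, which is a smooth self-adjoint Codazzi operator on $S\setminus\puct$. I claim $b$ is bounded. On $V_i$ the sum collapses to $u=u_i$, so $\Hess u-u\id=b_q$ and $b\equiv 0$ there. On the transition annulus $U_i\setminus V_i$, both $u=\chi_i u_i$ and $b_q$ are smooth on a compact set (the singular point is excluded), so $b$ is bounded. On the complement $K=S\setminus\bigcup_i V_i$ intersected with $S\setminus\puct$, which has compact closure inside $S\setminus\puct$, all quantities are smooth and $K\setminus\bigcup_i (U_i\setminus V_i)$ is a compact subset of $S\setminus\puct$ where $u\equiv 0$; on this set $b=b_q$, which is smooth on the compact region, hence bounded. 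Putting the three regions together, $b$ is bounded on $S\setminus\puct$.

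Finally, to obtain uniform positivity, I exploit the identity $\Hess(u+M)-(u+M)\id=(\Hess u-u\id)-M\id$ for any constant $M$. Replacing $u$ by $u+M$ therefore replaces $b$ by $b+M\id$. Since $b$ is bounded, there exists $M_0$ such that for all $M\geq M_0$ the tensor $b+M\id\geq \id$, hence is uniformly positive definite. The constant shift preserves Lipschitz regularity and smoothness on $S\setminus\puct$, so the replaced $u$ is the desired function. No step presents a genuine obstacle: the one mildly subtle point is recognizing that the additive constant $M$ in $u$ produces an additive multiple of the identity in $b$, which is exactly what is needed to convert a bounded tensor into a uniformly positive one.
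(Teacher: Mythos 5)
Your proof is correct and follows essentially the same route as the paper: apply Proposition \ref{pr:trivial} near each puncture, glue the local potentials with cutoffs so that $b$ vanishes near the punctures (hence is bounded, being smooth on the complementary compact set), and then add a suitable constant to $u$, which shifts $b$ by a multiple of the identity to achieve uniform positivity.
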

\begin{proof}
By Proposition \ref{pr:trivial} around each puncture $p_i$ there exists a function $u_i$ such that
$b_q=\hess u_i-u_i\id$.
By a partition of the unity it is possible to construct a smooth  function $u$ such that $u$ coincides with $u_i$ 
is some smaller neighborhood of $p_i$. In particular the support of  $b'=b_q-\hess u+ u\id$  is compact in $S\setminus\puct$,
so $b'$ is bounded.

In order to get $b$ uniformly positive it is sufficient to consider the constant function $v=||b'||_{\infty}+a$.
Then $b=b'-\Hess v+v\id=b_q-\Hess (u+v)+(u+v)\id$ is uniformly positive since $b'-\Hess v+v\id=b'+v\id>a\id$.
\end{proof}


\subsection{Flat Lorentzian space-times with particles} \label{cone singularities}

We now consider maximal globally hyperbolic flat manifolds with cone singularities along timelike lines. 

\begin{defi} \label{defi spacetime singular}
We say that a Lorentzian space-time $M$ has cone singularities along timelike lines (which we call also particles) if there is a collection of lines $s_1,\ldots,s_k$ such that $M^*=M\setminus (s_1\cup\ldots\cup s_k)$ is endowed with a flat Lorentzian metric and each $s_i$ has a neighborhood isometric to a slice in $\R^{2,1}$ of angle $\theta_i<2\pi$ around a timelike geodesic, whose edges are glued by a rotation around this timelike geodesic.
\end{defi}

\begin{figure}[htbp]
\centering
\includegraphics[height=7cm]{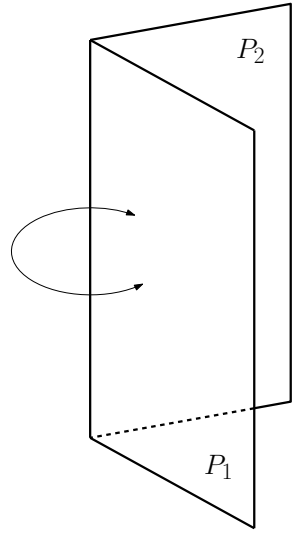}
\caption{The model of a flat particle. The slice in $\R^{2,1}$ is the intersection of the half-spaces bounded by two timelike planes $P_1,P_2$. The edges of the slice are glued by a rotation fixing $P_1\cap P_2$.} \label{modelparticle}
\end{figure}

By definition it is immediate that in a neighborhood of a point $p_i\in s_i$ there are coordinates $(z,t)\in D\times\mathbb R$, for $D$ a disc,
such that $s_i$ corresponds to the locus $\{z=0\}$ and the metric  $g$ takes the form 
\begin{equation} \label{singular metric coordinates}
g=|z|^{2\beta_i}|dz|^2-dt^2,
\end{equation} 
\noindent where
$\beta_i=\frac{\theta_i}{2\pi}-1$. 
Notice that the restriction of the metric $g$ to the slices $\{t=const\}$ are isometric Euclidean metrics with cone singularities $\theta_i$.

By definition the holonomy of a loop surrounding a  particle of angle $\theta$ is conjugated with a pure rotation of angle $\theta$, so
the translation part of the holonomy is orthogonal to the axis of rotation.

A closed embedded  surface $S\subset M$ is space-like if $S\subset M^*$ is space-like and for any point $p_0\in S\cap s_i$, in 
the coordinates $(z,t)$ defined in a neighborhood of $p_0$, $S$ is the graph of a function $f(z)$.
We will say that the surface is orthogonal to the singular locus at $p_0$ if $|f(z)-f(z_0)|=O(r^2)$ where $r$ is the intrinsic distance
on $D$ from the puncture. As $r=\frac{1}{\beta+1}|z|^{\beta+1}$ this condition is in general not equivalent to requiring that the differential of $f$ at $z_0$
(computed with respect to the coordinates $x,y$) vanishes. 

If $S$ is a space-like surface in a flat space-time with cone singularity, then the intersection of $S$ with the singular locus
is a discrete set and   a Riemannian metric $I$ is defined on $S^*=S\setminus (s_1\cup\ldots\cup s_n)$.
Clearly the first fundamental form and the shape operator on $S^*$, say $(I, s)$, satisfy the Gauss-Codazzi equation.

A notion of globally hyperbolic extension of $S$ makes sense even in this singular case and the existence and uniqueness of the maximal
extension can be proved by adapting \textit{verbatim} the argument given for the Anti de-Sitter case in \cite{bbsads}.

Like the previous section, we fix a topological surface $S$ and $k$ points $\puct=\{p_1\ldots, p_k\}$ and
 study pairs $(I,s)$  on $S\setminus\puct$ corresponding to embedding data of a Cauchy surface $S$ in a 
 globally hyperbolic space-time with particles of cone angles $\theta_1,\ldots,\theta_k$, so that the point $p_i$ lies
on the particle with cone angle $\theta_i$.  The divisor of $(I,s)$ is by definition $\bfs{\beta}=\sum\beta_i p_i$, where
we have put $\beta_i=\frac{\theta_i}{2\pi}-1$. Again, we consider embeddings which are isotopic to $S\hookrightarrow S\times\{0\}\subset M\cong S\times\R$.

As in the closed case we consider uniformly convex surfaces, but we also consider an upper bound for
the principal curvatures. More precisely  
we will assume that $s$ is a bounded and uniformly positive operator: that means that there exists a number $M$ such that $\frac{1}{M}\id<s<M\id$; in other words we require 
the eigenvalues of $s$ at every point to be  bounded between $1/M$ and $M$.

\noindent We now want to show that the set 
\begin{equation*}
\D_{\bfs{\beta}}= \left\{(I,s):\begin{aligned}
        &I\text{ and }s\text{ are embedding data of a closed uniformly convex Cauchy surface}\\
&\text{in a flat Lorentzian manifold with particles of angles }\theta_1,\ldots,\theta_k\\
 &\text{orthogonal to the singular locus with }s\text{ bounded and uniformly positive}
         \end{aligned}
 \right\}
\end{equation*}
is in bijection with
\begin{equation*}
\E_{\bfs{\beta}}= \left\{(h,b):\begin{aligned}
        &h\text{ hyperbolic metric on }S \text{ with cone singularities of angles }\theta_1,\ldots,\theta_k\\
&b:TS^*\rar TS^*\text{ self-adjoint},\text{ bounded and uniformly positive, } d^{\nabla}_{h} b=0
         \end{aligned} \right\}.
\end{equation*}

\noindent More precisely, we show the following relation.


\begin{thmx}\label{corrispondenza coppie caso singolare}
The correspondence $(I,s)\to (h,b)$, where $h=I(s,s)$ and $b=s^{-1}$, induces a bijection between $\E_{\bfs{\beta}}$ and $\D_{\bfs{\beta}}$.
\end{thmx}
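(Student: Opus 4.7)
The proof naturally splits into the two directions of the correspondence.

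The direction $(I,s)\mapsto(h,b)$ is the more direct one. On the regular locus $S\setminus\puct$ the relations $h=I(s\bullet,s\bullet)$ and $b=s^{-1}$ define a smooth hyperbolic metric and a smooth, self-adjoint, bounded, uniformly positive Codazzi tensor by Proposition \ref{identificationDE}. The content is to show that $h$ extends across each $p_i$ as a hyperbolic metric with cone angle $\theta_i$. In the local coordinates $(z,t)$ of \eqref{singular metric coordinates} the surface is a graph $t=f(z)$ with $|f(z)-f(z_0)|=O(r^2)$ by the orthogonality hypothesis, so pulling back $|z|^{2\beta_i}|dz|^2-dt^2$ one sees that $I$ differs from the reference Euclidean cone metric $|z|^{2\beta_i}|dz|^2$ by terms whose contribution to the conformal factor is bounded. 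Since $b$ is bounded above and below, $h=I(b^{-1}\bullet,b^{-1}\bullet)$ has the same conformal type at $p_i$, hence cone angle $\theta_i$; this is essentially the content of Theorem \ref{hbb cone metric} run in reverse.

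For the opposite direction $(h,b)\mapsto(I,s)$, set $I=h(b\bullet,b\bullet)$ and $s=b^{-1}$. On $S\setminus\puct$, Proposition \ref{identificationDE} gives that $(I,s)$ solves the Gauss-Codazzi equations, and Theorem \ref{hbb cone metric} tells us that $I$ is a singular Riemannian metric with cone angle $\theta_i$ at $p_i$. Applying Choquet-Bruhat's theorem to the regular part produces a flat MGHF space-time $M^\ast$ in which $S\setminus\puct$ is realized as a uniformly convex Cauchy surface with the prescribed embedding data. The task is then to thicken $M^\ast$ through each puncture into a flat space-time with a particle of angle $\theta_i$ at $p_i$, which is a purely local problem around each $p_i$.

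The local construction proceeds in three steps. First, one shows that the holonomy of $M^\ast$ around each peripheral loop has no \emph{spin}, i.e.\ its translation part lies in the plane orthogonal to the axis of the elliptic linear part. This is exactly Proposition \ref{pr:trivial}, applied to the harmonic part of the decomposition of $b|_U$ given by Proposition \ref{pr:conedecomp} (or directly from Lemma \ref{lm:triv}, once one knows that $b$ is bounded so in particular satisfies the growth hypothesis): on a neighborhood $U$ of $p_i$ one can write $b=\Hess u-u\,\id$ for a smooth function on $U\setminus\{p_i\}$. Second, passing to the Klein Euclidean metric $g_K$ obtained by radial projection from the fixed point of the linear holonomy and setting $\bar u=(\ch r)^{-1}u$, Lemma \ref{lm:kleincorr} converts the bounded uniform positivity of $b$ into the bounded uniform positivity of the Euclidean Hessian $D^2\bar u$. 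This allows us to prove that the map $\varphi(x)=D_\ast(\grad\bar u(x))$ is the developing map of a Euclidean structure on $U$ with a genuine cone singularity at $p_i$, of angle $\theta_i$. Third, the orthogonal product of this flat cone metric with $(\R,-dt^2)$ produces a flat cylinder with a particle of angle $\theta_i$, and formula \eqref{eq:sigma} shows that the graph $x\mapsto(\varphi(x),\bar u(x))$ is an isometric embedding of $(U,I,s)$ into this cylinder. A standard cut-and-paste argument then glues these local cylinders to $M^\ast$ to yield the desired maximal globally hyperbolic flat space-time with particles.

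The main obstacle is the second step: controlling $\varphi=D_\ast(\grad\bar u)$ well enough to ensure it defines a developing map of a genuine Euclidean cone structure of the prescribed angle. The rotational part of the peripheral holonomy of $\varphi$ is automatic from the construction, but one must verify that $\varphi$ is an immersion (this is where bounded uniform positivity of $D^2\bar u$ is crucial) and that its image behaves correctly near the cone point so that one really gets a Euclidean structure with angle $\theta_i$ rather than a degenerate or different-angle singularity. Once this step is secured, the rest of the construction, including uniqueness of the resulting maximal extension and the bijectivity of the correspondence, reduces to the unsingular theory of Section \ref{sec:cauco} applied on $S\setminus\puct$ together with the uniqueness of the thickening provided by Choquet-Bruhat for space-times with particles.
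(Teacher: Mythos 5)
You follow the paper's route essentially step for step -- a local potential $u$ for $b$ near each puncture via Lemma \ref{lm:triv}/Proposition \ref{pr:trivial}, the Klein metric and Lemma \ref{lm:kleincorr} to convert bounded uniform positivity of $b$ into bounded uniform positivity of $D^2\bar u$, the map $\varphi=\dev_*(\grad_g\bar u)$, the product cylinder and a cut-and-paste thickening -- but the step you yourself call ``the main obstacle'' is precisely where the substance of the paper's proof lies, and you do not supply it. That step is Lemma \ref{lemma metrica euclidea}, and it contains two separate arguments. First, one must show that $\varphi$ is not merely a local immersion but a covering of a punctured neighbourhood of the fixed point of the holonomy: the paper obtains this from the two-sided bound $a_2\|v\|_g<\|d\varphi(v)\|<a_1\|v\|_g$, the Lipschitz extension of $\varphi$ to the metric completion with $\varphi(\tilde 0)=0$, the quantitative lower bound $\|\varphi(x)\|\geq a_2\, d_g(x,\tilde 0)$ gotten by integrating the Hessian along radial geodesics, and a path-lifting argument; only then can $\varphi$ be identified with the developing map of a Euclidean cone structure on the disc. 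Second, one must show that the cone angle of $g'=g(\Hess_g\bar u\,\bullet,\Hess_g\bar u\,\bullet)$ is exactly $\theta_i$: the peripheral holonomy pins the angle down only modulo $2\pi$, and the paper removes this ambiguity by interpolating $u_s=su+\tfrac12(1-s)d_g(0,\cdot)^2$, so that the cone metrics vary continuously while the possible angles form a discrete set. Declaring this step ``secured'' without these arguments leaves the bijection unproved.

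A few further points. Membership in the set of embedding data in the statement also requires the constructed surface to be orthogonal to the particle; you never verify this for the surface you build, whereas the paper derives it from Lemma \ref{lemma metrica euclidea2} (a bounded, uniformly positive shape operator forces $|f(x)-f(x_0)|=O(\rho^2)$). Invoking Theorem \ref{hbb cone metric} inside the direction $(h,b)\mapsto(I,s)$ is circular: in the paper that theorem is a consequence of the surface constructed in Theorem \ref{corrispondenza coppie caso singolare}, and it is in any case not needed to apply Choquet--Bruhat on the regular part. Finally, in the easy direction your inference ``same conformal type at $p_i$, hence cone angle $\theta_i$'' does not stand on its own: the angle of a singular hyperbolic metric is a metric, not a conformal, invariant. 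The paper instead observes that $h$ is complete after adding one point and has elliptic peripheral holonomy, and then identifies the angle by applying the hard direction to the pair $(h,b)$.
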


This is a more precise version of Proposition \ref{identificationDE}, which additionally deals with the condition that $I$ and $h$ have cone singularities.
The fact that the hyperbolic metric associated to $(I,s)\in \D_{\bfs{\beta}}$ has a cone singularity at the singular points is a simple consequence of the fact that
its completion is obtained by  adding a point. 
In the opposite direction things are less clear and we will give a detailed proof of the fact that given $(h,b)$ as in the hypothesis of the theorem,
the surface $S$ can be embedded in a singular flat space-time with embedding data $(I,s)$.

We first prove two Lemmas which will be used in the proof of Theorem \ref{corrispondenza coppie caso singolare}.

\begin{lemma} \label{lemma metrica euclidea}
Let $g$ be a Euclidean metric on a disc $D$ with a cone point of angle $\theta_0\in(0,2\pi)$ at the point $0$. 
Suppose $u$ to be a $C^2$ function
on the punctured disc $D^*=D\setminus\{0\}$
 such that the Euclidean Hessian $\Hess_g u$ is bounded 
 with respect to $g$, 
 namely there exists a  constant $a_1>0$ such that $\Hess_g u<a_1\id$. Then $||\grad_g u(x)||\leq a_1 d_g(0,x)$
 and $u$ extends at $0$.
 
If moreover, $\Hess_g u$ is uniformly positive, i.e. there is a constant $a_2$ such that $a_2\Id<\Hess_g u$, then 
 the metric
 $g'(v,w)=g(\Hess_g u(v),\Hess_g u(w))$ is a Euclidean metric on $D$ with a cone point of angle $\theta_0$ at $0$.
\end{lemma}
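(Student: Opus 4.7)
The plan is to unfold the cone locally: since $\theta_0\in(0,2\pi)$, a neighborhood of $0$ in $(D,g)$ is isometric to the quotient of a closed Euclidean wedge $W_0\subset\R^2$ of angle $\theta_0$ at the origin, via a map $q$ identifying the two bounding radii $R_1,R_2$ by the rotation $R_{\theta_0}$ fixing $0$. Set $\tilde u:=u\circ q$, which is $C^2$ on $W_0\setminus\{0\}$ with Euclidean Hessian of operator norm at most $a_1$.

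For the first assertion, the boundedness of $D^2\tilde u$ makes $D\tilde u$ an $a_1$-Lipschitz map on $W_0\setminus\{0\}$, hence $D\tilde u$ extends to a Lipschitz map on $W_0$ with value $\xi_0:=D\tilde u(0)\in\R^2$. A direct computation starting from $q\circ R_{\theta_0}=q$ on $R_1$ (using the smoothness of $D^*$ across the identified radii to extend one-sided derivatives) yields the equivariance $D\tilde u(R_{\theta_0}y)=R_{\theta_0}\,D\tilde u(y)$ for $y\in R_1$. Letting $y\to 0$ along $R_1$ produces $\xi_0=R_{\theta_0}\xi_0$, and since $\theta_0\in(0,2\pi)$ admits no nontrivial fixed vector, $\xi_0=0$. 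Therefore $\|D\tilde u(y)\|\le a_1|y|$, which descends to $\|\grad_g u(x)\|\le a_1 d_g(0,x)$, and integration along radial paths gives the continuous extension of $u$ to $0$.

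For the second assertion, the additional assumption $\Hess_g u\ge a_2\id$ lifts to $D^2\tilde u\ge a_2\id$ on $W_0$; integration along the straight segment from $0$ to $y$ and pairing with $y$ give $\langle D\tilde u(y),y\rangle\ge a_2|y|^2$, hence $\|D\tilde u(y)\|\ge a_2|y|$. Convexity of $W_0$, together with the monotonicity $\langle D\tilde u(y_1)-D\tilde u(y_2),y_1-y_2\rangle\ge a_2|y_1-y_2|^2$, shows that $D\tilde u$ is injective; combined with $\det D^2\tilde u>0$ away from $0$, $D\tilde u$ is a homeomorphism onto its image $\Omega\subset\R^2$, realizing $g'=(D\tilde u)^*g_{\R^2}$ as a flat Riemannian metric on $D^*$. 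The equivariance on the radii shows that $\Omega$ is bounded by two curves issuing from $0$ and related by the rotation $R_{\theta_0}$.

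The delicate step is to conclude that the metric completion of $(D^*,g')$ adjoins a single cone point of angle exactly $\theta_0$, rather than $\theta_0+2\pi k$ for some $k\ge 1$. Since $D\tilde u$ is a homeomorphism of the simply connected wedge $W_0$ onto $\Omega$, the image does not wind around the origin, and gluing its boundary curves via $R_{\theta_0}\in(0,2\pi)$ reproduces exactly the model Euclidean cone of angle $\theta_0$ from Example \ref{examples cone singular metrics}.
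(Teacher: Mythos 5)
Your first assertion is proved essentially as in the paper (the paper works equivariantly on the universal cover and uses that the extended value of the gradient map at the vertex must be fixed by the holonomy, which is exactly your $\xi_0=R_{\theta_0}\xi_0$), and that part is fine, up to the minor caveat that the Lipschitz estimates for $D\tilde u$ should be taken with respect to the intrinsic distance of the wedge, since $W_0$ need not be convex; this is harmless there because radial segments stay in $W_0$. The genuine gap is in the second assertion, at the step ``Convexity of $W_0$, together with the monotonicity \dots shows that $D\tilde u$ is injective''. A wedge of angle $\theta_0$ is convex only when $\theta_0\le\pi$; for $\theta_0\in(\pi,2\pi)$ --- a range the lemma must cover, and the one that is relevant elsewhere in the paper --- the straight segment joining $y_1$ and $y_2$ may leave $W_0$, so you cannot integrate $D^2\tilde u\ge a_2\id$ along it and the monotonicity inequality is simply not available. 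Everything that follows (homeomorphism onto $\Omega$, ``the image does not wind'', hence cone angle exactly $\theta_0$) rests on this injectivity, so the proof breaks down precisely at the delicate point: ruling out that the completion of $(D^*,g')$ is a cone of angle $\theta_0+2k\pi$ with $k\ge 1$.

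The paper's proof avoids injectivity altogether. Using the two-sided bound $a_2\|v\|_{g}<\|d\varphi(v)\|<a_1\|v\|_{g}$ and the lower estimate $\|\varphi(x)\|\ge a_2\, d_g(x,\tilde 0)$, it shows that the gradient map $\varphi$, read on the universal cover, is a covering of a punctured neighborhood $U\setminus\{0\}$ with the same holonomy as $g$; this identifies $(D,g')$ with a model cone but pins down its angle only modulo $2\pi$. The ambiguity is then removed by a deformation argument: set $u_s=s\,u+\tfrac{1-s}{2}\,d_g(0,\cdot)^2$, so that $\Hess_g u_s=s\,\Hess_g u+(1-s)\id$ remains bounded and uniformly positive; the associated metrics $g_s$ form a continuous family of cone metrics with fixed holonomy, and by discreteness of the admissible angles $\{\theta_0+2k\pi\}$ the angle of $g_1=g'$ equals that of $g_0=g$, namely $\theta_0$. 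Your more elementary route does go through for $\theta_0\le\pi$, but to treat all $\theta_0\in(0,2\pi)$ you need either an injectivity argument not relying on convexity of $W_0$, or a substitute such as the covering-plus-deformation argument just described.
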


\begin{proof}
Let $\tilde D^*$ be the universal cover of the punctured disc $D^*$, and let $\tilde u$ be the lifting of $u$ on $\tilde D^*$. Let $
\dev$ be a developing map for the Euclidean metric on $D$; we can assume $0$ is the fixed point of the holonomy of a path winding around $0$ in 
$D$. Then we define the map $\varphi:\tilde D^*\rar \R^2$ given by $$\varphi(x)=(\dev)_*(\grad_{ g}\tilde u(x))\,,$$
where with a standard abuse we denote by $g$ also the lifting of the Euclidean metric to $\tilde D^*$. 

By the hypothesis
\[
||d\varphi(v)||_{\R^2}<a_1||v||_{g}
\]
for any tangent vector $v$. 
In particular this estimate  implies that $\varphi$ is $a_1$-Lipschitz, so it extends to the metric
completion of   $\tilde D^*$, which is composed by one point $\tilde 0$  fixed
by the covering transformations. 
As $\varphi$ conjugates the generator of the covering transformations $\tilde D\to D$ 
with the rotation of angle $\theta_0$ in $\R^2$, 
we must have $\varphi(\tilde 0)=0$, as $\varphi(\tilde 0)$ must be fixed by the action of the 
holonomy.  
This implies that 
\begin{equation} \label{pimpa}
||\grad_g \tilde u(x)||= ||\varphi(x)||=d_{\R^2}(\varphi(x), \varphi(0))\leq a_1 d_g(\tilde 0 ,x)\,.
\end{equation} 
From the boundedness of $\grad u$ we also obtain that $u$ is Lipschitz
hence $u$ extends with continuity to the metric completion $D$.
This concludes the first part.

Suppose now that $\Hess_g u$ is uniformly positive.
By construction, the pull-back $\varphi^* g_{\R^2}=
g(\Hess_{g} \tilde u(\bullet),\Hess_{g} \tilde u(\bullet))$ is a Euclidean metric for which $\varphi$ is a developing map, and has the 
same holonomy as $\dev$. 
We claim that $\varphi$ (suitably restricted if necessary to the lift of a smaller neighborhood of $0$, which we still 
denote by $D$), is a covering on $U\setminus\{0\}$, where $U$ is a neighborhood of $0$ in $\R^2$. This will show that $\varphi$ lifts to a 
homeomorphism $\tilde\varphi$ from $\tilde D^*$ and the universal cover of $U\setminus\{0\}$ conjugating the generator of $\pi_1(D^*)$ to
an element of the isometry group $\widetilde{SO}(2)$ of this covering.
Therefore $\tilde\varphi$ descends to an isometric homeomorphism  between $D$  equipped with the metric $g'$ and a model of a Euclidean disc with a cone singularity. 

To show the claim, observe that $\varphi$ is a local homeomorphism. Moreover, since $\Hess_g u$ is bounded by $a_2\id$ and $a_1\id$, we have 
\begin{equation} \label{bilipischitz local condition}
a_2||v||_{g}<||d\varphi(v)||_{\R^2}<a_1|v||_{g}
\end{equation} for any tangent vector $v$. 

We prove now that $\varphi(x)\neq 0$ for any $x\neq \tilde 0$.
Consider the geodesic path $\gamma:[0,t_0]\rightarrow \tilde D^*$ 
joining $x$ to $\tilde 0$ parametrized by arc length: 
it is simply the lifting of a radial geodesic in $D^*$. 
Then we have that 
\[
    g(\grad_{g} u (x),\, \dot\gamma (t_0))
  - g(\grad_{ g} u (\gamma(\epsilon)),\, \dot\gamma (\epsilon))
  =\int_\epsilon^{t_0} g(\Hess_g(\dot \gamma(t)),\dot\gamma(t))dt~.  
\]
Notice that $ |g(\grad_{g} u (\gamma(\epsilon)), \dot\gamma (\epsilon))|\leq ||\grad_{g} u (\gamma(\epsilon))||
=||\varphi (\gamma(\epsilon))||$. So, since $\varphi(\gamma(\epsilon))\rightarrow 0$ as $\epsilon\rightarrow 0$, we get
\[
 g(\grad_{g} u (x),\, \dot\gamma (t_0))=\int_0^{t_0} g(\Hess_g(\dot \gamma(t)),\dot\gamma(t))dt\,,
\]
and the integrand in the RHS is bigger than $a_2 t_0$. We deduce that
\begin{equation}\label{eq:unifdist}
  ||\varphi(x)||=||\grad_{g}u(x)||\geq a_2 t_0=a_2 d(x, \tilde 0)~.
\end{equation}

To conclude that $\varphi$ is a covering of $U\setminus \{0\}$ for some neighborhood of $0$ in $\mathbb R^2$,
it suffices to show that $\varphi$ has the path lifting property. Given any path $\gamma:[0,1]\to U\setminus\{0\}$, take a small $\rho_0$ so that $\gamma$ is contained in $U\setminus B(0,\rho_0)$. Therefore by \eqref{pimpa} a local lifting of $\gamma$
 is contained in a region of $\tilde D^*$ uniformly away from $\tilde 0$, hence metrically complete for the metric $g$. 
Equation \eqref{bilipischitz local condition} ensures that any local lifting of 
$\gamma$ has finite lenght and thus by standard arguments the lifting
can be defined on the whole interval $[0,1]$.

To complete the second point, we need to show that $g'$ has the same cone angle as $g$. By construction $g$ and $g'$ have the same holonomy, hence the cone angle can only differ by a multiple of $2\pi$. Consider the one-parameter family of functions $u_s:\tilde D^*\rar\R$, $$u_s(x)=s u(x)+\frac{1}{2}(1-s) d_g(0,x)^2.$$
The metrics $g_s(v,w)=g(\Hess_g u_s(v),\Hess_g u_s(w))$, constructed as above, form a one-parameter family of Euclidean metrics with cone singularities, depending smoothly on $s$. Moreover $g_0=g$ and $g_1=g'$. All the metrics $g_s$ have the same holonomy on a path around $0$, for the same construction. Therefore, by discreteness of the possible cone angles $\{\theta_0+2k\pi\}$, all metrics $g_s$ must have the same cone angle $\theta_0$.
\end{proof}

\begin{lemma} \label{lemma metrica euclidea2}
Let $S$ be a surface embedded in a flat space-time with particles. Suppose
that in a cylindrical neighborhood $C=D\times (a,b)$ of a particle where the metric is of the form $g=|z|^{2\beta_i}|dz|^2-dt^2$ as in \eqref{singular metric coordinates}, $S$ is the graph of a function $f:D\to (a,b)$.
If the shape operator of $S$  satisfies $ a_2\id<s<a_1\id$, then 
\begin{itemize}
\item There are constants $A_1$ and $A_2$ such that $A_2\id< \Hess_g f<A_1\id$, where $g$ is the Euclidean singular metric
on $D$.
\item $|f(x)-f(x_0)|=O(\rho^2)$ where $\rho$ is the Euclidean distance from the singular point $x_0$.
\end{itemize}
\end{lemma}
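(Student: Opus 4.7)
The idea is to pull the problem back to standard Minkowski geometry by developing the singular Euclidean base, and then to bound the Lorentz factor using the special structure of the particle holonomy. Since $g=|z|^{2\beta_i}|dz|^2$ is flat away from $z=0$, the universal cover of $D^*\times(a,b)$ (with $D^*=D\setminus\{0\}$) develops isometrically into $\R^{2,1}$ via $(z,t)\mapsto(\dev(z),t)$, where $\dev\colon\tilde D^*\to\R^2$ is a developing map for $g$. Under this development, the lift of $S$ becomes a spacelike graph $\{(w,F(w))\}$ in $\R^{2,1}$ with $F=f\circ\dev^{-1}$ locally. Since $\dev$ is a local isometry, any pointwise bound on the Euclidean Hessian of $F$ pulls back to the corresponding bound on $\Hess_g f$.

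A standard Minkowski computation gives $\II(X,Y)=\lambda\,\Hess_{g_E}F(X^z,Y^z)$ for vectors $X=X^z+dF(X^z)\partial_t$ tangent to the graph, where $\lambda=(1-|\nabla F|^2)^{-1/2}$ is the Lorentz factor. The hypothesis $a_2\,\id\leq s\leq a_1\,\id$ translates to $a_2 I\leq\II\leq a_1 I$; combined with $\lambda^{-2}g_E\leq I\leq g_E$ (which follows from $I=g_E-dF\otimes dF$, whose smallest eigenvalue with respect to $g_E$ is $1-|\nabla F|^2=\lambda^{-2}$), this yields
\[
a_2\lambda^{-3}\,g_E\leq\Hess_{g_E}F\leq a_1\,g_E.
\]
The upper bound is unconditional and gives at once $\Hess_g f\leq a_1\,\id$. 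The lower bound, however, requires a global upper bound on $\lambda$, equivalently a uniform lower bound on $1-|\nabla_g f|_g^2$.

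The main obstacle is precisely this bound on the Lorentz factor. The key observation is that the vector field $\partial_t$ is parallel in the ambient Lorentzian metric (all Christoffel symbols $\Gamma^k_{jt}$ vanish because the metric is $t$-independent and block-diagonal) and is fixed by the particle holonomy, which is a pure rotation about the axis through $\partial_t$. Hence, identifying fibers of the tangent bundle by parallel transport, $\partial_t$ determines a canonical point in $\Hyp^2$, and the function
\[
\delta(z):=d_{\Hyp^2}\bigl(N(z,f(z)),\partial_t\bigr)
\]
descends to a well-defined function on $D^*$, since the particle holonomy preserves the $\Hyp^2$-distance to $\partial_t$. The Gauss map $G\colon(S^*,I)\to\Hyp^2$ is $a_1$-Lipschitz (because $|dG(v)|_{\Hyp^2}=|s(v)|_I\leq a_1|v|_I$) and the distance function $d_{\Hyp^2}(\cdot,\partial_t)$ is $1$-Lipschitz; hence $\delta$ is $a_1$-Lipschitz for $I$, and therefore also for $g$ since $I\leq g$. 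As the $g$-diameter of $D^*$ is finite, $\delta$ is bounded, and so is $\lambda=\cosh\delta\leq\lambda_{\max}$ for some $\lambda_{\max}<\infty$. Combining with the previous step, $\Hess_g f\geq a_2\lambda_{\max}^{-3}\,\id$, completing the first bullet with $A_1=a_1$ and $A_2=a_2\lambda_{\max}^{-3}$.

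For the second bullet, apply Lemma \ref{lemma metrica euclidea} to $u=f$ with the singular Euclidean metric $g$: the boundedness of $\Hess_g f$ gives $\|\grad_g f(x)\|_g\leq A_1\,d_g(0,x)=A_1\rho$, and integrating along the radial geodesic from $0$ to $x$ yields
\[
|f(x)-f(x_0)|\leq\int_0^\rho A_1\,r\,dr=\tfrac{1}{2}A_1\rho^2=O(\rho^2).
\]
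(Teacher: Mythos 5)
Your proof is correct and rests on the same essential mechanism as the paper's: the $a_1$-Lipschitz Gauss map together with the fact that the particle holonomy is an elliptic rotation fixing the point of $\Hyp^2$ determined by $\partial_t$ forces the unit normal to stay a bounded hyperbolic distance from that axis (the paper phrases this as $G(\tilde S_C)\subset B(p_0,r_0)$, you as a bound on $\delta$, i.e.\ on the Lorentz factor $\lambda=\cosh\delta$), and the second bullet is obtained by the same gradient estimate and radial integration. The only difference is cosmetic: where the paper converts the normal-direction bound into Hessian bounds via the identity $\pi\circ G=\dev_*\grad_g f$ and the bi-Lipschitz constant of the Klein projection on $B(p_0,r_0)$ (getting the lower bound only near the cone point, after the estimate $\|\grad_g f\|\le a_0\rho$), you use the graph formulas $\II=\lambda\,\Hess_{g_E}F$ and $\lambda^{-2}g_E\le I\le g_E$, which yields the two-sided bound uniformly on $D^*$.
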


\begin{remark}
This lemma implies that any surface with shape operator bounded and uniformly positive is automatically orthogonal
to the singular locus.
\end{remark}

\begin{proof}
Let $S_C$ be the disc $S\cap C=\mbox{graph}(f)$.
Let $\dev$ be the developing map on the universal cover of $C$.
We may assume that $p_0=(0,0,1)$ is fixed by the holonomy of $C$, so
$\dev(x,t)=\dev_0(x)+(0,0,t)$, where $\dev_0$ is the developing map of the singular disc $(D,g)$.

Let $G:\tilde S_C\to\mathbb H^2$ be the Gauss map of the immersion $\dev|_{\tilde S_C}$.
First we notice that $G$ is locally bi-Lipschitz by our hypothesis, and since 
the diameter of $S_C$ is bounded, then the image through $G$ of a fundamental domain of the covering
$\tilde S_C\to S_C$ is contained in a hyperbolic ball $B(p_0, r_0)$. As the holonomy of $G$ is an elliptic group fixing $p_0$
we get that $G(\tilde S_C)$ is entirely contained in $B(p_0, r_0)$.

Now let $\pi:\mathbb H^2\to\mathbb R^2$ be the radial projection $\pi(x,y,z)=(x/z, y/z)$. Notice that
the restriction of $\pi$ over the disc $B(p_0, r_0)$ is a bi-Lipschitz diffeomorphism onto a Euclidean disc $B(p_0, \rho_0)$ where $\rho_0=\tanh r_0$.
Let $A=A(r_0)$ be the bi-Lipschitz constant of $\pi|_{B(p_0, r_0)}$.

Observe that, at a point $(x,f(x))$, the vector $\grad_g f(x)+\partial_t$ is a multiple of the normal vector of $S_C$. Hence the normal vector of the immersion $\dev:\tilde S_C\to\R^{2,1}$ at $(\tilde x,\tilde f(\tilde x))$ is parallel to the vector
$(\dev_0)_*(\grad_{ g} \tilde f(\tilde x))+(0,0,1)$.
Then it is easy to check that $\pi(G(\tilde x,\tilde f(\tilde x)))=(\dev_0)_*(\grad_{ g}\tilde f(\tilde x))$. So we get
\[
g(\hess_g \tilde f(v), \hess_g \tilde f(v))=\langle (\pi\circ G)_*(v'), (\pi\circ G)_*(v')\rangle\,,
\]
where $v'=v+df(v)\partial_t$.
Using that $\pi$ is $A$-bi-Lipschitz and that $\langle G_*(v'), G_*(v')\rangle = I(sv', sv')$ we get on $D$
\begin{equation}\label{eq:lll}
                                 \frac{a_1^2}{A}I(v',v') <g(\hess_g(f)v, \hess_g(f)v)<A a_2^2 I(v',v')\,.
\end{equation}
Now $I(v', v')=g(v,v)-(df(v))^2$ so it turns out that $g(\hess_g(f)\bullet, \hess_g(f)\bullet)$ is uniformly bounded.
By the first part of Lemma \ref{lemma metrica euclidea} there is $a_0$ such that $||\grad_g f(x)||<a_0 \rho$, where $\rho$
is the distance from  the singular point $x_0$.
In particular, in a small neighborhood of the singular point we have $(1-\epsilon)g(v,v)\leq I(v',v')$, for a fixed small $\epsilon$.
By \eqref{eq:lll} we conclude the proof of the first part of the lemma.

Finally,  as $||\grad_g f(x)||<a_0 \rho$,  a simple integration along the geodesic connecting $x$ to $x_0$ shows that
$|f(x)-f(x_0)|<(a_0/2) \rho^2$.
\end{proof}

\begin{proof} [Proof of Theorem \ref{corrispondenza coppie caso singolare}]
We start by showing that $I=h(b\bullet,b\bullet)$ and $s=b^{-1}$ define an embedding in a flat manifold with particles. 
The key point is to prove that a neighborhood  $U$ of a singular point $p_0\in \puct$ can be immersed as a graph in the model of the cone
singularity with embedding data $(I,s)$. Once this has been proved, by a standard application of the uniqueness of the extension one sees
that $S$ can be globally immersed in a space-time with cone singularity as in Definition \ref{defi spacetime singular}.

Let $D$ be a small disc centered at $p_0$ and let $\tilde D^*$ be the universal cover of the punctured disc $D^*=D\setminus\{p_0\}$. Suppose the cone angle at $p_0$ is $\theta_0$. Now we denote by $d:\tilde D^*\rar\Hyp^2$ the restriction of the developing map of $h$ to $\tilde D^*$; we can assume $(0,0,1)$ is the fixed point for the holonomy of a path winding around $p_0$. Consider the radial projection $\pi:\Hyp^2\rar \R^2$ from hyperbolic plane to the horizontal plane  $\{(x,y,z)\,|\,z=1\}$ in $\R^{2,1}$, namely $\pi(x,y,z)=(x/z,y/z)$. 
Let $\dev=\pi\circ d$, which is a developing map for the Klein Euclidean metric $g_K$ on $D^*$ introduced in Subsection \ref{ss:conprel}, which will be denoted simply by $g$ in the following.

Now let $u$ be a function on $D^*$ such that $\tilde b=\Hess_h u-  u\id$, that exists  by Proposition \ref{pr:trivial}. 
We consider the function $\bar u$ on $D^*$ as $\bar u(x)= u(x)/\ch r$ where we have put $r=d_h(p_0,x)$. 
We know that $\tilde\sigma:\tilde D^*\rar\R^{2,1}$ given by $\tilde \sigma(x)=d_*(\grad_h u(x))-u(x)d(x)$ gives an immersion of $\tilde{D^*}$  with the required embedding data. Here with some abuse
we denote by $u$ also the lifting of $u$ to the universal cover.
By \cite[Lemma 2.8]{bonfill} it turns out that the orthogonal projection of $\tilde\sigma(x)$ onto the horizontal plane $\mathbb R^2\subset\mathbb R^{2,1}$ (where again we are supposing that the vertical direction is fixed by the holonomy of a loop around $p_0$)  is exactly 
\[
\varphi(x)=(\dev)_*(\grad_g \bar u(x))\,,
\]
Thus $\tilde \sigma(x)=\varphi(x)+f(x)(0,0,1)$, for some function $f:\tilde D^*\to\R$. 
Notice that the holonomy of $\sigma$ is simply the elliptic rotation around the vertical line of angle $\theta_0$, so it turns out that
$f$ is invariant by the action of covering transformation of $\tilde D^*$ so it projects to a function still denoted by $f$ on $D^*$.

Now we claim that $\varphi$ is the developing map for a Euclidean structure with cone singularity $\theta_0$ over $D^*$.
In fact by Lemma \ref{lm:kleincorr} we know that  for $v,w\in T_x D^*$
\begin{equation} \label{relation Hessians}
g(\Hess_g \bar u(v),w)=\frac{1}{\ch r}h(b(v),w)\,.
\end{equation}
Therefore, $\Hess_g \bar u$ is bounded and uniformly positive. Indeed $b$ and the factor $1/\cosh r$ appearing in Equation (\ref{relation Hessians}), as well as the metric $h$ compared to $g$, are bounded on $D^*$. Applying Lemma \ref{lemma metrica euclidea}, $g'=g(\Hess_g \hat u(\cdot),\Hess_g \hat u(\cdot))$ is a Euclidean metric with cone angle $\theta_0$ 
and $\varphi$ coincides with its developing map.

As a consequence, if we consider on $M=D^*\times\mathbb R$ the flat Lorentzian metric with particle $g'-dt^2$, its developing map is $\mathrm{Dev}(x,t)=\varphi(x)+t(0,0,1)$. In particular we have that $\mathrm{Dev}(x,f(x))=\tilde\sigma(x)$.
So we have shown that the map $\sigma(x)=(x, f(x))$ is an immersion of $D^*$ into the space-time with particles  $M$ with embedding data
$(I, s)$.
The fact that the immersion is orthogonal to the singular locus follows from Lemma \ref{lemma metrica euclidea2}.

We prove now the opposite implication showing that if $(I,s)$ are the embedding data of a Cauchy surfaces in a space-time $M$ with singularities
of angle $\theta_i$, then $h=I(s\bullet, s\bullet)$ is a hyperbolic metric with cone points of angle $\theta_i$.

We know $h=I(s\bullet,s\bullet)$ is a hyperbolic metric, with holonomy a rotation of angle $\theta_i$ around each cone point $p_i$ of $I$. Moreover $s$ is bounded and uniformly positive, hence $h$ admits a one-point completion as $I$ does, and this is sufficient to show that $h$ has cone singularity. As $I=h(b,b)$, by applying  the first part of the proof to the pair $(h,b)$ 
one sees  that the cone angles of $h$ coincide with the cone angles of $M$.
\end{proof}

\subsection{A Corollary about metrics with cone singularities}

We write here a consequence of the previous discussion, which might be of interest independently of Lorentzian geometry.

\begin{thmx} \label{hbb cone metric}
Let $h$ be a hyperbolic metric with cone singularities and let $b$ be a Codazzi, self-adjoint operator for $h$, bounded and uniformly positive. Then $I=h(b\bullet,b\bullet)$ defines a singular metric with the same cone angles as $h$. Moreover if $I=e^{2\xi}|w|^{2\beta}|dw|^2$
in a conformal coordinate $w$ around a singular point $p$, the factor $\xi$ extends to a H\"older continuous function at $p$.
\end{thmx}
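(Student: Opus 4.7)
The first assertion is essentially contained in the proof of Theorem \ref{corrispondenza coppie caso singolare}: given $(h,b)$ with $b$ bounded and uniformly positive, one constructs an isometric realization of $(S^*,I)$ as a graph in a flat space-time with particles, and the cone angles of the ambient singular locus match those of $h$. Let me first recall this construction in a neighborhood of a cone point $p_0$ of $h$ with angle $\theta_0$. By Proposition \ref{pr:trivial} there is a function $u$ on the punctured disc $D^*$ such that $b=\Hess_h u-u\id$. Set $\bar u=(\ch r)^{-1}u$ with $r$ the hyperbolic distance from $p_0$, and consider the Klein Euclidean metric $g$ associated with $h$, which has cone angle $\theta_0$ at $p_0$. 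By Lemma \ref{lm:kleincorr},
\[
g(\Hess_g\bar u(v),w)=(\ch r)^{-1}h(bv,w)\,,
\]
so the boundedness and uniform positivity of $b$ pass to $\Hess_g\bar u$; Lemma \ref{lemma metrica euclidea} then ensures that
\[
g'(v,w):=g\bigl(\Hess_g\bar u(v),\Hess_g\bar u(w)\bigr)
\]
is a Euclidean metric on $D$ with cone angle $\theta_0$ at $p_0$.

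Next I would compute $I$ in terms of $g'$. The metrics $h$ and $g$ share the conformal class on $D^*$, so $h=\lambda g$ for a smooth positive function $\lambda$ extending continuously and positively to $p_0$ (as is seen from the conformal expressions in Example \ref{examples cone singular metrics}). The identity above then rewrites as $\Hess_g\bar u=(\lambda/\ch r)\,b$ as endomorphisms, hence $b=(\ch r/\lambda)\Hess_g\bar u$. Plugging in,
\[
I(v,w)=h(bv,bw)=\lambda\,g(bv,bw)=\frac{\ch^2 r}{\lambda}\,g'(v,w)\,.
\]
Since $g'$ has cone angle $\theta_0$ and $\ch^2 r/\lambda$ is a continuous positive function on $D$, the metric $I$ has cone singularity of angle $\theta_0$ at $p_0$.

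Finally, for the H\"older continuity statement, I would fix a conformal chart $w$ for $g'$: since $g'$ is flat with cone angle $\theta_0$, one can write $g'=c\,|w|^{2\beta}|dw|^2$ for a constant $c>0$. In this chart,
\[
I=e^{2\xi}|w|^{2\beta}|dw|^2\quad\text{with}\quad\xi=\tfrac12\log\!\bigl(c\,\ch^2 r/\lambda\bigr)\,.
\]
It then suffices to check that $r$ and $\lambda$ are H\"older continuous functions of $w$ in a neighborhood of the origin. For $r$, observe that in its own conformal coordinate $z$ one has $r\sim |z|^{1+\beta}$ near $p_0$; because $g$ and $g'$ are both flat with the same cone angle, the transition map between $z$ and $w$ is affine in the local Euclidean developing coordinates, which combined with the bi-Lipschitz estimate \eqref{bilipischitz local condition} of Lemma \ref{lemma metrica euclidea} gives a bi-H\"older change of variables between $z$ and $w$ (with exponent $1+\beta$), so $\ch^2 r-1=O(r^2)$ is H\"older in $w$. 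For $\lambda$, the explicit expression in Example \ref{examples cone singular metrics} shows that $\lambda$ is smooth on $D^*$ and extends analytically in $|z|^{2(1+\beta)}$ at $p_0$, again giving a H\"older dependence on $w$.

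The main obstacle I foresee is precisely this last point: controlling the regularity of the change of conformal coordinates between the natural chart $z$ adapted to $h$ and the chart $w$ adapted to $g'$. A priori, $\Hess_g\bar u$ is only bounded and uniformly positive, so the developing map $\varphi$ of $g'$ constructed in Lemma \ref{lemma metrica euclidea} is only known to be bi-Lipschitz; one has to argue that this is enough to transport H\"older continuity of $r$ and $\lambda$ to the target coordinate, using that both $g$ and $g'$ are rigid (flat with prescribed cone angle) so that their developing maps differ by an affine transformation in suitable charts.
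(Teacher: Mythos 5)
Your reduction to the Klein metric contains a genuine error at its key step. You assert that the hyperbolic metric $h$ and the Klein Euclidean metric $g=g_K$ of Subsection \ref{ss:conprel} ``share the conformal class on $D^*$'', i.e.\ $h=\lambda g$ for a positive function $\lambda$. This is false: the Klein (radial) projection is not conformal, and in polar coordinates one has $h=dr^2+\sinh^2 r\,d\theta^2$ while $g_K=\cosh^{-4}r\,dr^2+\tanh^2 r\,d\theta^2$, so the two metrics are only bi-Lipschitz (which is all the paper ever claims), agreeing conformally only at the cone point itself. Consequently the identity $\Hess_g\bar u=(\lambda/\cosh r)\,b$ and the resulting formula $I=(\cosh^2 r/\lambda)\,g'$ do not hold: writing $h(\cdot,\cdot)=g(A\cdot,\cdot)$ with $A$ a non-scalar $g$-self-adjoint operator, one gets $I=\cosh^2 r\,g(A^{-1}B\,\cdot,B\,\cdot)$ with $B=\Hess_g\bar u$, which is \emph{not} conformal to $g'=g(B\cdot,B\cdot)$. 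So the first assertion of the theorem does not follow from your computation, and the H\"older statement, which you derive from that same conformal identity, collapses with it. (Replacing $g_K$ by the flat metric genuinely conformal to $h$ would not rescue the argument either, since Lemma \ref{lm:kleincorr} is specific to the Klein metric.)

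The paper's route avoids any such conformal identification. From the proof of Theorem \ref{corrispondenza coppie caso singolare}, the surface is realized as the graph of a function $f$ over the Euclidean cone disc $(D,g')$, so that $I=g'-df\otimes df$; in a conformal coordinate $z$ for $g'$ this reads $I=|z|^{2\beta}\bigl(|dz|^2-|z|^{-2\beta}df\otimes df\bigr)$. The whole difficulty is then to show (Lemma \ref{lemma holder coefficients}) that the coefficients of $|z|^{-2\beta}I$, essentially $|z|^{-\beta}\partial_x f$ and $|z|^{-\beta}\partial_y f$, extend H\"older continuously to the puncture, using the bounds $\|\Hess_g f\|\le C$ and $\|\grad_g f\|_g\le C\rho$ together with the divergence of the Christoffel symbols of $g'$; one then invokes the Korn--Lichtenstein theorem to obtain a $C^{1,\alpha}$ conformal coordinate $w$ for $|z|^{-2\beta}I$ and concludes that $I=e^{2\xi}|w|^{2\beta}|dw|^2$ with the stated regularity of $\xi$. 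This non-conformal perturbation analysis is exactly what your proposal skips, and it cannot be bypassed by the conformal shortcut you propose. (Your concluding worry about the change of conformal charts is also symptomatic: two flat cone metrics on the same disc with the same angle need not have developing maps differing by an affine map, since the identity is not an isometry between them.)
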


To prove Theorem \ref{hbb cone metric}, we consider the uniformly convex surface constructed in Theorem \ref{corrispondenza coppie caso singolare}, with first fundamental form $I$. We must show that this metric has cone singularities. For every puncture of $S$, consider the singular Euclidean metric on the disc $D$, provided by Lemma \ref{lemma metrica euclidea}. We now call this metric $g$ instead of $g'$. Suppose the embedding in the chart $D^*\times \R$ with the metric $g-dt^2$ is the graph of a function $f:D^*\to\R$. Hence 
if $z$ is a conformal coordinate for the metric $g$,  the metric $I$ can be written in this coordinate as
$$I=|z|^{2\beta}|dz|^2-df\otimes df=|z|^{2\beta}\left(|dz|^2-\frac{df\otimes df}{|z|^{2\beta}}\right)\,,$$
where $g=|z|^{2\beta}|dz|^2$.
\begin{lemma} \label{lemma holder coefficients}
The coefficients of the metric 
$$I'=|dz|^2-\frac{df\otimes df}{|z|^{2\beta}}=|z|^{-2\beta}I$$
extend to H\"older functions defined on $D$.
\end{lemma}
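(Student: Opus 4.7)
The key is to track the size of $\partial_i f$ and $\partial_i\partial_j f$ in the chart $z$ as functions of $|z|$, using the bounds on $\grad_g f$ and $\Hess_g f$ provided by Lemmas \ref{lemma metrica euclidea} and \ref{lemma metrica euclidea2}. Write $F_{ij}(z) := |z|^{-2\beta}(\partial_i f)(\partial_j f)$, so that in the chart
\[
I' = |dz|^2 - \sum_{i,j} F_{ij}(z)\,dx^i\,dx^j,
\]
and the claim reduces to showing each $F_{ij}$ extends H\"older-continuously across $z=0$.

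First, since $g^{ij} = |z|^{-2\beta}\delta^{ij}$, we have $|\nabla_{\mathrm{eucl}} f| = |z|^{\beta}\|\grad_g f\|_g$. Combining Lemma \ref{lemma metrica euclidea} (which gives $\|\grad_g f\|_g \leq a_1 d_g(0, z)$) with $d_g(0, z) \sim |z|^{\beta+1}/(\beta+1)$ yields $|\partial_i f|(z) \leq C|z|^{2\beta+1}$. Hence $|F_{ij}(z)| \leq C^2|z|^{2\beta+2}$, and since $\beta>-1$ each $F_{ij}$ extends continuously to the origin with value $0$. Next I would bound the Euclidean second derivatives. The $g$-operator norm bound $\Hess_g f \leq A_1$ from Lemma \ref{lemma metrica euclidea2} translates, via the conformal factor, to $|(\Hess_g f)_{ij}| = O(|z|^{2\beta})$ as a bilinear form. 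The conformal Christoffel symbols of $g = e^{2\eta}|dz|^2$ with $\eta = \beta\log|z|$ satisfy $|\Gamma^k_{ij}| \leq 3|\beta|/|z|$, so
\[
\partial_i\partial_j f = (\Hess_g f)_{ij} + \Gamma^k_{ij}\partial_k f = O(|z|^{2\beta}) + O(|z|^{-1})\cdot O(|z|^{2\beta+1}) = O(|z|^{2\beta}).
\]
Differentiating $F_{ij}$ directly and inserting both estimates gives $|\nabla F_{ij}(z)| \leq C|z|^{2\beta+1}$.

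Since $2\beta+1 > -1$, this bound is integrable near $z=0$, and a standard two-case argument then yields the H\"older regularity with exponent $\alpha := \min(1,\, 2\beta+2) > 0$: for $z_1,z_2$ with $|z_1-z_2| \leq \tfrac12\min(|z_1|,|z_2|)$, integrate $\nabla F_{ij}$ along the line segment $[z_1,z_2]$ (which stays in the annulus of radii $\tfrac12\min(|z_1|,|z_2|)$ and $\tfrac32\max(|z_1|,|z_2|)$), obtaining $|F_{ij}(z_1)-F_{ij}(z_2)| \leq C|z_1-z_2|^{\alpha}$; otherwise $\max(|z_1|,|z_2|) \leq 3|z_1-z_2|$, and one bounds each of $|F_{ij}(z_1)|$, $|F_{ij}(z_2)|$ separately by integrating $\nabla F_{ij}$ along radial rays from $0$. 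In either case the desired inequality holds, so $F_{ij}$, and therefore $\delta_{ij} - F_{ij}$, extends to an $\alpha$-H\"older function on $D$.

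The main obstacle lies in the second step: converting the intrinsic bound $\Hess_g f \leq A_1$ into the pointwise Euclidean estimate $|\partial_i\partial_j f| = O(|z|^{2\beta})$. The first-derivative bound $|\partial_k f| = O(|z|^{2\beta+1})$ is just strong enough to make the Christoffel contribution $\Gamma^k_{ij}\partial_k f$ of precisely the same order $|z|^{2\beta}$ as the Hessian term itself, so no extra $|z|^{-1}$ singularity survives. Once this matching holds, the remaining H\"older estimate is a routine calculation on a punctured disc.
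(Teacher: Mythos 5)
Your proof is correct, and it rests on exactly the same two inputs as the paper's argument ($\|\Hess_g f\|\leq C$ and $\|\grad_g f\|_g\leq C\rho$ from Lemmas \ref{lemma metrica euclidea2} and \ref{lemma metrica euclidea}), as well as on the same key cancellation: the Christoffel contribution $\Gamma^k_{ij}\partial_k f=O(|z|^{-1})\cdot O(|z|^{2\beta+1})$ is of the same order $|z|^{2\beta}$ as the intrinsic Hessian term, which is precisely the content of the paper's estimates \eqref{termine1stima1holder}--\eqref{termine3stima2holder}. The difference is in the packaging. The paper proves the slightly stronger statement that each factor $F=|z|^{-\beta}\partial_x f$ (and likewise for $\partial_y f$) is $(\beta+1)$-H\"older, by estimating increments of $F$ separately along circular arcs and radial segments and then combining via an intermediate point $z_2=\varrho_0e^{i\phi_1}$; H\"older continuity of the metric coefficients then follows since products of bounded H\"older functions are H\"older. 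You instead estimate the coefficient functions $F_{ij}=|z|^{-2\beta}\partial_if\,\partial_jf$ directly, converting the intrinsic bounds into the pointwise Euclidean bounds $|\partial_i f|=O(|z|^{2\beta+1})$, $|\partial_i\partial_j f|=O(|z|^{2\beta})$, hence $|\nabla F_{ij}|=O(|z|^{2\beta+1})$, and then run the standard two-case (segment in an annulus versus comparable-to-the-radius) argument to get exponent $\min(1,2\beta+2)$. Both routes prove the lemma as stated, and either suffices for the Korn--Lichtenstein application in Theorem \ref{hbb cone metric}; the paper's version retains the factor-level regularity of $|z|^{-\beta}\partial_i f$, while yours is somewhat more streamlined and gives a marginally better exponent for the coefficients themselves.
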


\begin{proof}
It suffices to show that the functions $|z|^{-\beta}\partial_x f$ and $|z|^{-\beta}\partial_y f$ are H\"older functions. We will give the proof for the first function, which we denote $$F(z)=|z|^{-\beta}\frac{\partial f}{\partial x}.$$
We split the proof in three steps. Recall from Lemmas \ref{lemma metrica euclidea2} and \ref{lemma metrica euclidea} that we have $||\Hess_g f||\leq C$ and $||\grad_g f||_g\leq C \rho$ for some constant $C$, where $\rho=\frac{1}{1+\beta}|z|^{1+\beta}$ 
is the intrinsic Euclidean distance.

First, consider the path $\gamma_1(t)=\varrho e^{it}$ for $t\in[\phi_0,\phi_1]$, so that $|\gamma_1(t)|=\varrho$ is constant. We claim that $$|F(\gamma_1(\phi_1))-F(\gamma_1(\phi_0))|\leq C_1\varrho^{\beta+1}|\phi_1-\phi_0|$$
for some constant $C_1$. Consider 
\begin{align} \label{stima1holder}
\begin{split}
F(\gamma_1(\phi_1))-F(\gamma_1(\phi_0))&=\int_{\phi_0}^{\phi_1}\frac{d(F\circ\gamma_1(t))}{dt}dt \\
&=\int_{\phi_0}^{\phi_1}\varrho^{-\beta}\left(g\left(\Hess_g f(\dot\gamma_1),\frac{\partial}{\partial x}\right)+df\left(\nabla_{\dot\gamma_1}\frac{\partial}{\partial x} \right)\right).
\end{split}
\end{align}
Now we have
\begin{equation} \label{termine1stima1holder}
|g\left(\Hess_g f(\dot\gamma_1),\partial_x \right)|\leq C||\dot\gamma_1||_g||\partial_x||_g=C\varrho^{2\beta+1}.
\end{equation}
On the other hand, a computation (using Equation \eqref{eq:levicivita} in Remark \ref{remarkremark}) shows $\nabla_{\dot\gamma_1}\partial_x=\beta\partial_y$, hence
\begin{equation} \label{termine2stima1holder}
|df\left(\nabla_{\dot\gamma_1}\partial_x\right)|_g\leq\beta||\grad_g f||_g||\partial_x||_g\leq (|\beta|/\beta+1) C\varrho^{2\beta+1}.
\end{equation}
Using \eqref{termine1stima1holder} and \eqref{termine2stima1holder} in \eqref{stima1holder}, we get
$$|F(\gamma_1(\phi_1))-F(\gamma_1(\phi_0))|\leq C_1\varrho^{\beta+1}|\phi_1-\phi_0|
$$

As a second step, we consider the path $\gamma_2(t)=t z_0$ with $|z_0|=1$, for $t\in[t_0,t_1]$. We claim that
$$|F(\gamma_2(t_1))-F(\gamma_2(t_0))|\leq C_2|t_1-t_0|^{\beta+1}.$$
Since $|\gamma_2(t)|=t$, we have 
\begin{equation} \label{stima2holder}
\frac{d(F\circ\gamma_2(t))}{dt}=-\beta t^{-\beta-1}\frac{\partial f}{\partial x}+t^{-\beta}\left(g\left(\Hess_g f(\dot\gamma_2),\frac{\partial}{\partial x}\right)+df\left(\nabla_{\dot\gamma_2}\frac{\partial}{\partial x} \right)\right).
\end{equation}
From $||\grad_g f||\leq C\rho$, we get $|\partial f/\partial x|\leq (C/(\beta+1))t^{2\beta+1}$, hence the first term in \eqref{stima2holder} is bounded  by $(|\beta|/\beta+1) Ct^{\beta}$. For the second term we have as above
\begin{equation} \label{termine2stima2holder}
|g\left(\Hess_g f(\dot\gamma_2),\partial_x \right)|\leq C||\dot\gamma_2||_g||\partial_x||_g=Ct^{2\beta}\,,
\end{equation}
\noindent whereas in this case $\nabla_{\dot\gamma_2}\partial_x=(\beta/t)\partial_x$, from which we get
\begin{equation} \label{termine3stima2holder}
|df\left(\nabla_{\dot\gamma_2}\partial_x\right)|_g\leq|\beta/t|||\grad_g f||_g||\partial_x||_g\leq (|\beta|/\beta+1) Ct^{2\beta}\,.
\end{equation}
By integrating we get
$$|F(\gamma_2(t_1))-F(\gamma_2(t_0))|\leq\int_{t_0}^{t_1}\left| \frac{ d(F\circ\gamma_2(t))}{dt} \right| dt\leq C_2|t_1^{\beta+1}-t_0^{\beta+1}|\leq C_2|t_1-t_0|^{\beta+1}.$$

We can now conclude the proof. Given two points $z_0=\varrho_0e^{i\phi_0}, z_1=\varrho_1 e^{i\phi_1}\in D$, assuming $\varrho_1\geq \varrho_0$, consider the point $z_2=\varrho_0 e^{i\phi_1}$. We have
\begin{align*}
|F(z_1)-F(z_0)|&\leq |F(z_1)-F(z_2)|+|F(z_2)-F(z_0)|\leq C_2|\varrho_1-\varrho_0|^{\beta+1}+C_1\varrho_0^{\beta+1}|\phi_1-\phi_0| \\
&\leq C_3(|z_1-z_0|^{\beta+1}+\varrho_0^{\beta+1}|\phi_1-\phi_0|^{\beta+1}) \\
&\leq C_4|z_1-z_0|^{\beta+1}.
\end{align*}
In the second line we have used that $|\varrho_1-\varrho_0|\leq |z_1-z_0|$ and the constant $C_3$ involves $C_1$, $C_2$ and a factor which bounds $|\phi_1-\phi_0|^{\beta+1}$ in terms of $|\phi_1-\phi_0|$, since $|\phi_1-\phi_0|\in[0,2\pi]$. In the last line, we have used $\varrho_0|\phi_1-\phi_0|\leq (\pi/2)|z_1-z_0|$.
\end{proof}

\begin{proof}[Proof of Theorem \ref{hbb cone metric}]
By a classical theorem of Korn and Lichtstein (see \cite{chern} for a proof), Lemma \ref{lemma holder coefficients} implies that
there exists a $\mathrm C^{1,\alpha}$ conformal coordinate $w=w(z)$ for the metric $I'$ in a neighborhood of the point $p$.
Now $I=|z|^{2\beta}I'=e^{2\xi}|w|^{2\beta}|dw|^2$ where $\xi$ is some continuous function on a neighborhood of the puncture. This concludes the proof that $I$ has a cone point of the same angle as $h$.
\end{proof}

\subsection{Cauchy surfaces in MGHF space-times with particles}

The purpose of this subsection is to prove a step towards the parametrization of MGHF structures with particles on $S\times\R$ by means of the tangent bundle of Teichm\"uller space of the punctured surface $S$. The parametrization will be completely achieved in Corollary \ref{cor:main2}.

Given two uniformly convex Cauchy surfaces in a MGHF space-time $M$, we already know from Theorem \ref{theorem holonomy} that the holonomy of the third fundamental form of a Cauchy surface coincides with the linear holonomy of $M$. However, differently from the closed case, this is not sufficient to guarantee that the two hyperbolic metrics obtained as third fundamental form correspond to the same point in Teichm\"uller space. We prove this separately in Proposition \ref{compito ingegneria}, by using techniques similar to those developed in \cite{mehdi}.

Next, we prove a converse statement in Proposition \ref{existence and uniqueness with singularities}. Namely, if two pairs of embedding data $(I,s)$ and $(I',s')$ are such that the third fundamental forms $h=I(s\bullet,s\bullet)$ are isometric via an isometry isotopic to the identity, and the translation parts of the holonomy are in the same cohomology class, then $(I,s)$ and $(I',s')$ are embedding data of uniformly convex Cauchy surfaces in the same space-time.

\begin{prop} \label{compito ingegneria}
Let $(I,s),(I',s')\in \D_{\bfs{\beta}}$ be embedding data of uniformly convex Cauchy surfaces in the same space-time with particles $M$. Then $h=I(s\bullet,s\bullet)$ and $h'=I'(s'\bullet,s'\bullet)$ are isotopic.
\end{prop}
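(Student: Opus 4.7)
The plan is to construct an equivariant diffeomorphism between the universal covers of $S^*$ by comparing the Gauss maps of the two embeddings. First, lift $\sigma,\sigma':S\to M$ to equivariant spacelike immersions $\tilde\sigma,\tilde\sigma':\tilde S^{*}\to\R^{2,1}$ with respect to a fixed choice of holonomy $\hol:\pi_1(S^{*})\to\isom^{+}(\R^{2,1})$ of $M$. Proposition \ref{linear part holonomy} tells us that the Gauss maps $G,G':\tilde S^{*}\to\Hyp^{2}$ are developing maps for $\tilde h$ and $\tilde h'$, both equivariant under the same linear holonomy $\mathrm{L}\hol=\hol_h=\hol_{h'}$ (as confirmed also by Theorem \ref{theorem holonomy}).

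The core step is to show that $G$ and $G'$ are both diffeomorphisms onto the same open subset $\Omega\subset\Hyp^{2}$, which we expect to be an invariant of $M$ alone (for instance, the set of future-pointing unit normals attainable from the timelike convex hull of $\tilde M$, or equivalently the domain of definition of the cosmological-time direction field). Uniform convexity implies that $G$ and $G'$ are local diffeomorphisms, while the bounded/uniformly positive hypothesis on $s$ and $s'$, combined with the local analysis of Cauchy surfaces near a particle carried out in the proof of Theorem \ref{corrispondenza coppie caso singolare} (which describes the graph structure orthogonal to the singular locus), allows us to control the behaviour of $G$ and $G'$ near a puncture and to conclude that they both surject onto $\Omega$. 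Once this is established, the map $\tilde\Phi=(G')^{-1}\circ G:\tilde S^{*}\to\tilde S^{*}$ is a well-defined $\pi_1(S^{*})$-equivariant diffeomorphism satisfying $\tilde\Phi^{*}\tilde h'=\tilde h$, and descends to an isometry $\Phi:(S^{*},h)\to(S^{*},h')$. Since each puncture $p_i$ of $(S,h)$ is canonically associated to a peripheral conjugacy class in $\pi_1(S^{*})$, and $\Phi$ is equivariant, it sends the puncture of $h$ attached to $p_i$ to the puncture of $h'$ attached to $p_i$, hence extends to a diffeomorphism of $S$ fixing $\puct$ setwise.

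It remains to verify that $\Phi$ is isotopic to the identity. For this, use the fact that $\sigma$ and $\sigma'$ are both isotopic to the standard inclusion $S\hookrightarrow S\times\{0\}$: one can choose an isotopy $\sigma_t:S\to M$ through spacelike embeddings joining $\sigma$ to $\sigma'$, slightly perturbed so that the shape operator $s_t$ remains invertible (and in fact uniformly positive, using a convex-combination argument in the future of $\sigma(S)\cup\sigma'(S)$ in $M$). Running the Gauss map construction above for each $\sigma_t$ produces a continuous family of diffeomorphisms $\Phi_t:S\to S$ with $\Phi_0=\mathrm{Id}$ and $\Phi_1=\Phi$, yielding the desired isotopy and therefore the isotopy $h\sim h'$.

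The main obstacle is the identification of the images of $G$ and $G'$ with a common open set $\Omega\subset\Hyp^{2}$, which is delicate because with cone angles in $[\pi,2\pi)$ the holonomy alone does not distinguish hyperbolic structures (as the paper emphasises in Section \ref{sectionteichsing}). The argument must therefore genuinely exploit the embedding into the common space-time $M$, most likely via the analysis of the behaviour of a uniformly convex Cauchy surface in a neighborhood of a particle performed in Theorem \ref{corrispondenza coppie caso singolare} together with techniques analogous to those of \cite{mehdi} for interpolating Cauchy surfaces by level sets of a natural time function.
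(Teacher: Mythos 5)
Your core step fails. The Gauss map $G:\tilde S^{*}\to\Hyp^{2}$ of a uniformly convex Cauchy surface in a space-time with particles is, by Proposition \ref{linear part holonomy}, a developing map for the hyperbolic metric $h$ \emph{with cone singularities}. Unlike the closed case, where completeness of $h$ makes the developing map a global diffeomorphism onto $\Hyp^{2}$, the developing map of a hyperbolic cone surface is only a local isometry and is badly non-injective: near a particle the peripheral holonomy is an elliptic rotation and the developing map wraps the preimage of a punctured neighborhood around the fixed point, covering a punctured disc infinitely many times. Hence neither $G$ nor $G'$ is a diffeomorphism onto an open set, there is no common domain $\Omega$, and $(G')^{-1}\circ G$ is simply not defined. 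This is not a repairable technicality: as you yourself note, for angles in $[\pi,2\pi)$ the holonomy does not determine the hyperbolic cone structure, and your construction, had it worked, would only have used equality of the linear holonomies; the promised input from the ambient space-time (control of the images of $G,G'$ near the particles via Theorem \ref{corrispondenza coppie caso singolare}) is never actually supplied and cannot restore injectivity. The final isotopy step is also unjustified: there is no reason one can interpolate $\sigma$ and $\sigma'$ through spacelike embeddings keeping the shape operator uniformly positive by a ``slight perturbation''.

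For comparison, the paper's proof never leaves the space-time and never uses developing maps. It uses the normal flow $S(t)$: the third fundamental form is invariant under the flow and is recovered as $h=\lim_{t\to\infty}\frac{1}{t^{2}}I_{t}$ (Remark \ref{remark scoiattolo}); after translating $S_{2}$ into the slab between $S_{1}$ and $S_{1}(a)$, Lemma \ref{lemma cocci} together with \cite{mehdi} shows the projection along the normal flow gives $1$-Lipschitz diffeomorphisms $f_{t}:S_{1}(a+t)\to S_{2}(t)$; an Ascoli--Arzel\`a limit produces a $1$-Lipschitz map $(S_{1},h_{1})\to(S_{2},h_{2})$ homotopic to the flow projection, and since the two cone metrics have equal area by Gauss--Bonnet, this limit map must be an isometry, isotopic to the identity by construction. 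That argument genuinely exploits the common embedding in $M$, which is exactly the ingredient your approach lacks.
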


We first give an observation which will be used several times in the proof.

\begin{remark} \label{remark scoiattolo}
Given a uniformly convex surface $S$ with embedding data $(I,s)$, let $(h,b)\in \E_{\bfs{\beta}}$ the corresponding pair. Let $S(t)$ be the surface obtained by the future normal flow of $S$ at time $t$. It is known that $S(t)$ corresponds to the pair $(h,b+t\id)$, namely, the third fundamental form is constant along the normal flow. Moreover, as the first fundamental form of $S(t)$ is $I_t=h((b+t\id)\bullet,(b+t\id)\bullet)$, $h$ can also be recovered as $h=\lim_{t\rar\infty}\frac{1}{t^2}I_t$.
\end{remark}

We will also use the following lemma concerning the properties of flat globally hyperbolic space-times.

\begin{lemma} \label{lemma cocci}
Let $S_1$ and $S_2$ be uniformly convex surfaces in a MGHF space-time with particles. If $S_2$ is contained in the future of $S_1$ and in the past of $S_1(a)$, then $S_2(t)$ is contained in the future of $S_1(t)$ and in the past of $S_1(a+t)$ for every $t>0$.
\end{lemma}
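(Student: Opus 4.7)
The plan is to derive both containments from the Lorentzian distance function from $S_1$. For every $x\in I^+(S_1)$ set
$$\tau_1(x):=\sup\{L(\gamma)\mid \gamma\text{ is a future-directed timelike curve from a point of }S_1\text{ to }x\}.$$
Since $S_1$ is uniformly convex and orthogonal to the singular locus, a maximizing curve always exists and meets $S_1$ orthogonally; it therefore coincides with an integral curve of the future unit normal of $S_1$. Hence $\tau_1$ is precisely the parameter of the normal flow of $S_1$, so $S_1(t)=\tau_1^{-1}(t)$ for every $t\geq 0$, and the reverse triangle inequality implies that along any future-directed timelike curve $\gamma$ with endpoints $x,y\in I^+(S_1)$,
$$\tau_1(y)\geq\tau_1(x)+L(\gamma).$$

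The first containment is then immediate. Given $p\in S_2$, let $p(t)$ denote its image under the normal flow of $S_2$; the curve $s\mapsto\exp_p(s\,n_2(p))$, $s\in[0,t]$, where $n_2$ is the future unit normal of $S_2$, is a future-directed timelike geodesic of length $t$ from $p$ to $p(t)$. Since $p\in S_2\subset I^+(S_1)$ we have $\tau_1(p)>0$, so superadditivity gives
$$\tau_1(p(t))\;\geq\;\tau_1(p)+t\;>\;t,$$
which is exactly the statement $p(t)\in I^+(S_1(t))$. Letting $p$ vary in $S_2$, the inclusion $S_2(t)\subset I^+(S_1(t))$ follows.

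For the second containment I would use the following Cauchy duality: if $A,B$ are Cauchy surfaces in $M$, then $A\subset I^-(B)$ is equivalent to $B\subset I^+(A)$, since any inextendible past-directed timelike curve from a point of $B$ must cross the Cauchy surface $A$ (and vice versa). The hypothesis $S_2\subset I^-(S_1(a))$ thus becomes $S_1(a)\subset I^+(S_2)$, and the argument of the preceding paragraph, carried out with $S_1$ replaced by $S_2$ and $S_2$ replaced by $S_1(a)$, yields $S_1(a+t)=S_1(a)(t)\subset I^+(S_2(t))$. A final application of the duality translates this into $S_2(t)\subset I^-(S_1(a+t))$, as desired.

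The main obstacle I anticipate is justifying the identification $S_1(t)=\tau_1^{-1}(t)$ in the presence of particles. Because the Cauchy surfaces involved are uniformly convex and orthogonal to the singular locus, and $b+t\id$ remains bounded and uniformly positive for all $t\geq 0$ (Remark \ref{remark scoiattolo}), the normal geodesics of $S_1$ foliate $I^+(S_1)$: those emanating from a singular point $S_1\cap s_i$ simply retrace the particle $s_i$, while the regular ones foliate the complement. Granted this foliation, the distance-maximizing property of the normal segment (a first-variation argument carried out in a Minkowski chart, or in a cone neighbourhood of a particle where the same argument still applies) and the Cauchy duality invoked above are both insensitive to the singular locus, and the proof goes through.
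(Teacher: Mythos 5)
Your overall strategy is the same one the paper uses: treat the normal flow of $S_1$ as a ``distance from $S_1$'' time function, prove that causal curves in $I^+(S_1)$ cannot be longer than the corresponding jump of that function, and get the second inclusion by swapping the roles of the two surfaces and using Cauchy-surface duality (the paper only says the thesis ``follows directly'' from its claim; your duality argument $A\subset I^-(B)\Leftrightarrow B\subset I^+(A)$ for disjoint Cauchy surfaces, applied to $S_2$, $S_1(a)$ and then to $S_2(t)$, $S_1(a+t)$, is a correct way of spelling that out, together with the semigroup property $S_1(a)(t)=S_1(a+t)$).

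The one place where you genuinely diverge from the paper is also the one step that, as written, is a gap: the identification $S_1(t)=\tau_1^{-1}(t)$ is made to rest on the assertion that ``a maximizing curve always exists and meets $S_1$ orthogonally''. In a flat space-time with particles this is not an off-the-shelf fact: the Avez--Seifert/limit-curve existence arguments are formulated for smooth globally hyperbolic metrics, one has to rule out (or handle) maximizers that run into the singular lines, and at a point of $S_1\cap s_i$ the first-variation/orthogonality argument does not apply as stated; moreover a first-variation argument only yields criticality of the normal segment, not the global inequality $\tau_1\le s$ that you actually need. The paper's proof sidesteps all of this with a computation you could substitute verbatim: pulling back the metric by the normal flow of $S_1$ gives $-ds^2+g_s$ with $g_s$ Riemannian, so any future-directed causal curve $\gamma$ contained in $I^+(S_1)$ satisfies $L(\gamma)=\int\sqrt{\dot s^2-g_s(\dot\gamma\,',\dot\gamma\,')}\,\le\,\Delta s$; this gives $\tau_1\le s$ (while flow lines give $\tau_1\ge s$), hence your identification, with no existence theory for maximizers and with complete insensitivity to the particles. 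What this argument does require is exactly what your last paragraph addresses, namely that the normal flow of a bounded, uniformly convex Cauchy surface is defined for all positive times and that its flow coordinates cover the region between $S_1$ and $S_1(t)$; with the maximizer step replaced by the metric-form computation, the rest of your proof goes through.
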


\begin{proof}
We claim that a point $x$ is in the future of $S_1(t)$ if there is a timelike path with future endpoint in $x$, of length at least $t$, entirely contained in the future of $S_1$. From this claim, the thesis follows directly.

To prove the claim, assume $x$ is in the past of $S_1(t)$. The pull-back of the Lorentzian metric of $M$ using the normal flow takes the form $-ds^2+g_s\,,$
where $g_s$ are Riemannian metrics. Hence every causal path contained in the future of $S_1$ and with endpoint $x$ has length at most $t$.
\end{proof}

\begin{proof}[Proof of Proposition \ref{compito ingegneria}]
Let $\sigma_1:S\rar M$ and $\sigma_2:S\rar M$ be embeddings of uniformly convex Cauchy surfaces $S_1$ and $S_2$ with embedding data $(I,s)$ and $(I',s')$ respectively. Assume first that $S_2$ is contained in the future of $S_1$ and in the past of $S_1(a)$. Applying Lemma \ref{lemma cocci} and \cite[Proposition 4.2]{mehdi}, one sees that the projection from $S_2(t)$ to $S_1(a+t)$ obtained by following the normal flow of $S_1$ is distance-increasing, and is a diffeomorphism by the property of Cauchy surfaces.
Hence we obtain a one-parameter family of 1-Lipschitz diffeomorphisms (which clearly extend to the punctures) $f_t:S_1(a+t)\rar S_2(t)$. 

Recall that by Remark \ref{remark scoiattolo}, for the first fundamental form $I_t$ of $S_2(t)$, $I_t/t^2$ converges to the third fundamental form $h_1$, and analogously for $h_2$. 
Hence by Ascoli-Arzel\`a Theorem we obtain a 1-Lipschitz map $f_\infty:(S_1,h_1)\rar (S_2,h_2)$ homotopic to $f_0$. Since the areas of $(S_1,h_1)$ and $(S_2,h_2)$ coincide by Gauss-Bonnet formula, $f_\infty$ is necessarily an isometry. 
Moreover, by construction it is clear that $(\sigma_2)^{-1}\circ f_\infty\circ\sigma_1$ is isotopic to the identity.

In the general case, given $S_1$ and $S_2$ uniformly convex, it suffices to replace $S_2$ by $S_2(k)$ for $k$ to reduce to the previous case. Indeed, we have already observed that the third fundamental forms coincide for $S_2$ and $S_2(k)$.
\end{proof}

We now move to the proof of a Proposition \ref{existence and uniqueness with singularities}, which is a converse statement.

\begin{prop} \label{existence and uniqueness with singularities}
Let $h$ be a hyperbolic metric on $S$ with cone singularities and let $b,b'\in\cC_\infty(S, h)$ be bounded and uniformly positive Codazzi operators. If $\delta(b)=\delta(b')$, then the pairs $(I,s)$ and $(I',s')$ corresponding to $(h,b)$ and $(h,b')$ are embedding data of uniformly convex Cauchy surfaces in the same MGHF space-time with particles.
\end{prop}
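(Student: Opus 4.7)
By Theorem~\ref{corrispondenza coppie caso singolare}, the pair $(h,b)$ gives an embedding $\sigma \colon S \to M$ of a uniformly convex Cauchy surface with embedding data $(I,s)$ in some MGHF spacetime $M$ with particles; similarly $(h,b')$ gives an embedding $\sigma' \colon S \to M'$ with data $(I',s')$ in some $M'$. By Theorem~\ref{theorem holonomy}, both $M$ and $M'$ have linear holonomy equal to $\hol$ (the holonomy of $h$) and translation part equal to $\delta b = \delta b'$, so they share the same holonomy $\rho=(\hol,\delta b)$. It therefore suffices to realize an embedding $S \to M$ with data $(I',s')$ as a Cauchy surface inside $M$ itself.

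The plan is to interpolate. Set $b_t := (1-t)b + tb'$ for $t \in [0,1]$. Since uniform positivity and boundedness are convex conditions on symmetric operators, each $b_t$ is a bounded, uniformly positive, self-adjoint Codazzi tensor, and by linearity $\delta b_t = \delta b$. Let $d\colon\widetilde{S\setminus\puct}\to\Hyp^2$ denote a developing map for $h$. Choosing a smooth family of potentials $u_t$ on $\widetilde{S\setminus\puct}$ with $\tilde b_t = \Hess u_t - u_t\,\id$ (which exists by Proposition~\ref{pr:trivial} together with a gluing argument across the punctures), we obtain a family of spacelike immersions
\[
\tilde\sigma_t(x) = d_*(\grad u_t(x)) - u_t(x)\, d(x) \colon \widetilde{S \setminus \puct} \to \R^{2,1},
\]
all equivariant under $\rho$, with $\tilde\sigma_0$ equal to the developing map of $M$ restricted to the Cauchy lift of $\sigma(S)$.

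I would then lift the entire family $\{\tilde\sigma_t\}$ to equivariant spacelike immersions $\Sigma_t \colon \widetilde{S\setminus\puct} \to \widetilde{M^*}$ satisfying $\dev_M\circ \Sigma_t = \tilde\sigma_t$, with $\Sigma_0$ the natural inclusion induced by $\sigma$. A local lift exists and is unique because $\dev_M$ is a local diffeomorphism along spacelike surfaces, so the set of $t$ admitting a lift is open. Closedness follows from the uniform bounds on $b_t$: the immersion $\tilde\sigma_t$ evolves in a region of $\widetilde{M^*}$ with uniformly bounded geometry, and Lemma~\ref{lemma metrica euclidea2} keeps its image uniformly away from the singular locus. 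Hence $\Sigma_1$ is defined, descends to an immersion $S \setminus \puct \to M^*$, and extends across the punctures, via Lemma~\ref{lemma metrica euclidea2} and the local analysis in the proof of Theorem~\ref{corrispondenza coppie caso singolare}, to a Cauchy embedding $\sigma' \colon S \to M$ with embedding data $(I', s')$.

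The main obstacle in this strategy is the closedness step of the continuation argument: proving that $\tilde\sigma_t$ does not escape the developing image $\dev_M(\widetilde{M^*}) \subset \R^{2,1}$ as $t$ varies, which requires careful control of the family near the particles using the uniform positivity and boundedness of $b_t$.
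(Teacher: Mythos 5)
Your reduction is fine (same linear holonomy and same translation class, so it suffices to embed the data $(I',s')$ into $M$ itself, which is indeed what must be shown since the holonomy alone does not distinguish space-times with particles), and your interpolation $b_t=(1-t)b+tb'$ is legitimate: convex combinations preserve boundedness, uniform positivity and the class $\delta$. But the heart of the statement is precisely the step you leave open. Your plan is a global continuation in $t$, lifting the equivariant immersions $\tilde\sigma_t$ through $\dev_M$, and you concede that the closedness part (keeping $\tilde\sigma_t$ inside the developing image, with control near the particles) is an unresolved obstacle. As written this is a genuine gap, not a technicality: without that a priori control the open set of admissible $t$ need not be all of $[0,1]$, and even the local lifting is delicate because $\dev_M$ restricted to $\widetilde{M^*}$ is only a local isometry, not a covering, so the path-lifting property you invoke is not automatic.

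The paper avoids the closedness issue altogether. It first proves an openness statement (Lemma \ref{lemma nearby structures}): if $\delta(\dot b)=0$, then for $|s|$ small the data $(h,b+s\dot b)$ embed as uniformly convex Cauchy surfaces in the \emph{same} space-time $M$ determined by $(h,b)$. The proof uses compactness of $S$ to fix once and for all a finite covering of a tubular neighborhood $S\times(-a,a)\subset M$ by charts (flat charts away from the particles, model charts $D\times\R$ with metric $g-dt^2$ at the particles), chooses a smooth family of developing maps $\dev_s$ with data $(h,b+s\dot b)$ all sharing the same holonomy, and observes that for small $s$ the immersion stays inside the space-time glued from these charts. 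Then the global statement follows by local constancy rather than by a closed-plus-open argument: by Theorem \ref{corrispondenza coppie caso singolare} each $b_s$ along a path with constant $\delta$-class determines its own MGHF space-time $M_s$, and whenever $(h,b_s)$ embeds as a Cauchy surface in $M_{s_0}$, uniqueness of the maximal extension forces $M_s=M_{s_0}$; so $s\mapsto M_s$ is locally constant on a connected interval, hence constant. (The paper runs this along the two-segment path $b\to b+\|b'\|_\infty\id\to b'$ rather than your straight segment; that difference is immaterial.) To repair your argument you would either have to supply the missing uniform control near the singular lines, or replace the global continuation by this openness-plus-local-constancy scheme.
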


Recall from Theorem \ref{theorem holonomy} that, under the hypothesis, $\delta(b)$ is the translation part of the holonomy of the space-time $M$ provided by the embedding data $(I,s)$, and the linear part is the holonomy of $h$. The idea of the proof is to show that any small deformation of $b$ which leaves the holonomy invariant gives an embedding into the same space-time $M$. Then, we use connectedness of the space 
$$\left\{b'\in \cC_\infty(S, h): b'\text{ is uniformly positive and }\delta(b')=\delta(b)\right\}.$$
So we prove the first assertion, by a standard argument.

\begin{lemma} \label{lemma nearby structures}
Let $b\in \cC_\infty(S, h)$ be uniformly positive, and $\dot b\in \cC_\infty(S, h)$ be such that $\delta(\dot b)=0$. Let $M$ be the MGH space-time obtained from the embedding data $(h,b)$. Then there exists $\epsilon>0$ such that for $s\in (-\epsilon,\epsilon)$ every pair $(h,b+s\dot b)$ gives an embedding of a uniformly convex spacelike surface into the space-time $M$.
\end{lemma}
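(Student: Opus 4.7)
The plan is to construct, for each small $s$, an explicit $\hol$-equivariant deformation of the developed Cauchy embedding associated with $(h, b)$, producing a surface in $M$ with embedding data corresponding to $(h, b + s\dot b)$. Uniqueness of the maximal extension then forces this to be the full ambient space-time of $(h, b + s\dot b)$.

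First I would exploit $\delta(\dot b) = 0$ to write $\dot b$ globally as a trivial Codazzi tensor on $S^{*} = S \setminus \puct$. Lifting to the universal cover of $S^{*}$ and applying Proposition \ref{existence of function} gives $\tilde{\dot b} = \Hess \hat u - \hat u\,\id$ for some $\hat u$, and since the cocycle $\alpha \mapsto \hat u - \hat u\circ\alpha^{-1}$ represents the trivial class $\delta(\dot b) = 0$, after subtracting a suitable function of the form $V(t_0)$ I may assume $\hat u$ is $\pi_1(S^{*})$-invariant, so it descends to a smooth function $u$ on $S^{*}$ with $\dot b = \Hess u - u\,\id$.

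Second, let $\tilde\sigma : \widetilde{S^{*}} \to \R^{2,1}$ be the $\hol$-equivariant isometric immersion associated with $(h, b)$, with Gauss map $G$ and potential $\hat u_0$ as in Proposition \ref{caratterizzazione embedding inversi gauss}, so that $\tilde\sigma(x) = dG(\grad \hat u_0(x)) - \hat u_0(x)\,G(x)$ by \eqref{eq:normal}. Replacing the potential by $\hat u_0 + su$ produces a new map
\[
\tilde\sigma_s(x) = \tilde\sigma(x) + s\bigl(dG(\grad u(x)) - u(x)\,G(x)\bigr).
\]
Because $u$ descends to $S^{*}$, the correction term is $\hol$-equivariant, so $\tilde\sigma_s$ has the same ambient holonomy as $\tilde\sigma$, the same Gauss map $G$, and its embedding data are precisely those associated with $(h, b + s\dot b)$; hence $\tilde\sigma_s$ descends to an immersion $\sigma_s : S^{*} \to M^{*}$.

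Third, I would check that $\sigma_s$ extends to a uniformly convex spacelike Cauchy surface in $M$: for $s$ small, $b + s\dot b$ is still bounded and uniformly positive (since $\dot b$ is bounded and $b$ uniformly positive), so by Theorem \ref{corrispondenza coppie caso singolare} the pair $(h, b + s\dot b)$ determines a uniformly convex surface with the correct cone angles and orthogonality at the particles (via Lemma \ref{lemma metrica euclidea2}). The Cauchy property survives because $\sigma_s$ is a smooth perturbation of $\sigma_0$ sharing the same ambient holonomy. Then uniqueness of the MGHF extension in the singular setting (Choquet-Bruhat, adapted as in \cite{bbsads}) identifies the space-time with particles associated to $(h, b + s\dot b)$ with $M$. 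The main obstacle is precisely this last step: verifying that the deformed embedding goes through the particles of $M$ in the correct way, so that one lands inside $M$ rather than in a potentially different singular extension. This reduces to the local control near cone points already carried out in the proof of Theorem \ref{corrispondenza coppie caso singolare}, applied now to the perturbation $b + s\dot b$.
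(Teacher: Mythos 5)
Your construction of the deformed immersions is sound and in fact parallels the paper's own proof: the paper likewise takes a smooth family of developing maps with embedding data $(h,b+s\dot b)$ and observes they all share the holonomy of $M$ (it uses linearity, $\delta(b+s\dot b)=\delta(b)$, where you use $\delta(\dot b)=0$ to produce a global potential $u$ with $\dot b=\Hess u-u\,\id$; both are fine, and your explicit formula $\tilde\sigma_s=\tilde\sigma+s\bigl(dG(\grad u)-u\,G\bigr)$ is a legitimate way to realize that family). The gap is in how you put the deformed surface inside $M$. Equivariance with respect to the same holonomy does \emph{not} by itself imply that $\tilde\sigma_s$ descends to an immersion $S^{*}\to M^{*}$: $M^{*}$ is the quotient of a proper domain of $\R^{2,1}$ (the regular part of the domain of dependence), not of all of Minkowski space, so one must show that the image of $\tilde\sigma_s$ is actually carried into $M$ by its developing map. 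That is the whole content of the lemma, not a formal consequence of equivariance, so the sentence ``hence $\tilde\sigma_s$ descends to an immersion $\sigma_s:S^{*}\to M^{*}$'' is unjustified as written.

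Your proposed repair of this step does not close it. Theorem \ref{corrispondenza coppie caso singolare} only yields an embedding of the $(h,b+s\dot b)$-surface into \emph{some} MGHF space-time with particles $M_s$, and uniqueness of the maximal extension identifies $M_s$ as the maximal extension of \emph{that} surface; it cannot identify $M_s$ with $M$ unless you already know the new surface sits in $M$ as a Cauchy surface, which is what is to be proved. Nor can the identification be made through the holonomy alone: as the authors emphasize, when some cone angle lies in $[\pi,2\pi)$ the holonomy does not distinguish these structures, which is exactly why Corollary \ref{cor:main2} needs this lemma. The missing ingredient is the perturbative, compactness-based step: cover a tubular neighborhood of the original Cauchy surface in $M$ by finitely many charts (flat ones away from the particles, and cylinders $D\times(-a,a)$ with metric $g-dt^2$ around them), and use that $\tilde\sigma_s$ is uniformly $C^1$-close to $\tilde\sigma_0$ — which your formula makes easy, since $u$, $\grad u$ and $\dot b$ are bounded — to conclude that for $|s|$ small the deformed developing maps define embeddings into the space-time glued from these charts, hence into $M$ itself. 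This is precisely how the paper concludes, and it is the step your argument omits.
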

\begin{proof}
Let $\tilde\sigma:\tilde S\rar\R^{2,1}$ be the embedding constructed as in Proposition \ref{caratterizzazione embedding inversi gauss}.
We can choose a smooth path of developing maps $\dev_s:\tilde S\rar\R^{2,1}$ having embedding data $(h,b+s\dot b)$, for $s\in(-\epsilon,\epsilon)$. Since by linearity $\delta(b+s\dot b)=\delta(b)$ for all $s$, $\dev_s$ all have the same holonomy. We can also assume $\dev_0$ extends to a developing map $\mathrm{Dev}_0$ for $M$ defined on the lifting of a tubular neighborhood $\tilde T\cong \tilde S\times(-a,a)$ of $S$ in $M$ and there is a covering $\{U_\alpha\}$ of $S$ such that, for every $\alpha$, either $\mathrm{Dev}_0$ is an isometry on its image when restricted to $U_\alpha\times(-a,a)$ (if $U_\alpha$ does not contain any singular point) or there is a chart for $U_\alpha\times(-a,a)$ to a manifold $(D\times \R,g-dt^2)$ where $g$ is a Euclidean metric on the disc $D$ with a cone point at $0$. Restricting to a smaller $\epsilon$ if necessary, and using the fact that all $\dev_s$ have the same holonomy, we see that $\dev_s$ provides an embedding of $\tilde S$ into the space-time obtained by gluing the charts $\{U_\alpha\times(-a,a)\}$, for $s\in(-\epsilon',\epsilon')$. This concludes the proof.
\end{proof}

\begin{proof}[Proof of Proposition \ref{existence and uniqueness with singularities}] Given $b$ and $b'$ as in the hypothesis, 
let 
$v=||b'||_\infty$ be a constant function and let $ b_s=b+sv\id$ for $s\in[0,1]$. Since $b$ is modified by adding $sv\id=-\Hess(sv)+(sv)\id$, $\delta(b_s)=\delta(b)$ for every $s$. Clearly, $b_s$ is bounded and uniformly positive for every $s$. By Lemma \ref{lemma nearby structures}, $(h,b)$ and $(h,b_1)$ correspond to embeddings of uniformly convex surfaces in the same space-time $M$. Now the same argument can be applied to $b'_s=b'+s(b_1-b')$, which is again bounded and uniformly positive for every $s$, since by construction the eigenvalues of $b_1$ at every point are larger than the largest eigenvalue of $b'$, and $\delta(b'_s)=\delta(b')$ by linearity of $\delta$. Therefore, $b$ and $b'$ correspond to embeddings in the same space-time $M$.
\end{proof}



\section{Relation with Teichm\"uller theory} \label{sectionteichsing}


Let us fix a topological surface $S$, and a divisor $\bfs{\beta}=\sum\beta_i p_i$, where $\beta_i\in(-1,0)$, 
and put $\puct=\{p_1,\ldots,p_k\}$. Recall we are assuming $\chi(S, \bfs{\beta})<0$.

We denote by $\mathcal H(S,\bfs{\beta})$ the space of hyperbolic metrics on $S$ with cone singularity $\theta_i=2(1+\beta_i)\pi$ at $p_i$.
On the other hand, we consider the space of MGHF structures containing a uniformly convex Cauchy surface orthogonal to the singular locus with bounded second fundamental form, which we denote as follows:
$$\mathcal{S}_+(S, \bfs{\beta})=\left\{\begin{aligned}
&\text{MGHF structures on }S\times\mathbb R\text{ with particles along }\puct\times\R\text{ of angles }\theta_i \\
&\text{containing a convex Cauchy surface orthogonal to }\puct\times\R \\
&\text{having bounded and uniformly positive shape operator}  
\end{aligned}\right\}/{\sim}$$
where two structures are equivalent for the relation $\sim$ if and only if they are related by an isometry of $S\times\R$ isotopic to the identity fixing each particle.

In Theorem \ref{corrispondenza coppie caso singolare}
we have seen that the pairs $(h,b)$, where $h\in \mathcal H(S,\bfs{\beta})$ and $b$ is a bounded and positive $h$-Codazzi tensor on 
$S$, bijectively correspond to immersion data of convex Cauchy surfaces in a MGHF space-time, orthogonal to the singular locus.
Moreover the space-times corresponding to $(h,b)$ and $(h', b')$ are in the same equivalence class in $\mathcal{S}_+(S, \bfs{\beta})$ if and only if 
there is an isometry from $(S,h)$ to $(S,h')$ isotopic to the identity in $\Homeo^+(S, \puct)$
and $\delta(b)=\delta(b')$.

In this section we want to use this characterization to construct a natural bijective map between $\cS_+(S,\bfs{\beta})$ and
the tangent space of Teichm\"uller space of the punctured surface $\mathcal T(S,\puct)$.

Let us recall that elements of $\cT(S,\puct)$ are  complex structures on $S$, say $X=(S,\mathcal A)$, where $\mathcal{A}$ is a complex atlas on $S$, considered up to isotopies of
$S$ which point-wise fix $\puct$.
Dealing with complex structures associated with singular metrics on $S\setminus\puct$ 
makes important to clarify the regularity of the complex
atlas. 
In the classical Teichm\"uller theory one can deal with complex atlas whose charts
are only quasi-conformal with respect to a base smooth complex structure on $S$.
In this framework the group acting on this space of complex structures is the space of quasi-conformal homeomorphisms
of $S$, which does not depend on the complex structure chosen.
We have the following lemma:

\begin{lemma} \label{lemmacomplex}
Let $h$ be a hyperbolic metric with cone singularities on $S$, and $b\in\cC_\infty(S,h)$.
Then there is a complex structure $X$ on $S$ such that the metric $\hat h_t=h((\Id+tb)\bullet, (\Id+tb)\bullet)$ is conformal for $X$.
\end{lemma}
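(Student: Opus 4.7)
The plan is to apply the measurable Riemann mapping theorem (Ahlfors--Bers). First I would fix a reference complex structure $X_0$ on $S$ compatible with the singular hyperbolic metric $h$, which exists by the very definition of a metric with cone singularities recalled in Section~\ref{ss:conprel}: at each $p_i$ there is a conformal coordinate $z$ in which $h = |z|^{2\beta_i} e^{2\xi_i(z)} |dz|^2$. In such a conformal coordinate I would then write $h = \lambda |dz|^2$ and express the deformed tensor $\hat h_t = h((\Id+tb)\bullet,(\Id+tb)\bullet)$ as a symmetric bilinear form: its matrix, with respect to the real coordinates $(x,y)$, equals $\lambda\,(\Id+tb)^T(\Id+tb)$, and thus has entries which are bounded multiples of $\lambda$, since $b\in\cC_\infty(S,h)$ is uniformly bounded.

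From this expression I would read off the Beltrami coefficient $\mu_t$ of $\hat h_t$ relative to $X_0$ via the classical formula. Because $b$ is bounded, $\mu_t$ is essentially bounded on $S\setminus\puct$ and $\|\mu_t\|_\infty \to 0$ as $t\to 0$, so $\|\mu_t\|_\infty < 1$ for all sufficiently small $|t|$. Extending $\mu_t$ arbitrarily (for instance by zero) across the measure-zero set $\puct$ yields a genuine measurable Beltrami differential on the Riemann surface $(S,X_0)$ of $L^\infty$-norm less than one, to which the Ahlfors--Bers theorem applies: there exists a quasi-conformal homeomorphism $f_t : S \to S$ whose complex dilatation equals $\mu_t$ almost everywhere. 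The pull-back complex structure $X := f_t^*X_0$ then has the property that $f_t$ is holomorphic from $(S,X)$ to $(S,X_0)$, and consequently $\hat h_t$, whose Beltrami coefficient with respect to $X_0$ is $\mu_t$ by construction, is conformal for $X$.

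The main obstacle will be to verify that this $X$ is a \emph{well-defined} element of $\cT(S,\puct)$, i.e.\ that $f_t$ can be arranged to map the set $\puct$ to itself and to fix each $p_i$. This is standard but should be stated: quasi-conformal homeomorphisms have removable isolated singularities, so $f_t$ extends continuously across $\puct$, and by composing with an ambient isotopy one can arrange $f_t(p_i)=p_i$ for each $i$. I expect no further subtlety, and the smooth dependence of $X$ on $t$ which is crucial for the later definition of $\Psi(b)$ will follow from the parametric version of the Ahlfors--Bers theorem.
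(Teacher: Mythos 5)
Your argument is correct in substance, but it follows a genuinely different route from the paper. The paper never invokes the measurable Riemann mapping theorem: it takes the smooth complex structure $\cA^*_t$ that $\hat h_t$ induces on $S\setminus\puct$ by classical isothermal coordinates, observes that the identity $(S\setminus\puct,\cA^*_0)\to(S\setminus\puct,\cA^*_t)$ is quasi-conformal because $b\in\cC_\infty(S,h)$ is bounded, deduces that a punctured neighborhood of each $p_i$ in the structure $\cA^*_t$ is quasi-conformal, hence biholomorphic, to $\D^*$ (a punctured disc cannot become an annulus of positive modulus under a map of bounded dilatation), and then extends the resulting chart $\zeta$ continuously over $p_i$, noting it is quasi-conformal with respect to the original atlas. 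So the paper's proof is local at the punctures and uses only elementary quasi-conformal theory (modulus quasi-invariance and removability), whereas you integrate the measurable Beltrami coefficient $\mu_t$ of $\hat h_t$ globally via Ahlfors--Bers; both proofs hinge on exactly the same analytic input, namely that boundedness of $b$ (together with $\Id+tb$ being uniformly invertible, i.e.\ $|t|$ small, which is also implicit in the paper) gives a uniform bound $\|\mu_t\|_\infty<1$ near the punctures, and your global route has the advantage of delivering at once the quasi-conformal compatibility of the new atlas with the old and, through the parametric Ahlfors--Bers theorem, the regular dependence on $t$ used later to define $\Psi(b)$. One point you should phrase more carefully: on a compact or finite-type surface there is in general no quasi-conformal homeomorphism $f_t\colon (S,X_0)\to (S,X_0)$ with prescribed dilatation $\mu_t$ -- the theorem produces a quasi-conformal map onto \emph{some} Riemann surface $Y$ (equivalently, local homeomorphic solutions of the Beltrami equation serving as charts), and you should define $X$ as the pull-back by $f_t$ of the structure of $Y$, or simply take the charts to be the local solutions; with that correction, the conclusion that $\hat h_t$ is conformal for $X$ stands, and the final remarks about fixing $\puct$ are not needed for the lemma itself (they only matter for the later marking in $\cT(S,\puct)$).
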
 
 \begin{proof}
 Clearly there is a smooth complex structure $\mathcal A^*_t$ 
 on $S\setminus\puct$ for which $\hat h_t$ is conformal.
 We have to prove that $\mathcal A^*_t$ extends to a complex atlas over $S$.
 
 If $t=0$ then this basically follows from the definition of metric with cone singularities.
 
 Consider now the general case. Notice that $\Id:(S\setminus\puct, \mathcal A^*_0)\to
 (S\setminus\puct, \cA^*_t)$ is quasi-conformal. In particular this implies that a neighborhood $U$ of any 
 puncture $p_i$ with the structure inherited by $\cA^*_t$ is quasi-conformal to a punctured disc. So it is biholomorphic to a punctured disc, that
 is there is a biholomorphic map
\[
\zeta:(U\setminus p_i, \cA^*_t)\to \D^*\,,
\] 
 where $\D^*=\{z\in\mathbb C\,|\, 0<|z|<1\}$.
 
 It is not difficult to show that $\zeta$ extends by continuity to a homeomorphism $\zeta:U\to \D$.
 This proves that the atlas $\cA^*_t$ extends to $S$. 
 Finally notice that the function $\zeta$  in general is not smooth at $p_i$, but, as the map
 \[
      \zeta:(U, \cA_0)\to \D
 \]
is quasi-conformal, it has the requested regularity.   
 \end{proof}
 
At a point $[X]\in\cT(S,\puct)$ , the tangent space of $\cT(S, \puct)$ is identified with a quotient of 
the space of Beltrami differentials $\cB(X)$. 
We say that a Beltrami differential $\mu$ is trivial if $\langle q, \mu\rangle=0$ for any holomorphic quadratic differential with poles of order 
at worst $1$ at $p_i$ (equivalently with for any holomorphic  section of $K^2(\puct)$).
We will denote by $\cB(X,\puct)^\perp$ the subspace of trivial Beltrami differentials.
The tangent space  $T_{[X]}\cT(S,\puct)$  is naturally identified with $\cB(X)/\cB(X,\puct)^\perp$.
The identification works as in the case of a closed surface. The main difference is that the derivative of the Beltrami differential
of the map $\Id:X\to X_t$ is well-defined up to this more restrictive relation as we only consider homotopies which point-wise fix
$\puct$.
It turns out that if $\sigma$ is a smooth section on $K^{-1}$, then $\bar\partial\sigma$ is trivial iff $\sigma$ vanishes at punctures.

The main theorem we prove in this section is the analogue of Theorem \ref{theorem parametrization closed surfaces} in the closed case. 
Given a hyperbolic metric $h\in\cH(S,\bfs{\beta})$ and $b\in\cC_\infty(S,h)$, the family of Riemannian metrics 
$\hat h_t=h((\Id+tb)\bullet, (\Id+tb)\bullet)$ induces a smooth family 
$X_t$ of complex structures by Lemma \ref{lemmacomplex}. As in the closed case treated in Section \ref{glopar}, 
the derivative of $X_t$ coincides with the Beltrami differential corresponding to the traceless part $b_0=b-(\tr b/2)\id$. 
This leads again to the definition of a map $\Psi:\E_{\bfs{\beta}}\to T\cT(S,\puct)$.

\begin{thmx}\label{thm:coneteich}
Let $\cC_\infty(S, h)$ be the space of bounded Codazzi tensors on $(S, h)$.
The following diagram is commutative 
\begin{equation}\label{eq:commcone}
\begin{CD}
\cC_\infty(S, h) @>\Lambda\circ \delta>>H^1_{\tiny\Ad\circ\tiny\hol}(\pi_1(S\setminus\puct),\so(2,1))\\
@V\Psi VV                                       @A d\Hol AA\\
T_{[X]}\cT(S,\puct) @>>\mathcal J > T_{[X]}\cT(S,\puct)
\end{CD}\,,
\end{equation}
where $\Lambda:H^1_{\tiny\hol}(\pi_1(S\setminus\puct), \R^{2,1})\to H^1_{\tiny\Ad\circ\tiny\hol}(\pi_1(S\setminus\puct),\so(2,1))$ 
is the natural isomorphism, and $\cJ$ is the complex structure on $\cT(S,\puct)$.
\end{thmx}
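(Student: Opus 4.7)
The plan is to follow the same scheme as in the proof of Theorem \ref{theorem parametrization closed surfaces}, with the main additional ingredients being the decomposition of bounded Codazzi tensors on a hyperbolic surface with cone singularities (Proposition \ref{pr:conedecomp}) together with Troyanov's uniformization theorem for conformal structures with prescribed divisor. Given $b\in\cC_\infty(S,h)$, I would first write $b=b_q+\Hess u-u\id$, where $q$ is a holomorphic quadratic differential on $S$ with at worst simple poles at the punctures and $u$ lies in $W^{2,2}(h)$ (and in $W^{2,\infty}(h)$ when all $\theta_i<\pi$). Next, I would consider the family of Riemannian metrics $\hat h_t=h((\id+tb)\bullet,(\id+tb)\bullet)$; since $b$ is bounded, this is a smooth family with the same divisor $\bfs\beta$ as $h$, and Troyanov's theorem produces a smooth family of hyperbolic metrics $h_t=e^{2\psi_t}\hat h_t$ with cone angles $\theta_i$.

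Differentiating the identity $h_t=e^{2\psi_t}\hat h_t$ at $t=0$ yields $\tfrac{1}{2} h^{-1}\dot h=b_q+\Hess u+(\dot\psi-u)\id$. As $h^{-1}\dot h$, $b_q$ and $\Hess u=\mathbf S\nabla(\grad u)$ all solve the Lichnerowicz equation, the tensor $(\dot\psi-u)\id$ must be so too, which translates to $\phi:=\dot\psi-u$ solving $\Delta\phi-\phi=0$ on $S\setminus\puct$. The main technical obstacle lies at this step: in the closed case the maximum principle immediately forced $\phi\equiv 0$, but now one must first control the behavior of $\psi_t$ and $u$ near the punctures. The plan is to exploit the fact that $h_t$ and $\hat h_t$ share the cone divisor $\bfs\beta$ to show that $\psi_t$ extends to a bounded (indeed continuous) function on all of $S$, and to use Sobolev embedding to conclude that $u$ is also bounded; then $\phi$ is a bounded solution of $\Delta\phi-\phi=0$ on the closed singular surface, and a standard maximum principle argument forces $\phi\equiv 0$.

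Once $\dot\psi\equiv u$ is in hand, the argument proceeds as in the closed case. One has $h^{-1}\dot h=2(b_q+\Hess u)$, and since $\grad u$ is a globally defined vector field on $S\setminus\puct$ the cocycle $\mathfrak d(\Hess u)$ vanishes, leaving $\dot\hol=\mathfrak d(b_q)$. Solving $Jb_q=\Hess v-v\id$ locally on the universal cover of $S\setminus\puct$ and observing that $b_q=\mathbf S\nabla(-J\grad v)$, the computation of Proposition \ref{pr:holclos} carries over verbatim to give $\mathfrak d(b_q)=-\Lambda\delta(Jb_q)$. On the other side, $\cJ\Psi(b)=[Jb_0]=[Jb_q+\bar\partial(J\grad u)]$ in $T_{[X]}\cT(S,\puct)$; to eliminate the second Beltrami differential one needs to check that $J\grad u$ represents a section of $K^{-1}$ vanishing at the punctures, which follows from the Lipschitz regularity of $u$ at the punctures supplied by Lemma \ref{lm:triv} and Proposition \ref{pr:trivial}. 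Combining these two computations yields $d\Hol(\cJ\Psi(b))=\Lambda\delta(b_q)=\Lambda\delta(b)$, which is the commutativity of \eqref{eq:commcone}.
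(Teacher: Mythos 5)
Your proposal follows the same overall scheme as the paper (decompose $b$ via Proposition \ref{pr:conedecomp}, uniformize $\hat h_t$, derive $\Delta\phi-\phi=0$ for $\phi=\dot\psi-u$, show $\phi\equiv0$, then argue as in the closed case), but it glosses over precisely the points where the actual work lies, and the route you indicate for them does not go through as stated. First, Troyanov's theorem gives existence and uniqueness of the hyperbolic metric $h_t=e^{2\psi_t}\hat h_t$ for each fixed $t$, not smooth dependence on $t$; the paper needs Schumacher's result for that, and even this cannot be applied directly because the conformal structure of $\hat h_t$ is not constant near the punctures, so the paper builds an isotopy $F_t$, conformal near $\puct$, and works with $k_t=F_t^*(h_t)$, which injects the extra term $2\mathbf S\nabla Y$ into $h^{-1}\dot k$. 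Second, and more seriously, what the maximum-principle step requires is control of $\phi=\dot\psi-u$ at the punctures, i.e. of the $t$-derivative $\dot\psi$; boundedness or continuity of each $\psi_t$ for fixed $t$ (which is all your plan produces) gives no information about $\dot\psi$. In the paper this is the delicate part: near each $p_i$ one produces a holomorphic vector field $Y_1=V-Y-J\grad v-\grad u$ with $\phi\,\Id=\mathbf S\nabla Y_1$, proves $Y_1(p_i)=0$, and then computes $2\phi=\Div Y_1$ explicitly, where $Y_1(p_i)=0$ is essential to compensate the divergence of the Christoffel symbols of $h$ at the cone point.

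Two further, related inaccuracies. The vanishing of $\grad u$ (and $\grad v$) at $p_i$ does not follow from the Lipschitz regularity supplied by Lemma \ref{lm:triv} and Proposition \ref{pr:trivial}: Lipschitz only bounds the gradient. The paper proves the vanishing by showing that the limit of the Euclidean gradient along rays exists and is fixed by the holonomy rotation, hence zero; you need this both for $Y_1(p_i)=0$ and, as you correctly note, to discard $\bar\partial(J\grad u)$ as a trivial Beltrami differential in $T_{[X]}\cT(S,\puct)$. Finally, even once $\phi$ is known to extend continuously to $\puct$, a ``standard maximum principle'' is not available off the shelf: the equation $\Delta\phi-\phi=0$ holds only on $S\setminus\puct$ and $\phi$ is not a priori a weak solution across the punctures; the paper proves a separate lemma (an averaging/flux argument over small circles around each $p_i$) to exclude a positive maximum at a puncture. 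Without these ingredients your argument has a genuine gap at the step $\phi\equiv0$, which is the heart of the singular case.
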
 


As in the closed case the proof of this theorem is based on the computation of the differential of the holonomy map, where
in this case 
\[
\Hol:\cT(S,\puct)\to\cR(\pi_1(S\setminus\puct), \SO(2,1))/\!\!/ \SO(2,1)
\]
is the map sending the marked Riemann surface $[X]$ to the holonomy of the unique hyperbolic conformal metric on
$S\setminus\puct$ with cone singularities $\theta_i$ at $p_i$. The existence of such a metric is a corollary of a more general result
\cite{Troyanov}.

In \cite{schumacher} it has been proved that if $t\mapsto [X_t]$ is a smooth path in $\cT(S, \puct)$, then there is a smooth family of hyperbolic metrics
$h_t$ on $S\setminus\puct$ whose underlying complex structure is isotopic to $X_t$.
By Proposition \ref{pr:deltastar} it turns out that the holonomy map is smooth.
Now we want to compute precisely the differential of $\Hol$.
In particular we prove the analogue of Proposition \ref{pr:holclos}. Then, the proof of Theorem \ref{thm:coneteich} follows exactly as in the closed case.

\begin{prop}\label{prop:hol}
Let $h$ be a hyperbolic metric in $\cH(S, \bfs{\beta})$, $X_h\in\cT(S,\puct)$ its complex structure and let $b\in\cC_\infty(S,h)$.
Let $b=b_q+\Hess u-u\Id$ be the decomposition of $b$ given in Proposition \ref{pr:conedecomp}.
Then  $$d\Hol_{X_h}([b_0])=-\Lambda\delta(Jb_q)\,,$$ 
where $\Lambda:H^1_{\tiny\hol}(\pi_1(S\setminus\puct), \R^{2,1})\to
H^1_{\tiny\Ad\circ\tiny\hol}(\pi_1(S\setminus\puct), \mathfrak{so}(2,1))$ is the isomorphism  induced by the $\SO(2,1)$-equivariant isomorphism $\Lambda:\R^{2,1}\rar \so(2,1)$.
\end{prop}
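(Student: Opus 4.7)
The plan is to mirror the argument of Proposition \ref{pr:holclos} with two adjustments to accommodate the cone singularities. First, I would invoke the Troyanov uniformization \cite{Troyanov} together with the smooth dependence result of Schumacher \cite{schumacher} to lift the path of Riemannian metrics $\hat h_t=h((\Id+tb)\bullet,(\Id+tb)\bullet)$ to a smooth one-parameter family of hyperbolic metrics $h_t=e^{2\psi_t}\hat h_t$ on $S\setminus\puct$, each having cone angle $\theta_i$ at $p_i$. Since $h_t$ and $\hat h_t$ share the same cone angles, in any conformal coordinate $z$ around $p_i$ both sides take the form $|z|^{2\beta_i}e^{2(\cdot)}|dz|^2$ with a continuous factor; hence the conformal factor $\psi_t$ extends continuously across each puncture, and by smoothness of the family, so does $\dot\psi=\partial_t\psi_t|_{t=0}$.

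Next, Proposition \ref{pr:deltastar} applied on $(S\setminus\puct,h_t)$ yields $2\,\dot{\hol}=\mathfrak d(h^{-1}\dot h)$. Differentiating $h_t=e^{2\psi_t}\hat h_t$ at $t=0$ and using the orthogonal decomposition $b=b_q+\Hess u-u\Id$ of Proposition \ref{pr:conedecomp}, one obtains
$$\tfrac{1}{2}h^{-1}\dot h=(\dot\psi-u)\Id+\Hess u+b_q\,.$$
Each of $h^{-1}\dot h$, $b_q$ and $\Hess u=\mathbf S\nabla(\grad u)$ solves the Lichnerowicz equation on $S\setminus\puct$, so the same holds for $\phi\Id$ with $\phi:=\dot\psi-u$; as in the closed case this amounts to the scalar equation $\Delta\phi-\phi=0$ on the regular part.

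The heart of the matter, and the only genuinely new ingredient relative to the closed case, is to show that $\phi\equiv 0$ on an open surface, where the maximum principle is no longer enough on its own. From Proposition \ref{pr:conedecomp} we have $u\in W^{2,2}(h)$ and thus $u\in L^2(h)$; combined with the boundedness of $\dot\psi$ near each puncture and the finiteness of the area of $(S,h)$, this gives $\phi\in L^2(h)$. Standard elliptic regularity for the equation $\Delta\phi-\phi=0$ then upgrades $\phi$ to an element of $W^{1,2}(S,h)$: the singular set $\puct$ has zero $W^{1,2}$-capacity, so integration by parts across $\puct$ is permissible, giving
$$0=\int_S\phi\,(\Delta\phi-\phi)\,\da_h=-\int_S\bigl(|\grad\phi|_h^2+\phi^2\bigr)\,\da_h\,,$$
whence $\phi\equiv 0$.

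With $\phi\equiv 0$ we have $h^{-1}\dot h=2\Hess u+2b_q$. Since $\grad u$ is a globally defined vector field on $S\setminus\puct$, $\mathfrak d(\Hess u)=0$, and therefore $\dot{\hol}=\mathfrak d(b_q)$. The identification $\mathfrak d(b_q)=-\Lambda\,\delta(Jb_q)$ then carries over verbatim from the closed case: writing $Jb_q=\Hess v-v\Id$ on the universal cover via Proposition \ref{existence of function}, one recognises $b_q=\mathbf S\nabla(-J\grad v)$, and the final Lemma used in the proof of Proposition \ref{pr:holclos} is of purely local nature and applies here without modification.
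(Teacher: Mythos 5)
There is a genuine gap, and it sits exactly where the difficulty of the singular case lies: the behaviour of $\phi=\dot\psi-u$ at the punctures. First, the smooth dependence of the conformal factor $\psi_t$ is not a direct consequence of \cite{schumacher}: that result produces a smooth family of hyperbolic cone metrics whose underlying complex structures are only \emph{isotopic} to $X_t$, and since the conformal structure induced by $\hat h_t$ is not constant near the punctures one must first correct by a family of quasi-conformal isotopies $F_t$ (fixing $\puct$, conformal near $\puct$, with variation field vanishing at $\puct$) before differentiating; this changes $h^{-1}\dot h$ by a term $2\mathbf S\nabla Y$ which must be tracked. Second, and more seriously, you assert "boundedness of $\dot\psi$ near each puncture" with no argument; nothing in the construction gives this a priori, and establishing control of $\phi$ at the punctures is precisely the hard analytic content here (in the paper this occupies a three-step analysis: one produces a holomorphic vector field $Y_1$ near $p_i$ with $Y_1(p_i)=0$ such that $\phi\Id=\mathbf S\nabla Y_1$, which requires showing that $\grad u$ and $\grad v$ extend by $0$ at $p_i$, and then computes $\phi=\Re\bigl(f'(z)+2\partial_z\eta\,f(z)\bigr)$ to conclude that $\phi$ extends continuously with limit $(1+\beta_i)\Re f'(0)$).

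Third, your replacement for the maximum principle does not work as stated. From $\phi\in L^2(h)$ and $\Delta\phi-\phi=0$ on $S\setminus\puct$, "standard elliptic regularity" only gives interior smoothness; it does \emph{not} give $\phi\in W^{1,2}$ across the punctures. Indeed on a punctured cone there are solutions of $\Delta\phi-\phi=0$ with logarithmic (Bessel-$K_0$ type) behaviour at the puncture: these lie in $L^2$ but their gradient behaves like $1/r$ and is not square-integrable, so the zero-capacity integration-by-parts identity $0=-\int_S(\|\grad\phi\|^2+\phi^2)\da_h$ cannot be justified without first excluding such behaviour — which again amounts to controlling $\phi$ at $\puct$. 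This is why the paper, after proving continuity of $\phi$ at the punctures, still proves a bespoke maximum-principle lemma for cone surfaces (via averages over circles around $p_i$ and the divergence theorem), explicitly noting that a solution on $S\setminus\puct$ need not be a weak solution on $S$. The remaining parts of your outline (applying Proposition \ref{pr:deltastar}, $\mathfrak d(\Hess u)=0$, and the purely local lemma identifying $\mathfrak d(b_q)$ with $-\Lambda\delta(Jb_q)$) are fine and agree with the paper.
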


The proof of this proposition follows the same line as in the closed case,
but some technicalities come up.
We consider the hyperbolic metric $h_t$ with cone singularities and  $h_t=e^{2\psi_t}\hat h_t$, where $\hat h_t=h((\Id+tb)\bullet, (\Id+tb)\bullet)$.
By Proposition \ref{pr:deltastar}, 
\[
     \dot\hol=\frac{1}{2}\mathfrak{d}(h^{-1}\dot h)\,.
\]
Since $h^{-1}\dot h=2((\dot \psi-u)\Id+b_q+\Hess u)$, putting $\phi=\dot \psi-u$ one gets
$\Delta \phi-\phi= 0$ as in the closed case.
Then proving that $\phi\equiv 0$, one concludes as in the closed case.

Notice that respect to the closed case there are two technical points.
\begin{itemize}
\item One has to prove that the conformal factor $\psi_t$ smoothly depends on $t$ on $S\setminus\{\puct\}$.
The idea is to use the result of \cite{schumacher}, but we emphasize that we cannot apply directly it, as the conformal structure induced by
$h_t$ is not constant in a neighborhood of the punctures. So we need to use an isotopy to fix the conformal structures around the punctures.
\item
The second point is that the  proof that $\phi$ is zero is not immediate. In fact $\phi$ is defined only on the regular part
$S\setminus\puct$ which is not compact. Actually we will prove that $\phi$ continuously extends to the punctures and conclude by
adapting the maximum principle to the context of surfaces with cone singularity. 
The proof that $\phi$ continuously extends to the disc needs some careful analysis around the singular points.
\end{itemize}

In the following lemma we summarize the technical construction of the isotopy $F_t$. 

\begin{lemma}
There is a smooth map $F:[0,\epsilon]\times(S\setminus\puct)\to(S\setminus\puct)$ such that
\begin{itemize}
\item For any $t$ the map $F_t(\bullet)=F(t,\bullet)$ extends to a  quasi-conformal homeomorphism fixing $\puct$.
\item $F_0$ is the identity.
\item There is a neighborhood $U$ of $\puct$ such that $F_t: (U, \cA_0)\to (U, \cA_t)$ is conformal for any $t$.
\item The variation field $Y=\left.\frac{dF_t(x)}{dt}\right|_{t=0}$ extends to a continuous field on $S$ with $Y(p_i)=0$.
\end{itemize}
\end{lemma}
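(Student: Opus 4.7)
The proof plan is to construct $F_t$ locally near each puncture using the Ahlfors--Bers theorem, then globalize by taking the flow of a cut-off time-dependent vector field. Near each $p_i$, fix a conformal coordinate $z_i\colon U_i\to\D$ for $\mathcal A_0$ with $z_i(p_i)=0$, and let $\mu_t^i$ denote the Beltrami coefficient of the identity $(U_i,\mathcal A_0)\to(U_i,\mathcal A_t)$ expressed in this chart. Since $b\in\cC_\infty(S,h)$, the operator $\Id+tb$ is uniformly close to the identity for small $t$, so $\mu_t^i$ is smooth in $(t,z)\in[0,\epsilon]\times\D^*$, uniformly bounded in sup-norm, and $\mu_0^i\equiv 0$. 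Extend $\mu_t^i$ via a cutoff to a compactly supported Beltrami coefficient on $\C$, still smooth in $t$ and of uniformly small sup-norm.

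By the Ahlfors--Bers theorem, with the normalization $f_t^i(0)=0$, $f_t^i(1)=1$, $f_t^i(\infty)=\infty$, the unique quasi-conformal solution $f_t^i\colon\C\to\C$ of $\partial_{\bar z}f_t^i=\mu_t^i\,\partial_z f_t^i$ depends smoothly on $t$, with $f_0^i=\Id$, and the variation $\dot f_0^i=\partial_t f_t^i|_{t=0}$ is continuous on $\C$ and vanishes at $z=0$ (obtained by differentiating $f_t^i(0)=0$ in $t$). Define $F_t^i=z_i^{-1}\circ(f_t^i)^{-1}\circ z_i$; for $\epsilon$ small and a suitable $V_i\Subset U_i$ this is a biholomorphism $(V_i,\mathcal A_0)\to(F_t^i(V_i),\mathcal A_t)$ fixing $p_i$. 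Let $\xi_t^i$ be the time-dependent vector field on $V_i$ whose flow realizes $F_t^i$, choose a bump function $\chi_i$ equal to $1$ on a neighborhood $U\subset V_i$ of $p_i$ and supported in $V_i$, and set $\xi_t=\sum_i\chi_i\xi_t^i$ (extended by zero outside). For $\epsilon$ sufficiently small the flow of $\xi_t$ on $S\setminus\puct$ is defined up to time $\epsilon$ and yields the desired $F_t$.

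The four properties follow as follows. $F_0=\Id$ holds by definition of the flow at time zero. Near each $p_i$, $\chi_i\equiv 1$ on $U$, so $F_t$ coincides there with the biholomorphism $F_t^i$, proving property (3). For property (1), on the compact complement of $\bigcup_i U$ the map $F_t$ is a smooth diffeomorphism with uniformly bounded dilatation, while near each $p_i$ its dilatation is controlled by that of $(f_t^i)^{-1}$; combined with the continuous extension $F_t(p_i)=p_i$, this produces a quasi-conformal self-homeomorphism of $S$ fixing $\puct$. Finally, the variation field is $Y=\xi_0=\sum_i\chi_i\xi_0^i$; in the chart $z_i$ near $p_i$ we have $\xi_0^i=-\dot f_0^i$, which is continuous and vanishes at $0$, giving the continuous extension with $Y(p_i)=0$. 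The main technical input is the smooth dependence of the normalized Ahlfors--Bers solution $f_t^i$ on the parameter $t$ when the Beltrami coefficient varies smoothly and remains uniformly bounded in sup-norm, which is a standard regularity statement for the Beltrami equation.
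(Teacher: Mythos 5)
Your proof is correct and follows essentially the same strategy as the paper: solve the Beltrami equation near each puncture to produce a local isotopy that is conformal from $(\cdot,\cA_0)$ to $(\cdot,\cA_t)$ and fixes $p_i$, then globalize by cutting off the generating time-dependent vector field with a bump function and taking its flow. The only (easily fixed) imprecision is that property (3) should be asserted on a possibly smaller neighborhood of each $p_i$: as in the paper's choice of $U''_i$ with $F^{(i)}_t(U''_i)\subset U'_i$, one must ensure that flow lines starting there stay in the region where $\chi_i\equiv 1$, so that $F_t$ really coincides with the local biholomorphism $F^i_t$ for all $t\in[0,\epsilon]$.
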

\begin{proof}
First we construct the isotopy $F^{(i)}_t$ on a small neighborhood $U_i$ of a puncture $p_i$.
Notice that the Beltrami coefficient $\mu_t$ of the identity $(S, \cA_0)\to (S, \cA_t)$ corresponds to the symmetric traceless tensor $\frac{tb_0}{1+t (\mathrm{tr} b/2)}$
 under the usual identification, so $\mu_t$ smoothly depends on $t$.
By the classical theory of Beltrami equation we can find on a disc around the puncture a family of quasi-conformal maps
$G^{(i)}_t:(U_i, \cA_0)\to(U_i, \cA_0)$ with Beltrami coefficient $\mu_t$, such that the map $G^{(i)}:(-\epsilon,\epsilon)\times U_i\to U_i$ is smooth in $t$.
We may moreover suppose that $G^{(i)}_t(p_i)=p_i$ for every $t$.
Regarding $G^{(i)}_t$ as a  map $G^{(i)}_t:(U_i,\cA_t)\to (U_i, \cA_0)$, it is holomorphic, so the map defined by $F^{(i)}_t=(G^{(i)}_t)^{-1}$ satisfies
the requirements. Notice that the variation field 
$Y^{(i)}_t=\left.\frac{dF^{(i)}}{dt}\right|_{t=0}$ is defined on the whole $U_i$, is smooth outside the punctures and $Y_t(p_i)=0$.

Now take a neighborhood $U'_i$ of $p_i$ such that $\overline{U}'_i\subset U_i$ and choose a smaller neighborhood $U''_i$ such that
$F^{(i)}_t(U''_i)\subset U'_i$ for every $t$.
Take a smooth function $\chi$ on $S$ which vanishes on $S\setminus(\bigcup U_i)$ and is constantly $1$ over $\bigcup U'_i$.
Let $Y_t$ be the field defined by $Y_t=\chi Y_t^{(i)}$ over $U_i$ and $Y_t\equiv 0$ over $S\setminus(\bigcup U_i)$.
It can be readily shown that $Y_t$ generates a flow of maps $F_t\in \Homeo(S,\puct)\cap \Diffeo(S\setminus\puct)$ and $F_t\equiv F_t^{(i)}$
over $U''_i$. It is easy to check that $F_t$ verifies the requirements we need.
\end{proof}

\begin{proof}[Proof of Proposition \ref{prop:hol}]
We consider the metrics $k_t=F_t^*(h_t)$  conformal to $F_t^*(\hat h_t)$.
By \cite{schumacher} we know that they smoothly depend on the parameter $t$.
It follows that $h_t$ also smoothly depends on $t$. Moreover
we have
\[
    h^{-1}\dot k=h^{-1}\dot h+2\mathbf{S}\nabla Y\,.
\]
As $h^{-1}\dot h=(2\dot \psi-2u)\Id+2b_q+2\Hess u=2\phi\Id+2b_q+2\Hess u$, one deduces that
$\Delta\phi-\phi=0$ as in the closed case.

We claim that $\phi\equiv 0$. From the claim the proof follows exactly as in the closed case.
To prove the claim we first check that $\phi$ continuously extends to the punctures, then we use
 the maximum principle adapted to the case of surfaces with cone points that we prove separately in the next Lemma
 (notice that here $\phi$ solves the equation $\Delta\phi-\phi=0$ on $S\setminus\puct$ but this does not
imply that it is a weak solution of the equation on the closed surface). 

The proof of the continuity of $\phi$ around a puncture $p_i$ is articulated in the following steps:
\begin{itemize}
\item[Step 1] We will show that around  $p_i$ there is a smooth vector field
$V$ such that $h^{-1}\dot k=2\bfs{S}\nabla V$.
\item[Step 2] Writing $b_q=J\hess v-v J$ we have that $$\phi\Id=\bfs{S}\nabla(Y_1)\,,$$
where $Y_1=V-Y-J\grad v-\grad u$. This implies that $\bar\partial Y_1=0$, that is $Y_1$ is
a holomorphic vector field on $U\setminus\{p_i\}$ (see Remark \ref{remarkremark}).  We will prove that $Y_1$ extends at $p_i$ and $Y_1(p_i)=0$.
\item[Step 3] As $2\phi$ is the divergence of $Y_1$, the continuity of $\phi$ will be deduced by an explicit computation 
where we use that $Y_1$ is analytic around the puncture with $Y_1(p_i)=0$ 
(as the Christoffel symbols of the metric $h$ diverge around the puncture,  the condition $Y_1(p_i)=0$ 
will play a key role in the computation). 
\end{itemize}
 \vspace{10pt}
 \par\noindent
\emph{Step 1:}
\vspace{5pt}
\par
As $k_t$ smoothly depends on $t$ we can construct a smooth family of isometric embeddings
\[
s_t:(U, k_t)\to (U',h)
\]
where $U$ and $U'$ are fixed neighborhoods of $p_i$.
Up to shrinking $U$ we may suppose that on  $U$ the metric $k_t$ is conformal to $h$ for every $t$, so from
the conformal point of view we have a  smooth map
\[
s:(-\epsilon, \epsilon)\times U\to U'
\]
such that the restriction $s_t(\bullet)=s(t,\bullet)$ is holomorphic for every $t$.
It follows that  $V=\left.\frac{ds_t}{dt}\right|_{t=0}$ is a holomorphic field defined on the whole $U$.
As $s_t(p_i)=p_i$ we get that $V(p_i)=0$.
Finally notice that as $s_t^*(h)=k_t$ we have that $2\bfs{S}\nabla V=h^{-1}\dot k$.

\vspace{10pt}
 \par\noindent
\emph{Step 2:}
\vspace{5pt}
\par
As we know that $Y(p_i)=0$ and $V(p_i)=0$, it is sufficient to prove that
$\grad v$ and $\grad u$ vanish at $p_i$.

More generally we will prove that if $f$ is a function on $U$ such that
$||\hess f-f\Id||<Cr^{\alpha}$ for some $\alpha>-1$, then $\grad f$ extends to $0$ at $p_i$.
This general fact implies the extendability of $\grad v$, because $\Hess v-v\Id=-b_{iq}$ satisfies this condition as we noted in the proof of
\ref{integrability harmonic codazzi}. 
On the other hand $u$ can be regarded as the difference of the functions $u_1-u_2$ where $b=\Hess u_1-u_1\Id$
and $b_q=\Hess u_2-u_2\Id$. As $b$ is bounded and $b_q$ satisfies the condition above, we conclude that $\grad u$ extends as well. 

Let $g$ be the Klein Euclidean metric on $U$. As in the proof of Lemma \ref{lemma metrica euclidea}, let us put
$\bar f=(\ch r)^{-1}f$, where $r$ is the hyperbolic distance from the cone point.
By Lemma \ref{lm:kleincorr}, $\bar f$ satisfies the equation $D^2_g\bar f(\bullet, \bullet)=(\cosh r)^{-1} h((\Hess f-f\id) \bullet, \bullet)$, so $||D^2_g \bar f||_g<Cr^\alpha$.
Consider on the universal cover the gradient map $\tilde\varphi=(\dev)_* (\grad_g\bar f):\widetilde{U\setminus\{p_i\}}\to\R^{2}$.
If $\rho$ denotes the Euclidean radial coordinate and $\theta$ is the pull-back of the angular coordinate, we have

\begin{align*}
   ||\tilde \varphi(\rho_1, \theta_1)-\tilde \varphi(\rho_2,\theta_2)||&\leq ||\tilde \varphi(\rho_1, \theta_1)-\tilde \varphi(\rho_1,\theta_2)||
   +||\tilde \varphi(\rho_1, \theta_2)-\tilde \varphi(\rho_2,\theta_2)||\\
    &\leq C( \rho_1^{\alpha}\rho_1||\theta_1-\theta_2||+||\rho_2^{\alpha+1}-\rho_1^{\alpha+1}||)\,.
\end{align*}    

This shows that on each radial line there exists the limit $\lim_{\rho\to 0}\varphi(\rho, \theta)=\xi$. Moreover
 this limit does not depend
on $\theta$, and the convergence is uniform as far as $\theta$ lies in some compact interval of $\mathbb R$.
 As in the proof of Lemma \ref{lemma metrica euclidea},
 $\varphi(\rho, \theta+\theta_0)=R_{\theta_0}\varphi(\rho,\theta)$, we deduce that $\xi$ is a fixed point of the rotation, that is
 $\xi=0$. It turns out that $ ||\grad_g\bar f||_g\to 0$ at $p_i$.
As the hyperbolic metric $h$ is equivalent to $g$ we conclude that also $||\grad_h f||_h\to 0$ at the puncture $p_i$.
 \vspace{10pt}
 \par\noindent
\emph{Step 3:}
\vspace{5pt}
\par

Under the identification $K^{-1}=TS$ we have $Y_1=f(z)\frac{\partial}{\partial z}$ and 
$2\phi=\Div Y_1$. As in complex notation the connection form (compare Remark \ref{remarkremark}) is $$\omega=2\frac{\partial \eta}{\partial z}dz\,, $$
where $\eta$ is the conformal factor of the hyperbolic metric $h=e^{2\eta}|dz|^2$. It turns out that 
$\phi=\Re(f'(z)+2\partial_z \eta f(z))$. 
Notice that $\eta=\beta\log |z|+\xi$ where $\xi$ is a $C^1$ function on $D$ such that $||d\xi|||z|\rar 0$ 
(compare the explicit expression in Equation \eqref{eq:hypconf} for the hyperbolic metric in the conformal coordinate). So we get 
\[
\lim_{z\to 0}\phi=(1+\beta)\Re f'(0)\,,
\]
and in particular $\phi$ extends to a continuous function on $S$. 
\end{proof}

\begin{lemma}
Let $h\in\cH(S,\bfs{\beta})$.
If $\phi$ is a continuous function on $S$, and  on $S\setminus\puct$  is a smooth solution of
$\Delta_h \phi-\phi=0$, then $\phi\equiv 0$.
\end{lemma}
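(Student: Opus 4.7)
The plan is a capacity-type argument using an integration-by-parts against a logarithmic cutoff. Since $\phi$ is continuous on the compact surface $S$, it is bounded; set $M=\|\phi\|_\infty$. For each small $\epsilon>0$ I construct a cutoff $\eta_\epsilon\in\cun_c(S\setminus\puct)$ that is $0$ on balls $B(p_i,\epsilon)$, $1$ outside balls $B(p_i,\sqrt\epsilon)$, and, in the polar coordinates of Example \ref{examples cone singular metrics} around each $p_i$, interpolates via the logarithmic profile
\[
\eta_\epsilon(r)=\frac{2\log(r/\epsilon)}{|\log\epsilon|}\qquad\text{for }\epsilon<r<\sqrt\epsilon.
\]

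Since $\eta_\epsilon$ is compactly supported in the regular part of $S$, where $h$ is smooth, testing the equation $\Delta_h\phi-\phi=0$ against $\eta_\epsilon^2\phi$ and integrating by parts is legitimate and yields
\[
\int_S\eta_\epsilon^2(|\nabla\phi|^2+\phi^2)\,dA_h=-2\int_S\eta_\epsilon\phi\,\nabla\eta_\epsilon\cdot\nabla\phi\,dA_h.
\]
Applying Cauchy--Schwarz and Young's inequality to the right-hand side to absorb the $\eta_\epsilon\nabla\phi$ factor into the left-hand side gives
\[
\int_S\eta_\epsilon^2\phi^2\,dA_h\le \int_S\phi^2|\nabla\eta_\epsilon|^2\,dA_h\le M^2\int_S|\nabla\eta_\epsilon|^2\,dA_h.
\]
In the polar coordinates around $p_i$ the metric has the form $dr^2+\alpha_i^2\sinh^2 r\,d\phi^2$ (with $\alpha_i=\theta_i/(2\pi)$), so $dA_h=\alpha_i\sinh r\,dr\,d\phi$ and $|\nabla\eta_\epsilon|^2=4/(r|\log\epsilon|)^2$ on the transition annulus. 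Using $\sinh r/r\sim 1$ as $r\to 0$, one estimates
\[
\int_{B(p_i,\sqrt\epsilon)\setminus B(p_i,\epsilon)}|\nabla\eta_\epsilon|^2\,dA_h\ \lesssim\ \frac{1}{|\log\epsilon|^2}\int_\epsilon^{\sqrt\epsilon}\frac{dr}{r}\ =\ \frac{1}{2|\log\epsilon|}\ \longrightarrow\ 0.
\]

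Combined with the monotone convergence $\int\eta_\epsilon^2\phi^2\,dA_h\to\int_S\phi^2\,dA_h$ on the left, this forces $\int_S\phi^2\,dA_h=0$, hence $\phi\equiv 0$. The main obstacle is really the choice of cutoff: a linear profile on an annulus of fixed geometric scale produces an irremovable $O(M^2)$ error from $\int\phi^2|\nabla\eta_\epsilon|^2$, whereas the logarithmic profile exploits the fact that isolated points have zero $W^{1,2}$-capacity in dimension two and makes the right-hand side genuinely vanish. No strong maximum principle at a cone point is needed in this route.
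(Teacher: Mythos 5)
Your argument is correct, and it is genuinely different from the one in the paper. The paper proves the statement by extending the maximum principle to the cone points: if the maximum of $\phi$ is attained at a regular point the equation forces it to be nonpositive, and if it is attained at a puncture the paper studies the circular average $F(r)$ of $\phi$ around the cone point, shows via the divergence theorem that the flux $G(r)=\int_{\partial B_r}h(\grad\phi,\nu)\,d\ell_r$ extends continuously to $0$ at $r=0$ (a nonzero limit would make $\dot F$ non-integrable), deduces $\dot F(r)=\frac{1}{\theta_0\sinh r}\int_{B_r}\phi\,\mathrm{dA}_h$, and concludes that a maximum at a puncture is nonpositive and a minimum nonnegative. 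You instead run a global energy (capacity) argument: testing $\Delta_h\phi=\phi$ against $\eta_\epsilon^2\phi$ with a logarithmic cutoff and absorbing the cross term gives $\int\eta_\epsilon^2\phi^2\le M^2\int|\nabla\eta_\epsilon|^2$, and the right-hand side vanishes because isolated points have zero $W^{1,2}$-capacity in dimension two. Your computation of $\int|\nabla\eta_\epsilon|^2\,\mathrm{dA}_h\lesssim 1/|\log\epsilon|$ in the model polar coordinates is right (and would work equally well in the conformal coordinate $|z|^{2\beta}e^{2\xi}|dz|^2$, since the Dirichlet integral is conformally invariant), the absorption step is legitimate because $\int\eta_\epsilon^2|\nabla\phi|^2$ is finite for each fixed $\epsilon$, and the sign convention $\Delta_h=\mathrm{div}\,\grad$ matches the paper's, which is what makes the zeroth-order term come in with the favorable sign. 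What each approach buys: yours is shorter, needs only boundedness of $\phi$ (plus finiteness of the area, i.e.\ compactness of $S$) rather than continuity at the punctures, and avoids any removable-singularity analysis; the paper's proof is local around each puncture and yields a genuine maximum-principle statement at a cone point, which is a reusable tool beyond this lemma. Two small cosmetic repairs to your write-up: the cutoff as written is only Lipschitz at $r=\epsilon$ and $r=\sqrt\epsilon$, so either mollify it or note that Lipschitz test functions suffice for the integration by parts; and the passage to the limit on the left-hand side is by dominated convergence (or Fatou), not monotone convergence, since $\eta_\epsilon$ need not be monotone in $\epsilon$.
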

\begin{proof}
From the equation we know that if the maximum of $\phi$ is realized at an interior point, then it must be nonpositive.
We claim that the same holds if the maximum is realized at a puncture $p_i$. From the claim we can conclude that the maximum
must be nonpositive and analogously the minimum nonnegative, that is $\phi\equiv 0$.

To prove the claim we consider the function $F:[0,\epsilon)\to\R$ such that $F(r)$ is the average of $\phi$ over the circle centered at
$p_i$ of radius $r$. We fix $\epsilon$ so that all those circles are embedded in $S$.
Notice that by continuity of $\phi$ we have $\lim_{r\to 0} F(r)=\phi(p_i)$ and the assumption that $p_i$ is a maximum point for $\phi$
implies that $F(r)\leq F(0)$ for $r\geq 0$.

Now using  coordinates $r,\theta$ in a neighborhood of $p_i$ we have
\[
F(r)=\frac{1}{\theta_0}\int_0^{\theta_0}\phi(r, \theta)d\theta\,,
\]
so
\[
\begin{split}
\dot F(r)=\frac{1}{\theta_0}\int_0^{\theta_0}h(\grad\phi(r,\theta), \nu)d\theta
=\frac{1}{\theta_0\sh r}\int_{\partial B_r}h(\grad\phi(r,\theta), \nu)d\ell_r~,
\end{split}
\]
where $\nu$ is the normal field on $\partial B_r$ pointing outside.

Putting $G(r)=\int_{\partial B_r}h(\grad\phi(r,\theta), \nu)d\ell_r$, the Divergence  Theorem implies that
for $s<r$
\[
    G(r)-G(s)=\int_{B_r\setminus B_s}\Delta_h \phi \da_h=\int_{B_r\setminus B_s}\phi \da_h\,.
\]
As $\phi$ is bounded we have $|G(r)-G(s)|\leq K(r^2-s^2)$ for some constant $K$.
This implies that $G$ extends to $0$ and, putting $C_0=G(0)$ 
\begin{equation}\label{eq:dvf}
|G(r)-C_0|\leq Kr^2\,.
\end{equation}
Let us show that $C_0=0$. If $C_0\neq 0$, up to changing the sign of $\phi$ we may suppose $C_0>0$. 
Then by \eqref{eq:dvf} we get
\[
\left|\theta_0\dot F(r)-\frac{C_0}{\sh r}\right|\leq K'r\,.
\]
This implies that $\theta_0\dot F\geq\frac{C_0}{\sh r}-K'r$, but this contradicts the fact that $\dot F$ is integrable on $[0,\epsilon)$.
Thus $C_0=0$ so $|\dot F(r)|<K' r$, that  implies that $\dot F(r)\to 0$ as $r\to 0$.

Now 
\[
\begin{split}
\theta_0(\dot F(r)-\dot F(s))=\frac{1}{\sh r}(G(r)-G(s))+\left(\frac{1}{\sh r}-\frac{1}{\sh s}\right)G(s)=\\
=\frac{1}{\sh r}\int_{B_r\setminus B_s}\Delta_h \phi\da_h+\frac{\sh s-\sh r}{\sh s\,\sh r}G(s)~.
\end{split}
\]
Notice that the last addend tends to $0$ as $s\to 0$, so we deduce
\[
\dot F(r)=\frac{1}{\theta_0\sh r}\int_{B_r}\Delta_h \phi\da_h=\frac{1}{\theta_0\sh r}\int_{B_r} \phi\da_h~.
\]
Now as $F(r)\leq F(0)$ we get that $\dot F(r)$ must be nonpositive for small $r$, and this implies that $\phi(0)$ 
cannot be positive. Analogously one shows that if the minimum is achieved at a puncture, then it must be nonnegative and this concludes that $\phi\equiv 0$.
\end{proof}

\begin{corx}\label{cor:main2}
Two embedding data $(I,s)$ and $(I', s')$ in $\E_{\bfs{\beta}}$
 correspond to Cauchy surfaces contained in the same space-time with particles
if and only if 
\begin{itemize}
\item the third fundamental forms $h$ and $h'$ are isotopic;
\item the infinitesimal variation of $h$ induced by $b$ is Teichm\"uller equivalent to the infinitesimal variation of $h'$ induced by $b'$.
\end{itemize}
The map $\Psi$ induces to the quotient a bijective map
\[
\bar\Psi: \cS_+(S,\bfs{\beta})\to T\cT(S, \puct).
\] 
\end{corx}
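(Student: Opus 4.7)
The plan is to first establish the iff characterization, and then derive the bijectivity of $\bar\Psi$. For the forward direction, suppose $(I,s)$ and $(I',s')$ give Cauchy surfaces in the same space-time $M$. Proposition \ref{compito ingegneria} shows that the third fundamental forms $h$ and $h'$ are isotopic, so after applying the isotopy I may assume $h = h'$. Theorem \ref{theorem holonomy} then identifies both $\delta b$ and $\delta b'$ with the translation part of the holonomy of $M$, yielding $\delta(b - b') = 0$ in $H^1_{\hol}(\pi_1(S \setminus \puct), \R^{2,1})$. Since $b - b' \in \cC_\infty(S, h)$ is in particular $L^2$, Proposition \ref{pr:conedecomp} gives the unique decomposition
\[
b - b' = b_{q_0} + \Hess u_0 - u_0 \id,
\]
with $q_0$ a holomorphic quadratic differential with at worst simple poles and $u_0$ smooth on $S \setminus \puct$. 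Because $\delta \circ H = 0$, one obtains $\delta(b_{q_0}) = 0$; applying the exact sequence \eqref{coboundary operator} on the regular part produces a smooth function $v$ on $S \setminus \puct$ with $b_{q_0} = \Hess v - v\id$. By Proposition \ref{pr:trivial} applied around each puncture (together with the fact that flat sections of $F$ along the rotation axis provide smooth trivializations near a puncture), $v$ extends to a Lipschitz function on $S$. Tracelessness of $b_{q_0}$ forces $\Delta v - 2v = 0$, and the singular maximum principle from the final lemma of the proof of Proposition \ref{prop:hol} yields $v \equiv 0$, so $q_0 = 0$. Thus $b - b' = \Hess u_0 - u_0 \id$, and step 2 in the proof of Proposition \ref{prop:hol} (applicable because $\|\Hess u_0 - u_0\id\|$ is bounded, giving the exponent $0 > -1$) shows that $\grad u_0$ extends continuously by zero at each puncture. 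Consequently the Beltrami class of $\Hess u_0 - u_0\id$, namely $[\bar\partial(\grad u_0)]$, is trivial in $T_{[X]}\cT(S, \puct)$, giving $\Psi(b) = \Psi(b')$.

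For the reverse direction, assuming $h = h'$ after isotopy and $\Psi(b) = \Psi(b')$, the commutativity of diagram \eqref{eq:commcone} forces $\Lambda \delta(b) = d\Hol(\cJ\Psi(b)) = d\Hol(\cJ\Psi(b')) = \Lambda\delta(b')$ and hence $\delta(b) = \delta(b')$, so Proposition \ref{existence and uniqueness with singularities} produces the common space-time. This iff statement makes $\bar\Psi$ well-defined and injective. For surjectivity, given $([X], v) \in T\cT(S, \puct)$, let $h$ be the Troyanov hyperbolic metric with divisor $\bfs\beta$ in the conformal class $X$, and represent $v$ by a holomorphic quadratic differential $q$ with at worst simple poles at $\puct$; the Corollary following Proposition \ref{pr:trivial} supplies a Lipschitz function $u$ making $b = b_q - (\Hess u - u\id)$ bounded and uniformly positive. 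The pair $(h, b) \in \E_{\bfs\beta}$ then determines $[M] \in \cS_+(S, \bfs\beta)$ with $\bar\Psi([M]) = \Psi(b) = \Psi(b_q) = ([X], v)$.

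The main obstacle is the forward direction: since $d\Hol$ need not be injective on $T\cT(S, \puct)$ in the singular setting, the equality $\delta(b) = \delta(b')$ cannot be converted into $\Psi(b) = \Psi(b')$ merely by reading the commutative diagram backwards. Instead one must combine the uniqueness of the $L^2$-decomposition of bounded Codazzi tensors, the Lipschitz regularity of the trivializing potential $v$ at the cone points supplied by Proposition \ref{pr:trivial}, and a maximum principle adapted to hyperbolic surfaces with cone singularities.
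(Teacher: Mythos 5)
Your proof is correct, and it differs from the paper's own (very terse) argument precisely at the point you flag. The paper deduces the whole corollary from Proposition \ref{existence and uniqueness with singularities} (together with Proposition \ref{compito ingegneria} and Theorem \ref{theorem holonomy}), which characterize ``same space-time'' by isotopy of $h,h'$ plus $\delta(b)=\delta(b')$, and then passes between $\delta$-equivalence and Teichm\"uller equivalence by simply invoking commutativity of \eqref{eq:commcone}; surjectivity is handled by a dimensional argument plus adding $M\id$ to make $b$ positive. Reading the diagram backwards, however, tacitly uses injectivity of $d\Hol$, which is not established before this corollary -- exactly the gap you identify -- and you close it with a direct proof that $\delta(b-b')=0$ forces $\Psi(b)=\Psi(b')$, via the $L^2$-decomposition of Proposition \ref{pr:conedecomp}, the local triviality in Proposition \ref{pr:trivial}, the vanishing of the potential's gradient at the punctures (Step 2 in the proof of Proposition \ref{prop:hol}) and the singular maximum principle; in effect you reprove the needed injectivity of $d\Hol$ on these classes. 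Two remarks: the detour through $q_0=0$ is redundant, since once $\delta(b-b')=0$ the exact sequence \eqref{coboundary operator} on $S\setminus\puct$ gives a single global potential $w$ with $b-b'=\Hess w-w\id$, and boundedness of $b-b'$ together with the Step 2 argument already yields $\grad w\to 0$ at the punctures, hence triviality of the Beltrami class; and the maximum principle lemma is stated for $\Delta\phi-\phi=0$ while you apply it to $\Delta v-2v=0$ (the proof adapts verbatim, but this should be said). Finally, in the surjectivity argument the equality $\Psi(b)=\Psi(b_q)$ rests on the same vanishing-gradient fact and deserves a word; the reverse implication, injectivity, and surjectivity otherwise follow the paper's route (the diagram, Proposition \ref{existence and uniqueness with singularities}, and the corollary following Proposition \ref{pr:trivial}).
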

\begin{proof}
The first part directly follows by Proposition \ref{existence and uniqueness with singularities}.
The fact that $\bar\Psi$ is well-defined and injective is then a consequence of commutativity of \eqref{eq:commcone}.
Notice that  $\Psi:\cC(S, h)\to T_{[X]}(S, \puct)$ is surjective by a simple dimensional argument.
As for any $b\in\cC_\infty(S, h)$ we may find a constant $M$ so that $b+M\Id$ is positive. Like in the closed case we conclude that
$\bar\Psi$ is surjective.
\end{proof}

\subsection{Symplectic structures in the singular case}
In the singular case it is also possible to construct a Goldman intersection form $\omega^B$ 
on the image of $d\Hol$ in $H^1_{\tiny\Ad\circ\hol}(\pi_1(S), \so(2,1))$.
Mondello \cite{mondelloPoisson} proved that the map $d\Hol$ is symplectic up to a factor.
We will give here a different proof of that result in the analogy of the proof of Corollary \ref{corollarytheoremgoldman} given in Subsection \ref{Symplectic forms}.

First let us recall some basic facts on the construction of $\omega^B$.
We denote by $H^\bullet_{\mathrm c}(S, F_{\so(2,1)})$ the de Rham cohomology of the complex of $F_{\so(2,1)}$-valued 
forms on $S$ with compact support, and let 
\[
    I_*:H^1_{\mathrm c}(S, F_{\so(2,1)})\to H^1_{\mathrm{dR}}(S, F_{\so(2,1)})
\]
be the map induced by the inclusion.
The image of $I_*$ will be denoted by $H^1_{0}(S, F_{\so(2,1)})$ and contains the cohomology classes in
$H^1_{\mathrm{dR}}(S, F_{\so(2,1)})$ which admit a representative with compact support.
Under the isomorphism $H^1_{\tiny\Ad\circ\tiny\hol}(\pi_1(S), \so(2,1))\cong H^1_{\mathrm{dR}}(S, F_{\so(2,1)})$,  elements of
$H^1_0(S, F_{\so(2,1)})$ correspond to cocycles which are trivial around the punctures.

Let $B$ be the $\Ad$-invariant product on $\so(2,1)$ defined in Subsection \ref{Symplectic forms}. It induces a well-defined pairing
\[
   \bar\omega^B:H^1_{\mathrm c}(S, F_{\so(2,1)})\times H^1_{\mathrm{dR}}(S, F_{\so(2,1)})\to\R
\]
given as in the closed case by
\[
   \bar\omega^B([\varsigma], [\sigma])=\int_{S}B(\varsigma\wedge\sigma)\,.
\]
This pairing is nondegenerate by Poincar\'e duality.
 Notice that if $\varsigma, \varsigma'$ are forms with compact support
$\bar\omega^B([\varsigma], I_*[\varsigma'])=-\bar\omega^B([\varsigma'], I_*([\varsigma]))$, showing
that $\ker I_*$ coincides with the orthogonal subspace of $H^1_0(S, F_{\so(2,1)})$.

\noindent Thus the form $\bar\omega^B$ induces a symplectic form on $H^1_0(S, F_{\so(2,1)})$, defined by
\[
  \omega^B([\sigma], [\sigma'])=\int_SB(\sigma\wedge\sigma')~,
\]
where $\sigma$ and $\sigma'$ are representatives with compact support.

In a similar  way we can define the subspace $H^1_0(S, F)$ and a symplectic form $\bar\omega^F$ on it, 
analogous to the one constructed  in Subsection \ref{Symplectic forms}

By Proposition \ref{pr:trivial}, the coboundary operator $\delta:\cC_2(S,h)\to H^1_{\mathrm{dR}}(S, F)$
takes values in $H^1_0(S, F_{\so(2,1)})$.
The following proposition computes $\bar\omega^B(\Lambda\delta b, \Lambda\delta b')$ in analogy with Proposition \ref{pr:omegaB}.
\begin{prop}
Let $\delta:\cC_2(S,h)\to H^1_0(S,F)$. Then 
\begin{equation}\label{eq:omegaFsing}
  \bar\omega^F(\delta (b), \delta (b'))=\frac{1}{2}\int_S \tr(Jbb')\da_h\,.
\end{equation}
or analogously 
\begin{equation}\label{eq:omegaBsing}
\bar\omega^B(\Lambda(\delta (b)), \Lambda(\delta (b')))=\frac{1}{4}\int_S \tr(Jbb')\da_h\,.
\end{equation}
\end{prop}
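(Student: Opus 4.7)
The plan is to follow the template of Proposition \ref{pr:omegaB} in the closed case but, since the pairing $\bar\omega^F$ demands a compactly supported representative, to localize the argument near the punctures and control the resulting boundary terms. Throughout I use the decompositions $b = b_q + \Hess u - u\id$ and $b' = b_{q'} + \Hess u' - u'\id$ supplied by Proposition \ref{pr:conedecomp}.

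The first step is to reduce to the harmonic case. Cohomologically, trivial Codazzi tensors lie in the image of $H$, so $\delta(b) = \delta(b_q)$ and $\delta(b') = \delta(b_{q'})$. For the right-hand side I would prove that
\[
\int_S \tr(Jbb')\,\da_h = \int_S \tr(Jb_q b_{q'})\,\da_h\,,
\]
by showing that each cross-term $\int_S \tr(J(\Hess u - u\id)\,b_{q'})\,\da_h$ and the trivial-trivial term vanish. Using $\nabla J = 0$ and the Codazzi property of $b_{q'}$, a direct computation rewrites the cross-term integrand as $\Div\bigl((b_{q'} - (\tr b_{q'})\id)J\grad u\bigr)$, which since $b_{q'}$ is traceless collapses to $\Div(b_{q'}J\grad u)$; the Cauchy-Schwarz-on-shrinking-circles argument used in Step~2 of the proof of Proposition \ref{pr:conedecomp} (relying on $|\grad u| \in L^p$ for every $p<\infty$ by Sobolev embedding on a surface with cone singularities, and $\|b_{q'}\| \in L^{2+\varepsilon}$ near the punctures) produces a sequence $\epsilon_n \to 0$ along which the boundary contributions vanish. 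The trivial-trivial term is handled analogously.

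For the harmonic part I build an explicit compactly supported representative of $\delta(b_q)$. By Proposition \ref{pr:trivial}, on a neighborhood $U_i$ of each puncture $p_i$ there is a Lipschitz function $v_i$ with $b_q = \Hess v_i - v_i\id$; then $\eta_i := \iota_*\grad v_i - v_i\iota$ satisfies $d^D\eta_i = \iota_* b_q$ on $U_i$ by Lemma \ref{lm:potential}. Choosing cutoffs $\chi_i$ equal to $1$ on $V_i \Subset U_i$ and supported in $U_i$, the form $\sigma := \iota_* b_q - \sum_i d^D(\chi_i\eta_i)$ is smooth, closed, compactly supported in $S\setminus\puct$, and cohomologous to $\iota_* b_q$. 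Pairing $\sigma$ with the representative $\iota_* b_{q'}$ of $\delta(b_{q'})$, the term $\int_S \langle \iota_* b_q \wedge \iota_* b_{q'}\rangle$ equals $\tfrac{1}{2}\int_S \tr(Jb_q b_{q'})\,\da_h$ by the pointwise identity in the proof of Proposition \ref{pr:omegaB}, and is integrable by Cauchy-Schwarz. Each correction $\int_S \langle d^D(\chi_i\eta_i) \wedge \iota_* b_{q'}\rangle$ is, by the Leibniz rule and the closedness $d^D\iota_* b_{q'} = 0$, an integral of an exact $1$-form, which Stokes reduces to a limit of circle integrals of $\langle\eta_i, \iota_* b_{q'}\rangle$ whose density along $\partial B_\epsilon(p_i)$ is $h(\grad v_i, b_{q'}\tau)$; the boundedness of $|\grad v_i|_h$ (Lemma \ref{lm:triv}) together with $\|b_{q'}\| \in L^2$ and the same Cauchy-Schwarz trick forces these circle integrals to vanish along a subsequence.

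The main technical hurdle will be precisely the Stokes-with-boundary-term analysis: $\eta_i$ is only Lipschitz at $p_i$, the metric $h$ is singular there, and one must justify both the Leibniz-Stokes manipulation and the vanishing of the circle integrals along a sequence of radii. The required estimates are essentially those already developed in Step~2 of Proposition \ref{pr:conedecomp} and Lemma \ref{lm:triv}, which can be imported with minor adjustments. Once \eqref{eq:omegaFsing} is established, the companion identity \eqref{eq:omegaBsing} follows immediately from Lemma \ref{lm:BF}, which gives $\omega^B \circ (\Lambda \times \Lambda) = \tfrac{1}{2}\omega^F$.
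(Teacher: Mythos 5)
Your route is genuinely different from the paper's: the paper disposes of the proposition in three lines by observing that both sides of \eqref{eq:omegaFsing} are continuous for the $L^2$-distance on $\cC_2(S,h)$ (Proposition \ref{pr:contdelta} for the left-hand side, Cauchy--Schwarz for the right), reducing by density to $b,b'$ with compact support, where the closed-case computation of Proposition \ref{pr:omegaB} applies verbatim via $\delta(b)=[\bfs{\iota}_*b]$. Most of your alternative is sound: $\delta(b)=\delta(b_q)$, the pairing of a compactly supported representative of $\delta(b_q)$ (built from Proposition \ref{pr:trivial} and cutoffs) against $\bfs{\iota}_*b_{q'}$, and the cross-terms, which indeed reduce to $\int_S\Div(b_{iq'}\,J\ldots)$-type identities whose boundary circle integrals vanish because $\|b_{q'}\|\in L^{2+\epsilon}$ near the punctures while $\|\grad u\|\in L^p$ for every finite $p$, so the density along shrinking circles is in $L^2$ and the contradiction argument of Step~2 of Proposition \ref{pr:conedecomp} applies.

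The gap is the sentence ``the trivial-trivial term is handled analogously.'' It is not analogous: for $\int_S\tr\bigl(J(\Hess u-u\id)(\Hess u'-u'\id)\bigr)\da_h$ the divergence identity still holds, but the resulting boundary density is controlled only by $\|\Hess u'-u'\id\|\cdot\|\grad u\|$, and here the first factor is merely $L^2$ (there is no analogue of the pointwise bound $\|b_{q'}\|\lesssim r^{\alpha}$, $\alpha>-1$, nor of $L^{2+\epsilon}$ integrability for a general trivial tensor in $\cC_2$), while $\|\grad u\|$ is only $L^p$ for $p<\infty$. The product therefore lies only in $L^q$ for $q<2$, and the shrinking-circle argument fails exactly at this threshold: assuming the circle integrals stay above $a>0$ only forces $\int_{\partial B_r}f^q\gtrsim a^q r^{-(q-1)}$, which is integrable in $r$ for $q<2$, so no contradiction arises; the same borderline obstruction appears if you instead run the Stokes argument on $d\langle\eta\wedge\bfs{\iota}_*(\Hess u'-u'\id)\rangle$ or try an annular cutoff. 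So as written this step would fail, and closing it requires an extra ingredient --- the natural one being precisely the paper's mechanism: approximate one trivial factor in $L^2$ by compactly supported Codazzi tensors and use $L^2$-continuity of both sides (Proposition \ref{pr:contdelta}), or otherwise prove a stronger decay estimate for trivial $L^2$ Codazzi tensors near the cone points, which you have not supplied. The final passage from \eqref{eq:omegaFsing} to \eqref{eq:omegaBsing} via Lemma \ref{lm:BF} is fine.
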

\begin{proof}

Since $\delta$ is continuous for the $L^2$-norm by Proposition \ref{pr:contdelta}, both LHS and RHS in \eqref{eq:omegaFsing} and \eqref{eq:omegaBsing}
 are continuous on $\cC_2(S, h)\times\cC_2(S, h)$.
So by density it is sufficient to prove that \eqref{eq:omegaFsing}  holds for $b$ and $b'$ with compact support.
But in that case, the proof is the same given in the closed case, recalling that $\delta b=[\bfs{\iota}_*b]$.
\end{proof}

Now we  compute the Weil-Petersson form in terms of the map $\Phi$.
Let us denote by $K^2(\puct)$ the space of holomorphic quadratic differentials with at worst simple poles at $\puct$. Recall that if $q,q'\in K^2(\puct)$
then as in the closed case one can define $g_{\WP}(q, q')$ by integrating the form that in a local chart is
\[
      \frac{f\bar g}{e^{2\eta}}dx\wedge dy\,,
\]
for $q=f(z)dz^2$, $q'(z)=g(z)dz^2$ and $h=e^{2\eta}|dz|^2$.
In fact the integrability of that form relies on the fact that $q$ and $q'$ have at worst simple poles.
We have the same result as in the closed case:
\begin{prop}\label{omegaWPsing}
Given $b,b'\in\cC_\infty(S,h)$, the following formula holds:
\begin{equation}\label{eq:omegaWPsing}
   \omega_{\WP}(\Psi(b), \Psi(b'))=2 \int_S \tr (Jbb')\da_h\,.
\end{equation}
\end{prop}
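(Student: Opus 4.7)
The plan is to adapt the closed-case proof of Proposition \ref{pr:omegaWP} to the singular setting, using the cone-angle decomposition of Proposition \ref{pr:conedecomp}. I would first write
\begin{equation*}
b = b_q + (\Hess u - u\Id), \qquad b' = b_{q'} + (\Hess u' - u'\Id),
\end{equation*}
with $q, q' \in K^2(\puct)$ and $u, u' \in W^{2,2}(h)$, and organize the proof into three steps: (i) $\Psi(b) = \Psi(b_q)$ and $\Psi(b') = \Psi(b_{q'})$, so that the left-hand side of \eqref{eq:omegaWPsing} reduces to $\omega_{\WP}(\Psi(b_q),\Psi(b_{q'}))$; (ii) the latter equals $2\int_S \tr(Jb_q b_{q'})\,\da_h$ by reproducing the local conformal-chart computation of Proposition \ref{pr:omegaWP}; (iii) the identity $\int_S\tr(Jbb')\,\da_h = \int_S\tr(Jb_q b_{q'})\,\da_h$, obtained via a singular version of Remark \ref{rk:wd}.

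For step (i), under $K^{-1}\cong TS$ the Beltrami-differential representative of $\Hess u - u\Id$ is its traceless part $\Hess u - \tfrac{1}{2}(\Delta u)\Id$, which coincides with $\bar\partial(\grad u)$. Step 2 in the proof of Proposition \ref{prop:hol} ensures that $\grad u$ extends continuously to each puncture with value $0$: its hypothesis $\|\Hess u - u\Id\| \leq Cr^\alpha$ with $\alpha > -1$ is satisfied because $\|b\|$ is bounded and $\|b_q\|\leq Cr^{-2\beta-1}$ (with $-2\beta - 1 > -1$) by the estimates in the proof of Proposition \ref{integrability harmonic codazzi}. Hence $\bar\partial(\grad u) \in \cB(X,\puct)^\perp$ and $\Psi(\Hess u - u\Id) = 0$ in $T_{[X]}\cT(S,\puct)$. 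Step (ii) is purely local: the matrix formulas \eqref{formulabeltramicodazzi}, \eqref{formulahqdcodazzi}, the Weil--Petersson expression \eqref{eq:WP} and the contraction formula \eqref{eq:cont} carry over verbatim, with global integrability guaranteed by $b_q, b_{q'}\in L^2(h)$ (Proposition \ref{integrability harmonic codazzi}) and by boundedness of the trace-free parts $b_0, b'_0$.

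The main obstacle is step (iii), which will rest on the singular analogue of Remark \ref{rk:wd}: for every $X\in\cC_\infty(S,h)\cup\{b_{q''} : q''\in K^2(\puct)\}$ and every $v\in W^{2,2}(h)$,
\begin{equation*}
\int_S \tr\bigl(JX(\Hess v - v\Id)\bigr)\,\da_h = 0.
\end{equation*}
Since $\tr(JX) = 0$ ($X$ symmetric, $J$ antisymmetric) and $X$ is Codazzi, the integrand can be rewritten as $\Div(JX\grad v) - \tr(J\nabla_{\grad v}X)$, and the second term vanishes because $\nabla_{\grad v}X$ is symmetric. Integration by parts on $S\setminus\bigcup_i B_r(p_i)$ reduces the identity to the vanishing, along a sequence $r_n\to 0$, of the boundary integrals $\int_{\partial B_r}h(JX\grad v,\nu)\,d\ell_r$. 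This is exactly the Cauchy--Schwarz-plus-Tonelli argument of Step~2 of Proposition \ref{pr:conedecomp}, and it only requires $|JX\grad v| \in L^2(S,h)$: this holds when $X$ is bounded ($\grad v \in L^2(h)$ since $v\in W^{2,2}$), and also when $X = b_{q''}$ has $\|X\| \in L^{2+\delta}$ locally near each puncture (Proposition \ref{integrability harmonic codazzi}) combined with $\grad v \in L^p(h)$ for all $p<\infty$ by Sobolev embedding on $(S,h)$, via H\"older. Once the identity is established, applying it first with $X = b$ and $v = u'$ gives $\int_S\tr(Jbb')\,\da_h = \int_S\tr(Jbb_{q'})\,\da_h$; then, using $Jb_{q'} = -b_{q'}J$ (since $b_{q'}$ is traceless symmetric) together with cyclicity of the trace to rewrite $\tr(J(\Hess u - u\Id)b_{q'}) = -\tr(Jb_{q'}(\Hess u - u\Id))$, a second application of the identity with $X = b_{q'}$ and $v = u$ yields step (iii) and completes the proof.
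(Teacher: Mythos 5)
Your proposal is correct and follows essentially the same route as the paper: decompose $b$ and $b'$ via Proposition \ref{pr:conedecomp}, transplant the closed-case conformal-chart computation to the harmonic parts, and kill the cross terms by the divergence-plus-boundary-estimate argument of Step 2 of Proposition \ref{pr:conedecomp} (the paper obtains the same vanishing from the singular Goldman-form identity and $\delta(\Hess u-u\Id)=0$, i.e.\ the singular analogue of Remark \ref{rk:wd}, but this is the same mechanism). The only presentational difference is that the paper makes explicit that, since $b_q$ is in general only in $\cC_2(S,h)$ (it can be unbounded at cone points of angle at least $\pi$), one must first extend $\Psi$ to $\cC_2(S,h)$ via the continuous pairing with $\mathcal K^2(\puct)$ before writing $\Psi(b)=\Psi(b_q)$ and $\omega_{\WP}(\Psi(b_q),\Psi(b_{q'}))$ --- which is precisely what your pairing-vanishing arguments in steps (i) and (iii) supply implicitly.
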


The computation is done as in the closed case up to a simple technical difficulty.
The point is that strictly speaking if $b\in\cC_\infty(S,h)$, then $b_q$ is only $\cC_2(S, h)$, 
so in order to use the splitting $b=b_q+\hess u-u\Id$ we need to extend the map $\Psi$  to $\cC_2(S, h)$.

The reason this is possible is that the pairing 
\[
   \cC_\infty(S,h)\times \mathcal K^2(\puct)\to\mathbb C\,,\quad \langle q,\Psi(b)\rangle=\int_S q\bullet \Psi(b)
   \]
continuously extends to a pairing $\cC_2(S,h)\times\mathcal K^2(\puct)\to\mathbb C$. 
In fact as the proof of Equation \eqref{eq:cont} was only local, it holds also in this singular case and we have
\[
       \langle q,\Psi(b)\rangle=-\int_S(\tr(Jb_0 b_q)+i\tr(b_0 b_q))\da_h\,.
\]
Moreover, the antilinear map $\mathcal K^2(\puct)\to T_{X_h}\cT(S,\puct)$ given by the Weil-Petersson product is 
$$q\mapsto=\Psi(\frac{Jb_q}{2})$$
as in the closed case, which allows to recover the result. This concludes also the alternative proof of the result of Mondello (\cite{mondelloPoisson}).

\begin{cor} \label{corollarytheoremgoldmansing}
The Weil-Petersson symplectic form $\bar\omega_{\WP}$ and the Goldman symplectic form $\bar\omega^{B}$ for hyperbolic surfaces with cone points are related by:
$$\Hol^*(\bar\omega^B)=\frac{1}{8}\bar\omega_{\WP}\,.$$
\end{cor}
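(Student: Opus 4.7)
My plan is to imitate the argument sketched for the closed case in Corollary \ref{corollarytheoremgoldman}, pulling together the three main ingredients developed in this section: the commutativity of diagram \eqref{eq:commcone}, the formula for $\bar\omega^B$ pulled back via $\Lambda\circ\delta$, and the formula for $\bar\omega_{\WP}$ pulled back via $\Psi$.

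First, I would reduce checking the identity $\Hol^*(\bar\omega^B) = \frac{1}{8}\bar\omega_{\WP}$ to evaluating both sides on pairs of tangent vectors of the form $(\mathcal J\Psi(b), \mathcal J\Psi(b'))$ with $b,b' \in \cC_\infty(S,h)$. This is legitimate because $\Psi$ is surjective onto $T_{[X]}\cT(S,\puct)$ (as used in the proof of Corollary \ref{cor:main2}) and $\mathcal J$ is an isomorphism, so such vectors span the whole tangent space.

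Next, by commutativity of \eqref{eq:commcone}, one has $(\Lambda\circ\delta)(b) = d\Hol(\mathcal J\Psi(b))$ for every $b\in\cC_\infty(S,h)$. Therefore
\begin{equation*}
\Hol^*(\bar\omega^B)(\mathcal J\Psi(b), \mathcal J\Psi(b')) = \bar\omega^B(\Lambda\delta b, \Lambda\delta b') = \frac{1}{4}\int_S \tr(Jbb')\,\da_h,
\end{equation*}
where the second equality is precisely formula \eqref{eq:omegaBsing}. Notice that this uses in a crucial way that $\delta b$ and $\delta b'$ lie in $H^1_0(S,F)$, which was established in Proposition \ref{pr:trivial}, so that the pairing $\bar\omega^B$ is well-defined.

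On the other side, since $\bar\omega_{\WP}$ is the Kähler form associated with the complex structure $\mathcal J$, it is $\mathcal J$-invariant, so $\bar\omega_{\WP}(\mathcal J\Psi(b),\mathcal J\Psi(b')) = \bar\omega_{\WP}(\Psi(b),\Psi(b'))$. By Proposition \ref{omegaWPsing} the latter equals $2\int_S \tr(Jbb')\,\da_h$. Comparing the two expressions gives $\Hol^*(\bar\omega^B)(\mathcal J\Psi(b), \mathcal J\Psi(b')) = \frac{1}{8}\bar\omega_{\WP}(\mathcal J\Psi(b), \mathcal J\Psi(b'))$, which yields the desired identity. The only mildly delicate point is the verification that $\Psi$ surjects onto $T_{[X]}\cT(S,\puct)$ in the cone setting; once this is granted (either via the dimension count used in Corollary \ref{cor:main2} or directly from the decomposition in Proposition \ref{pr:conedecomp}), the proof reduces to bookkeeping with constants.
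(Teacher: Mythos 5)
Your proof is correct and follows essentially the same route as the paper: the corollary is obtained by combining the commutativity of diagram \eqref{eq:commcone} with the pullback formulas \eqref{eq:omegaBsing} and \eqref{eq:omegaWPsing}, using that $\delta$ lands in $H^1_0$ so that $\bar\omega^B$ is defined on the image of $d\Hol$. The only points you spell out that the paper leaves implicit are the surjectivity of $\mathcal J\circ\Psi$ and the $\mathcal J$-invariance of $\bar\omega_{\WP}$, both routine, and your constants check out ($\tfrac14=\tfrac18\cdot 2$).
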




\section{Exotic structures} \label{exotic}
We now want to discuss flat Lorentzian manifolds which do not satisfy the hypothesis we considered so far.

\subsection{The existence of a strictly convex Cauchy surface}
Recall we are considering MGHF space-times on $S\times \R$, where $S$ is a possibly singular surface of genus $g$ and the cone angles correspond to a divisor $\bfs{\beta}=\sum_{i=1}^k \beta_i p_i$ with
$\beta_i\in(-1,0)$. 
We have always  assumed $\chi(S, \bfs\beta)<0$. 
In \cite{Mess}, Mess proved that in the case of a closed surface (i.e. no cone points), every MGHF space-time contains a strictly convex Cauchy surface provided $\chi(S)<0$. 
We now show that this is not true in general if cone singularities are allowed.

\begin{example}
Let $M$ be a MGHF space-time with a particle $s$ of angle $\theta<\pi$ which contains a uniformly convex Cauchy embedding orthogonal to the singular lines. Such space-times are classified in Theorem \ref{thm:coneteich}. It is easy to find another Cauchy embedding $\sigma:S\rar M$ which is is flat (and thus not strictly convex) in a neighborhood of its intersection with the particle $s$. Hence there is a neighborhood $U$ of the singular point $p$ such that $\sigma(U)$ lies in the orthogonal plane to $s$ at $\sigma(x)$, so that the induced metric is Euclidean. By taking $U$ sufficiently small, we suppose $\sigma(U)$ does not intersect any other particle.


Hence we find a neighborhood $\sigma(U)\times(-\epsilon,\epsilon)$ where the metric takes the form $g_U-dt^2$, for $g_U$ is a Euclidean metric on $U$ with a cone point $p$ of angle $\theta$. We are now going to cut this neighborhood of $\sigma(U)$ and glue a germ of flat space-time containing two cone points.

\begin{figure}[b]
\centering
\begin{minipage}[c]{.45\textwidth}
\centering
\includegraphics[height=5.5cm]{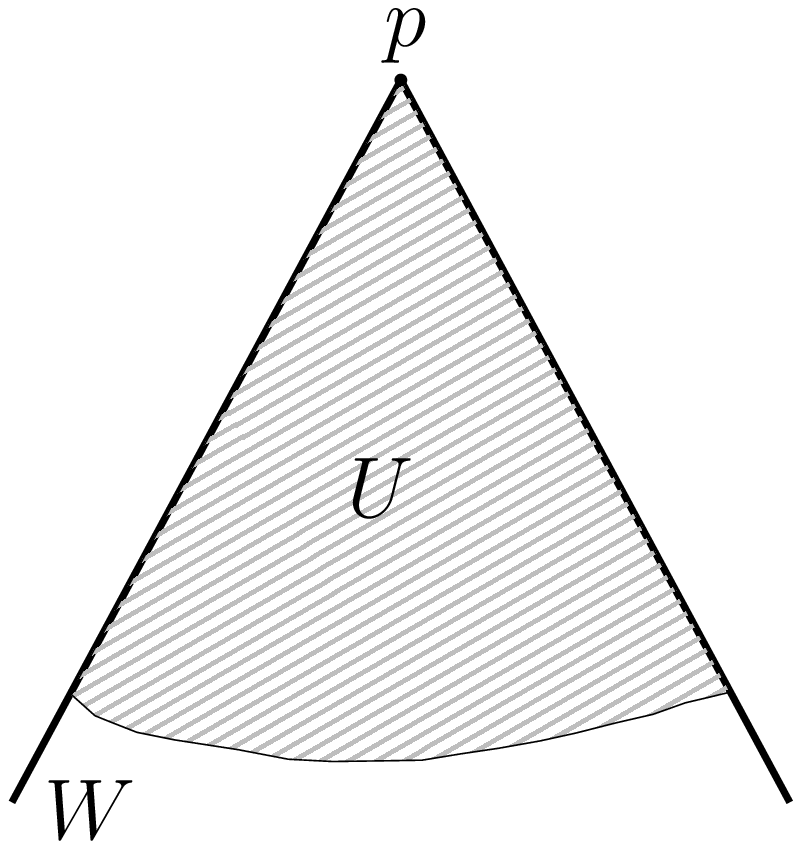} 
\end{minipage}%
\hspace{5mm}
\begin{minipage}[c]{.45\textwidth}
\centering
\includegraphics[height=5.5cm]{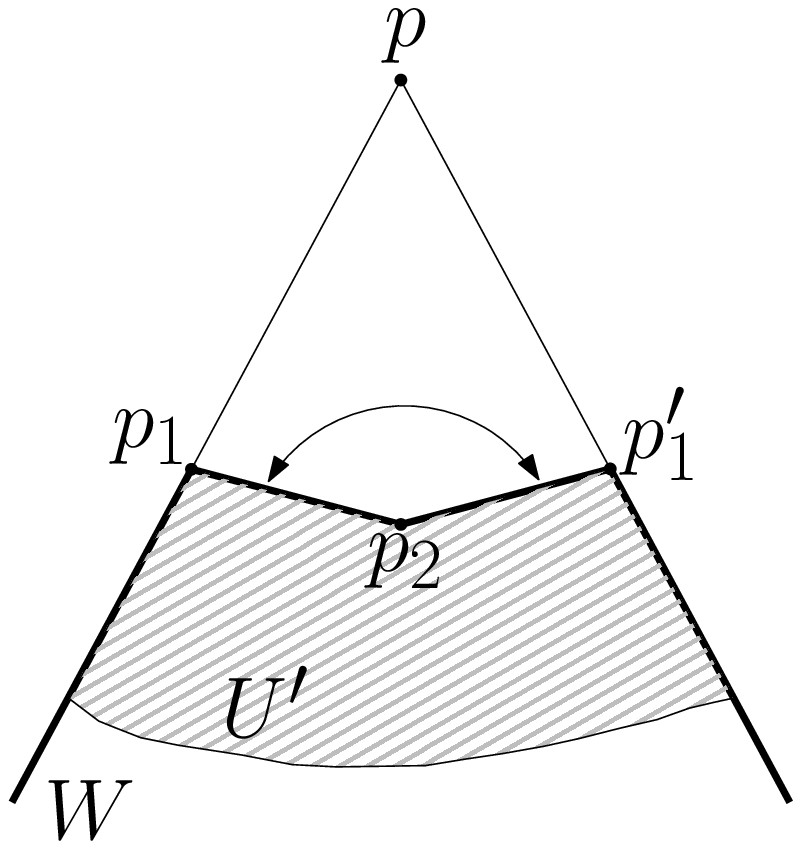}
\end{minipage}
\caption{Replacing a neighborhood of a cone point on a Euclidean structure by a disc with two cone points. The cone angles satisfy the relation $\theta_1+\theta_2=2\pi+\theta$.} 
\label{fig:wedge}
\end{figure}

Our construction is two-dimensional Euclidean geometry; see also Figure \ref{fig:wedge}. Consider a wedge $W$ in $\R^2$ of angle $\theta$, which represents a model of the cone point. We choose two points $p_1$ and $p_2$ in $U$ (which will represent the new cone points), $p_1$ in the boundary of the wedge and $p_2$ in the line bisecting $W$. Let $p_1'$ be the image of $p_1$ by the rotation of angle $\theta$, namely the point identified to $p_1$ on the other edge of $W$. Connect $p_2$ to $p_1$ and $p_1'$ by geodesic segments. We remove the quadrilateral $Q=p_1 p p_1' p_2$ from $W$ and glue the segments $\ol{p_1 p_2}$ and $\ol{p_1' p_2}$ by a rotation around $p_2$. We keep the same gluing as before between the two edges of $W$ outside $Q$. This gives a Euclidean structure on a disc $U'$, obtained from $W\setminus Q$ by the gluing we defined, containing two cone points of angle $\theta_1$ and $\theta_2$. Observe that at least one of $\theta_1$ and $\theta_2$ has angle between $\pi$ and $2\pi$. By some simple Euclidean geometry we have that $\frac{\theta}{2\pi}-1=\frac{\theta_1}{2\pi}-1+\frac{\theta_2}{2\pi}-1$.


We extend this operation to $U' \times (-\epsilon,\epsilon)$ in the obvious way and glue the new structure to a tubular neighborhood of $\sigma(S)\setminus \sigma(U)$ in $M\setminus (\sigma(U) \times (-\epsilon,\epsilon))$. By taking the maximal extension, we obtain a space-time $M'$ with two particles $s_1$ and $s_2$ of angles $\theta_1$ and $\theta_2$. Notice that the Euler characteristic of $M'$ equals that of $M$ so it is negative.
It is also clear that $M'$ cannot contain any strictly convex Cauchy surface, as the requirement of being orthogonal to the singularities forces a spacelike surface to be flat (i.e. the shape operator has a null eigenvalue) at some points. More precisely, $s_1$ and $s_2$ are connected by a geodesic segment entirely contained in $S$.

Observe that the holonomy of $M'$ of a path winding around both $s_1$ and $s_2$ is the same as the holonomy of $M$ of a path around $s$. Moreover, on peripheral paths around $s_1$ and $s_2$, the linear part of the holonomy of $M'$ fixes the same point in $\Hyp^2$.
\end{example}




\bibliographystyle{alpha}
\bibliography{bs-bibliography}

\newcommand{\etalchar}[1]{$^{#1}$}
\begin{thebibliography}{ABB{\etalchar{+}}07}

\bibitem[ABB{\etalchar{+}}07]{notes}
Lars Andersson, Thierry Barbot, Riccardo Benedetti, Francesco Bonsante,
  William~M. Goldman, Fran{\c{c}}ois Labourie, Kevin~P. Scannell, and Jean-Marc
  Schlenker.
\newblock Notes on: ``{L}orentz spacetimes of constant curvature'' [{G}eom.
  {D}edicata {\bf 126} (2007), 3--45; mr2328921] by {G}. {M}ess.
\newblock {\em Geom. Dedicata}, 126:47--70, 2007.

\bibitem[Bar05]{barbot}
Thierry Barbot.
\newblock Globally hyperbolic flat space-times.
\newblock {\em J. Geom. Phys.}, 53(2):123--165, 2005.

\bibitem[BB09]{bebo}
Riccardo Benedetti and Francesco Bonsante.
\newblock Canonical {W}ick rotations in 3-dimensional gravity.
\newblock {\em Mem. Amer. Math. Soc.}, 198(926):viii+164, 2009.

\bibitem[BBS11]{bbsads}
Thierry Barbot, Francesco Bonsante, and Jean-Marc Schlenker.
\newblock Collisions of particles in locally {A}d{S} spacetimes {I}. {L}ocal
  description and global examples.
\newblock {\em Comm. Math. Phys.}, 308(1):147--200, 2011.

\bibitem[Bel14]{mehdi}
Mehdi Belraouti.
\newblock Sur la g\'eom\'etrie de la singularit\'e initiale des espaces-temps
  plats globalement hyperboliques.
\newblock {\em Annales de l'Institut Fourier}, 64(2):457--466, 2014.

\bibitem[BF14]{bonfill}
Francesco Bonsante and Francois Fillastre.
\newblock The equivariant minkowski problem in minkowski space.
\newblock {\em ArXiv: 1405.4376}, 2014.

\bibitem[BG00]{beguacone}
Riccardo Benedetti and Enore Guadagnini.
\newblock Geometric cone surfaces and {$(2+1)$}-gravity coupled to particles.
\newblock {\em Nuclear Phys. B}, 588(1-2):436--450, 2000.

\bibitem[BG01]{begua}
Riccardo Benedetti and Enore Guadagnini.
\newblock Cosmological time in {$(2+1)$}-gravity.
\newblock {\em Nuclear Phys. B}, 613(1-2):330--352, 2001.

\bibitem[BM12]{barbot-meusburger}
Thierry Barbot and Catherine Meusburger.
\newblock Particles with spin in stationary flat spacetimes.
\newblock {\em Geom. Dedicata}, 161:23--50, 2012.

\bibitem[CB68]{choquet}
Yvonne Choquet-Bruhat.
\newblock Th\'eor\`eme global d'unicit\'e pour les solutions des \'equations
  de'{E}instein.
\newblock {\em Bull. Soc. Math. France}, 96:181--192, 1968.

\bibitem[Che55]{chern}
Shiing-shen Chern.
\newblock An elementary proof of the existence of isothermal parameters on a
  surface.
\newblock {\em Proc. Amer. Math. Soc.}, 6:771--782, 1955.

\bibitem[FT84a]{tromba}
Arthur~E. Fischer and Anthony~J. Tromba.
\newblock On a purely ``{R}iemannian'' proof of the structure and dimension of
  the unramified moduli space of a compact {R}iemann surface.
\newblock {\em Math. Ann.}, 267(3):311--345, 1984.

\bibitem[FT84b]{trombawp}
Arthur~E. Fischer and Anthony~J. Tromba.
\newblock On the {W}eil-{P}etersson metric on {T}eichm\"uller space.
\newblock {\em Trans. Amer. Math. Soc.}, 284(1):319--335, 1984.

\bibitem[FV13]{Fillastre}
Francois Fillastre and Giona Veronelli.
\newblock Lorentzian area measures and the christoffel problem.
\newblock {\em ArXiv: 1302.6169}, 2013.

\bibitem[Gar87]{gardiner}
F.~P. Gardiner.
\newblock {\em Teichm{\"u}ller theory and quadratic differentials}.
\newblock Wiley, 1987.

\bibitem[Gol80]{goldmanthesis}
William~M. Goldman.
\newblock {\em Discontinuous Groups and the Euler Class}.
\newblock PhD thesis, University of California, Berkeley, 1980.

\bibitem[Gol84]{Goldman}
William~M. Goldman.
\newblock The symplectic nature of fundamental groups of surfaces.
\newblock {\em Adv. in Math.}, 54(2):200--225, 1984.

\bibitem[Huy05]{huybrechts2005complex}
D.~Huybrechts.
\newblock {\em Complex Geometry: An Introduction}.
\newblock Universitext (1979). Springer, 2005.

\bibitem[JMR11]{mazzrubinKE}
T.~Jeffres, R.~Mazzeo, and Y.A. Rubinstein.
\newblock K\"ahler-{E}instein metrics with edge singularities.
\newblock {\em ArXiv: 1105.5216}, 2011.

\bibitem[KS07]{Schlenker-Krasnov}
Kirill Krasnov and Jean-Marc Schlenker.
\newblock Minimal surfaces and particles in 3-manifolds.
\newblock {\em Geom. Dedicata}, 126:187--254, 2007.

\bibitem[Lab92]{labourieCP}
Fran{\c{c}}ois Labourie.
\newblock Surfaces convexes dans l'espace hyperbolique et {${\bf C}{\rm
  P}^1$}-structures.
\newblock {\em J. London Math. Soc. (2)}, 45(3):549--565, 1992.

\bibitem[Mes07]{Mess}
Geoffrey Mess.
\newblock Lorentz spacetimes of constant curvature.
\newblock {\em Geom. Dedicata}, 126:3--45, 2007.

\bibitem[Mon10]{mondelloPoisson}
Gabriele Mondello.
\newblock Poisson structures on the {T}eichm\"uller space of hyperbolic
  surfaces with conical points.
\newblock In {\em In the tradition of {A}hlfors-{B}ers. {V}}, volume 510 of
  {\em Contemp. Math.}, pages 307--329. Amer. Math. Soc., Providence, RI, 2010.

\bibitem[OS83]{oliker}
V.~I. Oliker and U.~Simon.
\newblock Codazzi tensors and equations of {M}onge-{A}mp\`ere type on compact
  manifolds of constant sectional curvature.
\newblock {\em J. Reine Angew. Math.}, 342:35--65, 1983.

\bibitem[Sca02]{scannell}
Kevin~P. Scannell.
\newblock Local rigidity of hyperbolic 3-manifolds after {D}ehn surgery.
\newblock {\em Duke Math. J.}, 114(1):1--14, 2002.

\bibitem[ST11]{schumacher}
Georg Schumacher and Stefano Trapani.
\newblock Weil-{P}etersson geometry for families of hyperbolic conical
  {R}iemann surfaces.
\newblock {\em Michigan Math. J.}, 60(1):3--33, 2011.

\bibitem[Tro91]{Troyanov}
Marc Troyanov.
\newblock Prescribing curvature on compact surfaces with conical singularities.
\newblock {\em Trans. Amer. Math. Soc.}, 324(2):793--821, 1991.

\end{thebibliography}

\end{document}